\documentclass[11pt]{article}

\topmargin -1.5cm
\textheight 23cm
\textwidth 17cm 
\oddsidemargin 0cm

\usepackage{amsfonts}
\usepackage{amssymb}
\usepackage{amsmath,mathrsfs}
\usepackage{amsthm}
\usepackage{latexsym}
\usepackage{layout}

\usepackage{marvosym}
\usepackage{textcomp}
\usepackage{amsbsy}
\usepackage{latexsym}
\usepackage[mathscr]{eucal}
\usepackage{amsfonts}
\usepackage[usenames]{color}

\usepackage[english]{babel}

\theoremstyle{plain}
\begingroup
\newtheorem{theorem}{Theorem}[section]
\newtheorem{lemma}[theorem]{Lemma}
\newtheorem{proposition}[theorem]{Proposition}
\newtheorem{corollary}[theorem]{Corollary}
\endgroup

\theoremstyle{definition}
\begingroup
\newtheorem{definition}[theorem]{Definition}
\newtheorem{remark}[theorem]{Remark}
\newtheorem{example}[theorem]{Example}
\endgroup
 
\theoremstyle{remark}
\begingroup

\endgroup

\mathsurround=1pt
\mathchardef\emptyset="001F

\numberwithin{equation}{section}

\newcommand{\be}{\begin{equation}}
  \newcommand{\ee}{\end{equation}}
\newcommand{\bes}{\begin{eqnarray}}
\newcommand{\ees}{\end{eqnarray}}



\usepackage{bm}
\usepackage{graphicx}
\usepackage[hang]{subfigure}
\usepackage{stmaryrd}
\usepackage{url}
\usepackage[utf8]{inputenc}






\makeatletter \newcommand{\thmenumhspace}[1]{\sbox{\@labels}{\unhbox\@labels\hskip#1}} \makeatother

\renewcommand{\i}{\mathrm{i}} 
\newcommand{\dd}{\, \mathrm{d}} 
\newcommand{\diff}{\mathop{}\!\mathrm{d}}

\newcommand{\argmin}{\operatornamewithlimits{arg\,min}} 
\newcommand{\supp}{\operatorname{supp}}
\newcommand{\e}{\mathrm{e}}
\renewcommand\Re{\operatorname{Re}}

\newcommand{\newpar}{\par\vspace{\baselineskip}}

\usepackage{xspace}
\makeatletter
\newcommand\etc{etc\@ifnextchar.{}{.\@\xspace}}
\newcommand\ie{i.e.\@ifnextchar,{}{\@\xspace}}
\newcommand\eg{e.g.\@ifnextchar,{}{\@\xspace}}
\newcommand\wrt{with respect to\@ifnextchar,{}{\@\xspace}}
\makeatother

\newcommand\MoveEqLeft[1][2]{\kern #1em & \kern -#1em} 
\newcommand{\leadeq}[2][4]{\MoveEqLeft[#1] #2 \nonumber}
\newcommand{\leadeqnum}[2][4]{\MoveEqLeft[#1] #2}

\usepackage{xcolor}
\usepackage[normalem]{ulem}



\begin{document}
\title{Consistency of Probability Measure Quantization by Means of Power Repulsion-Attraction Potentials}
\author{
Massimo Fornasier
\footnote{Technische Universit\"at M\"unchen, Fakult\"at Mathematik, Boltzmannstrasse 3
 D-85748, Garching bei M\"unchen, Germany  ({\tt massimo.fornasier@ma.tum.de}). } \, and \,
Jan-Christian Hütter\footnote{Technische Universit\"at M\"unchen, Fakult\"at Mathematik, Boltzmannstrasse 3
 D-85748, Garching bei M\"unchen, Germany  ({\tt jan-christian.huetter@mytum.de}).}
}
\maketitle

\begin{abstract}
This paper is concerned with the study of the consistency of a variational method for probability measure quantization, deterministically 
realized by means of a minimizing principle, balancing power repulsion and attraction potentials. The proof of consistency is based 
on the construction of a target energy functional whose unique minimizer is actually the given probability measure $\omega$ to be quantized. 
Then we show that the discrete functionals, defining  the discrete quantizers as their minimizers, actually $\Gamma$-converge to the target energy with respect to the narrow topology 
on the space of probability measures.
A key ingredient is the reformulation of the target functional by means of a Fourier representation, which extends the characterization
of conditionally positive semi-definite functions from points in generic position to probability measures. As a byproduct of the Fourier
representation, we also obtain compactness of sublevels of the target energy in terms of uniform moment bounds, which 
already found applications in the asymptotic analysis of corresponding gradient flows. To model situations where the given probability is 
affected by noise, we additionally consider a modified energy, with the addition of a regularizing total variation term and we investigate 
again its point mass approximations in terms of $\Gamma$-convergence. We show that such a discrete measure 
representation of the total variation can be interpreted as an additional nonlinear potential, repulsive at a short range, attractive at a 
medium range, and at a long range not having effect, promoting a uniform distribution of the point masses.   
\end{abstract}

\section{Introduction}
\label{cha:introduction}

\subsection{Variational measure quantization and main results of the paper}

Quantization of $d$-dimensional probability measures deals with constructive methods to define atomic  probability measures 
supported on a finite number of discrete points, which best approximate a given (diffuse) probability measure \cite{00_Graf_Luschgy_quantization, 04-Gruber-Quantization}. 
Here the space of all probability measures is endowed with the Wasserstein or
Kantorovich metric, which is usually the measure of the distortion of the approximation.
The main motivations come from 
two classical relevant applications. The first we mention is information theory.
In fact the problem of the quantization of a $d$-dimensional measure $\omega$ can be re-interpreted as the
best approximation of a random $d$-dimensional vector $X$ with distribution $\omega$ by means of 
a random vector $Y$ which has at most $N$ possible values in its image. This is a classical
way of considering the digitalization of an analog signal, 
for the purpose of optimal data storage or parsimonious transmission of impulses via a channel. 
As we shall recall in more detail below, image dithering \cite{scgwbrwe11,testgwscwe11} is a modern
example of such an application in signal processing. 
The second classical application is numerical integration \cite{pa98}, where integrals  with respect to certain probability measures  need to be
well-approximated by corresponding quadrature rules defined on the possibly optimal quantization points with respect to classes of continuous functions.
Numerical integration belongs to the standard problems of numerical analysis with numerous applications. It is often needed as a relevant subtask for 
solving more involved problems, for instance, the numerical approximation of solutions of partial differential equations.
Additionally a number of problems in physics, e.g., in quantum physics, as well as any expectation in a variety of stochastic models 
require the computation of high-dimensional integrals as main (observable) quantities of interest.
However, let us stress that the range of applications of measure quantization has nowadays become more far 
reaching, including mathematical models in economics (optimal location of service centers) or biology (optimal foraging and
population distributions).

In absence of special structures of the underlying probability measure, for instance being well-approximated
by finite sums of  tensor products of  lower dimensional measures, the problem of optimal quantization of measures, especially when defined on high-dimensional domains, can 
be hardly solved explicitely by deterministic methods.  
In fact, one may need to define optimal tiling of the space into Voronoi cells, based again on testing the space by suitable high-dimensional
integrations, see Section \ref{sec:discr-kern-estim} below
for an explicit deterministic construction of high-dimensional tilings for approximating probability measures
by discrete measures. When the probability distribution can be empirically ``tested'', by being able to draw at random samples from it, measure quantization can be realized by means of 
empirical processes. This way of generating natural quantization points leads to the consistency of the approximation, 
in the sense of almost sure convergence of the empirical processes to the original probability measure as the number of draws goes to infinity, see Lemma \ref{lem:3} below.
Other results address also the approximation rate of such a randomized quantization, measuring the expected valued of the
Wasserstein distance between the empirical processes and original probability measure, see for instance \cite{descsc13} and references therein.
Unfortunately, in those situations where the probability distribution is given but it is too expensive or even impossible to be sampled, also
the use of simple empirical processes might not be viable. A concrete example of this situation is image dithering\footnote{\url{http://en.wikipedia.org/wiki/Dither}}, see
Figure \ref{fig:figure2}. In this case the image represents the given probability distribution, which we do actually can access, but it is evidently impossible
to sample  random draws from it, unless one designs actually a quantization of the image by means of deterministic methods, which again may leads
us to tilings, and eventually making use of pseudorandom number generators\footnote{\url{http://en.wikipedia.org/wiki/Pseudorandom_number_generator}. One practical way to sample randomly an image would be first to generate (pseudo-)randomly a finite number
of points according to the uniform distribution from which one eliminates points which do not realize locally an integral over a prescribed threshold.}.  To overcome this difficulty,  a variational approach has been proposed in a series of papers \cite{scgwbrwe11,testgwscwe11}.
\begin{figure}[t]
 \centering
  \subfigure[Original image]{\includegraphics[width=6cm]{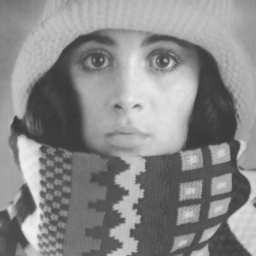}}
 \hspace{0.5cm}
  \subfigure[Dithered image]{\includegraphics[width=6cm]{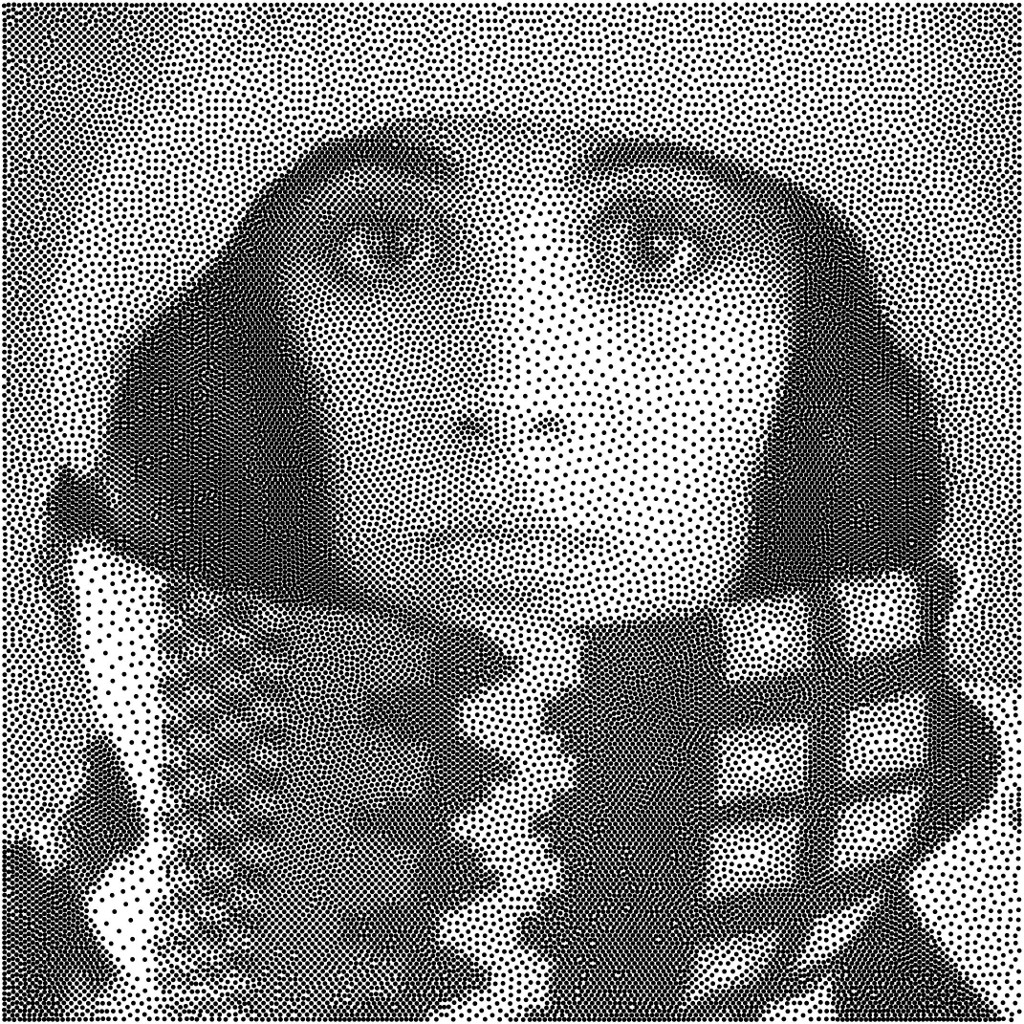}}
  \caption{Un-dithered and dithered image from \cite{testgwscwe11} with the kind permission of the authors.}
  \label{fig:figure2}
\end{figure}
 While there are many ways to determine the proximity of two probability measures (for a brief summary over some relevant alternatives, see \cite{07_Carillo_Toscani_prob-metrics}), the interesting idea in  \cite{testgwscwe11} consists in employing variational principles for that purpose. Namely, we consider the points \( x=(x_i)_{i=1,\ldots,N} \) to be attracted by the high-intensity locations of the black-and-white image, which represents our probability distribution $\omega$, by introducing an attraction potential
\begin{equation*}
  \mathcal{V}_N(x) := \frac{1}{N} \sum_{i=1}^{N} \int_{\mathbb{R}^2} \left| x_i - y \right| \diff \omega(y)
\end{equation*}
which is to be minimized. If left as it is, the minimization of this term will most certainly not suffice to force the points into an intuitively good position, as the
minimizer would consist of all the points being at the median of \( \omega \). Therefore, we shall enforce the spread of the points by adding a pairwise repulsion term
\begin{equation*}
  \mathcal{W}_N(x) := - \frac{1}{2N^2} \sum_{i,j=1}^{N} \left| x_i-x_j \right|,
\end{equation*}
leading to the minimization of the composed functional
\begin{equation}
  \label{eq:7}
  \mathcal{E}_N(x) := \mathcal{V}_N(x) + \mathcal{W}_N(x),
\end{equation}
which produces the visually appealing results in Figure \ref{fig:figure2}. By considering more general kernels $K_a(x_i,y)$ and $K_r(x_i,x_j)$ in the attraction and repulsion terms
\begin{equation}
  \label{eq:7'}
\mathcal{E}_N^{K_a,K_r}(x) =\frac{1}{N} \sum_{i=1}^{N} \int_{\mathbb{R}^2}  K_a(x_i,y) \diff \omega(y)- \frac{1}{2N^2} \sum_{i,j=1}^{N} K_r(x_i,x_j),
\end{equation}
as already mentioned above, an attraction-repulsion functional of this type can easily be prone to other interesting interpretations. For instance, one could also consider the particles as a population subjected to attraction to a nourishment source \( \omega \), modeled by the attraction term, while at the same time being repulsed by internal competition. As one can see in the numerical experiments reported in \cite[Section 4]{FHS12}, the interplay of different powers of attraction and repulsion forces can lead some individuals of the population to fall out of the domain of the resource (food), which can be interpreted as an interesting mathematical model of social exclusion.
The relationship between functionals of the type \eqref{eq:7'} and optimal numerical integration in reproducing kernel Hilbert spaces has been highlighted in \cite{grpost11}, also showing once again the relevance of (deterministic) measure quantization
towards designing efficient quadrature rules for numerical integration.\\

However, the generation of optimal quantization points by the minimization of functionals of the type \eqref{eq:7'} might also be subjected to criticism. First of all the functionals are in general nonconvex, rendering their global optimization, especially in high-dimension, a problem
of high computational complexity, although, being the functional the difference of two convex terms, numerical methods based on the alternation of descent and ascent algorithms proved to be rather efficient  in practice, see \cite{testgwscwe11} for details.
Especially one has to notice that for kernels generated by radial symmetric functions applied on the Euclidean distance of their arguments, the evaluation of the functional and of its subgradients may result in the computation of convolutions
which can be rapidly implemented by non-equispaced fast Fourier transforms \cite{fest04, post03}. Hence, this technical advantage makes it definitively a promising alternative (for moderate dimensions $d$) with respect to deterministic methods
towards optimal space tiling, based on local integrations and greedy placing, as it is for instance done in the strategy proposed in Section \ref{sec:discr-kern-estim} below.
Nevertheless, while for both empirical processes and deterministic constructions consistency results are available, see for instance Lemma \ref{lem:3} and Lemma \ref{lem:13} below, and the broad literature on these techniques \cite{00_Graf_Luschgy_quantization},  so far
no similar results have been provided for discrete measures supported on optimal points generated as minimizers of functionals of the type \eqref{eq:7'}, which leads us to the scope of this paper.

We shall prove that, for a certain type of kernels $K_a(x,y)=\psi_a(x-y)$ and $K_r(x,y)=\psi_r(x-y)$, where \( \psi_a \colon \mathbb{R}^d \rightarrow \mathbb{R}_+ \)  and  \( \psi_r \colon \mathbb{R}^d \rightarrow \mathbb{R}_+ \)
are radially symmetric functions, the empirical measures $\mu_N = \frac{1}{N} \sum_{i=1}^N \delta_{x_i}$ constructed over points  \( x=(x_i)_{i=1,\ldots,N} \) minimizing \eqref{eq:7'} converges narrowly to the given  probability measure $\omega$, showing
the consistency of this quantization method. The technique we intend to use to achieve this result makes use of the so-called $\Gamma$-convergence \cite{93-Dal_Maso-intro-g-conv}, which is a type of variational convergence of sequences of functionals over
metrizable spaces, which allows for simultaneous convergence of their respective minimizers. The idea  is to construct a ``target functional'' $\mathcal E$ whose unique minimizer is actually the given probability measure $\omega$. Then one needs
to show that the functionals $\mathcal{E}_N^{K_a,K_r}$ actually $\Gamma$-converge to  $\mathcal E$ with respect to the narrow topology on the space of probability measures, leading eventually to the convergence of the corresponding minimizers to $\omega$.
We immediately reveal that the candidate target functional for this purpose is, in the first instance, given by
\begin{equation}
  \label{eq:9}
  \mathcal{E}[\mu] := \int_{\Omega \times \Omega} \psi_a(x-y) \diff \omega(x) \diff \mu(y) - \frac{1}{2} \int_{\Omega \times \Omega} \psi_r(x-y)  \diff \mu(x) \diff \mu(y),
\end{equation}
where we consider from now on a more general domain $\Omega \subset \mathbb R^d$ as well as measures $\mu, \omega \in \mathcal P(\Omega)$, where $\mathcal P(\Omega)$ is the space of probability measures.
The reason for this natural choice comes immediately by observing that $\mathcal{E}[\mu_N] = \mathcal{E}_N^{K_a,K_r}(x)$. For later use we denote 
$$\mathcal V[\mu] := \int_{\Omega\times \Omega} \psi_a(x-y) \diff \omega(x) \diff \mu(y) \text { and } \mathcal W[\mu]:=-\frac{1}{2} \int_{\Omega\times \Omega} \psi_r(x-y)  \diff \mu(x) \diff \mu(y).
$$
However, this natural choice poses several mathematical questions. First of all, as the functional is composed by the difference of two positive terms which might be simultaneously not finite
over the set of probability measures, its well-posedness has to be justified. This will be done by restricting the class of radial symmetric functions $\psi_a$ and $\psi_r$ to those
with at most quadratic polynomial growth and the domain of the functional to probability measures with bounded second moment. This solution, however, conflicts with the natural
topology of the problem, which is the one induced by the narrow convergence. In fact, the resulting functional will not be necessarily lower semi-continuous with respect to the
narrow convergence and this property is well-known to be necessary for a target functional to be a $\Gamma$-limit \cite{93-Dal_Maso-intro-g-conv}.
Thus, we need to extend the functional $\mathcal{E}$ from the probability measures with bounded second moment $\mathcal P_2(\Omega)$ to the entire $\mathcal P(\Omega)$,
by means of a functional \(\widehat{\mathcal{E}}\) which is also lower semi-continuous with respect to the narrow topology. The first relevant result of this paper is to prove 
that such a lower semi-continuous relaxation \(\widehat{\mathcal{E}}\) can be explicitly expressed, up to an additive constant terms, for $\psi(\cdot)=\psi_a(\cdot)=\psi_r(\cdot) = |\cdot|^q$, and $1 \leq q < 2$, in terms
of the Fourier formula

\begin{equation}
  \label{eq:01}
  \widehat{\mathcal{E}}[\mu]= -2^{-1} (2\pi)^{-d}\int_{\mathbb{R}^d}\left| 
    \widehat{\mu}(\xi) - \widehat{\omega}(\xi) \right|^2
  \widehat{\psi}(\xi) \dd \xi,
\end{equation}
where for any \(\mu \in \mathcal{P}(\mathbb{R}^{d})\), \(\widehat{\mu}\) denotes its Fourier-Stieltjes transform,
and \(\widehat{\psi}\) is the \emph{generalized Fourier-transform of \(\psi\)}, \ie, a Fourier transform with respect to a certain duality, which allows to cancel the singularities of the Fourier transform of kernel $\psi$ at $0$. We have gathered most of the important facts about it in Appendix \ref{cha:cond-posit-semi}.
The connection between functionals composed of repulsive and attractive power terms and Fourier type formulas \eqref{eq:01} is novel and requires to extend the theory of conditionally positive semi-definite functions from discrete points 
to probability measures \cite{Wend05}. 
This crucial result is fundamental for proving as a consequence the well-posedness in $\mathcal P(\Omega)$ and the uniqueness of the minimizer $\omega$, as it is now evident by the form \eqref{eq:01}, and eventually the $\Gamma$-convergence
of the particle approximations. Another very relevant result which follows from the Fourier representation is the uniform $r$th-moment bound for $r< \frac{q}{2}$ of the sublevels of \(\widehat{\mathcal{E}}\)  leading to their
compactness in certain Wasserstein distances. This result plays a major role, for instance, in the analysis of the convergence to steady states of corresponding gradient flows (in dimension $d=1$), which are studied in the follow up paper \cite{dffohuma13}.
\\

Another relevant consequence of the Fourier representation is to allow us to add regularizations to the optimization problem. While for other quantization methods mentioned above, such as deterministic tiling and random draw of empirical processes,
it may be hard to filter the possible noise on the probability distribution, the variational approach based on the minimization of particle functionals of the type \eqref{eq:7} is amenable to easy mechanisms of regularization. Differently from the path followed in the
reasoning above, where we developed a limit from discrete to continuous functionals, here we proceed in the opposite direction, defining first the expected continuous regularized functional and then designing candidate discrete functional approximations, proving then the consistency again by $\Gamma$-convergence. One effective way of filtering noise and still preserving the structure of the underlying measure $\omega$ is the addition to the discrepancy functional of a term of {\it total variation}. This technique was introduced by Rudin, Osher, and Fatemi in the seminal paper \cite{rof92} for the denoising
of digital images, leading to a broad literature on variational methods over functions of bounded variations. We refer to \cite[Chapter 4]{10_Chambolle_ea_tv_intro} for an introduction to the subject and to the references therein for a broad view. Inspired by this well-established theory, we shall consider a regularization of \( \mathcal{E} \) by a total variation term,
\begin{equation}
  \label{eq:355}
  \mathcal{E}^\lambda[\mu] := \widehat{\mathcal{E}}[\mu] + \lambda \left| D\mu \right|(\Omega),
\end{equation}
where \( \lambda > 0 \) is a regularization parameter and \( \mu \) is assumed to be in \( L^1(\Omega) \), having distributional derivative \( D\mu \) which is a finite Radon measure with total variation \( \left| D\mu \right| \).
Beside providing existence of minimizers of $\mathcal{E}^\lambda$ in $\mathcal P(\Omega) \cap BV(\Omega)$ and its $\Gamma$-convergence to $\widehat{\mathcal{E}}$ for $\lambda \to 0$, we also formulate particle approximations to $\mathcal{E}^\lambda$.
While the approximation to the first term $\mathcal{E}$ is already given by its restriction to atomic measures, the consistent discretization in terms of point masses of the total variation term $\left| D\mu \right|(\Omega)$ is our last
result of the present paper. By means of  kernel estimators \cite{12-Wied-Weissbach-Kernel-Estimators}, we show in arbitrary dimensions that the total variation can be interpreted at the level of point masses as an additional attraction-repulsion
potential, actually repulsive at a short range, attractive at a medium range, and at a long range not having effect, which tends to locate the point masses into uniformly distributed configurations.
We conclude with the proof of consistency of such a discretization by means of $\Gamma$-convergence. To our knowledge this interpretation of the total variation in terms of point masses has never been pointed out before in the literature.

\subsection{Further relevance to other work}

Besides the aforementioned relationship to measure quantization in information theory, numerical integration, and the theory of conditionally positive semi-definite functions, energy functionals such as \eqref{eq:9}, being composed of a quadratic and a linear integral term, arise as well in a variety of mathematical models in biology and physics, describing the limit of corresponding particle descriptions. 
In particular the quadratic term, in our case denoted by \( \mathcal{W} \), corresponding to the self-interaction between particles, has emerged in modeling biological aggregation. We refer to the survey paper \cite{13-Carrillo-Choi-Hauray-MFL} and the references therein for a summary on the mathematical results related to the mean-field limit of large ensembles of interacting particles with applications in swarming models, with particular emphasis on existence and uniqueness of aggregation gradient flow equations. We also mention that in direct connection to \eqref{eq:9}, in the follow up paper \cite{dffohuma13} we review the global well-posedness of gradient flow equations associated to the energy $\mathcal E$ in one dimension, providing a simplified proof of existence and uniqueness, and we
address the difficult problem of describing the asymptotic behavior of their solutions. In this respect we stress once more that the moment bounds derived in Section \ref{sec:moment-bound-symm} of the present paper play a fundamental role for that analysis.
\\
Although here derived as a model of regularization of the approximation process to a probability measure, also functionals like \eqref{eq:355} with other kernels than polynomial growing ones  appear in the literature in various contexts.
The existence and characterization of their minimizers are in fact of great independent interest. When restricted to characteristic functions of finite perimeter sets, a functional of the type  \eqref{eq:355} with Coulombic-like repulsive interaction models the so-called non-local isoperimetric problem studied in \cite{2012-Muratov,2013-Muratov} and \cite{13-Cicalese-droplet-minimizers}. Non-local Ginzburg-Landau energies modeling diblock polymer systems with kernels given by the Neumann Green's function of the Laplacian are studied in \cite{12-Serfaty-DropletI,12-Serfaty-DropletII}. The power potential model studied in the present paper is contributing to this interesting constellation.
\newpar

\subsection{Structure of the paper}
\label{sec:organisation-thesis}

In Section \ref{sec:prel-obs}, we start with a few theoretical preliminaries, followed by examples and counterexamples of the existence of minimizers for \( \mathcal{E} \) in the case of power potentials, depending on the powers and on the domain \( \Omega \), where elementary estimates for the behavior of the power functions are used in conjunction with appropriate notions of compactness for probability measures, \ie, uniform integrability of moments and moment bounds.

Starting from Section \ref{sec:prop-funct-mathbbrd}, we study the limiting case of coinciding powers for attraction and repulsion, where there is no longer an obvious confinement property given by the attraction term. To regain compactness and lower semi-continuity, we consider the lower semi-continuous envelope of the functional  \( \mathcal{E} \), which can be proven to coincide, up to an additive constant, with the  Fourier representation \eqref{eq:01}, see Corollary \ref{cor:lower-semi-cont} in Section \ref{sec:extension-p}, which is at first derived on \( \mathcal{P}_2(\mathbb{R}^d) \) in Section \ref{sec:fourier-formula-p2}. The main ingredient to find this representation is the generalized Fourier transform in the context of the theory of conditionally positive definite functions, which we briefly recapitulated in Appendix \ref{cha:cond-posit-semi}.

Having thus established a problem which is well-posed for our purposes, we proceed to prove one of our main results, namely the convergence of the minimizers of the discrete functionals to \( \omega \), Theorem \ref{thm:cons-part-appr} in Section \ref{sec:part-appr}. This convergence will follow in a standard way from the \( \Gamma \)-convergence of the corresponding functionals. Furthermore, again applying the techniques of Appendix \ref{cha:cond-posit-semi} used to prove the Fourier representation  allows us to derive compactness of the sublevels of \( \mathcal{E} \) in terms of a uniform moment bound in Section \ref{sec:moment-bound-symm}.

Afterwards, in Section \ref{sec:tv-reg}, we shall introduce the total variation regularization of \( \mathcal{E} \). Firstly, we prove consistency in terms of \( \Gamma \)-convergence for vanishing regularization parameter in Section \ref{sec:cons-regul-cont}. Then, in Section \ref{sec:discrete-version-tv-1}, we propose two ways of computing a version of it on particle approximations and again prove consistency for \( N \to \infty \). One version consists of employing kernel density estimators, while, in the other one, each point mass is replaced by an indicator function extending up to the next point mass with the purpose of computing explicitly the total variation. In Section \ref{sec:numer-exper}, we exemplify the $\Gamma$-limits of the first approach by numerical experiments.

\section{Preliminary observations}
\label{sec:prel-obs}

\subsection{Narrow convergence and Wasserstein-convergence}
\label{sec:conv-p-pp}

We  begin with a brief summary of measure theoretical results which will be needed in the following. 
Let $\Omega \subset \mathbb R^d$ be fixed and denote with $\mathcal P_p(\Omega)$ the set of probability measures $\mu$ with finite $p$th-moment
$$
\int_{\Omega} |x|^p d\mu(x) < \infty. 
$$
For an introduction to the narrow topology in spaces of probability measures $\mathcal P(\Omega)$, see \cite[Chapter 5.1]{AGS08}.
Let us only briefly recall a few relevant facts, which will turn out to be useful later on. First of all let us recall the definition of narrow convergence. A sequence of probability measures $(\mu_n)_{n \in \mathbb N}$ narrowly converges
to $\mu \in \mathcal P(\Omega)$ if
$$
\lim_{n \to \infty} \left | \int_{\Omega} g(x) d \mu_n(x) - \int_{\Omega} g(x) d\mu(x) \right | = 0, \mbox{ for all } g \in C_b(\Omega).
$$
It is immediate to show that $L^1$ convergence of absolutely continuous probability measures in $\mathcal P(\Omega)$ implies narrow convergence. Moreover, as recalled in \cite[Remark 5.1.1]{AGS08}, there is a sequence of continuous functions \( (f_k)_{k \in \mathbb{N}} \) on \( \Omega \) and \( \sup_{x \in \Omega} \left| f_k(x) \right| \leq 1 \) such that the narrow convergence in \( \mathcal{P}(\Omega) \) can be metrized by 
  \begin{equation}
    \label{eq:318}
    \delta(\mu,\nu) := \sum_{k = 1}^{\infty} 2^{-k} \left| \int_{\Omega} f_k(x) \diff \mu(x) - \int_{\Omega} f_k(x) \diff \nu(x) \right|.
 \end{equation}
It will turn out to be useful also to observe that narrow convergences extends to tensor products. From \cite[Theorem 2.8]{68-Billingsley-Conv-Proba} it follows that if \( \left( \mu_n \right)_n \), \( \left( \nu_n \right)_n \) are two sequences in \( \mathcal{P}(\Omega) \) and \( \mu, \nu \in \mathcal{P}(\Omega) \), then
  \begin{equation*}
    \mu_n \otimes \nu_n \rightarrow \mu \otimes \nu \text{ narrowly} \mbox{ if and only if } \mu_n \rightarrow \mu \text{ and } \nu_n \rightarrow \nu \text{ narrowly}.
  \end{equation*}

Finally, we include some results about the continuity of integral functionals with respect to Wasserstein-convergence.

\begin{definition}[Wasserstein distance]
  \label{def:wasserstein-distance}
  Let \( \Omega \subseteq \mathbb{R}^d \), \( p \in [1,\infty) \) as well as \( \mu_1, \mu_2 \in \mathcal{P}_p(\Omega) \) be two probability measures with finite \( p \)th moment. Denoting by \( \Gamma(\mu_1, \mu_2) \) the probability measures on \( \Omega \times \Omega \) with marginals \( \mu_1 \) and \( \mu_2 \), then we define
  \begin{equation}
    \label{eq:322}
    W_p^p(\mu_1, \mu_2) := \min \left\{ \int_{\Omega^2} \left| x_1 - x_2 \right|^p \diff \bm{\mu}(x_1, x_2) : \bm{\mu} \in \Gamma(\mu_1, \mu_2) \right\},
  \end{equation}
  the \emph{Wasserstein}-\( p \) distance between \( \mu_1 \) and \( \mu_2 \).

\end{definition}

\begin{definition}[Uniform integrability]
  A measurable function \( f : \Omega \rightarrow [0,\infty] \) is \emph{uniformly integrable} with respect to a family of finite measures \( \left\{ \mu_i : i \in I \right\} \), if
  \begin{equation*}
    \lim_{M \rightarrow \infty} \sup_{i \in I} \int_{\left\{ f(x) \geq M\right\}} f(x) \diff \mu_i(x) = 0.
  \end{equation*}
\end{definition}

\begin{lemma}[Topology of Wasserstein spaces]
  \label{lem:26}
  \cite[Proposition 7.1.5]{AGS08}
  For \( p  \geq 1\) and a subset \( \Omega \subseteq \mathbb{R}^d \), \( \mathcal{P}_p (\Omega) \) endowed with the Wasserstein-\( p \) distance is a separable metric space which is complete if \( \Omega \) is closed. A set \(  \mathcal{K} \subseteq \mathcal{P}_p(\Omega) \) is relatively compact if and only if it is \( p \)-uniformly integrable (and hence tight by Lemma \ref{lem:24} just below). In particular, for a sequence \( (\mu_n)_{n \in \mathbb{N}} \subseteq \mathcal{P}_p(\Omega) \), the following properties are equivalent:
\begin{itemize}
\item[(i) ] $ \lim_{n\rightarrow\infty} W_p(\mu_n, \mu) = 0$;
\item[(ii) ] $\mu_n \rightarrow \mu$ narrowly and  $(\mu_n)_n$ has uniformly integrable $p$-moments.
\end{itemize}
  
\end{lemma}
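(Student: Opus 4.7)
The plan is to reduce everything to the equivalence (i) $\Leftrightarrow$ (ii), after first checking that $W_p$ is a genuine metric on $\mathcal{P}_p(\Omega)$. Non-negativity, symmetry, and finiteness follow from \eqref{eq:322} together with the elementary bound $|x_1-x_2|^p \leq 2^{p-1}(|x_1|^p+|x_2|^p)$; the separation property comes from testing against $C_b(\Omega)$; and the triangle inequality is proved by the usual gluing construction, i.e. given optimal plans for $(\mu_1,\mu_2)$ and $(\mu_2,\mu_3)$, disintegrating them with respect to $\mu_2$, pasting into a measure on $\Omega^3$, and applying Minkowski to the $(1,3)$ marginal. Separability then follows from that of $\Omega$ by approximating any $\mu \in \mathcal{P}_p(\Omega)$ with finitely supported rational-mass measures on a countable dense subset; completeness for closed $\Omega$ will come out as a byproduct of the compactness criterion.

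For (i) $\Rightarrow$ (ii), given $g \in C_b(\Omega)$ and an optimal plan $\bm{\mu}_n \in \Gamma(\mu_n,\mu)$, one writes
\begin{equation*}
\left|\int g\,d\mu_n - \int g\,d\mu\right| \leq \int |g(x_1)-g(x_2)|\,d\bm{\mu}_n(x_1,x_2),
\end{equation*}
and splits the integrand on $\{|x_1-x_2|\leq\delta\}$ and its complement: uniform continuity of $g$ on large balls together with the Markov estimate $\bm{\mu}_n\{|x_1-x_2|>\delta\}\leq \delta^{-p}W_p^p(\mu_n,\mu)$ gives narrow convergence. Uniform $p$-integrability of the moments is then obtained from the $L^p$-type triangle inequality $(\int|x|^p d\mu_n)^{1/p} \leq W_p(\mu_n,\mu)+(\int|x|^p d\mu)^{1/p}$, combined with the tail estimate $\mu_n(\{|x|\geq M\}) \leq \mu(\{|x|\geq M/2\}) + (2/M)^p W_p^p(\mu_n,\mu)$.

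For (ii) $\Rightarrow$ (i), pick optimal plans $\bm{\mu}_n \in \Gamma(\mu_n,\mu)$; since their marginals are tight, $(\bm{\mu}_n)_n$ itself is tight, and along a subsequence it converges narrowly to some $\bm{\mu} \in \Gamma(\mu,\mu)$ which must be supported on the diagonal (use that $\int|x_1-x_2|^p\wedge M\,d\bm{\mu}$ is a narrow lower bound for $W_p^p$, together with Kantorovich duality or the standard identification of the diagonal plan). Writing
\begin{equation*}
\int |x_1-x_2|^p d\bm{\mu}_n \leq \int (|x_1-x_2|^p\wedge M)\,d\bm{\mu}_n + \int_{|x_1-x_2|^p>M}|x_1-x_2|^p\,d\bm{\mu}_n,
\end{equation*}
the first summand tends to $0$ as $n\to\infty$ by narrow convergence to the diagonal plan, while the second is dominated by $2^{p-1}$ times the tails of $|x|^p$ under $\mu_n$ and $\mu$, which vanish uniformly in $n$ as $M\to\infty$ by (ii); a subsequence argument then gives $W_p(\mu_n,\mu)\to 0$ for the full sequence. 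Compactness of $p$-uniformly integrable sets now follows by noting that such sets are tight (Markov on $|x|^p$), extracting a narrowly convergent subsequence via Prokhorov, observing that uniform $p$-integrability is preserved under narrow limits (Fatou on truncated tails), and invoking (ii) $\Rightarrow$ (i). Completeness for closed $\Omega$ drops out: a $W_p$-Cauchy sequence is $W_p$-bounded, hence $p$-uniformly integrable and tight, so converges in $W_p$ along a subsequence, and the Cauchy property promotes this to full convergence.

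The main obstacle is the implication (ii) $\Rightarrow$ (i), and within it the uniform control of the far-field contribution to the cost: since $|x_1-x_2|^p$ is unbounded on $\Omega^2$, one cannot simply invoke dominated convergence against the narrowly convergent plans $\bm{\mu}_n$, and must instead decouple the tail estimate into tails of the marginals by means of $|x_1-x_2|^p \leq 2^{p-1}(|x_1|^p+|x_2|^p)$ before applying the hypothesis. Everything else is essentially bookkeeping around this decoupling step.
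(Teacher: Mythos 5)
The paper itself does not prove this lemma: it imports it wholesale from \cite[Proposition 7.1.5]{AGS08}, so there is no in-paper argument to compare against. Judged on its own terms, your outline is organized sensibly and correctly isolates the far-field control as the crux, but one step in the direction (ii) \(\Rightarrow\) (i) is asserted rather than established.

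You claim that the narrow limit \(\bm{\mu}\in\Gamma(\mu,\mu)\) of the optimal plans \(\bm{\mu}_n\in\Gamma(\mu_n,\mu)\) ``must be supported on the diagonal,'' and the parenthetical justification you give does not close this. Narrow lower semi-continuity of the transport cost only yields
\[
\int (|x_1-x_2|^p\wedge M)\,\mathrm{d}\bm{\mu} \;=\; \lim_n \int (|x_1-x_2|^p\wedge M)\,\mathrm{d}\bm{\mu}_n \;\le\; \liminf_n W_p^p(\mu_n,\mu),
\]
which is useless until one already knows the right-hand side is zero --- exactly the conclusion being sought, so the argument as phrased is circular. What actually forces \(\bm{\mu}\) to be the diagonal coupling is the \emph{stability of optimal transport plans} under narrow convergence of the marginals (cyclical monotonicity of \(\operatorname{supp}\bm{\mu}_n\) passes to the limit, so \(\bm{\mu}\) is optimal for \(W_p(\mu,\mu)=0\), whose unique optimizer for the strictly positive off-diagonal cost \(|x-y|^p\) is \((\mathrm{id},\mathrm{id})_{\#}\mu\)). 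That is a genuine theorem and needs to be invoked explicitly, not hidden in a parenthesis; alternatively, you can sidestep optimal plans entirely by using the Skorokhod representation theorem to produce couplings with a.s.\ convergent representatives and then applying Vitali's theorem via uniform integrability of \(|X_n-X|^p\le 2^{p-1}(|X_n|^p+|X|^p)\). Two smaller points: in (i) \(\Rightarrow\) (ii), boundedness of the \(p\)-th moments plus tightness is not by itself uniform \(p\)-integrability --- you need to upgrade the triangle inequality to actual \emph{convergence} \(\int|x|^p\,\mathrm{d}\mu_n\to\int|x|^p\,\mathrm{d}\mu\) and then argue through truncations \(|x|^p\wedge M\); and in the far-field estimate, after applying \(|x_1-x_2|^p\le 2^{p-1}(|x_1|^p+|x_2|^p)\) there remain cross terms of the form \(\int_{\{|x_2|^p>M/2^p\}}|x_1|^p\,\mathrm{d}\bm{\mu}_n\), which are controlled, but only after an additional split using tightness of \(\mu\) and uniform integrability of the \(\mu_n\), not by a direct reduction to marginal tails.
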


\begin{lemma}[Continuity of integral functionals]
  \label{lem:4}
  \cite[Lemma 5.1.7]{AGS08}
  Let \( (\mu_n)_{n \in \mathbb N} \) be a sequence in  $\mathcal{P}(\Omega)$  converging narrowly to \( \mu \in \mathcal{P}(\Omega) \), \( g : \Omega\rightarrow \mathbb{R} \) lower semi-continuous and \( f : \Omega \rightarrow \mathbb{R} \) continuous. If \( \left| f \right|, g^- := -\min \left\{ g, 0 \right\} \) are uniformly integrable \wrt \( \left\{ \mu_{n}: n \in \mathbb N \right\} \), then
  \begin{align*}
    \liminf_{n\rightarrow\infty} \int_{\Omega} g(x) \diff \mu_n(x) \geq {} & \int_{\Omega} g(x) \diff \mu(x)\\
    \lim_{n\rightarrow\infty} \int_{\Omega} f(x) \diff \mu_n(x) = {} & \int_{\Omega} f(x) \diff \mu(x)
  \end{align*}
\end{lemma}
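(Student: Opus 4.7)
The plan is to reduce the general statement to the classical portmanteau characterization of narrow convergence and then lift it to unbounded integrands via truncation combined with the uniform integrability hypothesis.

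For the lower semi-continuous statement I would split $g = g^+ - g^-$ and treat the two parts separately. The positive part $g^+$ is nonnegative and lower semi-continuous, so for every $M > 0$ the truncation $g^+ \wedge M$ is bounded and lower semi-continuous. Writing it as the increasing pointwise limit of the Lipschitz inf-convolutions $h_k(x) := \inf_{y \in \Omega}\{(g^+\wedge M)(y) + k|x-y|\}$ and applying narrow convergence to each $h_k$ gives, after monotone convergence in $k$, the bounded portmanteau inequality $\liminf_n \int (g^+ \wedge M) \diff \mu_n \geq \int (g^+ \wedge M) \diff \mu$. Since $g^+ \geq g^+ \wedge M$ on the left-hand side, sending $M \to \infty$ by monotone convergence on the right yields $\liminf_n \int g^+ \diff \mu_n \geq \int g^+ \diff \mu$.

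For the negative part, $g^- = \max(-g,0)$ is upper semi-continuous as the maximum of the u.s.c.\ function $-g$ and $0$, and it is nonnegative and uniformly integrable with respect to $\{\mu_n\}$. The truncation $g^- \wedge M$ is bounded u.s.c., so the symmetric portmanteau inequality $\limsup_n \int (g^- \wedge M) \diff \mu_n \leq \int (g^- \wedge M) \diff \mu$ holds. From the decomposition
\begin{equation*}
  \int g^- \diff \mu_n \leq \int (g^- \wedge M) \diff \mu_n + \sup_{n} \int_{\{g^- > M\}} g^- \diff \mu_n
\end{equation*}
and by taking $\limsup_n$ and then $M \to \infty$, the first term converges to $\int g^- \diff \mu$ by monotone convergence while the second vanishes by uniform integrability. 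This proves $\limsup_n \int g^- \diff \mu_n \leq \int g^- \diff \mu$, and as a byproduct that $g^-$ is $\mu$-integrable, so that $\int g \diff \mu$ is well-defined in $(-\infty,+\infty]$. Subtracting the two inequalities gives the desired $\liminf_n \int g \diff \mu_n \geq \int g \diff \mu$.

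For the statement about $f$ continuous with $|f|$ uniformly integrable, I simply apply the previous inequality to both $g = f$ and $g = -f$, noting that a continuous function is both l.s.c.\ and u.s.c., and that uniform integrability of $|f|$ implies uniform integrability of $f^-$ and $(-f)^- = f^+$. The main obstacle is a clean bookkeeping of the truncation/uniform-integrability interchange on the u.s.c.\ side, which I have outlined above; everything else is a direct application of the bounded portmanteau principle together with monotone convergence.
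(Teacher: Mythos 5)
The paper states this result as a citation to Ambrosio--Gigli--Savar\'e and does not supply its own proof, so there is no internal argument to compare against; your task was effectively to reconstruct the proof of a cited lemma. Your argument is correct and follows the standard route: decompose \( g = g^+ - g^- \), approximate the truncation \( g^+ \wedge M \) from below by Lipschitz inf-convolutions and pass to the monotone limit in \( k \) and then in \( M \) for the l.s.c.\ part, and use the truncation \( g^- \wedge M \) together with uniform integrability to control the tail on the u.s.c.\ side; the claim for continuous \( f \) then follows by applying the inequality to \( \pm f \). The reference itself argues slightly more economically using the single truncation \( g_M := g \vee (-M) \), which is l.s.c.\ and bounded below, satisfies \( g \leq g_M \) and \( |g - g_M| \leq g^- 1_{\{g^- \geq M\}} \), and therefore yields the result directly from the bounded-below l.s.c.\ portmanteau inequality plus the uniform integrability bound, without ever splitting \( g \) into positive and negative parts; both routes are legitimate.

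One small inaccuracy: your assertion that \( g^- \) is \( \mu \)-integrable ``as a byproduct'' does not follow from what you established. The inequality \( \limsup_n \int g^- \diff\mu_n \leq \int g^- \diff\mu \) bounds \( \int g^- \diff\mu \) from \emph{below}, not above, and \( \int g^- \diff\mu = +\infty \) is in fact possible under the stated hypotheses. For instance, with \( \Omega = \mathbb{R} \), \( \mu = \sum_{k\geq 1} 2^{-k}\delta_k \), \( \mu_n = \sum_{k\geq 1} 2^{-k}\delta_{k+1/n} \), and \( g^- = \sum_{k\geq 1} 2^k\, 1_{\{k\}} \) (which is u.s.c.\ as a sum of indicators of singletons), one has \( \mu_n \to \mu \) narrowly, \( \int g^- \diff\mu_n = 0 \) for all \( n \geq 2 \) so that \( g^- \) is trivially uniformly integrable, yet \( \int g^- \diff\mu = +\infty \). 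This does not harm your proof of the displayed inequality, because when \( \int g^- \diff\mu = +\infty \) the right-hand side \( \int g \diff\mu \) equals \( -\infty \) and the liminf inequality is trivially satisfied; and for the continuous \( f \) the integrability of \( |f| \) with respect to \( \mu \) does follow, since the truncations \( |f|\wedge M \) are then continuous and bounded, so \( \int (|f|\wedge M)\diff\mu = \lim_n \int(|f|\wedge M)\diff\mu_n \leq \sup_n \int |f|\diff\mu_n < \infty \) and monotone convergence applies. You should simply delete the ``byproduct'' remark, or replace it with the observation that if \( \int g^-\diff\mu = +\infty \) the conclusion is vacuously true.
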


\begin{lemma}[Uniform integrability of moments]
  \label{lem:24}
  \cite[Corollary to Theorem 25.12]{95-Billingsley-Proba_and_Measuer}
  Given \( r > 0 \) and a family \( \left\{ \mu_i :i \in I \right \} \) of probability measures in $\mathcal{P}(\Omega)$ with
  \begin{equation*}
    \sup_{i \in I} \int_{\Omega} \left| x \right|^r \diff \mu_i(x) < \infty,
  \end{equation*}
  then the family \( \left\{ \mu_{i} : i \in I \right \} \) is tight and for all \( 0 < q < r \), \( x \mapsto \left| x \right|^q \) is uniformly integrable \wrt \( \left\{ \mu_i : i \in I \right\} \). 
\end{lemma}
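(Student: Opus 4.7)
The plan is to prove both assertions by Markov's (Chebyshev's) inequality and a straightforward truncation argument, since the hypothesis is precisely a uniform bound on the $r$-th moment. Let $C := \sup_{i \in I} \int_{\Omega} |x|^r \diff \mu_i(x) < \infty$.

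For tightness, I would fix $\varepsilon > 0$ and observe that for every $R > 0$ the Markov inequality gives
\begin{equation*}
  \mu_i\bigl(\left\{ x \in \Omega : |x| > R \right\}\bigr) \;\leq\; \frac{1}{R^r} \int_{\Omega} |x|^r \diff \mu_i(x) \;\leq\; \frac{C}{R^r},
\end{equation*}
uniformly in $i \in I$. Choosing $R$ large enough so that $C/R^r < \varepsilon$, the set $K_\varepsilon := \{x \in \Omega : |x| \leq R\}$ (closed and bounded in $\mathbb{R}^d$, and, after intersecting with $\overline{\Omega}$ inside the ambient space, a compactly contained set for the purposes of tightness on $\Omega$) satisfies $\mu_i(K_\varepsilon) \geq 1 - \varepsilon$ for every $i$, which is the definition of tightness.

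For the uniform integrability of $x \mapsto |x|^q$ with $0 < q < r$, I would exploit the elementary pointwise inequality: on the set $\{x : |x|^q \geq M\}$, one has $|x| \geq M^{1/q}$, hence $|x|^{q-r} \leq M^{(q-r)/q}$ because $q - r < 0$. Multiplying by $|x|^r$ yields $|x|^q \leq M^{(q-r)/q} |x|^r$ on this set, so
\begin{equation*}
  \int_{\{|x|^q \geq M\}} |x|^q \diff \mu_i(x) \;\leq\; M^{(q-r)/q} \int_{\Omega} |x|^r \diff \mu_i(x) \;\leq\; C \, M^{(q-r)/q},
\end{equation*}
and taking the supremum over $i \in I$ followed by the limit $M \to \infty$ gives the desired vanishing, since the exponent $(q-r)/q$ is strictly negative.

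There is no real obstacle here: the argument is routine and relies only on Markov's inequality plus a one-line truncation bound. The only points requiring a modicum of care are (i) making sure tightness is stated relative to compact subsets of $\Omega$, which is unproblematic when $\Omega$ is closed, and otherwise is handled by viewing the $\mu_i$ as measures on $\mathbb{R}^d$; and (ii) the strict inequality $q < r$, which is essential so that $M^{(q-r)/q} \to 0$.
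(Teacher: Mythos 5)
Your proof is correct and takes essentially the same route as the paper: Markov's inequality for tightness, and the pointwise bound $|x|^q \leq M^{(q-r)/q}|x|^r$ on $\{|x|^q \geq M\}$ for uniform integrability. The only difference is presentation order; the key estimates are identical.
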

\begin{proof} 
  For the uniform integrability, let \( M > 0 \). By the monotonicity of the power functions \( t \mapsto t^p \) for \( t > 0 \) and \( p > 0 \), we have
  \begin{align*}
    \int_{\left\{ \left| x \right|^q \geq M \right\}} \left| x \right|^q \diff \mu_i = {} & \int_{\left\{ \left| x \right|^q \geq M \right\}} \left| x \right|^q \frac{M^{(r-q)/q}}{M^{(r-q)/q}} \diff \mu_i \\
    \leq {} & M^{-(r-q)/q} \int_{\left\{ \left| x \right|^q \geq M \right\}} \left| x \right|^r \diff \mu_i \\
    \leq {} & M^{-(r-q)/q} \int_{\Omega} \left| x \right|^r \diff \mu_i \rightarrow 0,
  \end{align*}
  for \( M \rightarrow \infty \), uniformly in \( i \in I \).

  Similarly, for the tightness, 
  \begin{equation*}
    \mu_i\left( \left\{ \left| x \right| \geq M \right\} \right) \leq M^{-r} \int_{\Omega} \left| x \right|^r \diff \mu_i(x) \rightarrow 0
  \end{equation*}
  for \( M \rightarrow \infty \).
\end{proof}

\subsection{Examples and counterexamples to existence of minimizers for discordant powers $q_a \neq q_r$}

We recall the definition of \( \mathcal{E} \):
\begin{equation*}
    \mathcal{E}[\mu] := \int_{\Omega\times\Omega} \psi_a(x-y) \diff \omega(x) \diff \mu(y) - \frac{1}{2} \int_{\Omega\times\Omega} \psi_r(x-y)  \diff \mu(x) \diff \mu(y),
\end{equation*}
for \( \omega \), \( \mu \in \mathcal{P}_2(\Omega) \) (at least for now) and
\begin{equation*}
    \psi_a(x) := \left| x \right|^{q_a}, \quad \psi_r(x) := \left| x \right|^{q_r}, \quad x \in \mathbb{R}^d,
\end{equation*}
where \( q_a \), \( q_r \in [1,2] \). Furthermore, denote for a vector-valued measure \( \nu \) its \emph{total variation} by \( \left| \nu \right| \) and by \( BV(\Omega) \) the space of functions \( f \in L^1_{\mathrm{loc}}(\Omega) \) whose distributional derivatives \( Df \) are finite Radon measures. With abuse of terminology, we call \( \left| Df \right|(\Omega) \) the total variation of \( f \). Now, we define the \emph{total variation regularization} of \( \mathcal{E} \) by
\begin{equation*}
  \mathcal{E}^\lambda[\mu] := \mathcal{E}[\mu] + \lambda \left| D\mu \right|(\Omega),
\end{equation*}
where \( \mu \in \mathcal{P}_2(\Omega) \cap BV(\Omega) \).

We shall briefly state some results which are in particular related to the asymmetric case of \( q_a \) and \( q_r \)  not necessarily being equal.

\subsubsection{Situation on a compact set}
\label{sec:situ-comp-set}

From now on, let \( q_a, q_r \in [1,2] \).

\begin{proposition}
  Let \(\Omega \)  be a compact subset of \(\mathbb{R}^{d}\). Then, the functionals \(\mathcal{E}\) and \(\mathcal{E}^{\lambda}\) are well-defined on \(\mathcal{P}(\Omega)\) and  \(\mathcal{P}(\Omega) \cap BV(\Omega)\), respectively, and \(\mathcal{E}\) admits a minimizer.

  If additionally \(\Omega\) is an extension domain, then \(\mathcal{E}^{\lambda}\) admits a minimizer as well.
\end{proposition}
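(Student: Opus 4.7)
The plan is to apply the direct method of the calculus of variations in both cases, exploiting the fact that compactness of $\Omega$ makes the attraction/repulsion kernels bounded continuous functions on $\Omega\times\Omega$.

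\textbf{Well-posedness.} Since $\Omega$ is compact, both $\psi_a(x-y)=|x-y|^{q_a}$ and $\psi_r(x-y)=|x-y|^{q_r}$ are continuous, hence bounded, on $\Omega\times\Omega$. Therefore $\mathcal{V}[\mu]$ and $\mathcal{W}[\mu]$ are finite for every $\mu\in\mathcal{P}(\Omega)$, and $\mathcal{E}$ is well-defined on $\mathcal{P}(\Omega)$. The term $|D\mu|(\Omega)$ is finite by definition whenever $\mu\in BV(\Omega)$, so $\mathcal{E}^\lambda$ is well-defined on $\mathcal{P}(\Omega)\cap BV(\Omega)$.

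\textbf{Existence for $\mathcal{E}$.} Since $\Omega$ is compact, every sequence in $\mathcal{P}(\Omega)$ is automatically tight, so Prokhorov's theorem gives that $\mathcal{P}(\Omega)$ is sequentially compact in the narrow topology. I would then check that $\mathcal{E}$ is actually narrowly \emph{continuous}: if $\mu_n\to\mu$ narrowly, then the map $y\mapsto\int_\Omega\psi_a(x-y)\diff\omega(x)$ is continuous and bounded on the compact set $\Omega$, so $\mathcal{V}[\mu_n]\to\mathcal{V}[\mu]$ by definition of narrow convergence. For $\mathcal{W}$, the preliminary recalled above ensures $\mu_n\otimes\mu_n\to\mu\otimes\mu$ narrowly, and since $\psi_r$ is continuous and bounded on $\Omega\times\Omega$, $\mathcal{W}[\mu_n]\to\mathcal{W}[\mu]$. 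Thus any minimizing sequence admits a narrowly convergent subsequence whose limit realizes the infimum.

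\textbf{Existence for $\mathcal{E}^\lambda$ under the extension hypothesis.} Let $(\mu_n)_n\subseteq\mathcal{P}(\Omega)\cap BV(\Omega)$ be a minimizing sequence. Since $\mathcal{E}$ is bounded on $\mathcal{P}(\Omega)$ (by compactness of $\Omega$ and continuity of $\mathcal{E}$), the sequence $\lambda|D\mu_n|(\Omega)=\mathcal{E}^\lambda[\mu_n]-\mathcal{E}[\mu_n]$ is bounded, and together with $\|\mu_n\|_{L^1(\Omega)}=1$ this yields a uniform $BV(\Omega)$-bound. Because $\Omega$ is a (bounded) extension domain, the embedding $BV(\Omega)\hookrightarrow L^1(\Omega)$ is compact; hence, up to a subsequence, $\mu_n\to\mu$ in $L^1(\Omega)$. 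In particular $\mu\in L^1(\Omega)$, $\mu\ge 0$ and $\int_\Omega\mu=1$, so $\mu\in\mathcal{P}(\Omega)$; moreover $L^1$-convergence of probability densities implies narrow convergence, giving $\mathcal{E}[\mu_n]\to\mathcal{E}[\mu]$ by the continuity established above. The classical lower semi-continuity of the total variation with respect to $L^1$-convergence gives
\begin{equation*}
|D\mu|(\Omega)\le\liminf_{n\to\infty}|D\mu_n|(\Omega),
\end{equation*}
so $\mu\in BV(\Omega)$ and
\begin{equation*}
\mathcal{E}^\lambda[\mu]\le\liminf_{n\to\infty}\mathcal{E}^\lambda[\mu_n]=\inf\mathcal{E}^\lambda,
\end{equation*}
proving that $\mu$ is a minimizer.

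The only nontrivial ingredient is precisely the need for $\Omega$ to be an extension domain: this is what upgrades a uniform $BV$-bound into strong $L^1$-compactness, which in turn is needed to merge the narrow topology (governing $\mathcal{E}$) with the $L^1$-topology (governing the lower semi-continuity of the total variation). Everything else reduces to boundedness and continuity of the kernels on the compact set $\Omega\times\Omega$.
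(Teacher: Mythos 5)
Your proof is correct and follows essentially the same route as the paper: boundedness and continuity of the power kernels on the compact set $\Omega\times\Omega$ gives well-posedness and narrow continuity of $\mathcal{E}$, narrow compactness of $\mathcal{P}(\Omega)$ gives a minimizer, and for $\mathcal{E}^\lambda$ the uniform $BV$-bound together with the compact embedding $BV(\Omega)\hookrightarrow L^1(\Omega)$ (valid for extension domains) and the $L^1$-lower semi-continuity of the total variation yields the minimizer via the direct method. You spell out a few steps (continuity of $\mathcal{V}$ and $\mathcal{W}$ via narrow convergence of tensor products, extraction of a minimizing sequence) that the paper leaves implicit, but the argument is the same.
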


\begin{proof}
  Note that since the mapping
  \begin{equation}
    \label{eq:15}
    (x,y) \mapsto \left| y - x \right|^{q},\quad x,y \in \mathbb{R}^{d},
  \end{equation}
  is jointly continuous in \(x\) and \(y\), it attains its maximum on the compact set \(\Omega \times \Omega\). Hence, the kernel \eqref{eq:15} is a bounded continuous function, which, on the one hand, implies that the functional \(\mathcal{E}\) is bounded (and in particular well-defined) on \( L^1(\Omega) \) and on the other hand that it is continuous with respect to the narrow topology. Together with the compactness of \( \mathcal{P}(\Omega) \), this implies existence of a minimizer for \(\mathcal{E}\).

  The situation for \(\mathcal{E}^{\lambda}\) is similar. Due to the boundedness of \(\Omega\) and the regularity of its boundary, sub-levels of \( \left| D\, \cdot\, \right|(\Omega) \) are relatively compact in \( L^{1}(\Omega) \cap \mathcal{P}(\Omega) \) by \cite[Chapter 5.2, Theorem 4]{92-Evans-fine-properties}. As the total variation is lower semi-continuous with respect to \(L^{1}\)-convergence by \cite[Chapter 5.2, Theorem 1]{92-Evans-fine-properties} and \(L^{1}\)-convergence implies narrow convergence, we get lower semi-continuity of \( \mathcal{E}^\lambda \) and therefore again existence of a minimizer.
\end{proof}


\subsubsection{Existence of minimizers for stronger attraction on arbitrary domains}
\label{sec:exist-minim-strong}

Note that from here on, the constants \( C \) and \( c \) are generic and may change in each line of a calculation.
In the following we shall make use of the following elementary inequalities: for \( q \geq 1 \) and \( x,y \in \mathbb{R}^d \), there exist \( C,c > 0 \) such that
  \begin{equation}
    \label{eq:16}
    \left| x + y \right|^q \leq C \left( \left| x \right|^q + \left| y \right|^q \right),
  \end{equation}
  and
  \begin{equation}
    \label{eq:17}
    \left| x - y \right|^q \geq  \left( c \left| x \right|^q - \left| y \right|^q \right).
  \end{equation}

\begin{theorem}
  \label{thm:exist-min-strong}
  Let \( q_a, q_r \in [1,2] \), \( \Omega \subseteq \mathbb{R}^d \) closed and \( q_a > q_r \). If \( \omega \in \mathcal{P}_{q_a}(\Omega) \), then the sub-levels of \( \mathcal{E} \) have uniformly bounded \( q_a \)th moments and \( \mathcal{E} \) admits a minimizer on \( \mathcal{P}_{q_r}(\Omega) \).
\end{theorem}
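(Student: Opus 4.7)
The strategy is coercivity-plus-compactness-plus-lower-semicontinuity. First I would derive a coercive lower bound for $\mathcal{E}$, extract from a minimizing sequence (or any sublevel) compactness in the Wasserstein-$q_r$ topology, and finally verify that $\mathcal{E}$ is suitably lower semi-continuous along such a sequence.

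\textbf{Coercivity and the moment bound.} Applying \eqref{eq:17} to the attraction integrand gives
\begin{equation*}
\mathcal{V}[\mu] \geq c \int_{\Omega} |y|^{q_a}\, d\mu(y) - \int_{\Omega} |x|^{q_a}\, d\omega(x),
\end{equation*}
while \eqref{eq:16} applied to the repulsion kernel gives
\begin{equation*}
-\mathcal{W}[\mu] = \tfrac{1}{2}\!\int_{\Omega\times\Omega}\!|x-y|^{q_r}\,d\mu(x)d\mu(y) \leq C \int_{\Omega} |y|^{q_r}\, d\mu(y).
\end{equation*}
Since $q_a > q_r$, Young's inequality yields $|y|^{q_r} \leq \varepsilon |y|^{q_a} + C_\varepsilon$, and choosing $\varepsilon$ small enough to absorb into the attraction term I obtain
\begin{equation*}
\mathcal{E}[\mu] \geq \tfrac{c}{2}\int_{\Omega} |y|^{q_a}\, d\mu(y) - C(\omega),
\end{equation*}
with $C(\omega)$ finite because $\omega \in \mathcal{P}_{q_a}(\Omega)$. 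This is exactly the announced uniform $q_a$th moment bound on every sublevel.

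\textbf{Compactness and existence.} A minimizing sequence $(\mu_n)$ exists and has finite energy (for instance $\mathcal{E}[\omega] < \infty$ since $\omega \in \mathcal{P}_{q_a} \subseteq \mathcal{P}_{q_r}$). By the previous step $\sup_n \int |y|^{q_a}\, d\mu_n < \infty$, so Lemma \ref{lem:24} gives tightness and uniform integrability of $|y|^{q_r}$ with respect to $(\mu_n)$. Passing to a subsequence, $\mu_n \to \mu^*$ narrowly, and Lemma \ref{lem:26} upgrades this to convergence in $W_{q_r}$; by Fatou, $\mu^*$ also has finite $q_a$th moment, hence lies in $\mathcal{P}_{q_r}(\Omega)$.

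\textbf{Lower semi-continuity.} Using the tensor-product fact recalled after \eqref{eq:318}, $\omega\otimes\mu_n \to \omega\otimes\mu^*$ and $\mu_n\otimes\mu_n \to \mu^*\otimes\mu^*$ narrowly. Since $|x-y|^{q_a}\geq 0$ is continuous, Lemma \ref{lem:4} gives $\liminf_n \mathcal{V}[\mu_n] \geq \mathcal{V}[\mu^*]$ directly. For the repulsion term the sign is reversed, so I need continuity, not merely l.s.c.; this is where the strict gap $q_a > q_r$ is essential. I would dominate $|x-y|^{q_r} \leq C(|x|^{q_r}+|y|^{q_r})$ via \eqref{eq:16} and invoke Lemma \ref{lem:24} on the product measures $\mu_n\otimes\mu_n$, whose $q_a$th moments are uniformly bounded, to conclude that $|x-y|^{q_r}$ is uniformly integrable with respect to $\mu_n\otimes\mu_n$. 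Then Lemma \ref{lem:4} gives $\mathcal{W}[\mu_n] \to \mathcal{W}[\mu^*]$, and adding the two pieces yields $\mathcal{E}[\mu^*] \leq \liminf_n \mathcal{E}[\mu_n] = \inf \mathcal{E}$, so $\mu^*$ is the sought minimizer.

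\textbf{Main obstacle.} The delicate step is the passage to the limit in the repulsion term, because its negative sign prevents a purely lower-semicontinuous argument: one really needs full continuity, and this continuity hinges on transferring the $q_a$-moment bound inherited from coercivity into uniform integrability of $|x-y|^{q_r}$ under the product measures. The hypothesis $q_a > q_r$ is used precisely here, and once this uniform integrability is established the rest of the argument is routine.
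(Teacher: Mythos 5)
Your proposal is correct and follows essentially the same route as the paper: the same estimates \eqref{eq:16}--\eqref{eq:17} give the uniform $q_a$th moment bound on sublevels, narrow/Wasserstein compactness follows from Lemma \ref{lem:26} and Lemma \ref{lem:24}, and the direct method closes the argument with $\mathcal{W}$ continuous and $\mathcal{V}$ merely lower semi-continuous via Lemma \ref{lem:4}. The only cosmetic difference is that you absorb the $|y|^{q_r}$ term by Young's inequality while the paper splits into $B_M(0)$ and its complement, and you spell out the uniform-integrability step for $|x-y|^{q_r}$ under $\mu_n\otimes\mu_n$, which the paper leaves implicit.
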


\begin{proof}
 
  \emph{Ad moment bound:} Let \( \mu \in \mathcal{P}_{q_r}(\Omega) \). By estimate \eqref{eq:17}, we have
  \begin{align}   
    \mathcal{V}[\mu] = {} & \int_{\Omega \times \Omega} \left| x - y \right|^{q_a} \diff \mu(x) \diff \omega(y) \nonumber \\
    \geq {} &  \int_{\Omega \times \Omega} \left( c \left| x \right|^{q_a} - \left| y \right|^{q_a} \right) \diff \mu(x) \diff \omega(x)\nonumber \\
    = {} &  c \int_{\Omega} \left| x \right|^{q_a} \diff \mu(x) - \int_{\Omega} \left| y \right|^{q_a} \diff \omega(y). \label{eq:23}
  \end{align}
  On the other hand, by estimate \eqref{eq:16}
  \begin{align}
    \mathcal{W}[\mu] = {} & - \frac{1}{2} \int_{\Omega \times \Omega} \left| x - y \right|^{q_r} \diff \mu(x) \diff \mu(y) \nonumber \\
    \geq {} & - C \int_{\Omega \times \Omega} \left( \left| x \right|^{q_r} + \left| y \right|^{q_r} \right) \diff \mu(x) \diff \mu(y)\nonumber \\
    \geq {} & - C \int_{\Omega} \left| x \right|^{q_r} \diff \mu(x).\label{eq:25}
  \end{align}
  Combining \eqref{eq:23} and \eqref{eq:25}, we obtain
  \begin{align}
    \mathcal{E}[\mu] + \int_{\Omega} \left| x \right|^{q_a} \diff \omega(x) \geq {} &\int_{\Omega} \left( c \left| x \right|^{q_a} - C \left| x \right|^{q_r} \right) \diff \mu(x) \nonumber \\
    \geq {} &\int_{\Omega} \left( c - C \left| x \right|^{q_r - q_a} \right) \left| x \right|^{q_a} \diff \mu(x). \nonumber
  \end{align}
  Since \( q_a > q_r \), there is an \( M > 0 \) such that
  \begin{equation}
    c - C \left| x \right|^{q_r - q_a} \geq \frac{c}{2}, \quad \left| x \right| \geq M,  \nonumber
  \end{equation}
  and hence
  \begin{align}
    \int_{\Omega} \left| x \right|^{q_a} \diff \mu(x) = {} &   \left[ \int_{B_M(0)} \left| x \right|^{q_a} \diff \mu(x) + \int_{\Omega \setminus B_M(0)} \left| x \right|^{q_a} \diff \mu(x) \right]  \nonumber\\
    \leq {} &  M^{q_a} + \frac{2}{c} \left [\mathcal{E}[\mu] + \int_{\Omega} \left| x \right|^{q_a} \diff \omega(x) \right] \label{eq:358}
  \end{align}
As we can show that the sub-levels of \( \mathcal{E} \) have a uniformly bounded \( q_a \)th moment, so that they are also Wasserstein-\( q \) compact for any \( q < q_a \) by Lemma \ref{lem:26} and Lemma \ref{lem:24}, given a minimizing sequence, we can extract a narrowly converging subsequence \( \left( \mu_n \right)_n \) with uniformly integrable \( q_r \)th moments. With respect to that convergence, which  also implies the narrow convergence of \( \left( \mu_n \otimes \mu_n \right)_n \) and \( \left( \mu_n \otimes \omega \right)_n \), the functional \( \mathcal{W} \) is continuous and the functional \( \mathcal{V} \) is lower semi-continuous by Lemma \ref{lem:4}, so we shall be able to apply the direct method of calculus of variations to show existence of a minimizer in \( \mathcal{P}_{q_r}(\Omega) \).

\end{proof}

\subsubsection{Counterexample to the existence of minimizers for stronger repulsion}
\label{sec:absence-minim-strong}

Now, let \(q_{a},q_{r} \in [1,2]\) with \( q_r > q_a \). On \(\Omega=\mathbb{R}^{d}\), this problem need not have a minimizer.

\begin{example}[Nonexistence of minimizers for stronger repulsion]
  \label{exa:count-exist-minim}
  Let \(\Omega=\mathbb{R}\), \(q_{r} > q_{a}\), \(\omega = \mathcal{L}^{1}\llcorner{[-1,0]}\) and
  consider the sequence \(\mu_{n} := n^{-1}\mathcal{L}^{1}\llcorner{[0,n]}\). Computing
  the values of the functionals used to define \(\mathcal{E}\) and
  \(\mathcal{E}^{\lambda}\) yields
  \begin{align}
    \mathcal{V}[\mu_{n}] = {} &\frac{1}{n} \int_{-1}^{0} \int_{0}^{n} \left| y -
      x \right|^{q_{a}} \diff x \diff y \nonumber \\
    \leq {} &\frac{1}{n} \int_{0}^{n} (y+1)^{q_{a}} \diff y \nonumber \\
    = {} &\frac{1}{n (q_{a}+1)} \left(n+1\right)^{q_{a}+1} - \frac{1}{n (q_{a}+1)}\nonumber \\
    \leq {} &\frac{\left(n+1\right)^{q_{a}}}{q_{a}+1};\nonumber \\
    \mathcal{W}[\mu_{n}] = {} & - \frac{1}{2n^{2}} \int_{0}^{n}
    \int_{0}^{n}\left| y-x \right|^{q_{r}} \diff x \diff y \nonumber \\
    = {} & - \frac{1}{2n^{2}(q_{r}+1)} \int_{0}^{n} \left[(n - y)^{q_{r}+1} +
      y^{q_{r}+1}\right] \diff y\nonumber \\
    = {} & - \frac{1}{2n^{2}(q_{r}+1)(q_{r}+2)} 2n^{q_{r}+2} =
    \frac{n^{q_{r}}}{(q_{r}+1)(q_{r}+2)};\nonumber \\
    \left\| D\mu_{n} \right\| = {} &- \frac{2}{n}. \nonumber
  \end{align}
  By considering the limit of the corresponding sums, we see that
  \begin{equation}
    \mathcal{E}[\mu_{n}]\rightarrow -\infty, \quad \mathcal{E}^{\lambda}[\mu_{n}]\rightarrow -\infty \quad \text{for } n\rightarrow \infty,\nonumber
  \end{equation}
  which means that there are no minimizers in this case.
\end{example}

\section{Properties of the functional on $\mathbb{R}^d$}
\label{sec:prop-funct-mathbbrd}

Now, let us consider \(\Omega=\mathbb{R}^{d}\) and 
\begin{equation}
  \label{eq:317}
  q := q_{a} = q_{r}, \quad \psi(x) := \psi_a(x) = \psi_r(x) = \left| x \right|^q, \quad x \in \mathbb{R}^d,
\end{equation}
for \( 1 \leq q < 2 \).

Here, neither the well-definedness of \(\mathcal{E}[\mu]\) for all \(\mu\in\mathcal{P}(\mathbb{R}^{d})\) nor the narrow compactness of the sub-levels as in the case of a compact \( \Omega \) in Section \ref{sec:situ-comp-set} are clear, necessitating additional conditions on \( \mu \) and \( \omega \). For example, if we assume the finiteness of the second moments, i.e., \(\mu,\omega \in \mathcal{P}_{2}(\mathbb{R}^{d})\), we can a priori see that both \(\mathcal{V}[\mu]\) and \(\mathcal{W}[\mu]\) are finite.

Under this restriction, we shall show a formula for \(\mathcal{E}\) involving the Fourier-Stieltjes transform of the measures \(\mu\) and \(\omega\). Namely, there is a constant \(C = C(q,\omega) \in \mathbb{R}\) such that
\begin{equation}
  \label{eq:31}
  \mathcal{E}[\mu] + C = -2^{-1} (2\pi)^{-d}\int_{\mathbb{R}^d}\left| 
    \widehat{\mu}(\xi) - \widehat{\omega}(\xi) \right|^2
  \widehat{\psi}(\xi) \dd \xi =: \widehat{\mathcal{E}}[\mu],
\end{equation}
where for any \(\mu \in \mathcal{P}(\mathbb{R}^{d})\), \(\widehat{\mu}\) denotes its Fourier-Stieltjes transform,
\begin{equation}
  \label{eq:32}
  \widehat{\mu}(\xi) = \int_{\mathbb{R}^{d}} \exp(-\i x^{T}\xi) \dd \mu(x),
\end{equation}
and \(\widehat{\psi}\) is the \emph{generalized Fourier-transform of \(\psi\)}, \ie, a Fourier transform with respect to a certain duality, which allows to cancel the singularities of the Fourier transform of the kernel $\psi$ at $0$. We have gathered most of the important facts about it in Appendix \ref{cha:cond-posit-semi}. In this case, it can be explicitly computed to be
\begin{equation}
  \label{eq:33}
      \widehat{\psi}(\xi) := -2\cdot(2\pi)^{d} D_{q} \, \left| \xi \right|^{-d-q}, \quad \text{with a } D_{q} > 0,
\end{equation}
where
\begin{equation}
  D_{q} := -(2\pi)^{-d/2}\frac{2^{q + d/2}\, \Gamma((d+q)/2)}{2\Gamma(-q/2)} > 0,
\end{equation}
so that
\begin{equation}
  \label{eq:34}
  \widehat{\mathcal{E}}[\mu] = D_{q} \int_{\mathbb{R}^d}\left| 
    \widehat{\mu}(\xi) - \widehat{\omega}(\xi) \right|^2
  \left| \xi \right|^{-d-q} \dd \xi,
\end{equation}
which will be proved in Section \ref{sec:fourier-formula-p2}.

Notice that, while $\mathcal E$ might not be well-defined on $\mathcal P(\mathbb R^d)$, formula \eqref{eq:34} makes sense on the whole space \(\mathcal{P}(\mathbb{R}^{d})\) and the sub-levels of \( \widehat{\mathcal{E}} \) can be proved to be narrowly compact as well as lower semi-continuous \wrt the narrow topology (see Proposition \ref{prp:compctness-sublvls}), motivating the proof in Section \ref{sec:extension-p} that up to a constant, this formula is exactly the lower semi-continuous envelope of \(\mathcal{E}\) on \(\mathcal{P}(\mathbb{R}^{d})\) endowed with the narrow topology.

\subsection{Fourier formula in $\mathcal{P}_{2}(\mathbb{R}^{d})$}
\label{sec:fourier-formula-p2}

Assume that \(\mu,\omega \in \mathcal{P}_{2}(\mathbb{R}^{d})\) and observe that by using the symmetry of $\psi$, $\mathcal{E}[\mu]$ can be written as
\begin{align}
  \mathcal{E}[\mu] = {} & -\frac{1}{2}\int_{\mathbb{R}^d \times \mathbb{R}^d} \psi(y - x) \dd \mu(x) \dd \mu(y)
  + 
  \frac{1}{2}\int_{\mathbb{R}^d \times \mathbb{R}^d} \psi(y - x) \dd \omega(x) \dd \mu(y) \nonumber \\
  & +\frac{1}{2}\int_{\mathbb{R}^d \times \mathbb{R}^d} \psi(y - x) \dd \omega(y) \dd \mu(x) -
  \frac{1}{2}\int_{\mathbb{R}^d \times \mathbb{R}^d} \psi(y - x) \dd \omega(x) \dd \omega(y)  \nonumber  \\
  & +\frac{1}{2}\int_{\mathbb{R}^d \times \mathbb{R}^d} \psi(y - x) \dd \omega(x) \dd \omega(y)  \nonumber  \\
  = {} &- \frac{1}{2}\int_{\mathbb{R}^d \times \mathbb{R}^d} \psi(y - x) \dd [\mu - \omega](x) \dd [\mu - \omega](y) +
  C,
\end{align}
where
\begin{equation}
  \label{eq:36}
  C = \frac{1}{2}\int_{\mathbb{R}^d \times \mathbb{R}^d} \psi(y - x) \dd \omega(x) \dd \omega(y).
\end{equation}
In the following, we shall mostly work with the symmetrized variant and denote it
by
\begin{equation}
  \label{eq:37}
  \widetilde{\mathcal{E}}[\mu] := - \frac{1}{2}\int_{\mathbb{R}^d \times \mathbb{R}^d} \psi(y - x) \dd [\mu -
  \omega](x) \dd [\mu - \omega](y).
\end{equation}

\subsubsection{Representation for point-measures}
\label{sec:four-repr-point}

Our starting point is a Fourier-type representation of $\widetilde{\mathcal{E}}$ in the case
where $\mu$ and $\omega$ are atomic measures, which has been derived in \cite{Wend05}.

\begin{lemma}
  Let $\mu$ and $\omega$ be a linear combination of Dirac measures so that
  \begin{equation*}
    \mu - \omega = \sum_{j = 1}^N \alpha_j \delta_{x_j},
  \end{equation*}
  for a suitable $N \in \mathbb{N}$, $\alpha_j \in \mathbb{R}$, and pairwise distinct $x_j \in \mathbb{R}^d$ for all $j = 1,\ldots,N$.
  Then
  \begin{equation}
    \label{eq:39}
    \widetilde{\mathcal{E}}[\mu] = -2^{-1}(2\pi)^{-d}\int_{\mathbb{R}^d}\left| \sum_{j = 1}^N \alpha_j
      \exp(\i x_j^{T}\xi) \right|^2 \widehat{\psi}(\xi) \dd \xi,
  \end{equation}
  where
  \begin{equation*}
    \widehat{\psi}(\xi) := -2\cdot(2\pi)^{d} D_{q} \, \left| \xi \right|^{-d-q}, \quad \text{with a } D_{q} > 0.
  \end{equation*}
\end{lemma}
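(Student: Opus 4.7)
The plan is to expand $\widetilde{\mathcal E}[\mu]$ directly using the atomic structure of $\mu - \omega$ and then invoke the Plancherel-type representation for conditionally positive definite functions recorded in the appendix. Substituting $\mu - \omega = \sum_j \alpha_j \delta_{x_j}$ into the symmetric form \eqref{eq:37} gives the finite double sum
\begin{equation*}
\widetilde{\mathcal E}[\mu] = -\tfrac{1}{2}\sum_{j,k=1}^{N} \alpha_j \alpha_k \, \psi(x_j - x_k),
\end{equation*}
so the problem reduces to establishing the Fourier identity for this bilinear form evaluated on the coefficient vector $(\alpha_j)$.

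The crucial structural fact is that since $\mu,\omega \in \mathcal P(\mathbb R^d)$, the coefficients satisfy the vanishing moment condition $\sum_{j=1}^N \alpha_j = \mu(\mathbb R^d) - \omega(\mathbb R^d) = 0$. This is exactly the cancellation required to apply the theory of conditionally positive semi-definite functions of order $1$ to $\psi(x) = |x|^q$ with $1 \leq q < 2$. I would then cite the generalized Plancherel formula from Appendix \ref{cha:cond-posit-semi} (in the form of Wendland \cite{Wend05}), which asserts that for any coefficients $(\alpha_j)$ annihilating constants and any pairwise distinct nodes $(x_j)$,
\begin{equation*}
\sum_{j,k=1}^{N} \alpha_j \alpha_k \, \psi(x_j - x_k) = (2\pi)^{-d}\int_{\mathbb R^d}\Bigl|\sum_{j=1}^N \alpha_j e^{\i x_j^T \xi}\Bigr|^2 \widehat{\psi}(\xi)\,\df \xi,
\end{equation*}
where $\widehat{\psi}$ is the \emph{generalized} Fourier transform of $\psi$, paired against Schwartz functions vanishing at $0$. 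The vanishing of $\sum_j \alpha_j$ guarantees that $|\sum_j \alpha_j e^{\i x_j^T \xi}|^2 = O(|\xi|^2)$ near the origin, which compensates for the singularity of $\widehat{\psi}$ there and makes the integral absolutely convergent.

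It then remains to identify the generalized Fourier transform of $\psi(x) = |x|^q$ explicitly. The radial function $|x|^q$ is a standard example in the theory; treating $|\cdot|^q$ as a tempered distribution modulo polynomials of degree $0$ and using the analytic continuation of the Riesz potential formula together with the duality paired against radial Schwartz functions yields $\widehat{\psi}(\xi) = -2(2\pi)^{d} D_q |\xi|^{-d-q}$, with the gamma-factor constant $D_q = -(2\pi)^{-d/2}\,2^{q+d/2}\Gamma((d+q)/2)/(2\Gamma(-q/2))$, which is strictly positive for $q \in [1,2)$ since $\Gamma(-q/2) < 0$ on this range. Plugging this expression for $\widehat{\psi}$ into the Plancherel identity above and combining with the factor $-\tfrac12$ from $\widetilde{\mathcal E}[\mu]$ produces \eqref{eq:39}.

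The main obstacle, which is already packaged into the cited results of Appendix \ref{cha:cond-posit-semi}, is the proper definition of $\widehat{\psi}$ as a generalized Fourier transform (i.e.\ the choice of the correct test-function subspace that annihilates the singularity at $\xi = 0$), and the verification that the Plancherel-type identity extends from ordinary Fourier transforms of integrable functions to this conditionally positive definite setting. Given these ingredients, the proof itself is a short substitution.
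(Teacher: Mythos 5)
Your proof takes essentially the same route as the paper's: the paper also reduces the claim to the representation theorem for conditionally positive semi-definite functions (Theorem~\ref{thm:repr-thm-cond-semi}) combined with the explicit computation of the generalized Fourier transform of $|\cdot|^q$ (Theorem~\ref{thm:cond-ft-power}). You merely spell out the finite double-sum expansion of $\widetilde{\mathcal{E}}[\mu]$, the vanishing-moment condition $\sum_j \alpha_j = 0$ that licenses the application of those results, and the sign of $D_q$, all of which the paper leaves implicit.
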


\begin{proof}
  The claim is an application of a general representation theorem for conditionally positive semi-definite functions. An extensive introduction can be found in \cite{Wend05}, of which we have included a brief summary in Appendix \ref{cha:cond-posit-semi} for the sake of completeness. Here, we use Theorem \ref{thm:repr-thm-cond-semi} together with the explicit computation of the generalized Fourier transform of \( \psi \) in Theorem \ref{thm:cond-ft-power}.
\end{proof}

\begin{remark}
  By $\overline{\exp(\i x)} = \exp(-\i x)$, for $x \in \mathbb{R}$,  we can also write the above formula \eqref{eq:39} as
  \begin{equation*}
    \widetilde{\mathcal{E}}[\mu] = D_q\int_{\mathbb{R}^d}\left| \widehat{\mu}(\xi) - \widehat{\omega}(\xi) \right|^2 \left| \xi \right|^{-d-q} \dd \xi, \quad \xi \in \mathbb{R}^d.
  \end{equation*}
\end{remark}

\subsubsection{Point approximation of probability measures by the empirical process}
\label{sec:point-appr-empir}

\begin{lemma}[Consistency of empirical process]
  \label{lem:3}
  Let \(\mu \in \mathcal{P}(\mathbb{R}^{d})\) and \((X_{i})_{i \in \mathbb{N}}\) be  a sequence of i.i.d. random variables with \(X_{i} \sim \mu\) for all \(i \in \mathbb{N}\). Then the empirical distribution
  \begin{equation*}
    \mu_{N} := \frac{1}{N} \sum_{i=1}^{N} \delta_{X_{i}}
  \end{equation*}
  converges with probability \(1\) narrowly to \(\mu\), i.e.,
  \begin{equation*}
    P(\{\mu_{N}\rightarrow \mu \text{ narrowly}\}) = 1.
  \end{equation*}
  
  Additionally, if for a \(p \in [1,\infty)\), \(\int_{\mathbb{R}^{d}}\left| x \right|^{p}\dd \mu < \infty\), then \( x \mapsto \left| x \right|^p \) is almost surely uniformly integrable \wrt \( \left\{ \mu_N: N  \in \mathbb N \right \}\), which by Lemma \ref{lem:26} implies almost sure convergence of \( \mu_N \rightarrow \mu \) in the \( p \)-Wasserstein topology.
\end{lemma}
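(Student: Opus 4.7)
The plan is to base both claims on the Strong Law of Large Numbers (SLLN) applied to the i.i.d.\ real-valued sequence $(f(X_i))_{i\in\mathbb N}$ for suitable test functions $f$, and then to upgrade the per-$f$ almost-sure convergence to a simultaneous almost-sure statement via a countability argument.

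\textbf{Narrow convergence.} For a \emph{fixed} $f \in C_b(\mathbb{R}^d)$, the random variables $f(X_i)$ are i.i.d.\ and bounded, hence integrable, so the classical SLLN gives
$$
\int_{\mathbb R^d} f \diff \mu_N \;=\; \frac{1}{N}\sum_{i=1}^{N} f(X_i) \;\longrightarrow\; E[f(X_1)] \;=\; \int_{\mathbb R^d} f \diff \mu
$$
almost surely. The subtlety is that the exceptional null set a priori depends on $f$ and $C_b(\mathbb R^d)$ is uncountable. To circumvent this, I would use the explicit metrization \eqref{eq:318} of the narrow topology: let $A_k$ be the almost-sure event on which $\int f_k \diff \mu_N \to \int f_k \diff \mu$. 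Each $A_k$ has probability one, so their countable intersection $A := \bigcap_{k\in \mathbb N} A_k$ retains probability one. On $A$, the uniform bound $|f_k|\le 1$ gives $\bigl|\int f_k \diff \mu_N - \int f_k \diff \mu\bigr| \le 2$ for every $N,k$, so dominated convergence applied to the summable series defining $\delta$ yields $\delta(\mu_N,\mu)\to 0$; equivalently, $\mu_N \to \mu$ narrowly.

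\textbf{Wasserstein-$p$ convergence.} By Lemma \ref{lem:26}, given the narrow convergence above, it suffices to show that almost surely $x\mapsto |x|^p$ is uniformly integrable with respect to $\{\mu_N: N \in \mathbb N\}$. Fix a sequence $M_k\uparrow\infty$ and apply the SLLN to the i.i.d.\ integrable variables $|X_i|^p\mathbf{1}_{|X_i|\geq M_k}$ (integrability follows from finiteness of the $p$th moment of $\mu$). This furnishes an almost-sure event $B_k$ on which
$$
\int_{|x|\geq M_k} |x|^p \diff \mu_N \;\longrightarrow\; E\bigl[|X_1|^p\mathbf{1}_{|X_1|\geq M_k}\bigr].
$$
On $B := \bigcap_{k\in \mathbb N} B_k$, still of probability one, dominated convergence yields $E\bigl[|X_1|^p\mathbf{1}_{|X_1|\geq M_k}\bigr]\to 0$ as $k\to\infty$. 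Given $\varepsilon>0$, pick $k$ making this expectation $<\varepsilon/2$, then choose $N_0$ so large that $\int_{|x|\geq M_k} |x|^p \diff \mu_N < \varepsilon$ for all $N\ge N_0$. For the finitely many $N<N_0$, $\mu_N$ is supported in $\{X_1,\dots,X_{N_0-1}\}$, so enlarging $M$ past $\max_{i<N_0}|X_i|$ wipes out those contributions, yielding $\sup_N \int_{|x|\ge M}|x|^p \diff \mu_N < \varepsilon$ for $M$ large enough, which is the required uniform integrability.

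\textbf{Main obstacle.} The only nontrivial point is passing from pointwise (per $f$ or per $M$) almost-sure convergence to an almost-sure statement uniform over the relevant class. Both parts resolve this by reducing to a countable family: the collection $(f_k)_{k\in \mathbb N}$ metrizing the narrow topology in the first part, and the countable truncation sequence $(M_k)_{k\in \mathbb N}$ in the second, so that a single full-measure event supports the desired conclusion.
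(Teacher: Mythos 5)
Your proof is correct and follows essentially the same approach as the paper: apply the strong law of large numbers per test function and per truncation level, then intersect a countable family of full-measure events. The only noteworthy difference is that you are slightly more careful than the paper on the uniform integrability step, fixing a countable sequence $M_k\uparrow\infty$ \emph{in advance} and building the almost-sure event $B=\bigcap_k B_k$ before letting $\varepsilon$ vary, whereas the paper applies the SLLN to $f_M$ for an $M=M(\varepsilon)$ chosen after $\varepsilon$; the paper's argument is easily repaired by restricting to $\varepsilon=1/k$, but your formulation sidesteps the uncountability issue explicitly, and it also spells out the dominated-convergence step needed to pass from per-$f_k$ convergence to $\delta(\mu_N,\mu)\to 0$, which the paper leaves implicit.
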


\begin{proof}
  By the metrizability of narrow convergence as in \eqref{eq:318}, it is sufficient to prove convergence of the integral functionals associated to a sequence of bounded continuous functions \( \left( f_k \right)_{k \in \mathbb{N}} \). But
  \begin{equation*}
    \int_{\mathbb{R}^d} f_k(x) \diff \mu_N(x) = \frac{1}{N}\sum_{i = 1}^{N} f_k(X_i) \xrightarrow{N\rightarrow\infty} E[f_k(X)] = \int_{\mathbb{R}^d} f_k(x) \diff \mu(x),
  \end{equation*}
  almost surely by the strong law of large numbers, \cite[Theorem 2.4.1]{Dur10}, leading to construction of a countable collection of null sets \( A_k \) where the above convergence fails. Since a countable union of null sets is again a null set, the first claim follows.

  For the second claim, we apply the strong law of large numbers to the functions \( f_M(x) := \left| x \right|^p \cdot 1_{\left\{ \left| x \right|^p \geq M \right\}}\) for \( M > 0 \) to get the desired uniform integrability: for a given \( \varepsilon > 0 \), choose \( M > 0 \) large enough such that
  \begin{equation*}
    \int_{\mathbb{R}^d} f_M(x) \diff \mu(x) < \frac{\varepsilon}{2},
  \end{equation*}
  and then \( N_0 \in \mathbb{N} \) large enough such that
  \begin{equation*}
    \left| \int_{\mathbb{R}^d} f_M(x) \diff \mu_N(x) - \int_{\mathbb{R}^d} f_M(x) \diff \mu(x) \right| < \frac{\varepsilon}{2}, \quad N \geq N_0, \text{ almost surely}.
  \end{equation*}
  Now we choose \( M' \geq M \) sufficiently large so to ensure that \( \left| X_{i} \right|^p < M' \) almost surely for all \( i < N_0 \). By the monotonicity of \( \int_{\mathbb{R}^d} f_M(x) \diff \mu(x) \) in \( M \), this ensures
  \begin{align*}
    \sup_{N\in\mathbb{N}} \int_{\mathbb{R}^d} f_{M'}(x) \diff \mu_N = {} & \sup_{N \geq N_0} \int_{\mathbb{R}^d} f_{M'}(x) \diff \mu_N \leq {} \sup_{N\geq N_0} \int_{\mathbb{R}^d} f_M(x) \diff \mu_N(x) \\
    < {} & \frac{\varepsilon}{2} + \frac{\varepsilon}{2} = \varepsilon
  \end{align*}
\end{proof}

\subsubsection{Representation for $\mathcal{P}_{2}(\mathbb{R}^{d})$}
\label{sec:four-repr-gener}

Now we establish continuity in both sides of \eqref{eq:39} with respect to the
\( 2 \)-Wasserstein-convergence to obtain \eqref{eq:31} in $\mathcal P_2(\mathbb R^d)$.

\begin{lemma}[Continuity of \(\widetilde{\mathcal{E}}\)]
  \label{lem:5}
  Let
  \begin{equation*}
    \mu_{k} \rightarrow  \mu, \quad \omega_{k} \rightarrow  \omega \quad     \text{for } k\rightarrow \infty \text{ in } \mathcal{P}_{2}(\mathbb{R}^{d}),
  \end{equation*}
\wrt the \( 2 \)-Wasserstein-convergence.
  Then,
  \begin{align}
    \label{eq:54}
    \leadeq\int_{\mathbb{R}^d \times \mathbb{R}^d} \psi(y - x) \dd [\mu_k - \omega_k](x) \dd [\mu_k - \omega_k](y) \\
    \rightarrow  {} &\int_{\mathbb{R}^d \times \mathbb{R}^d} \psi(y - x) \dd [\mu - \omega](x) \dd [\mu - \omega](y), \quad
    \text{for } k \rightarrow \infty.
  \end{align}
\end{lemma}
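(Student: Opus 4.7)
The plan is to reduce the bilinear expression to four integrals against product measures, and then apply Lemma \ref{lem:4} (continuity of integral functionals) to each of them separately.

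First I would expand
\begin{align*}
\int_{\mathbb{R}^d\times\mathbb{R}^d}\psi(y-x)\,\diff[\mu_k-\omega_k](x)\,\diff[\mu_k-\omega_k](y)
={} & \int\psi(y-x)\,\diff(\mu_k\otimes\mu_k) - \int\psi(y-x)\,\diff(\mu_k\otimes\omega_k)\\
 & {}- \int\psi(y-x)\,\diff(\omega_k\otimes\mu_k) + \int\psi(y-x)\,\diff(\omega_k\otimes\omega_k),
\end{align*}
and similarly for the limit, and then handle each of the four integrals individually. The integrand $(x,y)\mapsto \psi(y-x)=|y-x|^q$ is continuous on $\mathbb{R}^d\times\mathbb{R}^d$. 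Since $2$-Wasserstein convergence implies narrow convergence (Lemma \ref{lem:26}), and narrow convergence is preserved under tensor products as recalled right after \eqref{eq:318}, the four sequences of product measures converge narrowly to their respective tensor-product limits.

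The remaining ingredient is uniform integrability of $(x,y)\mapsto|y-x|^q$ with respect to each of the four sequences. Using the elementary inequality \eqref{eq:16}, there is a constant $C$ such that $|y-x|^q\le C(|x|^q+|y|^q)$, so it suffices to prove uniform integrability of $(x,y)\mapsto|x|^q$ and $(x,y)\mapsto|y|^q$ with respect to $\{\mu_k\otimes\mu_k\}$, $\{\mu_k\otimes\omega_k\}$, $\{\omega_k\otimes\mu_k\}$, $\{\omega_k\otimes\omega_k\}$. Because the marginals are probability measures, for any $M>0$ one simply has, e.g.,
\begin{equation*}
\int_{\{|x|^q\ge M\}}|x|^q\,\diff(\mu_k\otimes\omega_k)(x,y) \;=\; \int_{\{|x|^q\ge M\}}|x|^q\,\diff\mu_k(x),
\end{equation*}
so uniform integrability on the products reduces to uniform integrability of $|x|^q$ with respect to $\{\mu_k\}$ and $\{\omega_k\}$ separately.

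Now since $\mu_k\to\mu$ and $\omega_k\to\omega$ in $\mathcal{P}_2(\mathbb{R}^d)$, Lemma \ref{lem:26} yields uniform integrability of the second moments, which in particular gives $\sup_k\int|x|^2\,\diff\mu_k<\infty$ and $\sup_k\int|x|^2\,\diff\omega_k<\infty$. Because $q<2$, Lemma \ref{lem:24} then provides the required uniform integrability of $|x|^q$. Hence Lemma \ref{lem:4}, applied with $f(x,y)=\psi(y-x)$ to each of the four narrowly convergent sequences of product measures, yields convergence of each of the four integrals to its limit. Summing these convergences with the correct signs gives \eqref{eq:54}. The only step where one has to be slightly careful is ensuring that the pointwise quadratic-type bound $|y-x|^q\le C(|x|^q+|y|^q)$ is used in tandem with the fact that $q<2$, but this is routine given the lemmas already established.
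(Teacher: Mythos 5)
Your proof is correct and follows essentially the same approach as the paper: expand the bilinear expression into four integrals against product measures, establish uniform integrability of the kernel via an elementary power inequality and the Wasserstein-$2$ convergence, and then apply Lemma~\ref{lem:4}. The only cosmetic difference is the choice of dominating function — you bound $|y-x|^q\le C(|x|^q+|y|^q)$ and invoke Lemma~\ref{lem:24} to pass from second moments to $q$th moments, while the paper directly uses $|\psi(y-x)|\le 2C(1+|x|^2+|y|^2)$ together with uniform integrability of second moments from Lemma~\ref{lem:26}.
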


\begin{proof}
  By the particular choice of $\psi$, we have the estimate
  \begin{equation*}
    \left| \psi(y - x) \right| \leq C(1 + \left| y - x \right|^2) \leq 2C(1 + \left| x \right|^2 +
    \left| y \right|^2).
  \end{equation*}
  After expanding the expression to the left of \eqref{eq:54} so that we only have to deal with integrals with respect to probability measures, we can use this estimate to get the uniform integrability of the second moments of \( \mu \) and \( \omega \) by Lemma \ref{lem:26} and are then able to apply Lemma \ref{lem:4} to obtain convergence.
\end{proof}

\begin{lemma}[Continuity of \(\widehat{\mathcal{E}}\)]
  \label{lem:6}
  Let
  \begin{equation*}
    \mu_{k}\rightarrow \mu,\quad \omega_{k}\rightarrow \omega \quad \text{for } k\rightarrow \infty\, \text{ in } \mathcal{P}_{2}(\mathbb{R}^{d}),
  \end{equation*}
\wrt the \( 2 \)-Wasserstein-convergence,
  such that
  \begin{equation*}
    \mu_k - \omega_k = \sum_{j = 1}^{N_k} \alpha_j^k \delta_{x_j^k}
  \end{equation*}
  for suitable $\alpha_j^k \in \mathbb{R}$ and pairwise distinct $x_j^k \in \mathbb{R}^{d}$. Then,
  \begin{align*}
    \leadeq{\int_{\mathbb{R}^d}\left| \sum_{j = 1}^{N_k} \alpha_j^k \exp(\i \xi \cdot x_j^k) \right|^2
    \widehat{\psi}(\xi) \dd \xi} \\ \rightarrow  {}
    &\int_{\mathbb{R}^d}\left| \int_{\mathbb{R}^d} \exp(\i \xi \cdot x) \dd [\mu - \omega](x) \right|^2
    \widehat{\psi}(\xi) \dd \xi \quad \text{for } k\rightarrow \infty.
  \end{align*}
\end{lemma}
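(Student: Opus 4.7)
The plan is to apply Lebesgue's dominated convergence theorem to the sequence of integrands on $\mathbb{R}^d$. Writing $\nu_k := \mu_k - \omega_k$ and $\nu := \mu - \omega$, and recalling from \eqref{eq:33} that $\widehat{\psi}(\xi) = -2(2\pi)^d D_q |\xi|^{-d-q}$ with $D_q > 0$, observe that $\sum_{j} \alpha_j^k \exp(\i x_j^k \cdot \xi)$ is (up to a sign in the argument) exactly the Fourier--Stieltjes transform $\widehat{\nu_k}(-\xi)$. Since $W_2$-convergence implies narrow convergence by Lemma \ref{lem:26} and $x \mapsto e^{-\i \xi \cdot x}$ is bounded continuous, I obtain pointwise convergence $\widehat{\nu_k}(\xi) \to \widehat{\nu}(\xi)$ for every $\xi \in \mathbb{R}^d$, and hence pointwise convergence of $|\widehat{\nu_k}(\xi)|^2 \widehat{\psi}(\xi)$ on $\mathbb{R}^d \setminus \{0\}$.

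The heart of the argument will be constructing a $k$-independent $L^1(\mathbb{R}^d)$ dominant. Two elementary estimates on $\widehat{\nu_k}$ cooperate. The trivial bound $|\widehat{\nu_k}(\xi)| \leq 2$ from the triangle inequality handles the tail for large $|\xi|$. Near the origin I exploit that $\mu_k$ and $\omega_k$ have equal (unit) total mass, so by Kantorovich--Rubinstein duality applied to the $|\xi|$-Lipschitz function $x \mapsto e^{-\i \xi \cdot x}$,
\begin{equation*}
  |\widehat{\nu_k}(\xi)| = \left| \int_{\mathbb{R}^d} e^{-\i \xi \cdot x} \diff (\mu_k - \omega_k)(x) \right| \leq |\xi|\, W_1(\mu_k, \omega_k) \leq |\xi|\, W_2(\mu_k, \omega_k).
\end{equation*}
Because the sequences $(\mu_k)_k$ and $(\omega_k)_k$ are $W_2$-convergent they are in particular $W_2$-bounded, so by the triangle inequality there exists $M > 0$ with $W_2(\mu_k, \omega_k) \leq M$ for all $k$. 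Combining both bounds yields
\begin{equation*}
  |\widehat{\nu_k}(\xi)|^2 |\xi|^{-d-q} \leq \min\bigl(4,\, M^2 |\xi|^2 \bigr)\, |\xi|^{-d-q} =: h(\xi).
\end{equation*}

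Finally, I would verify $h \in L^1(\mathbb{R}^d)$: on $\{|\xi| \leq 1\}$, $h(\xi) \leq M^2 |\xi|^{2-d-q}$, which is integrable precisely because $q < 2$; on $\{|\xi| > 1\}$, $h(\xi) \leq 4\, |\xi|^{-d-q}$, integrable because $q > 0$. Dominated convergence then delivers the claimed limit, after reinstating the multiplicative constant $-2(2\pi)^d D_q$ coming from $\widehat{\psi}$. The step carrying the essential information is the Kantorovich--Rubinstein bound producing the extra factor $|\xi|$, which is exactly what is needed to cancel the singularity of $\widehat{\psi}$ at the origin; this crucially hinges on $\nu_k$ having vanishing total mass, a feature automatic from $\mu_k, \omega_k \in \mathcal{P}(\mathbb{R}^d)$, and it is here that the restriction $1 \leq q < 2$ earns its keep.
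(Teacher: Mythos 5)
Your proof is correct and follows the same overall strategy as the paper: pointwise convergence of the Fourier transforms via narrow convergence, dominated convergence, a trivial bound $\left| \widehat{\nu_k}(\xi) \right| \leq 2$ for the tail, and a near-origin bound $\left| \widehat{\nu_k}(\xi) \right| \lesssim \left| \xi \right|$ to compensate the singularity of $\widehat{\psi}$. The ingredient producing that near-origin bound is, however, genuinely different. The paper inserts the zeroth-order Taylor expansion $\exp(\i \xi \cdot x) = 1 + \i \xi \cdot x \int_0^1 \exp(\i \xi \cdot tx) \dd t$; the constant term drops because $\mu_k$ and $\omega_k$ have equal mass, leaving $\left| \xi \right| \left( \int \left| x \right| \dd \mu_k + \int \left| x \right| \dd \omega_k \right)$, uniformly bounded since the first moments of a $W_2$-convergent sequence are. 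You instead invoke Kantorovich--Rubinstein duality to get $\left| \widehat{\nu_k}(\xi) \right| \lesssim \left| \xi \right| W_1(\mu_k,\omega_k) \leq \left| \xi \right| W_2(\mu_k,\omega_k)$, with uniformity from the triangle inequality for $W_2$. Both routes produce the same $L^1$ dominant, but yours is conceptually cleaner in that it converts the mass cancellation directly into a Wasserstein statement. One small imprecision: the Kantorovich--Rubinstein duality is for real-valued Lipschitz functions, so for $x \mapsto \exp(-\i \xi \cdot x)$ you should apply it separately to the real and imaginary parts, which costs a harmless factor of $\sqrt{2}$ and does not affect the argument. With that noted, the proof is complete.
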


\begin{proof}
  By the narrow convergence of $\mu_k$ and $\omega_k$, we get pointwise convergence of the Fourier transforms, i.e.,
  \begin{equation*}
    \sum_{j = 1}^{N_k} \alpha_j^k \exp(\i \xi \cdot x_j^k) \rightarrow  \int_{\mathbb{R}^d} \exp(\i \xi \cdot x) \dd [\mu - \omega](x) \quad \text{for all } \xi \in \mathbb{R}^d \text{ and } k\rightarrow \infty.
  \end{equation*}
  We want to use the dominated convergence theorem: The Fourier transform  of \(\mu - \omega\) is bounded in \(\xi\), so that the case $\xi\rightarrow \infty$ poses no problem due to the integrability of $\widehat{\psi}(\xi) = C \left| \xi \right|^{-d-q}$ away from $0$. In order to justify the necessary decay at $0$, we use the control of the first moments (since we even control the second moments by the \(\mathcal{P}_{2}\) assumption): Inserting the Taylor expansion of the exponential function of order \(0\),
  \begin{equation*}
    \exp(\i \xi \cdot x) = 1 + \i \xi \cdot x \int_0^1 \exp(\i \xi \cdot tx) \dd t,
  \end{equation*}
  into the expression in question and using the fact that \(\mu_{k}\) and \(\omega_{k}\) are probability measures results in
  \begin{align*}
    \leadeq{\left| \int_{\mathbb{R}^d} \exp(\i \xi \cdot x) \dd [\mu_k - \omega_k](x) \right| }\\
    = {} & \left| \int_{\mathbb{R}^d} \left( 1 + \i \xi \cdot x \int_0^1 \exp(\i \xi \cdot tx) \dd t \right) \dd
      [\mu_k - \omega_k](x) \right|\\
    = {} & \left| \int_{\mathbb{R}^d} \left( \i \xi \cdot x \int_0^1 \exp(\i \xi \cdot tx) \dd t \right) \dd [\mu_k - \omega_k](x) \right|\\
    \leq {} &|\xi| \underbrace{\left(\int_{\mathbb{R}^d} |x| \dd \mu_k(x) + \int_{\mathbb{R}^d} |x|\dd \omega_k(x) \right)}_{:= C}.
  \end{align*}
  Therefore, we have a $k$-uniform bound $C$ such that
  \begin{equation*}
    \left| \sum_{j = 1}^{N_k} \alpha_j^k \exp(\i \xi \cdot x_j^k) \right|^2 \leq C \, |\xi|^2,
  \end{equation*}
  compensating the singularity of $\widehat{\psi}$ at the origin, hence together
  with the dominated convergence theorem proving the claim.
\end{proof}

Combining the two Lemmata above with the approximation provided by Lemma \ref{lem:3} yields

\begin{corollary}[Fourier-representation for $\widetilde{\mathcal{E}}$ on
  \(\mathcal{P}_{2}(\mathbb{R}^{d})\)]
  \label{cor:four-repr-widet}
  \begin{equation*}
    \widetilde{\mathcal{E}}[\mu] = \widehat{\mathcal{E}}[\mu], \quad \mu \in \mathcal{P}_{2}(\mathbb{R}^{d}).
  \end{equation*}
\end{corollary}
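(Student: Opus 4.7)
The strategy is a three-step density/continuity argument: approximate both $\mu$ and $\omega$ by empirical processes, invoke the atomic Fourier representation of the previous lemma, and then pass to the limit using the two continuity lemmas.

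First, since $\mu, \omega \in \mathcal{P}_2(\mathbb{R}^d)$, take two independent sequences of i.i.d.\ random variables $(X_i)_{i \in \mathbb{N}} \sim \mu$ and $(Y_i)_{i \in \mathbb{N}} \sim \omega$, and form the empirical processes
\begin{equation*}
  \mu_N := \frac{1}{N}\sum_{i=1}^{N}\delta_{X_i}, \qquad \omega_N := \frac{1}{N}\sum_{i=1}^{N}\delta_{Y_i}.
\end{equation*}
By Lemma \ref{lem:3} applied with $p=2$ to each sequence, on an event of probability one we have $\mu_N \to \mu$ and $\omega_N \to \omega$ in the $2$-Wasserstein topology. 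Fix from now on a realization in that full-measure event.

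Next, for each $N$ the signed measure $\mu_N - \omega_N$ is supported on at most $2N$ points; after collecting the atoms sharing a location (cancellations included), we may write $\mu_N - \omega_N = \sum_{j=1}^{N_k} \alpha_j^N \delta_{x_j^N}$ with pairwise distinct $x_j^N$ and real coefficients $\alpha_j^N$. The atomic representation lemma (the Fourier formula proved via Theorem \ref{thm:repr-thm-cond-semi} and Theorem \ref{thm:cond-ft-power}) applies with $\mu$ replaced by $\mu_N$ and $\omega$ replaced by $\omega_N$, yielding for every $N$ the identity
\begin{equation*}
  -\tfrac{1}{2}\int_{\mathbb{R}^d\times\mathbb{R}^d}\psi(y-x)\,\mathrm{d}[\mu_N-\omega_N](x)\,\mathrm{d}[\mu_N-\omega_N](y)
  = -\tfrac{1}{2}(2\pi)^{-d}\int_{\mathbb{R}^d}\Bigl|\sum_{j=1}^{N_k}\alpha_j^N e^{\i x_j^N\cdot\xi}\Bigr|^2\widehat{\psi}(\xi)\,\mathrm{d}\xi.
\end{equation*}

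Finally, we pass $N\to\infty$ on both sides, using that $\mu_N\to\mu$ and $\omega_N\to\omega$ in $\mathcal{P}_2(\mathbb{R}^d)$. The left-hand side tends to $\widetilde{\mathcal{E}}[\mu]$ by Lemma \ref{lem:5}, while the right-hand side, rewritten in the form
\begin{equation*}
  -\tfrac{1}{2}(2\pi)^{-d}\int_{\mathbb{R}^d}\bigl|\widehat{\mu_N}(\xi)-\widehat{\omega_N}(\xi)\bigr|^2\widehat{\psi}(\xi)\,\mathrm{d}\xi,
\end{equation*}
tends to $\widehat{\mathcal{E}}[\mu]$ by Lemma \ref{lem:6}. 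Equating the two limits yields $\widetilde{\mathcal{E}}[\mu]=\widehat{\mathcal{E}}[\mu]$, proving the corollary.

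The only delicate point is to ensure that both continuity lemmas are applicable along the chosen sequence; this is exactly the reason for invoking the $\mathcal{P}_2$-statement in Lemma \ref{lem:3}, which upgrades the almost sure narrow convergence to almost sure $W_2$-convergence under the finite second moment assumption on $\mu$ and $\omega$. Apart from this, the argument is a direct concatenation of the three preceding lemmas.
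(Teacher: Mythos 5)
Your proposal is correct and follows essentially the same route as the paper, which simply states that the corollary follows by combining Lemma \ref{lem:3} (almost sure $W_2$-convergence of empirical processes), the atomic Fourier representation lemma, and the two continuity lemmas (Lemma \ref{lem:5} and Lemma \ref{lem:6}). Your write-up fills in exactly those steps, including the small but necessary observation that one may group coinciding atoms in $\mu_N - \omega_N$ so that the representation with pairwise distinct points required by the atomic lemma applies.
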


\subsection{Extension to $\mathcal{P}(\mathbb{R}^{d})$}
\label{sec:extension-p}

While the well-definedness of \(\mathcal{E}[\mu]\) is not clear for all \(\mu \in \mathcal{P}(\mathbb{R}^{d})\), since the sum of two integrals with values \( \pm \infty \) may occur instead, for each such \(\mu\) we can certainly assign a value in \(\mathbb{R}\cup \left\{ \infty \right\}\) to \(\widehat{\mathcal{E}}[\mu]\). In the following, we want to justify in which sense it is possible to consider \(\widehat{\mathcal{E}}\) instead of the original functional, namely that \( \widehat{\mathcal{E}} \) is, up to an additive constant, the \emph{lower semi-continuous envelope} of \( \mathcal{E} \).

Firstly, we prove that \(\widehat{\mathcal{E}}\) has compact sub-levels in \(\mathcal{P}(\mathbb{R}^{d})\) endowed with the narrow topology, using the following lemma as a main ingredient.

\begin{lemma}
  \label{lem:7}
   Given a probability measure \(\mu \in \mathcal{P}(\mathbb{R}^{d})\) with Fourier transform
  \(\widehat{\mu}\colon \mathbb{R}^{d} \rightarrow  \mathbb{C}\), there are \(C_{1} = C_{1}(d) > 0\) and \(C_{2} = C_{2}(d) > 0\) such that for all
  \(u > 0\),
  \begin{equation}
    \label{eq:64}
    \mu\left(\left\{x : \left| x \right| \geq u^{-1}\right\}\right) \leq
    \frac{C_{1}}{u^{d}}\int_{\left| \xi \right|
      \leq C_{2} u} (1 - \Re \widehat{\mu}(\xi)) \dd \xi.
  \end{equation}
\end{lemma}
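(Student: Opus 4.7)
The plan is to relate the tail probability $\mu(\{|x| \geq u^{-1}\})$ to an integrated form of $1 - \Re\widehat{\mu}$ near the origin via the elementary identity
\[
1 - \Re \widehat{\mu}(\xi) = \int_{\mathbb{R}^d} (1 - \cos(\xi \cdot x)) \dd \mu(x) \geq 0,
\]
followed by Fubini and a scaling argument to transfer the $R$-dependence onto a purely geometric quantity.

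First, I would apply Fubini (whose use is justified by the non-negativity above) to obtain, for any $R > 0$,
\[
\int_{|\xi| \leq R} (1 - \Re \widehat{\mu}(\xi)) \dd \xi = \int_{\mathbb{R}^d} \left( \int_{|\xi| \leq R} (1 - \cos(\xi \cdot x)) \dd \xi \right) \dd \mu(x).
\]
For fixed $x \neq 0$, the substitution $\xi = \eta/|x|$ together with rotational invariance (aligning $x/|x|$ with the first coordinate axis) reduces the inner integral to
\[
\int_{|\xi| \leq R} (1 - \cos(\xi \cdot x)) \dd \xi = |x|^{-d}\, g(R|x|), \qquad g(r) := \int_{|\eta| \leq r}(1 - \cos \eta_1) \dd \eta,
\]
so all the dependence on $x$ and $R$ is encoded in the scalar quantity $g(R|x|)/|x|^d$.

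Next, I would establish a lower bound of the form $g(r) \geq c_d\, r^d$ for all $r \geq r_0(d)$. The clean elementary argument is to restrict the integral to the set $\{|\eta| \leq r,\ \cos \eta_1 \leq 0\}$, on which the integrand is $\geq 1$; slicing by $\eta_1$ and noting that $\{\eta_1 \in [-r,r] : \cos \eta_1 \leq 0\}$ has one-dimensional measure $\geq r/2$ for $r$ beyond an explicit threshold, while the orthogonal $(d-1)$-slice has measure $\omega_{d-1}(r^2 - \eta_1^2)^{(d-1)/2}$, gives the required $r^d$ lower bound. (Conceptually, $g(r) = \omega_d r^d - o(r^d)$, as the oscillatory correction $\int_{|\eta|\leq r}\cos\eta_1 \dd \eta$ is $O(r^{(d-1)/2})$ by stationary phase, but the sign argument above suffices and is quantitative.)

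Finally, setting $C_2 := r_0(d)$ and $R := C_2 u$, any $x$ with $|x| \geq 1/u$ satisfies $R|x| \geq r_0(d)$, hence $g(R|x|) \geq c_d (R|x|)^d$ and therefore $|x|^{-d} g(R|x|) \geq c_d R^d = c_d C_2^d u^d$. Restricting the domain of integration in $x$ to $\{|x| \geq 1/u\}$ and using non-negativity of the integrand on the complement yields
\[
\int_{|\xi| \leq C_2 u} (1 - \Re \widehat{\mu}(\xi)) \dd \xi \geq c_d\, C_2^d\, u^d\, \mu\left(\{|x| \geq u^{-1}\}\right),
\]
from which the claimed bound follows with $C_1 := (c_d C_2^d)^{-1}$. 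The only non-trivial step is the lower bound on $g(r)$; everything else is scaling and Fubini.
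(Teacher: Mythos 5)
Your proof is correct, but it takes a genuinely different route from the paper's once the common starting point --- the identity $1 - \Re\widehat{\mu}(\xi) = \int_{\mathbb{R}^d}(1-\cos(\xi\cdot x))\,\dd\mu(x)\geq 0$ followed by Fubini --- has been established. The paper then passes to spherical coordinates $\xi=r\widetilde{\xi}$, integrates the radial integral $\int_0^u\cos(r\widetilde{\xi}\cdot x)r^{d-1}\,\dd r$ by parts, applies $|\sin|\leq 1$, and obtains the lower bound $\tfrac{1}{2d}$ after restricting both to the set of directions $\widetilde{S}(x)=\{\widetilde{\xi}\in S^{d-1}:|\widetilde{\xi}\cdot x|\geq |x|/2\}$ and to the tail $|x|\geq 8d/u$; this is essentially Durrett's one-dimensional argument extended coordinate by coordinate. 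You instead normalize by the substitution $\xi\mapsto\eta/|x|$, collapsing the inner integral into $|x|^{-d}\,g(R|x|)$ for the single universal function $g(r)=\int_{|\eta|\leq r}(1-\cos\eta_1)\,\dd\eta$, and then bound $g(r)\geq c_d r^d$ for $r$ large by a purely geometric argument on the region $\{\cos\eta_1\leq 0\}$ where the integrand is $\geq 1$. This trades the integration by parts for a measure count, and it exposes the self-similar scaling structure of the problem cleanly: all $x$- and $u$-dependence is reduced to the single scalar $R|x|$. One minor imprecision: as stated, bounding the measure of $\{\eta_1\in[-r,r]:\cos\eta_1\leq 0\}$ below by $r/2$ is not by itself enough, because the cross-sectional weight $\omega_{d-1}(r^2-\eta_1^2)^{(d-1)/2}$ degenerates near $\eta_1=\pm r$; you should additionally restrict to, say, $|\eta_1|\leq r/2$, where the slice measure is $\gtrsim r^{d-1}$ and the set $\{\cos\eta_1\leq 0\}$ still carries measure $\gtrsim r$. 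With that explicit cut the lower bound $g(r)\geq c_d r^d$ follows, and the rest of your argument is complete.
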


\begin{proof}
The proof for the case \( d = 1 \) can be found in  \cite[Theorem 3.3.6]{Dur10} and we generalize it below to any $d \geq 1$.
  Let \(u > 0\). Firstly, note that
  \begin{equation*}
    1 - \Re \widehat{\mu}(\xi) = \int_{\mathbb{R}^{d}}(1 - \cos(\xi \cdot x)) \dd \mu(x) 
    \geq 0 \quad \text{for all } \xi \in \mathbb{R}^{d}.
  \end{equation*}
  By starting with the integral on the right-hand side of \eqref{eq:64} (up to a constant in the integration domain) and using Fubini-Tonelli as well as integration in spherical coordinates, we get
  \begin{align}
    \leadeq{\int_{\left| \xi \right|\leq u} (1 - \Re \widehat{\mu}(\xi)) \dd \xi} \nonumber \\
    = {} & \int_{\mathbb{R}^{d}} \int_{\left| \xi \right| \leq u} (1 - \cos(\xi \cdot x)) \dd \xi \dd \mu(x) \nonumber\\
    = {} & \int_{\mathbb{R}^{d}} \int_{\left| \widetilde{\xi} \right| = 1} \int_{0}^{u} (1 - \cos(r\widetilde{\xi} \cdot x)) r^{d-1}\dd r \dd \sigma(\widetilde{\xi}) \dd \mu(x) \label{eq:356}\\
    = {} & \int_{\mathbb{R}^{d}} \int_{\left| \widetilde{\xi} \right| = 1} \left[ \frac{u^d}{d} - \int_{0}^{u} \cos(r\widetilde{\xi} \cdot x) r^{d-1}\dd r \right] \dd \sigma(\widetilde{\xi}) \dd \mu(x)\label{eq:67}
  \end{align}
  If \(d \geq 2\), integrating the integral over \(\cos(r\widetilde{\xi} \cdot x)r^{d-1}\) in
  \eqref{eq:67} by parts yields
  \begin{align*}
    \int_{0}^{u} \cos(r\widetilde{\xi} \cdot x) r^{d-1}\dd r = {} &
    \frac{\sin(u\widetilde{\xi} \cdot x) u^{d-1}}{\widetilde{\xi} \cdot x} - (d-1)
    \int_{0}^{u}\frac{\sin(r\widetilde{\xi} \cdot x)}{\widetilde{\xi} \cdot x}r^{d-2} \dd r,
  \end{align*}
  which can also be considered true for \(d = 1\) if the second part is assumed to be zero because of the factor \((d-1)\).

  We now prove \eqref{eq:64} by estimating the integrand in \eqref{eq:67} suitably from below. Using \(\left| \sin(x) \right| \leq 1\) for all \(x \in \mathbb{R}\) and dividing by \(u^{d}\), we get
  \begin{align*}
    \leadeq{d^{-1} - u^{-d}\int_{0}^{u} \cos(r\widetilde{\xi} \cdot x) r^{d-1}\dd r} \\
    = {} & d^{-1} - \frac{\sin(u \widetilde{\xi} \cdot x)}{u\widetilde{\xi} \cdot x} + \frac{(d-1)}{u^{d}} \int_{0}^{u}\frac{\sin(r\widetilde \xi \cdot x)}{\widetilde{\xi} \cdot x} r^{d-2}\dd r\\
    \geq {} & d^{-1} - \frac{1}{u\left| \widetilde{\xi} \cdot x \right|} - \frac{(d-1)}{u^{d}} \int_{0}^{u}\frac{1}{\left| \widetilde{\xi} \cdot x \right|} r^{d-2}\dd r\\
    = {} & d^{-1} - \frac{2}{u\left| \widetilde{\xi} \cdot x \right|}.
  \end{align*}
  As we want to achieve an estimate from below, by the non-negativity of the integrand \( 1 - \cos(\xi \cdot x) \), we can restrict the integration domain in \eqref{eq:356} to
    \begin{equation*}
      \widetilde{S}(x) := \left\{ \widetilde \xi \in S^{d-1} : \left| \widetilde \xi \cdot x \right| \geq \frac{1}{2}\left| x \right| \right\} \quad
    \text{and} \quad D(u) := \left\{ x : \left| x \right| \geq \frac{8d}{u} \right\},
  \end{equation*}
  yielding
  \begin{equation}
    \frac{1}{d} - \frac{1}{u^{d}}\int_{0}^{u} \cos(r\widetilde{\xi} \cdot x) r^{d-1}\dd r \geq \frac{1}{2d}, \quad x \in D(u),\, \widetilde{\xi} \in \widetilde{S}(x).\label{eq:72}
  \end{equation}
  Combining \eqref{eq:72} with \eqref{eq:67} gives us
  \begin{equation*}
    \frac{1}{u^{d}} \int_{\left| \xi \right|\leq u} (1 - \Re \widehat{\mu}(\xi)) \dd \xi \geq \frac{1}{C_{3}} \mu\left(\left\{ \left| x \right| \geq 8D's^{-1} \right\}\right)
  \end{equation*}
  with
  \begin{equation*}
    C_{3} := \frac{1}{2d} \operatorname{vol} (\widetilde S(x)),
  \end{equation*}
  where \(\operatorname{vol} (\widetilde S(x))\) is independent of \(x\). Finally, we substitute \(\widetilde u := (8d)^{-1}u\) to get
  \begin{equation*}
    \mu\left(\left\{x : \left| x \right| \geq \widetilde{u}^{-1}\right\}\right) \leq \frac{C_{1}}{\widetilde{u}^{d}}\int_{\left| \xi \right| \leq C_{2}\widetilde{u}} (1 - \Re \widehat{\mu}(\xi)) \dd \xi
  \end{equation*}
  with
  \begin{equation*}
    C_{1} := \frac{C_{3}}{(8d)^{d}} \quad \text{and} \quad  C_{2} := 8d.\qedhere
  \end{equation*}
\end{proof}

\raggedbottom
\begin{proposition}
  \label{prp:compctness-sublvls}
  \(\widehat{\mathcal{E}}\colon \mathcal{P}(\mathbb{R}^{d}) \rightarrow  \mathbb{R}_{\geq 0} \cup \{\infty\}\) is lower
  semi-continuous with respect to the narrow convergence and its sub-levels are
  narrowly compact.
\end{proposition}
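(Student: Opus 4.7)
The plan is to split the proof into lower semi-continuity and narrow compactness of sub-levels, using the Fourier representation \eqref{eq:34} together with Lemma \ref{lem:7} as the two essential tools. For lower semi-continuity, if $\mu_n \to \mu$ narrowly in $\mathcal{P}(\mathbb{R}^d)$, then since for each fixed $\xi \in \mathbb{R}^d$ the map $x \mapsto \exp(-\i x^T \xi)$ is bounded and continuous, the Fourier-Stieltjes transforms converge pointwise, $\widehat{\mu}_n(\xi) \to \widehat{\mu}(\xi)$, and so do the non-negative integrands $\left|\widehat{\mu}_n(\xi) - \widehat{\omega}(\xi)\right|^2 |\xi|^{-d-q}$. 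Fatou's lemma then yields $\widehat{\mathcal{E}}[\mu] \leq \liminf_{n \to \infty} \widehat{\mathcal{E}}[\mu_n]$.

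For the sub-level compactness, given $(\mu_n)_n$ with $\widehat{\mathcal{E}}[\mu_n] \leq M$, by Prokhorov's theorem (combined with the lower semi-continuity just established) it suffices to prove uniform tightness. I apply Lemma \ref{lem:7} at scale $u>0$ and bound the integrand via
\begin{equation*}
  1 - \Re \widehat{\mu}_n(\xi) \leq \left| 1 - \widehat{\omega}(\xi)\right| + \left|\widehat{\mu}_n(\xi) - \widehat{\omega}(\xi)\right|.
\end{equation*}
Since $\omega \in \mathcal{P}_2(\mathbb{R}^d) \subset \mathcal{P}_1(\mathbb{R}^d)$, a first-order Taylor expansion of the exponential gives $\left|1 - \widehat{\omega}(\xi)\right| \leq |\xi|\int_{\mathbb{R}^d}|x|\dd\omega(x)$, so this term, after integration on $\{|\xi| \leq C_2 u\}$ and division by $u^d$, contributes at most $C u$. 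For the second term, Cauchy--Schwarz against the weight $|\xi|^{-d-q}$ yields
\begin{equation*}
  \int_{|\xi| \leq C_2 u} \left|\widehat{\mu}_n - \widehat{\omega}\right|\dd\xi \leq \left(\int_{\mathbb{R}^d} \left|\widehat{\mu}_n - \widehat{\omega}\right|^2 |\xi|^{-d-q}\dd\xi\right)^{1/2} \left(\int_{|\xi| \leq C_2 u} |\xi|^{d+q} \dd\xi\right)^{1/2} \leq C\sqrt{M/D_q}\, u^{d+q/2},
\end{equation*}
contributing $C\sqrt{M/D_q}\, u^{q/2}$ after dividing by $u^d$. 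Since $q \geq 1$, both contributions vanish uniformly in $n$ as $u \to 0^+$, giving uniform tightness and hence narrow relative compactness; the lower semi-continuity then ensures that any narrow limit stays in the sub-level.

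The hard part is controlling the tail of $\mu_n$ by $\widehat{\mathcal{E}}[\mu_n]$, whose integrand carries the singularity $|\xi|^{-d-q}$ at the origin. The key observation is that Lemma \ref{lem:7} localizes the Fourier information needed to a small ball $\{|\xi| \leq C_2 u\}$, on which the weight $|\xi|^{d+q}$ produced by Cauchy--Schwarz exactly compensates the singularity and yields a bound proportional to a positive power of $u$, which then kills the prefactor $1/u^d$ from Lemma \ref{lem:7}; the smallness of $\omega$ near $\xi=0$ provided by the first moment bound handles the remaining piece.
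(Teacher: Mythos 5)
Your proof is correct and follows the same overall structure as the paper's: Fatou's lemma for lower semi-continuity, and tightness via Lemma~\ref{lem:7} by splitting $1 - \Re\widehat{\mu}_n$ with the triangle inequality into an $\omega$-piece and a $(\mu_n - \omega)$-piece, then using the weighted $L^2$ control coming from $\widehat{\mathcal{E}}[\mu_n]\leq M$ on a small ball around the origin. The only genuine divergence is in how you handle the $\omega$-term: you use a first-order Taylor bound $|1-\widehat{\omega}(\xi)|\leq |\xi|\int|x|\,\mathrm{d}\omega$ giving a contribution $O(u)$, whereas the paper treats both pieces uniformly with one Hölder inequality against the split $|\xi|^{-d}=|\xi|^{(-d-q)/2}|\xi|^{(-d+q)/2}$, obtaining $O(u^{q/2})$ for both; the rates differ, but both tend to zero, so the tightness conclusion is unchanged, and your Cauchy--Schwarz with the weight $|\xi|^{d+q}$ on the small ball yields the same $u^{q/2}$ decay for the $(\mu_n-\omega)$-piece as the paper's manipulation, just organized slightly differently. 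Your version is a perfectly adequate alternative; the moment bound you invoke for the $\omega$-term is also marginally weaker (first moment vs.\ second), which is a small simplification.
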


\begin{proof}
  Lower semi-continuity and thence closedness of the sub-levels follows from Fatou’s lemma, because narrow convergence corresponds to pointwise convergence of the Fourier transform and the integrand in the definition of \(\widehat{\mathcal{E}}\) is non-negative.
  
  Now, assume we have a \(K > 0\) and
  \begin{equation*}
    \mu \in N_{K}(\widehat{\mathcal{E}}) := \{\mu \in \mathcal{P}(\mathbb{R}^{d}) : \widehat{\mathcal{E}}[\mu] \leq K\}.
  \end{equation*}
  We show the tightness of the family of probability measures
  \(N_{K}(\widehat{\mathcal{E}})\) using Lemma \ref{lem:7}. Let \(0 < u \leq 1\). Then,
  \begin{align}
    \leadeq[2]{\frac{1}{u^{d}} \int_{\left| \xi \right|\leq C_{2}u} \left(1 - \Re \widehat{\mu}(\xi)\right) \dd \xi} \\
    \leq {} & C_2^d \int_{\left| \xi \right|\leq C_{2}u} \left| \xi \right|^{-d} \left(1 - \Re \widehat{\mu}(\xi)\right) \dd \xi  \label{eq:78} \\
    \leq {} & C_2^d \int_{\left| \xi \right|\leq C_{2}u} \left| \xi \right|^{-d} \left(\left| 1 - \Re \widehat{\omega}(\xi) \right| + \left| \Re \widehat{\omega}(\xi) - \Re \widehat{\mu}(\xi) \right| \right)\dd \xi \nonumber \\
    \leq {} & C_2^d \int_{\left| \xi \right|\leq C_{2}u} \left| \xi \right|^{-d} \left(\left| 1 - \widehat{\omega}(\xi) \right| + \left| \widehat{\omega}(\xi) - \widehat{\mu}(\xi) \right| \right)\dd \xi \nonumber \\
    = {} & C_2^d \int_{\left| \xi \right|\leq C_{2}u} \left| \xi \right|^{(-d-q)/2} \cdot \left| \xi \right|^{(-d+q)/2} \left(\left| 1 - \widehat{\omega}(\xi) \right| + \left| \widehat{\omega}(\xi) - \widehat{\mu}(\xi) \right| \right)\dd \xi \nonumber \\
\nonumber
    \end{align}
\begin{align}
  \leq {} & C_2^d \, {\underbrace{\left(\int_{\left| \xi \right|\leq C_{2}u} \left| \xi \right|^{-d+q} \dd \xi\right)}_{\text{\(=:f(u)\)}}}^{1/2} \cdot \Bigg[{\underbrace{\left(\int_{\left| \xi \right|\leq C_{2}u} \left| \xi \right|^{-d-q} \left| 1 - \widehat{\omega}(\xi) \right|^{2}\dd \xi\right)}_{\text{\(= C \cdot \widehat{\mathcal{E}}[\delta_0] < \infty\)}}}^{1/2}\label{eq:79}\\
     & + {\underbrace{\left(\int_{\left| \xi \right|\leq C_{2}u}\left| \xi \right|^{-d-q} \left| \widehat{\omega}(\xi) - \widehat{\mu}(\xi) \right|^{2} \dd \xi\right)}_{\text{\(\leq D_{q}^{-1}K\)}}}^{1/2}\Bigg] \quad \text{(Hölder’s inequality)} \label{eq:80}\\
    \leq {} & C_2^d \, (f(u))^{1/2} \left( C^{1/2} + \left(D_{q}^{-1}K\right)^{1/2} \right), \nonumber
  \end{align}
  where in equations \eqref{eq:79} and \eqref{eq:80} we used the boundedness of the first summand in \eqref{eq:79} by a constant \(C > 0\), which is justified because \(\omega\) has an existing second moment. But
  \begin{equation*}
    f(u) = \int_{\left| \xi \right|\leq C_{2} u} \left| \xi \right|^{-d+q} \dd \xi = O(u^{q})
    \quad \text{for } u \rightarrow  0,
  \end{equation*}
  giving a uniform control of the convergence to zero of the left-hand side of \eqref{eq:78}. Together with Lemma \ref{lem:7}, this yields tightness of \(N_{K}(\widehat{\mathcal{E}})\), hence relative compactness with respect to narrow convergence. Compactness then follows from the aforementioned lower semi-continuity of \(\widehat{\mathcal{E}}\).
\end{proof}

From this proof, we cannot deduce a stronger compactness, so that the limit of a minimizing sequence for the original functional \(\widetilde{\mathcal{E}}\) (which coincides with \(\widehat{\mathcal{E}}\) on \(\mathcal{P}_{2}(\mathbb{R}^{d})\) by Corollary \ref{cor:four-repr-widet}) need not lie in the set \(\mathcal{P}_{2}(\mathbb{R}^{d})\) (actually, in Section \ref{sec:moment-bound-symm}, we shall see that we can prove a slightly stronger compactness). To apply compactness arguments, we hence need an extension of \(\widetilde{\mathcal{E}}\) to the whole of \(\mathcal{P}(\mathbb{R}^{d})\). For the direct method and later $\Gamma$-convergence to be applied, this extension should also be lower semi-continuous; therefore the natural candidate is the \emph{lower semi-continuous envelope} of \(\widetilde{\mathcal{E}}\), now defined on the whole of \( \mathcal{P}(\mathbb{R}^d) \) by
\begin{equation*}
  \widetilde{\mathcal{E}}[\mu] =
  \begin{cases}
    \widetilde{\mathcal{E}}[\mu], \quad & \mu \in \mathcal{P}_2(\mathbb{R}^d), \\
    \infty, & \mu \in \mathcal{P}(\mathbb{R}^d) \setminus \mathcal{P}_2(\mathbb{R}^d),
  \end{cases}
\end{equation*}
which in our case can be defined as
\begin{equation*}
  \widetilde{\mathcal{E}}^{-}[\mu] := \inf_{\substack{\mu_{n}\rightarrow \mu \text{        narrowly}\\\mu_{n} \in \mathcal{P}_{2}(\mathbb{R}^{d})}} \liminf_{n\rightarrow \infty}  \widetilde{\mathcal{E}}[\mu_{n}],
\end{equation*}
or equivalently as the largest lower semi-continuous function smaller than  \( \widetilde{\mathcal{E}} \).
This corresponds to \cite[Definition 3.1]{93-Dal_Maso-intro-g-conv} if we consider our functional initially to be \( +\infty \) for \( \mu \in \mathcal{P}(\mathbb{R}^d) \setminus \mathcal{P}_2(\mathbb{R}^d) \).

In order to show that actually \(\widetilde{\mathcal{E}}^{-} = \widehat{\mathcal{E}}\), which is the content of Corollary \ref{cor:lower-semi-cont} below, we need a sequence along which there is continuity in the values of \( \widetilde{\mathcal{E}} \), which we find by dampening an arbitrary \( \mu \) by a Gaussian.

\flushbottom
\begin{proposition}
  \label{prp:rec-seq}
  For \( \omega \in \mathcal{P}_2(\mathbb{R}^d) \) and \(\mu \in \mathcal{P}(\mathbb{R}^{d})\), there exists a sequence \((\mu_{n})_{n \in \mathbb{N}} \subseteq \mathcal{P}_{2}(\mathbb{R}^{d})\) such that
  \begin{alignat*}{2}
    \mu_{n}&\rightarrow \mu \text{ narrowly} \quad&&\text{ for } n\rightarrow \infty,\\
    \widehat{\mathcal{E}}[\mu_{n}]&\rightarrow \widehat{\mathcal{E}}[\mu] &&\text{ for } n\rightarrow \infty.
  \end{alignat*}
\end{proposition}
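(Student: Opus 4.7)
Define the Gaussian–dampened sequence $\mu_n := Z_n^{-1}g_n\mu$ with $g_n(x) := \exp(-|x|^2/n)$ and $Z_n := \int g_n\,\dd\mu$. The elementary bound $|x|^2 e^{-|x|^2/n}\le n/e$ immediately gives $\int|x|^2\dd\mu_n \le n(eZ_n)^{-1}$, so $\mu_n\in\mathcal{P}_2(\mathbb{R}^d)$. Since $g_n\to 1$ pointwise with $0\le g_n\le 1$, dominated convergence yields $Z_n\to 1$ and, for every $f\in C_b(\mathbb{R}^d)$, $\int f\dd\mu_n = Z_n^{-1}\int f g_n\dd\mu \to \int f\dd\mu$, proving $\mu_n\to\mu$ narrowly.

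For the energy convergence, the lower bound $\liminf_n\widehat{\mathcal{E}}[\mu_n]\ge\widehat{\mathcal{E}}[\mu]$ is immediate from the narrow lower semi-continuity of $\widehat{\mathcal{E}}$ established in Proposition \ref{prp:compctness-sublvls}; in particular this completes the proof when $\widehat{\mathcal{E}}[\mu]=\infty$. The substantive case is $\widehat{\mathcal{E}}[\mu]<\infty$, where I have to prove the matching $\limsup_n\widehat{\mathcal{E}}[\mu_n]\le\widehat{\mathcal{E}}[\mu]$.

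For this upper bound I would split the defining integral
\[
\widehat{\mathcal{E}}[\mu_n] = D_q \int_{\mathbb{R}^d} |\widehat{\mu_n}(\xi) - \widehat{\omega}(\xi)|^2 |\xi|^{-d-q}\,\dd\xi
\]
into the three regions $\{|\xi|>R\}$, $\{\varepsilon\le|\xi|\le R\}$, $\{|\xi|<\varepsilon\}$, and control each uniformly in $n$. On $\{|\xi|>R\}$ the crude estimate $|\widehat{\mu_n}-\widehat{\omega}|^2\le 4$ together with the integrability of $|\xi|^{-d-q}$ at infinity makes the contribution uniformly small as $R\to\infty$. On the annulus $\{\varepsilon\le|\xi|\le R\}$, the pointwise convergence $\widehat{\mu_n}(\xi)\to\widehat{\mu}(\xi)$ (consequence of the narrow convergence of $\mu_n$) combined with the uniform dominant $4\varepsilon^{-d-q}$ on the integrand yields convergence to the corresponding piece of $\widehat{\mathcal{E}}[\mu]$ by dominated convergence.

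The main obstacle is the near-origin piece $\{|\xi|<\varepsilon\}$, where the singular weight $|\xi|^{-d-q}$ is not locally integrable. To control it uniformly in $n$, I would exploit the key cancellation $\widehat{\mu_n}(0)=\widehat{\omega}(0)=1$, the smoothness of $\widehat{\omega}$ inherited from $\omega\in\mathcal{P}_2$, and the hypothesis $\widehat{\mathcal{E}}[\mu]<\infty$ (which forces the $L^2(|\xi|^{-d-q}\dd\xi)$-integrability of $\widehat{\mu}-\widehat{\omega}$ near $0$). Concretely, writing $\widehat{\mu_n}=Z_n^{-1}\rho_n\ast\widehat{\mu}$ with $\rho_n := (2\pi)^{-d}\widehat{g_n}$ a Gaussian probability density concentrating at the origin, and using the decomposition
\[
\widehat{\mu_n}-\widehat{\omega} = Z_n^{-1}\,\rho_n\ast(\widehat{\mu}-\widehat{\omega}) + Z_n^{-1}\bigl(\rho_n\ast\widehat{\omega}-Z_n\widehat{\omega}\bigr),
\]
the error term is controlled by a Taylor expansion of the $C^2$ function $\widehat{\omega}$ against the narrowing Gaussian $\rho_n$, while the main convolution term is the delicate one. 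Matching the vanishing of $\rho_n\ast(\widehat{\mu}-\widehat{\omega})$ at $\xi=0$ against the non-integrability of $|\xi|^{-d-q}$ near $0$, so that the contribution of $\{|\xi|<\varepsilon\}$ is dominated (up to an error vanishing as $n\to\infty$) by a quantity going to $0$ as $\varepsilon\to 0$, is where the bulk of the technical work lies.
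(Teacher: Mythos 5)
Your construction (renormalizing the Gaussian-dampened measure by $Z_n^{-1}$) differs slightly from the paper's, which instead dumps the lost mass at $\delta_0$: $\mu_n := \widehat{\eta}_{n^{-1}}\cdot\mu + (1-(\widehat{\eta}_{n^{-1}}\cdot\mu)(\mathbb{R}^d))\delta_0$. Both yield probability measures in $\mathcal{P}_2$ with $\widehat{\mu}_n(0)=1$, and either could be made to work, so the choice itself is not the issue. The narrow-convergence and $\liminf$ parts of your argument are fine.

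The gap is that you do not actually prove the near-origin bound, which is where essentially all the work in this statement lives, and the decomposition you propose for attacking it is flawed. You write
\begin{equation*}
\widehat{\mu_n}-\widehat{\omega} = Z_n^{-1}\,\rho_n\ast(\widehat{\mu}-\widehat{\omega}) + Z_n^{-1}\bigl(\rho_n\ast\widehat{\omega}-Z_n\widehat{\omega}\bigr)
\end{equation*}
and then invoke ``the key cancellation $\widehat{\mu_n}(0)=\widehat{\omega}(0)=1$'' to control the singularity, but neither summand vanishes at $\xi=0$: the first equals $Z_n^{-1}(\rho_n\ast(\widehat{\mu}-\widehat{\omega}))(0)$ there, the second equals $Z_n^{-1}((\rho_n\ast\widehat{\omega})(0)-Z_n)$, and only their \emph{sum} cancels. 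So neither term is individually small near $0$, and each would separately produce a non-integrable $|\xi|^{-d-q}$ contribution; the cancellation you want has been destroyed by the split. Also, your plan of sending $\varepsilon\to0$ after $n\to\infty$ requires an $n$-\emph{uniform} integrable majorant on $\{|\xi|<\varepsilon\}$, which you never exhibit.

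The paper avoids both problems at once by observing that with its normalization, $\widehat{\mu}_n(\xi)-1 = (\widehat{\mu}\ast\eta_{n^{-1}})(\xi)-(\widehat{\mu}\ast\eta_{n^{-1}})(0)$, whence $|\widehat{\mu}_n(\xi)-1|\le \int_{\mathbb{R}^d}|\e^{-\i\xi\cdot x}-1|\,\dd\mu(x)$ uniformly in $n$; this pointwise bound is then split into a cosine and a sine part and shown to be square-integrable against $|\xi|^{-d-q}$ via the tail estimate of Lemma \ref{lem:7}, Hölder, and Fubini. That uniform dominating function is precisely what licenses the single application of dominated convergence. Your proof as written stops at ``this is where the bulk of the technical work lies,'' and that remaining technical work (constructing the $n$-uniform majorant and proving its integrability near $0$) is the substance of the proposition.
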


\begin{proof}
  \emph{1. Definition of \(\mu_{n}\).} Define
  \begin{equation*}
    \eta(x) := (2\pi)^{-d/2}\exp\left(-\frac{1}{2}\left| x \right|^{2}\right), \quad \eta_{\varepsilon}(x) := \varepsilon^{-d}\eta(\varepsilon^{-1}x), \quad x \in \mathbb{R}^{d}.
  \end{equation*}
  Then \((2\pi)^{-d} \widehat{\widehat{\eta_{\varepsilon}}} = \eta_\varepsilon \) is a non-negative approximate identity with respect to the convolution and \( \widehat{\eta}_\varepsilon = \exp(-\varepsilon^2 \left| x \right|^2/2) \). To approximate \(\mu\), we use a smooth dampening of the form
  \begin{equation*}
    \mu_{n} := \widehat{\eta}_{n^{-1}}\cdot \mu + \left(1 - (\widehat{\eta}_{n^{-1}}\cdot \mu)(\mathbb{R}^{d})\right)\delta_{0},
  \end{equation*}
  such that the resulting \( \mu_n \) are in \( \mathcal{P}_2 \), with Fourier transforms
  \begin{equation*}
    \widehat{\mu}_{n}(\xi) =  (\widehat{\mu} \ast
    \eta_{n^{-1}})(\xi) - (\widehat{\mu} \ast
    \eta_{n^{-1}})(0) + 1, \quad \xi \in \mathbb{R}^{d}.
  \end{equation*}
  Note that because \(\widehat{\mu}\) is continuous, \(\widehat{\mu}_{n}(\xi) \rightarrow \widehat{\mu}(\xi)\) for all \(\xi \in \mathbb{R}^{d}\). We want to use the dominated convergence theorem to deduce that
  \begin{equation*}
    \widehat{\mathcal{E}}[\mu_{n}] = D_{q} \int_{\mathbb{R}^{d}} \left| \xi \right|^{-d-q}
    \left| \widehat{\mu}_{n}(\xi)-\widehat{\omega}(\xi) \right|^{2} \dd \xi \rightarrow  \widehat{\mathcal{E}}[\mu] \quad \text{ for }
    n\rightarrow \infty.
  \end{equation*}
  
  \emph{2. Trivial case and dominating function.} Firstly, note that if \(\widehat{\mathcal{E}}[\mu] = \infty\), then Fatou's lemma ensures that \(\widehat{\mathcal{E}}[\mu_{n}]\rightarrow \infty\) as well.

  Secondly, by the assumptions on \( \omega \), it is sufficient to find a dominating function for
  \begin{equation*}
    \xi \mapsto \left| \xi \right|^{-d-q} \left| \widehat{\mu}_n(\xi) - 1 \right|^2,
  \end{equation*}
  which will only be problematic for \( \xi \) close to \( 0 \). We can estimate the behavior of \(\widehat{\mu}_{n}\) by that of \(\widehat{\mu}\) as
  \begin{align}
    \left| \widehat{\mu}_{n}(\xi) - 1 \right| \leq {} & \int_{\mathbb{R}^{d}} \int_{\mathbb{R}^{d}} \eta_{n^{-1}}(\zeta) \left| \exp(\i(\zeta-\xi)\cdot x) - \exp(\i\zeta\cdot x) \right| \dd \mu(x) \dd \zeta \nonumber \\
    = {} & \int_{\mathbb{R}^{d}} \underbrace{\int_{\mathbb{R}^{d}} \eta_{n^{-1}}(\zeta) \dd \zeta}_{\text{\(=1\)}}\,\left| \exp(-i\xi\cdot x) - 1 \right| \dd \mu(x) \nonumber \\
    \leq {} &C\bigg[(1 - \Re \widehat{\mu}(\xi)) + \underbrace{\int_{\mathbb{R}^{d}} \left| \sin (\xi\cdot x) \right| \dd \mu(x)}_{\text{\(f(\xi):=\)}}\bigg],\label{eq:92}
  \end{align}
  where the right-hand side \eqref{eq:92} is to serve as the dominating function. Note that we can estimate each summand in \eqref{eq:92} separately to justify integrability due to the elementary inequality
  \begin{equation*}
    \left| a+b \right|^{2} \leq 2\left(\left| a \right|^{2} + \left| b \right|^{2}\right) \quad
    \text{for all } a,b \in \mathbb{C}.
  \end{equation*}
  Taking the square of \eqref{eq:92} yields
  \begin{equation}
    \label{eq:94}
    \left| \widehat \mu_{n}(\xi) - 1 \right|^{2} \leq C\left[(1 - \Re \widehat{\mu}(\xi))^{2} + \left(\int_{\mathbb{R}^{d}} \left| \sin (\xi\cdot x) \right| \dd \mu(x)\right)^{2}\right].
  \end{equation}
  Now, by the existence of the second moment of \(\omega\), we know that
  \begin{align}
    \leadeq{\int_{\mathbb{R}^{d}} \left| \xi \right|^{-d-q} (1 - \Re \widehat{\mu}(\xi))^{2} \dd \xi} \nonumber \\
    \leq {} & \int_{\mathbb{R}^{d}} \left| \xi \right|^{-d-q} \left| \widehat{\mu}(\xi) - 1 \right|^{2} \dd \xi \nonumber \\
    \leq {} & 2\int_{\mathbb{R}^{d}} \left| \xi \right|^{-d-q}\left| \widehat{\mu}(\xi) - \widehat{\omega}(\xi) \right|^{2} \dd \xi + 2 \int_{\mathbb{R}^{d}} \left| \xi \right|^{-d-q} \left| \widehat{\omega}(\xi) - 1 \right|^{2} \dd \xi < \infty\label{eq:97}
  \end{align}
  This yields the integrability condition for the first term in equation \eqref{eq:94}. What remains is to show the integrability for the term \(f\) in \eqref{eq:92}, which will occupy the rest of the proof.
  
  \emph{3. Splitting \(f\).} We apply the estimate
  \begin{equation*}
    \left| \sin(y) \right| \leq \min\{\left| y \right|,1\} \quad \text{for } y \in \mathbb{R},
  \end{equation*}
  resulting in
  \begin{equation*}
    f(\xi) = \int_{\mathbb{R}^{d}} \left| \sin(\xi\cdot x) \right| \dd \mu(x) \leq \underbrace{\left| \xi \right|
      \int_{\left| x \right| \leq \left| \xi \right|^{-1}} \left| x \right| \dd \mu(x)}_{\text{\(:= f_{1}(\xi)\)}}
    + \underbrace{\int_{\left| x \right| \geq
        \left| \xi \right|^{-1}} \dd \mu(x)}_{\text{\(:= f_{2}(\xi)\)}}.
  \end{equation*}
  
  \emph{4. Integrability of \(f_{2}\):}\label{item:1} By Lemma \ref{lem:7}
  and Hölder's inequality, we can estimate \(f_{2}\) as follows:
  \begin{align}
    f_{2}(\xi) \leq {} &\frac{C_{1}}{\left| \xi \right|^{d}} \int_{\left| y \right| \leq C_{2}\left| \xi \right|} (1
    - \Re \widehat{\mu}(y)) \dd y \nonumber \\
    \leq {} & \frac{C_{1}}{\left| \xi \right|^{d}} {\underbrace{\left(\int_{\left| y \right| \leq
            C_{2}\left| \xi \right|} 1 \dd y\right)^{1/2}}_{\text{\(= C\left| \xi \right|^{d/2}\)}}}
    \left(\int_{\left| y \right| \leq C_{2}\left| \xi \right|}(1 - \Re \widehat{\mu}(y))^{2} \dd
      y\right)^{1/2}\label{eq:101}
  \end{align}
  Hence, inserting \eqref{eq:101} into the integral which we want to show to be finite and applying Fubini-Tonelli yields
  \begin{align*}
    \int_{\mathbb{R}^{d}} \left| \xi \right|^{-d-q} f_{2}(\xi)^{2} \dd \xi \leq {} &C \int_{\mathbb{R}^{d}} \left| \xi \right|^{-2d-q} \int_{\left| y \right|\leq C_{2}\left| \xi \right|} (1 - \Re \widehat{\mu}(y))^{2}\dd y \dd \xi\\
    \leq {} & C\int_{\mathbb{R}^{d}} (1 - \Re \widehat{\mu}(y))^{2} \underbrace{\int_{C_{2}\left| \xi \right| \geq \left| y \right|} \left| \xi \right|^{-2d-q} \dd \xi}_{\text{\(=C\left| y \right|^{-d-q}\)}} \dd y\\
    \leq {} & C\int_{\mathbb{R}^{d}} \left| y \right|^{-d-q}(1 - \Re \widehat{\mu}(y))^{2} \dd y < \infty
  \end{align*}
  by \eqref{eq:97}.
  
  \emph{5. Integrability of \(f_{1}\):} We use Fubini-Tonelli to get a well-known estimate for the first moment, namely
  \begin{align*}
    f_{1}(\xi) = {} & \left| \xi \right| \int_{\left| x \right|\leq \left| \xi \right|^{-1}} \left| x \right| \dd
    \mu(x)\\
    = {} & \left| \xi \right| \int_{\left| x \right|\leq \left| \xi \right|^{-1}} \int_{0}^{\left| x \right|} 1
    \dd z \dd \mu(x)\\
    = {} & \left| \xi \right| \int_{0}^{\infty} \int_{\mathbb{R}^{d}} 1_{\text{\(\{z \leq \left| x \right| \leq \left| \xi \right|^{-1}\}\)}} \dd \mu(x) \dd z\\
    \leq {} & \left| \xi \right| \int_{0}^{\left| \xi \right|^{-1}} \mu(\{z \leq \left| x \right|\}) \dd z.
  \end{align*}
  Next, we use Lemma \ref{lem:7} and Hölder's inequality (twice) to obtain
  (remember that \(1 \leq q < 2\) which ensures integrability)
  \begin{align*}
    f_{1}(\xi) \leq {} & C_{1} \left| \xi \right| \int_{0}^{\left| \xi \right|^{-1}} z^{d} \int_{\left| \zeta \right| \leq C_{2}z^{-1}} (1 - \Re \widehat{\mu}(\zeta)) \dd \zeta \dd z\\
    \leq {} & C_{1} \left| \xi \right| \int_{0}^{\left| \xi \right|^{-1}} {z^{d}}\underbrace{\left(\int_{\left| \zeta \right| \leq C_{2} z^{-1}} 1 \dd \zeta\right)^{1/2}}_{\text{\(=C\,z^{-d/2} = C\,z^{q/4 + (-d/2 - q/4)}\)}} \left(\int_{\left| \zeta \right| \leq C_{2} z^{-1}} (1 - \Re \widehat{\mu}(\zeta))^{2} \dd \zeta\right)^{1/2} \dd z\\
    \leq {} & C \left| \xi \right| \underbrace{\left(\int_{0}^{\left| \xi \right|^{-1}} z^{-q/2}\dd z\right)^{1/2}}_{\text{\(=C\left| \xi \right|^{q/4-1/2}\)}} \left(\int_{0}^{\left| \xi \right|^{-1}} \int_{\left| \zeta \right| \leq C_{2}z^{-1}} z^{d+q/2}(1 - \Re \widehat{\mu}(\zeta))^{2} \dd \zeta \dd z\right)^{1/2}.
  \end{align*}
  Squaring the expression and using Fubini-Tonelli on the second term, we obtain
  \begin{align}
    f_{1}(\xi)^{2} \leq {} & C \left| \xi \right|^{1+q/2} \int_{\mathbb{R}^{d}} (1 - \Re \widehat{\mu}(\zeta))^{2} \int_{0}^{\left| \xi \right|^{-1}} 1_{\{z \leq C_{2}\left| \zeta \right|^{-1}\}} z^{d+q/2} \dd z \dd \zeta \nonumber \\
    \leq {} & C \left| \xi \right|^{1+q/2} \int_{\mathbb{R}^{d}} (1 - \Re \widehat{\mu}(\zeta))^{2} \min\left\{\left| \xi \right|^{-d-q/2-1},\left| \zeta \right|^{-d-q/2-1}\right\} \dd \zeta \nonumber \\
    = {} & C \left| \xi \right|^{-d} \int_{\left| \zeta \right| \leq \left| \xi \right|} (1 - \Re \widehat{\mu}(\zeta))^{2} \dd \zeta\label{eq:106}\\
    &+ \underbrace{C \left| \xi \right|^{1+q/2} \int_{\left| \zeta \right|\geq\left| \xi \right|}\left| \zeta \right|^{-d-q/2-1} (1 - \Re \widehat{\mu}(\zeta))^{2} \dd \zeta}_{:= f_{3}(\xi)}\label{eq:107}
  \end{align}
  The integrability against \(\xi \mapsto \left| \xi \right|^{-d-q}\) of the term \eqref{eq:106} can now be shown analogously to \eqref{eq:101} in Step 2. Inserting the term \eqref{eq:107} into the integral and again applying Fubini-Tonelli yields
  \begin{align*}
    \leadeq{\int_{\mathbb{R}^{d}} \left| \xi \right|^{-d-q} f_{3}(\xi)^{2} \dd \xi} \\
    \leq {} & C \int_{\mathbb{R}^{d}} \left| \xi \right|^{-d-q/2+1} \int_{\left| \zeta \right|\geq\left| \xi \right|} \left| \zeta \right|^{-d-q/2-1} (1 - \Re \widehat{\mu}(\zeta))^{2} \dd \zeta \dd \xi\\
    = {} & C \int_{\mathbb{R}^{d}} \left| \zeta \right|^{-d-q/2-1} (1 - \Re \widehat{\mu}(\zeta))^{2} \underbrace{\int_{\left| \xi \right|\leq\left| \zeta \right|} \left| \xi \right|^{-d-q/2+1}\dd \xi}_{\text{\(=C\left| \zeta \right|^{-q/2+1}\)}} \dd \zeta\\
    = {} & C \int_{\mathbb{R}^{d}} \left| \zeta \right|^{-d-q} (1 - \Re \widehat{\mu}(\zeta))^{2} \dd \zeta < \infty,
  \end{align*}
  because of \eqref{eq:97}, which ends the proof. \qedhere
\end{proof}

\begin{corollary}
  \label{cor:lower-semi-cont}
  We have that
  \begin{equation*}
    \widetilde{\mathcal{E}}^{-}[\mu] = \widehat{\mathcal{E}}[\mu], \quad \mu \in \mathcal{P}(\mathbb{R}^{d})
  \end{equation*}
  and that \( \omega \) is the unique minimizer of \( \widetilde{\mathcal{E}}^- \).
\end{corollary}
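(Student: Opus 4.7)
The plan is to establish the two inequalities $\widehat{\mathcal{E}} \leq \widetilde{\mathcal{E}}^-$ and $\widetilde{\mathcal{E}}^- \leq \widehat{\mathcal{E}}$ on $\mathcal{P}(\mathbb{R}^d)$ separately, then read off uniqueness of $\omega$ as minimizer directly from the non-negativity built into the Fourier representation \eqref{eq:34}.

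For the inequality $\widehat{\mathcal{E}}[\mu] \leq \widetilde{\mathcal{E}}^-[\mu]$, I would fix an arbitrary $\mu \in \mathcal{P}(\mathbb{R}^d)$ and a sequence $(\mu_n)_{n \in \mathbb{N}} \subseteq \mathcal{P}_2(\mathbb{R}^d)$ with $\mu_n \to \mu$ narrowly. By Corollary \ref{cor:four-repr-widet}, $\widetilde{\mathcal{E}}[\mu_n] = \widehat{\mathcal{E}}[\mu_n]$ for every $n$, and by the narrow lower semi-continuity of $\widehat{\mathcal{E}}$ established in Proposition \ref{prp:compctness-sublvls},
\begin{equation*}
  \liminf_{n\to\infty}\widetilde{\mathcal{E}}[\mu_n] = \liminf_{n\to\infty}\widehat{\mathcal{E}}[\mu_n] \geq \widehat{\mathcal{E}}[\mu].
\end{equation*}
Taking the infimum over all admissible sequences yields $\widetilde{\mathcal{E}}^-[\mu] \geq \widehat{\mathcal{E}}[\mu]$.

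For the reverse inequality, I would invoke the recovery sequence constructed in Proposition \ref{prp:rec-seq}: given $\mu \in \mathcal{P}(\mathbb{R}^d)$, there exists $(\mu_n)_{n \in \mathbb{N}} \subseteq \mathcal{P}_2(\mathbb{R}^d)$ with $\mu_n \to \mu$ narrowly and $\widehat{\mathcal{E}}[\mu_n] \to \widehat{\mathcal{E}}[\mu]$. Applying Corollary \ref{cor:four-repr-widet} again gives $\widetilde{\mathcal{E}}[\mu_n] = \widehat{\mathcal{E}}[\mu_n] \to \widehat{\mathcal{E}}[\mu]$, whence
\begin{equation*}
  \widetilde{\mathcal{E}}^-[\mu] \leq \liminf_{n\to\infty}\widetilde{\mathcal{E}}[\mu_n] = \widehat{\mathcal{E}}[\mu],
\end{equation*}
which combined with the previous step produces the identification. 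The only delicate point here is that the dampening construction in Proposition \ref{prp:rec-seq} has already done the hard work of Gaussian cutoff plus an added mass at $0$ to stay in $\mathcal{P}_2$, so this step of the proof is then essentially automatic.

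For uniqueness, I would read off from \eqref{eq:34} that $\widehat{\mathcal{E}}[\mu] \geq 0$ with $\widehat{\mathcal{E}}[\omega] = 0$, so $\omega$ is a minimizer. Conversely, if $\widehat{\mathcal{E}}[\mu] = 0$, then, since $D_q > 0$ and the weight $|\xi|^{-d-q}$ is strictly positive a.e., one has $\widehat{\mu}(\xi) = \widehat{\omega}(\xi)$ for Lebesgue-a.e.\ $\xi \in \mathbb{R}^d$. Both Fourier–Stieltjes transforms are continuous, hence the identity holds everywhere, and by uniqueness of the Fourier–Stieltjes transform on $\mathcal{P}(\mathbb{R}^d)$, $\mu = \omega$. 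Thus $\omega$ is the unique minimizer of $\widetilde{\mathcal{E}}^- = \widehat{\mathcal{E}}$.
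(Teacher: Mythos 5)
Your proof is correct and follows the same two-inequality structure as the paper: the first inequality from lower semi-continuity of $\widehat{\mathcal{E}}$ together with Corollary~\ref{cor:four-repr-widet}, and the reverse from the recovery sequence of Proposition~\ref{prp:rec-seq}. The paper dispatches uniqueness with a single appeal to the form of \eqref{eq:34}, whereas you spell out the argument (non-negativity, a.e.\ equality of Fourier--Stieltjes transforms, continuity, injectivity of the transform); that is a welcome elaboration of the same idea, not a different route.
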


\begin{proof}
  For \(\mu \in \mathcal{P}(\mathbb{R}^{d})\) and any sequence \((\mu_{n})_{n\in\mathbb{N}} \subseteq
  \mathcal{P}_{2}(\mathbb{R}^{d})\) with \(\mu_{n}\rightarrow \mu\) narrowly, we have
  \begin{equation*}
    \liminf_{n\rightarrow \infty} \widetilde{\mathcal{E}}[\mu_{n}] = \liminf_{n\rightarrow \infty}
     \widehat{\mathcal{E}}[\mu_{n}] \geq \widehat{\mathcal{E}}[\mu],
  \end{equation*}
  by the lower semi-continuity of \(\widehat{\mathcal{E}}\). By taking the infimum over all the sequences converging narrowly to $\mu$, we conclude
  \begin{equation}
    \label{eq:111}
    \widetilde{\mathcal{E}}^{-}[\mu] \geq \widehat{\mathcal{E}}[\mu] \quad \text{for all } \mu \in
    \mathcal{P}(\mathbb{R}^{d}).
  \end{equation}
  
  Conversely, for \(\mu \in \mathcal{P}(\mathbb{R}^{d})\), employing the sequence \((\mu_{n})_{n\in\mathbb{N}}\subseteq \mathcal{P}_{2}(\mathbb{R}^{d})\) of Proposition \ref{prp:rec-seq} allows us to see that
  \begin{equation}
    \label{eq:112}
    \widehat{\mathcal{E}}[\mu] = \lim_{n\rightarrow \infty} \widehat{\mathcal{E}}[\mu_{n}] = \lim_{n\rightarrow \infty}
    \widetilde{\mathcal{E}}[\mu_{n}] \geq \widetilde{\mathcal{E}}^{-}[\mu].
  \end{equation}
  Combining \eqref{eq:112} with \eqref{eq:111} yields the first claim, while the characterization of the minimizer follows from the form of \( \widehat{\mathcal{E}}\) in \eqref{eq:34}.
\end{proof}

Having verified this, in the following we shall work with the functional
\(\widehat{\mathcal{E}}\) instead of \(\mathcal{E}\) or \(\widetilde{\mathcal{E}}\).

\begin{remark}
  Notice that the lower semi-continuous envelope and therefore \(\widehat{\mathcal{E}}\) is also the \(\Gamma\)-limit, see Definition \ref{def:gamma-conv} below, of a regularization of \(\widetilde{\mathcal{E}}\) using the second moment, i.e., by considering 
  \begin{equation*}
    \mathcal{I}_{\varepsilon}[\mu] := \widetilde{\mathcal{E}}[\mu] + \varepsilon \int_{\mathbb{R}^{d}}\left| x \right|^{2} \dd \mu,
  \end{equation*}
  we have
  \begin{equation*}
    \mathcal{I}_{\varepsilon} \xrightarrow{\Gamma} \widetilde{\mathcal{E}}^{-} \quad \text{for
    } \varepsilon \rightarrow 0.
  \end{equation*}
\end{remark}

\subsection{Consistency of the particle approximations}
\label{sec:part-appr}

Let \(N \in \mathbb{N}\) and define
\begin{equation*}
  \mathcal{P}^{N}(\mathbb{R}^{d}) := \left\{ \mu \in \mathcal{P}(\mathbb{R}^{d}) : \mu =
    \frac{1}{N}\sum_{i = 1}^{N}\delta_{x_{i}} \text{ for some } \{x_{i}: i=1, \dots N \}
    \subseteq \mathbb{R}^{d} \right\}
\end{equation*}
and consider the restricted minimization problem
\begin{equation}
  \label{eq:115}
  \widehat{\mathcal{E}}_{N}[\mu] :=
  \begin{cases}
    \widehat{\mathcal{E}}[\mu], &\mu \in \mathcal{P}^{N}(\mathbb{R}^{d}),\\
    \infty, &\text{otherwise}
  \end{cases}\rightarrow \min_{\mu \in \mathcal{P}(\mathbb{R}^{d})}.
\end{equation}

We want to prove consistency of the restriction in terms of \(\Gamma\)-convergence of \(\widehat{\mathcal{E}}_{N}\) to \(\widehat{\mathcal{E}}\). This implies that the  discrete measures minimizing
 \(\widehat{\mathcal{E}}_{N}\) will converge to the unique minimizer $\omega$ of \(\widehat{\mathcal{E}}\), in other words the measure quantization of $\omega$ via the minimization of
\(\widehat{\mathcal{E}}_{N}\) is consistent.

\begin{definition} [\( \Gamma \)-convergence]
  \label{def:gamma-conv}
  \cite[Definition 4.1, Proposition 8.1]{93-Dal_Maso-intro-g-conv}
  Let \( X \) be a metrizable space and \( F_N \colon X \rightarrow (-\infty,\infty] \), \( N \in \mathbb{N} \) be a sequence of functionals. Then we say that \( F_N \) \emph{\( \Gamma \)-converges} to \( F \), written as \( F_N \xrightarrow{\Gamma} F \), for an \( F \colon X \rightarrow (-\infty,\infty] \), if
  \begin{enumerate}
  \item \emph{\( \liminf \)-condition:} For every \( x \in X \) and every sequence \( x_N \rightarrow x \),
    \begin{equation*}
      F(x) \leq \liminf_{N\rightarrow\infty} F_N(x_N);
    \end{equation*}
  \item \emph{\( \limsup \)-condition:} For every \( x \in X \), there exists a sequence \( x_N \rightarrow x \), called \emph{recovery sequence}, such that
    \begin{equation*}
      F(x) \geq \limsup_{N\rightarrow\infty} F_N(x_N).
    \end{equation*}
  \end{enumerate}

  Furthermore, we call the sequence \( (F_N)_N \) \emph{equi-coercive} if for every \( c \in \mathbb{R} \) there is a compact set \( K \subseteq X \) such that \( \left\{ x : F_N(x) \leq c \right\} \subseteq K \) for all \( N \in \mathbb{N} \). As a direct consequence,  choosing \( x_N \in \argmin F_N \) for all $N \in \mathbb N$, there is a subsequence \( (x_{N_k})_k \) and \( x^\ast \in X \) such that
  \begin{equation*}
    x_{N_k} \rightarrow x^\ast \in \argmin F.
  \end{equation*}
\end{definition}
We shall need a further simple lemma justifying the existence of minimizers for the problem \eqref{eq:115}.

\begin{lemma}
  \label{lem:9}
  For all \(N \in \mathbb{N}\), \(\mathcal{P}^{N}(\mathbb{R}^{d})\) is closed in the narrow topology.
\end{lemma}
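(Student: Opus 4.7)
The plan is to take an arbitrary narrowly convergent sequence $(\mu_k)_k \subseteq \mathcal{P}^N(\mathbb{R}^d)$ with limit $\mu \in \mathcal{P}(\mathbb{R}^d)$, and show that $\mu$ is itself of the form $\frac{1}{N}\sum_{i=1}^N \delta_{x_i}$. Writing $\mu_k = \frac{1}{N}\sum_{i=1}^N \delta_{x_i^k}$ for points $x_i^k \in \mathbb{R}^d$, the strategy reduces to extracting, via compactness, a joint limit of the $N$ sequences $(x_i^k)_k$ as $k \to \infty$.

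The first step is to confine all $x_i^k$ to a common compact set. Since $(\mu_k)_k$ converges narrowly to the probability measure $\mu$, Prokhorov's theorem guarantees that the family is tight. Choosing $\varepsilon = 1/(2N)$, there exists a compact $K \subseteq \mathbb{R}^d$ such that $\mu_k(K) \geq 1 - 1/(2N)$, equivalently $\mu_k(\mathbb{R}^d \setminus K) \leq 1/(2N)$, for every $k$. Since each atom of $\mu_k$ carries mass exactly $1/N$, the presence of even one $x_i^k$ outside $K$ would force $\mu_k(\mathbb{R}^d \setminus K) \geq 1/N > 1/(2N)$, contradicting the tightness bound. Consequently $x_i^k \in K$ for all $i = 1, \ldots, N$ and all $k \in \mathbb{N}$.

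By compactness of $K$, we successively extract subsequences (for $i = 1, 2, \ldots, N$) to obtain a single subsequence, which by abuse of notation we still denote by $k$, such that $x_i^k \to x_i \in K$ for every $i = 1, \ldots, N$. For any $g \in C_b(\mathbb{R}^d)$,
\begin{equation*}
\int_{\mathbb{R}^d} g \dd \mu_k = \frac{1}{N}\sum_{i=1}^N g(x_i^k) \longrightarrow \frac{1}{N}\sum_{i=1}^N g(x_i) = \int_{\mathbb{R}^d} g \dd\Bigl(\tfrac{1}{N}\sum_{i=1}^N \delta_{x_i}\Bigr),
\end{equation*}
so along this subsequence $\mu_k$ converges narrowly to $\frac{1}{N}\sum_{i=1}^N \delta_{x_i} \in \mathcal{P}^N(\mathbb{R}^d)$. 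By the Hausdorff property of the narrow topology on $\mathcal{P}(\mathbb{R}^d)$ (see e.g.\ the metric $\delta$ in \eqref{eq:318}), the limit of the full sequence must coincide with this one, so $\mu \in \mathcal{P}^N(\mathbb{R}^d)$.

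The only mildly delicate point is that the $x_i^k$ are not required to be pairwise distinct, so one cannot tightly argue atom-by-atom; this is precisely why we use the crude mass-versus-tightness comparison $1/N > 1/(2N)$ to rule out the escape of any single atom to infinity, which is all that is needed.
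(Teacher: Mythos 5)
Your proof is correct and follows essentially the same route as the paper: use tightness of the narrowly convergent sequence to confine all $N$ atoms to a common compact set, extract convergent subsequences of the atoms, and invoke the Hausdorff property of the narrow topology to identify the limit. Your version is slightly more explicit where the paper is terse, in that you quantify the tightness bound ($\varepsilon = 1/(2N)$) and explain why a single escaping atom would already carry too much mass outside $K$; the paper simply asserts that tightness forces the atoms into a compact set and orders them lexicographically before extracting a subsequence.
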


\begin{proof}
  Note that \(\mathcal{P}(\mathbb{R}^{d})\) endowed with the narrow topology is a metrizable space, hence it is a Hausdorff space and we can characterize its topology by sequences. Let \(N \in \mathbb{N}\) and \((\mu_{k})_{k \in \mathbb{N}} \subseteq \mathcal{P}^{N}(\mathbb{R}^{d})\) with
  \begin{equation*}
    \mu_{k} \rightarrow  \mu \in \mathcal{P}(\mathbb{R}^{d}) \quad \text{narrowly for } k\rightarrow \infty.
  \end{equation*}
  By ordering the points composing each measure, for example using a lexicographical ordering, we can identify the measures \(\mu_{k}\) with a collection of points \(x^{k} \in \mathbb{R}^{d\times N}\). As the sequence \((\mu_{k})_{k}\) is convergent, it is tight, whence the columns of \((x^{k})_{k}\) must all lie in a compact set \(K \subseteq \mathbb{R}^{d}\). So we can extract a subsequence \((x^{k_{l}})_{l \in \mathbb{N}}\) such that 
  \begin{equation*}
    x^{k_{l}} \rightarrow  x^{\ast} = (x^{\ast}_{i})_{i = 1}^{N} \in \mathbb{R}^{d\times N} \quad \text{for } l\rightarrow \infty.
  \end{equation*}
  This implies that
  \begin{equation*}
    \mu_{k_{l}} \rightarrow  \mu^{\ast} = \frac{1}{N} \sum_{i}^{N}\delta_{x^{\ast}_{i}} \quad \text{narrowly for } l\rightarrow \infty.
  \end{equation*}
  Since \(\mathcal{P}(\mathbb{R}^{d})\) is a Hausdorff space, \(\mu = \mu^{\ast} \in \mathcal{P}^{N}(\mathbb{R}^{d})\), concluding the proof.
\end{proof}

\begin{theorem}[Consistency of particle approximations]
  \label{thm:cons-part-appr}
  The functionals \((\widehat{\mathcal{E}}_{N})_{N \in \mathbb{N}}\) are equi-coercive and
  \begin{equation*}
    \widehat{\mathcal{E}}_{N} \xrightarrow{\Gamma} \widehat{\mathcal{E}} \quad \text{for } N\rightarrow \infty,
  \end{equation*}
  with respect to the narrow topology. In particular,
  \begin{equation*}
    \argmin_{\mu \in \mathcal{P}(\mathbb{R}^{d})} \widehat{\mathcal{E}}_{N}[\mu] \ni \widetilde{\mu}_{N} \rightarrow \widetilde{\mu} = \argmin_{\mu \in \mathcal{P}(\mathbb{R}^{d})} \widehat{\mathcal{E}}[\mu] = \omega,
  \end{equation*}
  for any choice of minimizers \(\widetilde \mu_{N}\).
\end{theorem}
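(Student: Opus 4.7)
The plan is to verify the two hypotheses of the fundamental theorem of \(\Gamma\)-convergence, namely equi-coercivity together with the liminf and limsup conditions, and then invoke the identification of the unique minimizer of \(\widehat{\mathcal{E}}\) with \(\omega\) established in Corollary \ref{cor:lower-semi-cont}.

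Equi-coercivity follows essentially for free from the definition \eqref{eq:115}: for every \(\mu \in \mathcal{P}(\mathbb{R}^d)\) we have \(\widehat{\mathcal{E}}_N[\mu] \geq \widehat{\mathcal{E}}[\mu]\), so for any \(c \in \mathbb{R}\) the sublevel \(\{\widehat{\mathcal{E}}_N \leq c\}\) is contained in \(\{\widehat{\mathcal{E}} \leq c\}\), which is narrowly compact by Proposition \ref{prp:compctness-sublvls}. The liminf inequality is equally direct. Given \(\mu_N \rightarrow \mu\) narrowly, either \(\widehat{\mathcal{E}}_N[\mu_N] = \infty\) (and the inequality is trivial), or \(\mu_N \in \mathcal{P}^N(\mathbb{R}^d)\), in which case \(\widehat{\mathcal{E}}_N[\mu_N] = \widehat{\mathcal{E}}[\mu_N]\), and the narrow lower semi-continuity of \(\widehat{\mathcal{E}}\) from Proposition \ref{prp:compctness-sublvls} yields \(\liminf_N \widehat{\mathcal{E}}_N[\mu_N] \geq \widehat{\mathcal{E}}[\mu]\).

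The main obstacle is the construction of a recovery sequence \((\mu_N)_N\) with \(\mu_N \in \mathcal{P}^N(\mathbb{R}^d)\), \(\mu_N \rightarrow \mu\) narrowly, and \(\limsup_N \widehat{\mathcal{E}}[\mu_N] \leq \widehat{\mathcal{E}}[\mu]\) for an arbitrary \(\mu \in \mathcal{P}(\mathbb{R}^d)\), which may fail to have finite second moment. The strategy is a two-step approximation followed by a diagonal extraction. First, apply Proposition \ref{prp:rec-seq} to obtain a sequence \((\nu_n)_n \subseteq \mathcal{P}_2(\mathbb{R}^d)\) with \(\nu_n \rightarrow \mu\) narrowly and \(\widehat{\mathcal{E}}[\nu_n] \rightarrow \widehat{\mathcal{E}}[\mu]\). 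Second, for each fixed \(n\), since \(\nu_n\) has a finite second moment, Lemma \ref{lem:3} produces empirical measures \(\nu_{n,N} \in \mathcal{P}^N(\mathbb{R}^d)\) with \(\nu_{n,N} \rightarrow \nu_n\) almost surely in the \(2\)-Wasserstein distance. Lemma \ref{lem:5} together with Corollary \ref{cor:four-repr-widet} then gives \(\widehat{\mathcal{E}}[\nu_{n,N}] = \widetilde{\mathcal{E}}[\nu_{n,N}] \rightarrow \widetilde{\mathcal{E}}[\nu_n] = \widehat{\mathcal{E}}[\nu_n]\) as \(N \rightarrow \infty\). Fixing a realization along which both convergences hold and using the metric \eqref{eq:318} for the narrow topology, one chooses a strictly increasing sequence \(N_n\) such that for all \(N \geq N_n\)
\begin{equation*}
\delta(\nu_{n,N},\nu_n) < \tfrac{1}{n}, \qquad \bigl|\widehat{\mathcal{E}}[\nu_{n,N}] - \widehat{\mathcal{E}}[\nu_n]\bigr| < \tfrac{1}{n},
\end{equation*}
and sets \(\mu_N := \nu_{n(N),N}\) where \(n(N) := \max\{n : N_n \leq N\}\). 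By construction \(\mu_N \in \mathcal{P}^N(\mathbb{R}^d)\), \(\mu_N \rightarrow \mu\) narrowly, and \(\widehat{\mathcal{E}}[\mu_N] \rightarrow \widehat{\mathcal{E}}[\mu]\), delivering the recovery sequence.

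Having established the \(\Gamma\)-convergence and equi-coercivity, the convergence of minimizers is the standard consequence recorded in Definition \ref{def:gamma-conv}: any subsequence \((\widetilde{\mu}_{N_k})_k\) admits a further narrowly convergent subsequence whose limit minimizes \(\widehat{\mathcal{E}}\), and by Corollary \ref{cor:lower-semi-cont} this minimizer is uniquely \(\omega\), so the whole sequence converges to \(\omega\). The only technical point requiring care is ensuring that the diagonal extraction preserves membership in \(\mathcal{P}^N(\mathbb{R}^d)\), which is automatic since the atomic measure \(\nu_{n(N),N}\) has exactly \(N\) Dirac atoms regardless of \(n(N)\).
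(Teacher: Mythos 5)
Your proof is correct and follows essentially the same route as the paper: equi-coercivity and the liminf inequality from Proposition \ref{prp:compctness-sublvls}, and a recovery sequence built by combining Proposition \ref{prp:rec-seq} with the empirical process of Lemma \ref{lem:3} and a diagonal extraction. The only cosmetic difference is that you justify continuity of \(\widehat{\mathcal{E}}\) along the Wasserstein-convergent empirical approximations via Lemma \ref{lem:5} together with Corollary \ref{cor:four-repr-widet}, whereas the paper cites Lemma \ref{lem:6} directly; these are interchangeable.
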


\begin{proof}
  \emph{1. Equi-coercivity:} This follows from the fact that \(\widehat{\mathcal{E}}\) has compact sub-levels by Proposition \ref{prp:compctness-sublvls}, together with \(\widehat{\mathcal{E}}_{N} \geq \widehat{\mathcal{E}}\).

  \emph{2. \( \liminf \)-condition:} Let \(\mu_{N} \in \mathcal{P}(\mathbb{R}^{d})\) such that \(\mu_{N} \rightarrow \mu\) narrowly for \(N\rightarrow \infty\). Then
  \begin{equation*}
    \liminf_{N\rightarrow \infty} \widehat{\mathcal{E}}_{N}[\mu_{N}] \geq \liminf_{N\rightarrow \infty} \widehat{\mathcal{E}}[\mu_{N}] \geq \widehat{\mathcal{E}}[\mu],
  \end{equation*}
  by the lower semi-continuity of \(\widehat{\mathcal{E}}\).

  \emph{3. \( \limsup \)-condition:} Let \(\mu \in \mathcal{P}(\mathbb{R}^{d})\). By Proposition \ref{prp:rec-seq}, we can find a sequence \((\mu^{k})_{k \in \mathbb{N}} \subseteq \mathcal{P}_{2}(\mathbb{R}^{d})\), for which \(\widehat{\mathcal{E}}[\mu^{k}] \rightarrow \widehat{\mathcal{E}}[\mu]\). Furthermore, by Lemma \ref{lem:3}, we can approximate each \(\mu^{k}\) by \((\mu^{k}_{N})_{N\in\mathbb{N}} \subseteq \mathcal{P}_{2}(\mathbb{R}^{d})\cap\mathcal{P}^{N}(\mathbb{R}^{d})\), a realization of the empirical process of \(\mu^{k}\). This has a further subsequence which converges in the \(2\)-Wasserstein distance by Lemma \ref{lem:26}, for which we have continuity of \(\widehat{\mathcal{E}}\) by Lemma \ref{lem:6}. A diagonal argument then yields a sequence \(\mu_{N} \in \mathcal{P}^{N}(\mathbb{R}^{d})\) for which
  \begin{equation*}
    \widehat{\mathcal{E}}_{N}[\mu_{N}] = \widehat{\mathcal{E}}[\mu_{N}] \rightarrow  \widehat{\mathcal{E}}[\mu] \quad \text{for } N\rightarrow \infty.
  \end{equation*}

  \emph{4.Convergence of minimizers:} We find minimizers for \(\widehat{\mathcal{E}}_{N}\) by applying the direct method, which is justified because the \((\widehat{\mathcal{E}}_{N})_{N}\) are equi-coercive and each \(\widehat{\mathcal{E}}_{N}\) is lower semi-continuous by Fatou’s lemma and Lemma \ref{lem:9}. The convergence of the minimizers \(\widetilde{\mu}_{N}\) to a minimizer \(\widetilde{\mu}\) of \(\widehat{\mathcal{E}}\) then follows. But \(\widetilde{\mu} = \omega\) because \(\omega\) is the unique minimizer of \(\widehat{\mathcal{E}}\).\qedhere
\end{proof}

\section{Moment bound in the symmetric case}
\label{sec:moment-bound-symm}

Let \( q=q_a = q_r \in (1,2) \) be strictly larger than \( 1 \) now. We want to prove that in this case, we have a stronger compactness than the one showed in Proposition \ref{prp:compctness-sublvls}, namely that the sub-levels of \( \widehat{\mathcal{E}} \) have a uniformly bounded \( r \)th moment for \( r < q/2 \).

In the proof, we shall be using the theory developed in Appendix \ref{cha:cond-posit-semi} in a more explicit form than before, in particular the notion of the generalized Fourier transform (Definition \ref{def:gen-fourier-transform}) and its computation in the case of the power function (Theorem \ref{thm:cond-ft-power}).

\begin{theorem}
  \label{thm:moment-bound-symmetric}
  Let \( \omega \in \mathcal{P}_2(\mathbb{R}^d) \). For \( r < q/2 \) and a given \( M \in \mathbb{R} \), there exists an \( M' \in \mathbb{R} \) such that
  \begin{equation*}
    \int_{\mathbb{R}^d} \left| x \right|^{r} \diff \mu(x) \leq M', \quad \text{for all } \mu \text{ such that } \widehat{\mathcal{E}}[\mu] \leq M.
  \end{equation*}
\end{theorem}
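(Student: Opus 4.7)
The plan is to bound the tail of $\mu$ using Lemma \ref{lem:7} applied in the regime where the bound on the Fourier side is controlled by $\widehat{\mathcal{E}}[\mu]$, and then integrate in the layer-cake formula. Concretely, for a fixed $t > 0$ large, I apply Lemma \ref{lem:7} with $u = 1/t$ to obtain
\begin{equation*}
  \mu\bigl(\{|x| \geq t\}\bigr) \leq C_1 \, t^d \int_{|\xi| \leq C_2/t} \bigl(1 - \Re \widehat{\mu}(\xi)\bigr) \dd \xi.
\end{equation*}
Since $\widehat{\omega}(0) = 1$, I split via the triangle inequality
\begin{equation*}
  1 - \Re \widehat{\mu}(\xi) \leq \bigl| 1 - \widehat{\omega}(\xi) \bigr| + \bigl| \widehat{\omega}(\xi) - \widehat{\mu}(\xi) \bigr|,
\end{equation*}
and treat the two pieces separately.

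For the first piece, since $\omega \in \mathcal{P}_2(\mathbb{R}^d)$ has a finite first moment, the elementary estimate $|1 - e^{-\i x \cdot \xi}| \leq |\xi||x|$ gives $|1 - \widehat{\omega}(\xi)| \leq C_\omega |\xi|$, so that
\begin{equation*}
  \int_{|\xi| \leq C_2/t} \bigl| 1 - \widehat{\omega}(\xi) \bigr| \dd \xi \leq C \, t^{-d-1}.
\end{equation*}
For the second piece I use Cauchy–Schwarz against the weight $|\xi|^{\pm(d+q)/2}$,
\begin{equation*}
  \int_{|\xi| \leq C_2/t} \bigl| \widehat{\omega}(\xi) - \widehat{\mu}(\xi) \bigr| \dd \xi \leq \left(\int_{|\xi| \leq C_2/t} |\xi|^{d+q} \dd \xi \right)^{1/2} \left( \int_{\mathbb{R}^d} |\xi|^{-d-q} \bigl|\widehat{\omega}(\xi) - \widehat{\mu}(\xi)\bigr|^2 \dd \xi \right)^{1/2},
\end{equation*}
where the first factor is of order $t^{-(d+q/2)}$ by polar integration and the second is bounded by $(M/D_q)^{1/2}$ by assumption. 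Combining yields
\begin{equation*}
  \mu\bigl(\{|x| \geq t\}\bigr) \leq C \, t^{-1} + C \, M^{1/2} \, t^{-q/2},
\end{equation*}
and since $q \leq 2$ the term $t^{-q/2}$ dominates, giving $\mu(\{|x| \geq t\}) \leq C(M,\omega) \, t^{-q/2}$ for $t$ sufficiently large, uniformly for $\mu$ in the sublevel set.

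Finally, I use the layer-cake representation
\begin{equation*}
  \int_{\mathbb{R}^d} |x|^{r} \dd \mu(x) = r \int_0^\infty t^{r-1} \mu\bigl(\{ |x| \geq t\}\bigr) \dd t,
\end{equation*}
and split the outer integral at some threshold $t_0$. The contribution from $[0,t_0]$ is trivially bounded by $t_0^r$, while the contribution from $[t_0,\infty)$ is majorised by $C(M,\omega) \int_{t_0}^\infty t^{r-1-q/2}\dd t$, which is finite precisely when $r < q/2$. Taking $M'$ equal to this finite bound completes the proof. The main delicate point is bookkeeping the two competing powers in step (ii) above so that the slower decay $t^{-q/2}$ (and not $t^{-1}$) is what pins down the threshold $r < q/2$; the fact that $\omega \in \mathcal{P}_2$ is used only to ensure $|1-\widehat{\omega}(\xi)| \leq C|\xi|$ and this cheaper control of $\widehat{\omega}$ is why the threshold is governed by the repulsion exponent through the weight $|\xi|^{-d-q}$ in $\widehat{\mathcal{E}}$.
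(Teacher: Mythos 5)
Your proof is correct, and it takes a genuinely different route from the paper. The paper's proof approximates $\mu$ by a dampened and mollified Schwartz sequence $\mu_n'$, applies the generalized Fourier transform of the power function (Theorem~\ref{thm:cond-ft-power}.2) to rewrite $\int |x|^r\,\diff\mu_n$ as a Fourier-side integral against $|\xi|^{-d-r}$, bounds that via Cauchy--Schwarz against the weight $|\xi|^{-d-q}$, and then passes to the limit $n\to\infty$ invoking continuity of $\widehat{\mathcal{E}}$ along the approximating sequence. Your argument bypasses all of this: you instead upgrade the quantitative tail estimate latent in the proof of Proposition~\ref{prp:compctness-sublvls} — namely Lemma~\ref{lem:7} combined with the triangle-inequality split of $1-\Re\widehat\mu$ and Cauchy--Schwarz against $|\xi|^{\pm(d+q)/2}$ — to obtain the explicit decay $\mu(\{|x|\geq t\}) \leq C(M,\omega)\,t^{-q/2}$, and then integrate it via the layer-cake formula. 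Your route is shorter and more elementary: it needs neither the generalized Fourier transform theory (beyond the formula~\eqref{eq:34} for $\widehat{\mathcal{E}}$ itself) nor the approximation machinery of Proposition~\ref{prp:rec-seq}, and it makes transparent exactly why $r<q/2$ is the correct threshold — the exponent $q/2$ appears directly in the tail decay and the integral $\int^\infty t^{r-1-q/2}\,\diff t$ converges precisely for $r<q/2$. The paper's approach, by contrast, exercises the generalized Fourier transform in a linear (rather than quadratic) pairing, which is arguably its reason for being included, as it showcases that machinery. Both establish the same bound.
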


\begin{proof}
  Let \( \mu \in \mathcal{P}(\mathbb{R}^d) \). If \( \widehat{\mathcal{E}}[\mu] \leq M \), then we also have
  \begin{align*}
    M \geq {} & \widehat{\mathcal{E}}[\mu] = D_q \int_{\mathbb{R}^d} \left| \widehat{\mu}(\xi) - \widehat{\omega}(\xi) \right|^2 \left| \xi \right|^{-d-q} \diff \xi\\
    \geq {} &  \left( c \int_{\mathbb{R}^d} \left| \widehat{\mu}(\xi) - 1 \right|^2 \left| \xi \right|^{-d-q} \diff \xi - \int_{\mathbb{R}^d} \left| \widehat{\omega}(\xi) - 1 \right|^2 \left| \xi \right|^{-d-q} \diff \xi \right),
  \end{align*}
  so that there is an \( M'' > 0\) such that 
  \begin{equation*}
    \int_{\mathbb{R}^d} \left| \widehat{\mu} - 1 \right|^2 \left| \xi \right|^{-d-q} \diff \xi \leq M''.
  \end{equation*}
  Now approximate \( \mu \) by the sequence of Proposition \ref{prp:rec-seq}, again denoting it
  \begin{equation*}
    \mu_{n} := \widehat{\eta}_{n^{-1}}\cdot \mu + \left(1 - (\widehat{\eta}_{n^{-1}}\cdot \mu)(\mathbb{R}^{d})\right)\delta_{0},
  \end{equation*}
 and then \( \mu_n \) by a Gaussian mollification with \( \eta_{k_n^{-1}} \) to obtain the diagonal sequence \( \mu_{n}' := \mu_n \ast \eta_{k_n^{-1}}\), so that we have convergence \( \widehat{\mathcal{E}}[\mu_n'] \rightarrow \widehat{\mathcal{E}}[\mu] \). We set \( \widehat{\nu}_n := (\mu_n' - \eta_{k_n^{-1}}) \).

Then, \( \widehat{\nu}_n \in \mathcal{S}(\mathbb{R}^d) \), the space of Schwartz functions: by the dampening of Proposition \ref{prp:rec-seq}, the underlying measures have finite moment of any order, yielding decay of \( \widehat{\nu_n}(x) \) of arbitrary polynomial order for \( \left| x \right| \to \infty \), and the mollification takes care of \( \widehat{\nu}_n \in C^\infty(\mathbb{R}^d) \). Furthermore, set \( \nu_n = \widehat{\nu}^\vee_n \) and recall that the inverse Fourier transform can also be expressed as the integral of an exponential function. By expanding this exponential function in its power series, we see that for each \( n \),
  \begin{equation*}
    \widehat{\nu}_n(\xi) = O(\left| \xi \right|) \quad \text{for } \xi \rightarrow 0,
  \end{equation*}
  by the fact that \( \mu_n' \) and \( \delta_0 \) have the same mass, namely \( 1 \). Therefore, \( \widehat{\nu}_n \in \mathcal{S}_1(\mathbb{R}^d) \), see Definition \ref{def:restr-schwartz}, and we can apply Theorem \ref{thm:cond-ft-power}.2, to get
  \begin{align*}
    \leadeq{\int_{\mathbb{R}^d} \left| x \right|^{r} \widehat{\nu}_n(x) \diff x}\\
    = {} & C \int_{\mathbb{R}^d} \left| \xi \right|^{-d-r} \nu_n(\xi) \diff \xi\\
    \leq {} & C \Bigg[ \int_{\left| \xi \right| \leq 1} \underbrace{\left| \xi \right|^{-d-r}}_{= \left| \xi \right|^{-\frac{d-q+2r}{2}} \left| \xi \right|^{-\frac{d + q}{2}}} \left| \nu_n(\xi) \right| \diff \xi + \underbrace{\int_{\left| \xi \right| > 1} \left| \xi \right|^{-d-r} \left| \nu_n(\xi) \right| \diff \xi}_{\leq C < \infty} \Bigg]\\
    \leq {} & C \Bigg[ \underbrace{\left( \int_{\left| \xi \right| \leq 1} \left| \xi \right|^{-d+(q - 2r)} \diff \xi \right)^{1/2}}_{\smash{< \infty}} \left( \int_{\mathbb{R}^d} \left| \xi \right|^{-d-q} \left| \nu_n \right|^2 \diff \xi \right)^{1/2} + 1 \Bigg]\\
    \leq {} & C \left[ \left( \int_{\mathbb{R}^d} \left| \xi \right|^{-d-q} \left| \nu_n \right|^2 \diff \xi \right)^{1/2} + 1 \right].
  \end{align*}
  
  Now, we recall again the continuity of \( \widehat{\mathcal{E}} \) for \( \omega = \delta_0 \) along \( \mu_n \) by Proposition \ref{prp:rec-seq},  and its continuity \wrt the Gaussian mollification. The latter can be seen either by the \( 2 \)-Wasserstein-convergence of the mollification for \( n \) fixed or by using the dominated convergence theorem together with the power series expansion of \( \exp \), similarly to Lemma \ref{lem:11} below. To summarize, we see that
  \begin{equation*}
    \lim_{n\rightarrow\infty} \int_{\mathbb{R}^d} \left| \xi \right|^{-d-q} \left| \nu_n \right|^2 \diff \xi = (2 \pi)^{-d} \int_{\mathbb{R}^d} \left| \xi \right|^{-d-q} \left| \widehat{\mu} - 1 \right|^2 \diff \xi \leq (2 \pi)^{-d} M'',
  \end{equation*}
  while on the other hand we have
  \begin{align*}
    \liminf_{n\rightarrow\infty} \int_{\mathbb{R}^d} \left| x \right|^{r} \widehat{\nu}_n(x) \diff x = {} & \liminf_{n\rightarrow\infty} \int_{\mathbb{R}^d} \left| x \right|^{r} \diff \mu_n(x) - \underbrace{\lim_{n\rightarrow\infty} \int_{\mathbb{R}^d} \left| x \right|^r \eta_{k_n^{-1}} (x) \diff x}_{=0}\\
    \geq {} & \int_{\mathbb{R}^d} \left| x \right|^{r} \diff \mu(x)
  \end{align*}
  by Lemma \ref{lem:4}, concluding the proof.
\end{proof}

\section{Regularization by using the total variation}
\label{sec:tv-reg}

We shall regularize the functional \( \widehat{\mathcal{E}} \) by an additional total variation term, for example to reduce the possible effect of  noise on the given datum \( \omega \). In particular, we expect the minimizer of the corresponding functional to be piecewise smooth or even piecewise constant while any sharp edges discontinuities in \( \omega \) should be preserved, as it is the case for the regularization of a quadratic fitting term, see for example \cite[Chapter 4]{10_Chambolle_ea_tv_intro}.

In the following, we begin by introducing this regularization and prove that for a vanishing regularization parameter, the minimizers of the regularizations converge to the minimizer of the original functional. One effect of the regularization will be to allow us to consider approximating or regularized minimizers of \( \widehat{\mathcal{E}}[\mu] \) in \( \mathcal{P}(\mathbb{R}^d) \cap BV(\mathbb{R}^d) \), where \( BV(\mathbb{R}^d) \) is the space of bounded variation functions. In the classical literature, one finds several approaches to discrete approximations to functionals including total variation terms as well as to their \( BV \)-minimizers, e.g., by means of finite element type approximations, see for example \cite{12-Bartels-TotalVariation}. Here however, we propose an approximation which depends on the position of (freely moving) particles in \( \mathbb{R}^d \), which can be combined with the particle approximation of Section \ref{sec:part-appr}. To this end, in Section \ref{sec:discrete-version-tv-1}, we shall present two ways of embedding into $L^1$ the Dirac masses which are associated to particles.

\subsection{Consistency of the regularization for the continuous functional}
\label{sec:cons-regul-cont}

For \(\mu \in \mathcal{P}(\mathbb{R}^{d})\), define
\begin{equation}
  \label{eq:135}
  \widehat{\mathcal{E}}^{\lambda}[\mu] :=
  \begin{cases}
    \widehat{\mathcal{E}}[\mu] + \lambda \left| D\mu \right|(\mathbb{R}^d), &\mu \in \mathcal{P}(\mathbb{R}^{d}) \cap BV(\mathbb{R}^{d}),\\
    \infty, &\text{otherwise},
  \end{cases}
\end{equation}
where \(D\mu\) denotes the distributional derivative of \(\mu\), being a finite Radon-measure, and \(\left| D \mu \right|(\mathbb{R}^d)\) its total variation. We present two useful Lemmata before proceeding to prove the \( \Gamma \)-convergence \( \widehat{\mathcal{E}}^\lambda \xrightarrow{\Gamma} \widehat{\mathcal{E}} \).

\begin{lemma}
  [Continuity of \(\widehat{\mathcal{E}}\) \wrt Gaussian mollification]
  \label{lem:11}
  Let \( \omega \in \mathcal{P}_2(\mathbb{R}^d) \), \(\mu \in \mathcal{P}(\mathbb{R}^{d})\) and set again
  \begin{equation*}
    \eta(x) := (2\pi)^{-d/2}\exp\left(-\frac{1}{2}\left| x \right|^{2}\right), \quad \eta_{\varepsilon}(x) := \varepsilon^{-d}\eta(\varepsilon^{-1}x), \quad x \in \mathbb{R}^{d}.
  \end{equation*}
  Then,
  \begin{equation*}
    \widehat{\mathcal{E}}[\eta_{\varepsilon}\ast \mu] \rightarrow \widehat{\mathcal{E}}[\mu], \quad \text{for } \varepsilon \rightarrow 0.
  \end{equation*}
\end{lemma}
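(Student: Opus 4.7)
The plan is to exploit the Fourier representation \eqref{eq:34} of $\widehat{\mathcal{E}}$, together with the fact that convolution in space becomes multiplication in Fourier space. Since $\eta_\varepsilon \ast \mu$ is again a probability measure (with $L^1$ density) and the Fourier-Stieltjes transform of a convolution is the product of transforms,
\begin{equation*}
  \widehat{\eta_\varepsilon \ast \mu}(\xi) = \widehat{\eta}_\varepsilon(\xi)\,\widehat{\mu}(\xi) = \exp(-\varepsilon^2 |\xi|^2/2)\,\widehat{\mu}(\xi),
\end{equation*}
so that
\begin{equation*}
  \widehat{\mathcal{E}}[\eta_\varepsilon \ast \mu] = D_q \int_{\mathbb{R}^d} \bigl|\exp(-\varepsilon^2|\xi|^2/2)\,\widehat{\mu}(\xi) - \widehat{\omega}(\xi)\bigr|^2 |\xi|^{-d-q} \dd \xi.
\end{equation*}
As $\varepsilon \to 0$ the integrand converges pointwise to $|\widehat{\mu}(\xi)-\widehat{\omega}(\xi)|^2 |\xi|^{-d-q}$, so the aim reduces to an application of the dominated convergence theorem.

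The trivial case $\widehat{\mathcal{E}}[\mu] = +\infty$ is immediate from Fatou's lemma applied to the nonnegative integrand, so I may assume $\widehat{\mathcal{E}}[\mu] < \infty$. To produce an $\varepsilon$-uniform dominating function I would split
\begin{equation*}
  \exp(-\varepsilon^2|\xi|^2/2)\widehat{\mu}(\xi) - \widehat{\omega}(\xi) = \exp(-\varepsilon^2|\xi|^2/2)\bigl(\widehat{\mu}(\xi) - \widehat{\omega}(\xi)\bigr) + \bigl(\exp(-\varepsilon^2|\xi|^2/2) - 1\bigr)\widehat{\omega}(\xi),
\end{equation*}
giving
\begin{equation*}
  |\cdot|^2 \leq 2 \bigl|\widehat{\mu}(\xi) - \widehat{\omega}(\xi)\bigr|^2 + 2 \bigl(1 - \exp(-\varepsilon^2|\xi|^2/2)\bigr)^2 \bigl|\widehat{\omega}(\xi)\bigr|^2,
\end{equation*}
using $|\exp(-\varepsilon^2|\xi|^2/2)| \leq 1$ on the first term.

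The first summand, multiplied by $|\xi|^{-d-q}$, is integrable by the assumption $\widehat{\mathcal{E}}[\mu] < \infty$. For the second summand I would use the elementary monotonicity $\varepsilon \mapsto 1 - \exp(-\varepsilon^2|\xi|^2/2)$ increasing in $\varepsilon \geq 0$, so that for $\varepsilon \leq 1$ one bounds it by $1 - \exp(-|\xi|^2/2)$, independently of $\varepsilon$. Integrability of $(1 - \exp(-|\xi|^2/2))^2 |\widehat{\omega}(\xi)|^2 |\xi|^{-d-q}$ then splits: near the origin the estimate $1 - \exp(-|\xi|^2/2) = O(|\xi|^2)$ together with $|\widehat{\omega}| \leq 1$ yields an integrand $O(|\xi|^{4-d-q})$, integrable since $q < 2$; away from the origin $|\widehat{\omega}| \leq 1$ and the factor $(1-\exp(-|\xi|^2/2))^2 \leq 1$ reduce the integral to $\int_{|\xi|\geq 1}|\xi|^{-d-q}\dd\xi < \infty$. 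With this $\varepsilon$-uniform dominating function in hand, dominated convergence delivers the claimed limit. No step is truly hard here; the only delicate point is checking integrability of the dominating function near $\xi = 0$, which is handled by the second-order vanishing of $1 - \widehat{\eta}_\varepsilon$ at the origin and the assumption $q < 2$.
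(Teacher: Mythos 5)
Your proof is correct, and it reaches the result by a slightly different algebraic route than the paper. The paper estimates the difference $\bigl|\widehat{\mathcal{E}}[\eta_{\varepsilon}\ast\mu]-\widehat{\mathcal{E}}[\mu]\bigr|$ directly, using the factorization $|a^2-b^2|\leq|a-b|\,(|a|+|b|)$, bounding $|a|+|b|\leq 4$ and $|\widehat{\mu}|\leq 1$ to reduce everything to showing $\int(1-\exp(-\varepsilon^2|\xi|^2/2))\,|\xi|^{-d-q}\diff\xi\to 0$ by dominated convergence, which near the origin exploits the second-order vanishing $1-\exp(-\varepsilon^2|\xi|^2/2)=O(\varepsilon^2|\xi|^2)$ together with $q<2$. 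You instead add and subtract $\exp(-\varepsilon^2|\xi|^2/2)\widehat{\omega}(\xi)$ inside the square and build an $\varepsilon$-uniform integrable majorant for the full integrand of $\widehat{\mathcal{E}}[\eta_{\varepsilon}\ast\mu]$, then apply dominated convergence to the functional value itself; your near-origin term even gains a quartic vanishing $(1-\exp(-|\xi|^2/2))^2=O(|\xi|^4)$, so there the constraint $q<2$ is used with room to spare. Both decompositions hinge on exactly the same ingredients (the Fourier representation, pointwise convergence of $\widehat{\eta}_\varepsilon\widehat{\mu}$, the finiteness of $\widehat{\mathcal{E}}[\mu]$ to integrate the $|\widehat{\mu}-\widehat{\omega}|^2$ part, and the $q<2$ restriction near $\xi=0$); the paper's version additionally produces an explicit $o(1)$ bound on the difference, while yours is the more direct dominated-convergence argument. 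Your handling of the $\widehat{\mathcal{E}}[\mu]=\infty$ case via Fatou is equivalent to the paper's appeal to lower semi-continuity.
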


\begin{proof}
  If \(\widehat{\mathcal{E}}[\mu] = \infty\), then the claim is true by the lower semi-continuity of \(\widehat{\mathcal{E}}\) together with the fact that \(\eta_{\varepsilon} \ast \mu \rightarrow \mu\) narrowly.

  If \(\widehat{\mathcal{E}}[\mu] < \infty\), we can estimate the difference \(\left|\widehat{\mathcal{E}}[\eta_{\varepsilon}\ast \mu]-\widehat{\mathcal{E}}[\mu]\right|\) (which is well-defined, but for now may be \(\infty\)) by using
  \begin{equation*}
    \left| a^{2} - b^{2} \right| \leq \left| a - b \right| \cdot \big( \left| a \right| + \left| b \right|\big), \quad a,b \in \mathbb{\mathbb{C}},
  \end{equation*}
  and
  \begin{equation*}
    \widehat{\eta_{\varepsilon}\ast\mu}(\xi) = \exp\left( -\frac{\varepsilon^2}{2} \left| \xi \right|^{2} \right) \widehat\mu(\xi),
  \end{equation*}
  as
  \begin{align}
    \leadeq{\left|\widehat{\mathcal{E}}[\eta_{\varepsilon}\ast \mu]-\widehat{\mathcal{E}}[\mu]\right|} \nonumber \\
    \leq {} & D_{q}\int_{\mathbb{R}^{d}} \left| \left| \widehat\eta_{\varepsilon}(\xi) \widehat\mu(\xi) - \widehat\omega(\xi) \right|^{2} - \left| \widehat\mu(\xi) - \widehat\omega(\xi) \right|^{2}\right| \left| \xi \right|^{-d-q} \diff \xi \nonumber \\
    \leq {} & D_{q}\int_{\mathbb{R}^{d}} \underbrace{\left(\left| \widehat\eta_{\varepsilon}(\xi) \widehat\mu(\xi) - \widehat\omega(\xi) \right| + \left| \widehat\mu(\xi) - \widehat\omega(\xi) \right| \right)}_{\leq 4} \underbrace{\left| \widehat\eta_{\varepsilon}(\xi) \widehat\mu(\xi) - \widehat\mu(\xi) \right|}_{=\left( 1 - \exp\left( -(\varepsilon^2/2)\left| \xi \right|^{2} \right) \right) \widehat\mu(\xi)} \left| \xi \right|^{-d-q} \diff \xi \nonumber \\
    \leq {} & C\int_{\mathbb{R}^{d}}\left( 1 - \exp\left( - \frac{\varepsilon^2}{2}\left| \xi \right|^{2} \right) \right) \left| \xi \right|^{-d-q}\diff \xi,\label{eq:141}
  \end{align}
  which converges to \(0\) by the dominated convergence theorem: On the one hand we can estimate
  \begin{equation*}
    \exp\left( -\frac{\varepsilon^2}{2}\left| \xi \right|^{2} \right) \geq 0, \quad \xi \in \mathbb{R}^{d},
  \end{equation*}
  yielding a dominating function for the integrand in \eqref{eq:141} for \(\xi\) bounded away from \(0\) because of the integrability of \( \xi \mapsto \left| \xi \right|^{-d-q} \) there. On the other hand
  \begin{align*}
    1 - \exp\left( -\frac{\varepsilon^2}{2}\left| \xi \right|^{2} \right) {} = & -\sum_{n = 1}^{\infty} \frac{1}{n!} \left(-\frac{\varepsilon^2}{2}\left| \xi \right|^{2}\right)^{n}\\
    = {} &\frac{\varepsilon^2}{2}\left| \xi \right|^{2} \sum_{n = 0}^{\infty} \frac{1}{(n+1)!} \left(-\frac{\varepsilon^2}{2}\left| \xi \right|^{2}\right)^{n},
  \end{align*}
  where the sum on the right is uniformly bounded for \(\varepsilon\rightarrow 0\) as a convergent power-series, which, combined with \(q < 2\), renders the integrand in \eqref{eq:141} dominated for \(\xi\) near \(0\) as well.
%
\end{proof}



\begin{proposition}
  [Consistency]
  \label{prp:consist-cont-bv}
Let $(\lambda_N)_{N \in \mathbb N}$ be a vanishing sequence of positive parameters.
  The functionals \((\widehat{\mathcal{E}}^{\lambda_N})_{N \in \mathbb{N}}\) are equi-coercive and
  \begin{equation*}
    \widehat{\mathcal{E}}^{\lambda_N} \xrightarrow{\Gamma} \widehat{\mathcal{E}} \quad \text{for } N\rightarrow 0,
  \end{equation*}
  with respect to the narrow topology. In particular, the limit point of minimizers of $\widehat{\mathcal{E}}^{\lambda_N}$ coincides with the unique minimizer $\omega$ of  $\widehat{\mathcal{E}}$.
\end{proposition}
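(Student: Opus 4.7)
The plan is to verify $\Gamma$-convergence and equi-coercivity in the narrow topology, from which the convergence of the minimizers follows by the standard consequence stated in Definition \ref{def:gamma-conv}, combined with the uniqueness of $\omega$ as the minimizer of $\widehat{\mathcal{E}}$ established in Corollary \ref{cor:lower-semi-cont}.

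The equi-coercivity and the $\liminf$-inequality come almost for free from the positivity of the total variation term. Since $\widehat{\mathcal{E}}^{\lambda_N}[\mu]\geq \widehat{\mathcal{E}}[\mu]$ for every $\mu$, any sub-level of $\widehat{\mathcal{E}}^{\lambda_N}$ is contained in the corresponding sub-level of $\widehat{\mathcal{E}}$, which is narrowly compact by Proposition \ref{prp:compctness-sublvls}; this gives equi-coercivity uniformly in $N$. The same inequality combined with the narrow lower semi-continuity of $\widehat{\mathcal{E}}$ yields, for any $\mu_N\to\mu$ narrowly,
\[
\liminf_{N\to\infty}\widehat{\mathcal{E}}^{\lambda_N}[\mu_N]\;\geq\;\liminf_{N\to\infty}\widehat{\mathcal{E}}[\mu_N]\;\geq\;\widehat{\mathcal{E}}[\mu].
\]

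For the $\limsup$-inequality I would build the recovery sequence by Gaussian mollification. Given $\mu\in\mathcal{P}(\mathbb{R}^d)$, define $\mu_\varepsilon:=\eta_\varepsilon\ast \mu$ with the same $\eta_\varepsilon$ as in Lemma \ref{lem:11}. Then $\mu_\varepsilon$ is a smooth probability density in $\mathcal{P}(\mathbb{R}^d)\cap BV(\mathbb{R}^d)$, and Young's convolution inequality between an $L^1$ function and a finite measure gives
\[
|D\mu_\varepsilon|(\mathbb{R}^d)\;=\;\|\nabla\eta_\varepsilon\ast\mu\|_{L^1}\;\leq\;\|\nabla\eta_\varepsilon\|_{L^1}\,|\mu|(\mathbb{R}^d)\;=\;\varepsilon^{-1}\|\nabla\eta\|_{L^1}\;=:\;C\,\varepsilon^{-1}.
\]
Moreover, $\mu_\varepsilon\to\mu$ narrowly as $\varepsilon\to 0$ because $\eta_\varepsilon$ is an approximate identity, and $\widehat{\mathcal{E}}[\mu_\varepsilon]\to\widehat{\mathcal{E}}[\mu]$ by Lemma \ref{lem:11}. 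A diagonal choice $\varepsilon_N\to 0$ with $\lambda_N/\varepsilon_N\to 0$—for instance $\varepsilon_N:=\sqrt{\lambda_N}$, which gives $\lambda_N/\varepsilon_N=\sqrt{\lambda_N}\to 0$—and setting $\mu_N:=\mu_{\varepsilon_N}$ therefore yields
\[
\widehat{\mathcal{E}}^{\lambda_N}[\mu_N]\;=\;\widehat{\mathcal{E}}[\mu_N]+\lambda_N\,|D\mu_N|(\mathbb{R}^d)\;\longrightarrow\;\widehat{\mathcal{E}}[\mu]+0,
\]
with $\mu_N\to\mu$ narrowly, as required.

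To conclude about minimizers, I would first note that each $\widehat{\mathcal{E}}^{\lambda_N}$ admits a minimizer by the direct method: it is narrowly lower semi-continuous (sum of two narrowly l.s.c.\ terms, the l.s.c.\ of $\mu\mapsto|D\mu|(\mathbb{R}^d)$ under narrow convergence of probability densities following by combining the $L^1$-lower semi-continuity of the total variation with a standard $BV$-compactness argument as in \cite[Chapter~5]{92-Evans-fine-properties}) and has narrowly compact sub-levels by the same argument as for equi-coercivity. Then the standard consequence of $\Gamma$-convergence paired with equi-coercivity ensures that any sequence of minimizers $\widetilde{\mu}_N$ of $\widehat{\mathcal{E}}^{\lambda_N}$ admits, up to extraction, a narrow limit minimizing $\widehat{\mathcal{E}}$; by Corollary \ref{cor:lower-semi-cont} this limit must equal $\omega$, and since every subsequence inherits the same property the full sequence converges. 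The only mildly delicate point in the whole argument is the trade-off in the recovery step between forcing $\varepsilon_N\to 0$ (needed for $\widehat{\mathcal{E}}[\mu_{\varepsilon_N}]\to\widehat{\mathcal{E}}[\mu]$) and controlling the blow-up $|D\mu_{\varepsilon_N}|(\mathbb{R}^d)\lesssim\varepsilon_N^{-1}$; since $\lambda_N\to 0^+$ at any prescribed rate, the diagonal coupling can always be arranged.
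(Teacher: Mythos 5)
Your proof is correct and follows essentially the same route as the paper: equi-coercivity and the $\liminf$-inequality from $\widehat{\mathcal{E}}^{\lambda_N}\geq\widehat{\mathcal{E}}$ together with Proposition~\ref{prp:compctness-sublvls}; recovery sequence by Gaussian mollification with the scale $\varepsilon$ coupled to $\lambda_N$ so that $\lambda_N |D\mu_{\varepsilon(\lambda_N)}|(\mathbb{R}^d)\to 0$. Two small remarks on details. First, your estimate $|D(\eta_\varepsilon\ast\mu)|(\mathbb{R}^d)\leq \varepsilon^{-1}\|\nabla\eta\|_{L^1}$ is in fact the correct scaling (the paper writes $\varepsilon^{-d}$, which appears to be a slip in bookkeeping of the normalization; any polynomial rate works, and your choice $\varepsilon_N=\sqrt{\lambda_N}$ is fine). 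Second, your existence-of-minimizers step is substantially compressed. You assert narrow lower semi-continuity of $\mu\mapsto|D\mu|(\mathbb{R}^d)$ on $\mathcal{P}(\mathbb{R}^d)$ by pointing to "$L^1$-l.s.c. of TV plus standard $BV$-compactness"; but the standard $BV$-compactness theorem (\cite[Chapter 5.2]{92-Evans-fine-properties}) is a local statement, and on the unbounded domain $\mathbb{R}^d$ one must supply tightness before one can pass from a uniform $BV$-bound to genuine $L^1(\mathbb{R}^d)$-convergence (and hence to membership of the limit in $BV(\mathbb{R}^d)$, rather than, say, a singular measure). The paper makes this explicit: it multiplies by smooth cut-offs $\theta_l$, extracts $L^1_{\mathrm{loc}}$-convergent subsequences, and glues them by a diagonal argument using the tightness coming from the compact sub-levels of $\widehat{\mathcal{E}}$. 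In your formulation, the needed tightness is available (a narrowly convergent sequence of probability measures is tight by Prokhorov), so the argument does close, but it is worth spelling out that this is the step that prevents mass and oscillation from escaping to infinity.
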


\begin{proof}
  Firstly, observe that equi-coercivity follows from the narrow compactness of the sub-levels of \(\widehat{\mathcal{E}}\) shown in Proposition \ref{prp:compctness-sublvls}, and that the \( \liminf \)-condition is a consequence of the lower semi-continuity of \(\widehat{\mathcal{E}}\) as in the proof of Theorem \ref{thm:cons-part-appr}.

  \emph{Ad existence of minimizers:} We again want to apply the direct method of the calculus of variations.

  Let \( (\mu_k)_k \) be a minimizing sequence for \( \widehat{\mathcal{E}}^\lambda \), so that the \( \mu_k \) are all contained in a common sub-level of the functional. Now, for a given \( \lambda \), the sub-levels of \( \widehat{\mathcal{E}}^\lambda \) are relatively compact in \( L^1(\mathbb{R}^d) \), which can be seen by combining the compactness of the sub-levels of the total variation in \( L^1_{\text{loc}}(\mathbb{R}^d) \) with the tightness gained by \( \widehat{\mathcal{E}} \): if \( \widehat{\mathcal{E}}^\lambda[\mu_k] \leq M < \infty \), we can consider \( (\theta_l \mu_k)_k \) for a smooth cut-off function \( \theta_l \) having its support in \( [-l, l]^d \). By standard arguments we have the product formula
  \begin{equation*}
    D\left(\theta_l \mu_k\right) = D \theta_l \mu_k + \theta_l D \mu_k
  \end{equation*}
  and therefore
  \begin{align*}
    \left| D \left( \theta_l \mu_k \right) \right| (\mathbb{R}^d) \leq {} &  \int_{\mathbb{R}^d} \mu_k(x) \left| D\theta_l (x) \right| \diff x + \int_{\mathbb{R}^d}  \theta_l (x) \diff \left| D \mu_k \right| (x)\\
    \leq {} & C_l + \left| D\mu_k \right|(\mathbb{R}^d),
  \end{align*}
  so that for each \( l \), by the compactness of the sub-levels of the total variation in \( L^1_{\text{loc}} \), see \cite[Chapter 5.2, Theorem 4]{92-Evans-fine-properties}, we can select an \( L^{1} \)-convergent subsequence, which we denote again by\( (\theta_l \mu_{k})_k \). Then, by extracting further subsequences for $m \geq l$ and applying a diagonal argument we can construct a subsequence, again denoted $\mu_k$ such that
  \begin{align*}
    \left\| \mu_{k} - \mu_{h} \right\|_{L^1} \leq {} & \left\| (1-\theta_{l}) (\mu_{k} - \mu_{h}) \right\|_{L^1} + \left\| \theta_{l} \mu_{k} - \theta_{l} \mu_{h} \right\|_{L^1},
  \end{align*}
 and 
$$
 \left\| (1-\theta_{l}) (\mu_{k} - \mu_{h}) \right\|_{L^1} \leq \frac{\varepsilon}{2},
$$
for $l \geq \l_0(\varepsilon)$ large enough, because of the tightness of $(\mu_k)_k$, and
$$
\left\| \theta_{l} \mu_{k} - \theta_{l} \mu_{h} \right\|_{L^1} \leq \frac{\varepsilon}{2},
$$
by the local convergence in $L^1_{\mathrm{loc}}$ for $h,k \geq k_0(l)$ large enough. From this we conclude that $(\mu_k)_k$ is a Cauchy subsequence in $L^1$, hence, convergent.

  The lower semi-continuity of \( \widehat{\mathcal{E}}^\lambda \) follows from the lower semi-continuity of the total variation with respect to \( L^{1} \)-convergence and the lower semi-continuity of \( \widehat{\mathcal{E}} \) with respect to narrow convergence. Summarizing, we have compactness and lower semi-continuity, giving us that \( (\mu_k)_k \) has a limit point which is a minimizer.

  \emph{Ad \( \limsup \)-condition:} Let \(\mu \in \mathcal{P}(\mathbb{R}^{d})\) and write \(\mu_{\varepsilon} := \widehat{\eta}_{\varepsilon}\ast\mu\) for the mollification of Lemma \ref{lem:11}. Now, by Fubini’s theorem,
  \begin{align*}
    \left| D (\widehat{\eta}_{\varepsilon}\ast \mu) \right|(\mathbb{R}^d) = {} &\int_{\mathbb{R}^{d}} \left| \left(\nabla \widehat{\eta}_{\varepsilon}\ast \mu\right)(x) \right| \diff x \\
    \leq {} & \left\| \nabla \widehat{\eta}_{\varepsilon} \right\|_{L^{1}(\mathbb{R}^{d})} \mu(\mathbb{R}^{d})\\
    = {} & \varepsilon^{-d} \left\| \nabla\widehat{\eta} \right\|_{L^{1}(\mathbb{R}^{d})},
  \end{align*}
  so if we choose \(\varepsilon(\lambda)\) such that \(\lambda = o(\varepsilon^{d})\), for example \(\varepsilon(\lambda) := \lambda^{1/(d+1)}\), then
  \begin{equation*}
    \lambda \left| D\mu_{\varepsilon(\lambda)} \right|(\mathbb{R}^d) \rightarrow 0, \quad \text{for } \lambda \rightarrow 0.
  \end{equation*}
  On the other hand, \(\widehat{\mathcal{E}}[\mu_{\varepsilon(\lambda)}] \rightarrow \widehat{\mathcal{E}}[\mu]\) by Lemma \ref{lem:11}, yielding the required convergence \(\widehat{\mathcal{E}}^{\lambda}[\mu_{\varepsilon(\lambda)}] \rightarrow \widehat{\mathcal{E}}[\mu]\).

  The convergence of the minimizers then follows.
\end{proof}

\subsection{Discrete versions of the total variation regularization}
\label{sec:discrete-version-tv-1}

As one motivation for the study of the functional \( \mathcal{E} \) was to compute its particle minimizers, we shall also here consider a discretized version of the total variation regularization, for example to be able to compute the minimizers of the regularized functional directly on the level of the point approximations. We propose two techniques for this discretization.

The first technique is well-known in the non-parametric estimation of \( L^1 \) densities and consists of replacing each point with a small ``bump'' instead of interpreting it as a point measure. In order to get the desired convergence properties, we have to be careful when choosing the corresponding scaling of the bump. For an introduction to this topic, see \cite[Chapter 3.1]{85-Devroye-Gyoerfi-non-param-den-est}.

The second technique replaces the Dirac deltas by indicator functions which extend from the position of one point to the next one. Unfortunately, this poses certain difficulties in generalizing it to higher dimensions, as the set on which we extend would have to be replaced by something like a Voronoi cell, an object well-known in the theory of optimal quantization of measures, see for example \cite{00_Graf_Luschgy_quantization}.

In the context of attraction-repulsion functionals, it is of importance to note that the effect of the additional particle total variation term can again be interpreted as an attractive-repulsive-term. See Figure \ref{fig:discrete-tv} for an example in the case of kernel density estimation with a piecewise linear estimation kernel, where it can be seen that each point is repulsive at a short range, attractive at a medium range, and at a long range does not factor into the total variation any more.
This interpretation of the action of the total variation as a potential acting on particles to promote their uniform distribution is, to our knowledge, new.

\begin{figure}[t]
  \centering
  \subfigure[Linear \( K_1(x) \) and corresponding \( K_1'(x) \) ]{\includegraphics[]{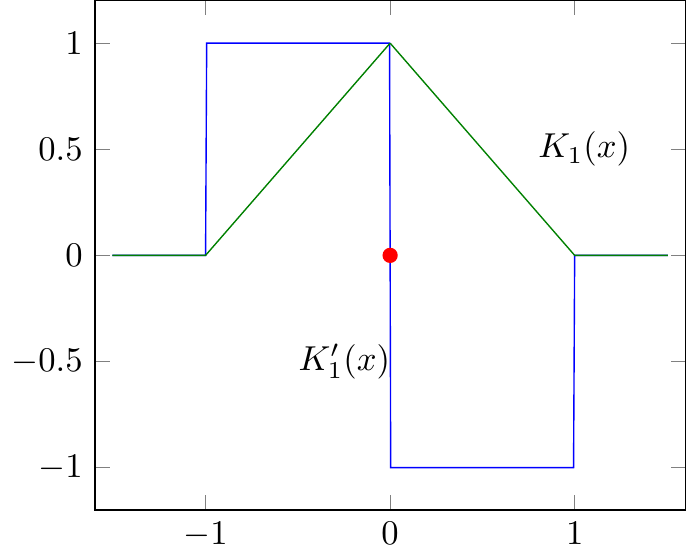}}
  \hspace{0.5cm}
  \subfigure[Discrete total variation for \( x_1 = 0 \) fixed (red), \( x_2 \) free, \( h = 0.75 \)]{\includegraphics[]{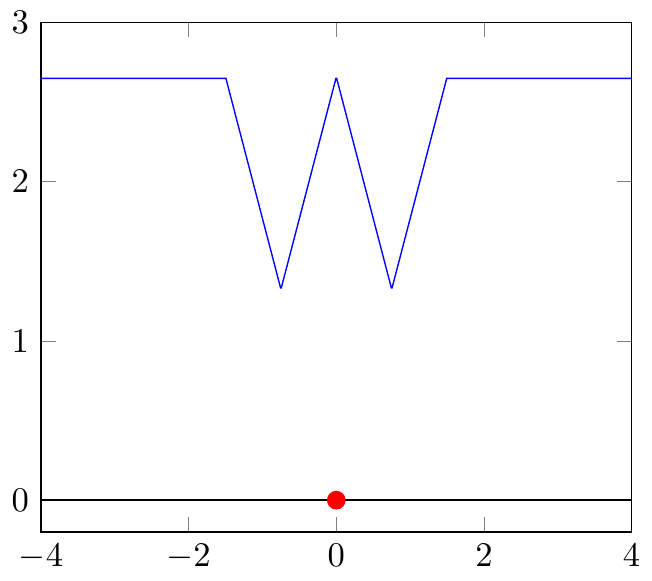}} 
  \caption{Example for the discrete total variation functional}
  \label{fig:discrete-tv}
\end{figure}

\subsubsection{Discretization by kernel estimators and quantization on deterministic tilings}
\label{sec:discr-kern-estim}

\begin{definition}
  [Discrete total variation via kernel estimate]
  \label{def:dens-est}
  For a \(\mu_{N} = \frac{1}{N}\sum_{i=1}^{N} \delta_{x_{i}} \in \mathcal{P}^{N}(\mathbb{R}^{d})\), a \emph{scale parameter} \(h = h(N)\) and a \emph{density estimation kernel} \( K \in W^{1,1}(\mathbb{R}^{d}) \) such that \( \nabla K \in BV(\mathbb{R}^d, \mathbb{R}^d) \), as well as
  \begin{equation*}
    K \geq 0, \quad \int_{\mathbb{R}^{d}}K(x) \diff x = 1,
  \end{equation*}
  we set
  \begin{equation*}
    K_{h}(x) := \frac{1}{h^{d}}K\left(\frac{x}{h}\right)
  \end{equation*}
  and define the corresponding \emph{\(L^{1}\)-density estimator} by
  \begin{equation*}
    Q_h[\mu_N](x) := K_{h}\ast\mu_{N}(x) = \frac{1}{N h^{d}}\sum_{i=1}^{N} K\left(\frac{x-x_{i}}{h}\right),
  \end{equation*}
  where the definition has to be understood for almost every \( x \). Then, we can introduce a discrete version of the regularization in \eqref{eq:135} as
  \begin{equation}
    \label{eq:151}
    \widehat{\mathcal{E}}_{N}^{\lambda}[\mu_N] := \widehat{\mathcal{E}}[\mu_N] + \lambda \left| DQ_{h(N)}[\mu_N]\right|(\mathbb{R}^d), \quad \mu_N \in \mathcal{P}^N(\mathbb{R}^d).
  \end{equation}
\end{definition}

We want to prove consistency of this approximation in terms of \( \Gamma \)-convergence of the functionals \( \widehat{\mathcal{E}}_N^\lambda \) to \( \widehat{\mathcal{E}}^\lambda \). For a survey on the consistency of kernel estimators in the probabilistic case under various sets of assumptions, see \cite{12-Wied-Weissbach-Kernel-Estimators}. Here however, we want to give a proof using deterministic and explicitly constructed point approximations.

\begin{figure}[t]
  \centering
  \subfigure[Notation of Definition \ref{def:good-tiling} for \( N = 5 \) ]{\includegraphics{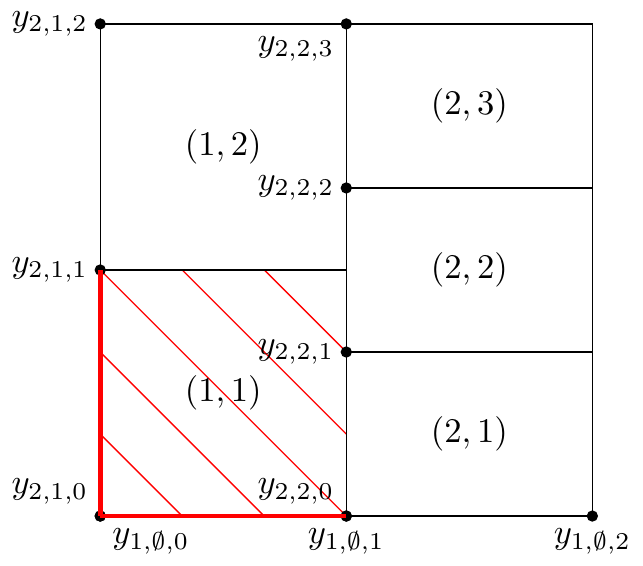}}
  \hspace{0\textwidth}
  \subfigure[{Tiling as in Example \ref{exp:constr-2d} for a uniform measure on two squares in \( [0,1]^2 \)}]{\includegraphics{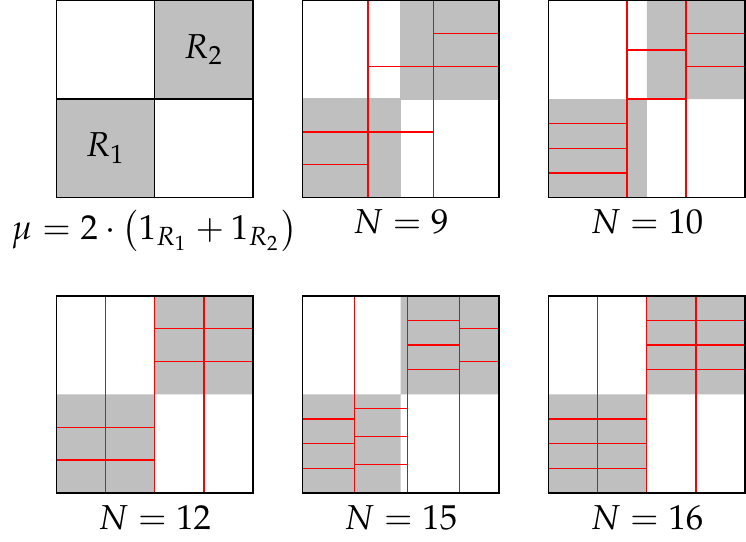}}
  \caption{Illustration of the tiling}
  \label{fig:tiling}
\end{figure}

In order to find a recovery sequence for the family of functionals \eqref{eq:151}, we have to find point approximations to a given measure with sufficiently good spatial approximation properties. For this, we suggest using a generalization of the quantile construction to higher dimensions. Let us state the properties we expect from such an approximation:

\begin{definition}
  [Tiling associated to a measure]
  \label{def:good-tiling}
  Let \( \mu \in \mathcal{P}_c(\mathbb{R}^d)\cap L^1(\mathbb{R}^d) \) compactly supported, where \( \mathcal{P}_c(\mathbb{R}^d) \) denotes the space of compactly supported probability measures, such that \( \operatorname{supp}(\mu) \subseteq [-R_\mu,R_\mu]^d \) and let \( N \in \mathbb{N} \). Set \( \widetilde{n} := \lfloor N^{1/d} \rfloor \). A good tiling (for our purposes) will be composed of an index set \( I \) and an associated tiling \( \left( T_i \right)_{i \in I} \) such that (see Figure \ref{fig:tiling} for an example of the notation):

  \begin{enumerate}
  \item \( I \) has \( N \) elements, \( \# I = N \), and in each direction, we have at least \( \widetilde{n} \) different indices, \ie,
    \begin{equation}
      \label{eq:155}
      \left\{ 1,\ldots,\widetilde{n} \right\}^d \subseteq I \subseteq \left\{ 1,\ldots,\widetilde{n}+1 \right\}^d.\\
    \end{equation}
    Additionally, for all \( k \in {1,\ldots,d} \) and \( (i_1,\ldots,i_{k-1},i_k,\ldots,i_d) \in I \),
    \begin{equation*}
      n_{k,i_1,\ldots,i_{k-1}} := \# \left\{ j_k : j \in I, \, (j_1,\ldots,j_{k-1}) = (i_1,\ldots,i_{k-1}) \right\} \in \left\{ \widetilde{n},\widetilde{n}+1 \right\}.
    \end{equation*}
  \item There is a family of ordered real numbers only depending on the first \( k \) coordinates,
    \begin{gather*}      
      y_{k,i_1,\ldots,i_k} \in [-R_\mu,R_\mu], \quad y_{k,i_1,\ldots,i_k-1} < y_{k,i_1,\ldots,i_k},\\
      \text{for all } k \in \left\{1,\ldots,d\right\} \text{ and } (i_1,\ldots,i_k,i_{k+1},\ldots,i_d) \in I, \nonumber
    \end{gather*}
    with fixed end points,
    \begin{equation*}
      y_{k,i_1,\ldots,i_{k-1},0}=-R_{\mu}, \quad y_{k,i_1,\ldots,i_{k-1},n_{k,i_1,\ldots,i_{k-1}}} = R_{\mu},
    \end{equation*}
    associated tiles
    \begin{equation*}
      T_i := \vartimes_{k=1}^{d} \left[y_{k,i_1,\ldots,(i_k-1)},y_{k,i_1,\ldots,i_k}\right],
    \end{equation*}
    and such that the mass of \( \mu \) is equal in each of them,
    \begin{equation*}
      \mu \left( T_i \right) = \frac{1}{N}, \quad \text{for all }i \in I.
    \end{equation*}
  \end{enumerate}
\end{definition}

Such a construction can always be found by generalizing the quantile construction. Let us show the construction explicitly for \( d = 2 \) as an example.

\begin{example}
  [Construction in 2D]
  \label{exp:constr-2d}
  Given \(N \in \mathbb{N}\), let \(\widetilde{n} := \lfloor \sqrt{N} \rfloor\). We can write \( N \) as
  \begin{equation}
    \label{eq:159}
    N = \widetilde{n}^{2-m}\left( \widetilde{n} +1 \right)^m + l,
  \end{equation}
with unique \( m \in \left\{ 0, 1 \right\} \) and \( l \in \left\{ 0,\ldots,\widetilde{n}^{1-m} \left( \widetilde{n} + 1 \right)^m - 1 \right\}. \)
  Then we get the desired tiling by setting
  \begin{alignat}{2}
    n_{1,\emptyset} := {} &
    \begin{cases}
      \widetilde{n} + 1 \quad &\text{if } m = 1,\\
      \widetilde{n} &\text{if } m = 0,
    \end{cases} \nonumber \\
    n_{2,i_1} := {} &
    \begin{cases}
      \widetilde{n}+1 \quad &\text{if } i_1 \leq l,\\
      \widetilde{n} &\text{if } i_1 \geq l + 1,
    \end{cases} \quad && i_1 = 1,\ldots,n_{1,\emptyset},\label{eq:375}\\
    w_{2,i_1,i_2} := {} &\frac{1}{n_{2,i_1}}, \quad && i_1 = 1,\ldots,n_{1,\emptyset},\; i_2 = 1,\ldots,n_{2,i_1}, \nonumber \\
    w_{1,i_1} := {} &\frac{n_{2,i_1}}{\sum_{j_1} n_{2,j_1}}, \quad &&i_1 = 1,\ldots,n_{1,\emptyset}, \nonumber
  \end{alignat}
  and choosing the end points of the tiles such that
  \begin{align}
    \label{eq:161}
    \sum_{j_1 = 1}^{i_1} w_{1,j_1} = {} &\int_{-R_\mu}^{y_{1,i_1}} \int_{-R_\mu}^{R_\mu} \diff \mu(x_1,x_2),\\
    \sum_{j_1 = 1}^{i_1} \sum_{j_2 = 1}^{i_2} w_{1,j_1} w_{2,j_1,j_2} = {} &\int_{-R_\mu}^{y_{1,i_1}} \int_{-R_\mu}^{y_{2,i_1,i_2}} \diff \mu(x_1,x_2). \nonumber
  \end{align}
  Now, check that indeed \(\sum_{j_1} n_{2,j_1} = N\) by \eqref{eq:159} and \eqref{eq:375} and that we have
  \begin{equation*}
    \mu(T_{i_1,i_2}) = w_{1,i_1} w_{2,i_1,i_2} = \frac{1}{N} \quad \text{for all } i_1, i_2,
  \end{equation*}
  by the choice of the weights \( w_{1,j_1} \), \( w_{2,j_1,j_2} \) as desired.
\end{example}

The general construction now consists of choosing a subdivision in \( \widetilde{n}+1 \) slices uniformly in as many dimensions as possible, while keeping in mind that in each dimension, we have to subdivide in at least \( \widetilde{n} \) slices. There will again be a rest \( l \), which is filled up in the last dimension.

\begin{proposition}
  [Construction for arbitrary \( d \)]
  \label{prp:constr-gen-dim}
  A tiling as defined in Definition \ref{def:good-tiling} exists for all \( d \in \mathbb{N} \).
\end{proposition}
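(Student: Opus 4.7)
The plan is to proceed by induction on the dimension $d$, proving a statement slightly stronger than the one given in order to make the inductive hypothesis usable. Specifically, I would show that for every $d \geq 1$, every $\widetilde n \in \mathbb{N}$, every integer $M$ with $\widetilde n^d \leq M \leq (\widetilde n + 1)^d$, every $R > 0$, and every $\nu \in \mathcal{P}(\mathbb{R}^d) \cap L^1(\mathbb{R}^d)$ with $\supp(\nu) \subseteq [-R,R]^d$, there exists a tiling of $[-R,R]^d$ into $M$ axis-aligned rectangular cells $(T_i)_{i \in I}$ satisfying all structural requirements of Definition \ref{def:good-tiling} (with $\widetilde n$ in place of $\lfloor M^{1/d}\rfloor$), such that $\nu(T_i) = 1/M$ for all $i \in I$ and all slice counts $n_{k,i_1,\ldots,i_{k-1}}$ lie in $\{\widetilde n, \widetilde n + 1\}$. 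Proposition \ref{prp:constr-gen-dim} is recovered as the case $\widetilde n = \lfloor N^{1/d}\rfloor$, $M = N$, $\nu = \mu$. The base case $d = 1$ is immediate: $\widetilde n = M$, and the tiling is the quantile partition $-R = y_{1,0} < y_{1,1} < \cdots < y_{1,M} = R$ defined by $\nu([-R, y_{1,j}]) = j/M$, well-defined because the CDF of an absolutely continuous measure is continuous.

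For the inductive step from $d-1$ to $d$, following the pattern of Example \ref{exp:constr-2d}, I would first select the top-level count $n_{1,\emptyset} \in \{\widetilde n, \widetilde n + 1\}$ together with slice sizes $M_{i_1} \in [\widetilde n^{d-1}, (\widetilde n + 1)^{d-1}] \cap \mathbb{N}$ for $i_1 = 1,\ldots,n_{1,\emptyset}$, satisfying $\sum_{i_1 = 1}^{n_{1,\emptyset}} M_{i_1} = M$. The arithmetic heart of the argument is the covering statement that the union of integer intervals $[n\,\widetilde n^{d-1},\, n(\widetilde n + 1)^{d-1}]$ for $n \in \{\widetilde n, \widetilde n + 1\}$ equals $[\widetilde n^d, (\widetilde n + 1)^d]$. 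Indeed, these two intervals overlap provided $(\widetilde n + 1)\widetilde n^{d-1} \leq \widetilde n(\widetilde n + 1)^{d-1}$, which reduces to $\widetilde n^{d-2} \leq (\widetilde n + 1)^{d-2}$, true for $d \geq 2$. Any integer $M$ in this union can then be represented as a sum of $n_{1,\emptyset}$ terms from $[\widetilde n^{d-1}, (\widetilde n + 1)^{d-1}]$: one chooses as many terms as needed at the two extremes $\widetilde n^{d-1}$ and $(\widetilde n + 1)^{d-1}$ and adjusts one intermediate entry to hit $M$ exactly.

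Given such a decomposition, the geometric realization proceeds by defining cut points $y_{1, i_1}$ such that the first marginal of $\nu$ assigns mass $M_{i_1}/M$ to $[y_{1, i_1 - 1}, y_{1, i_1}]$; since the first marginal of an absolutely continuous compactly supported measure has a continuous CDF, these cut points exist. On each slab $S_{i_1} := [y_{1, i_1 - 1}, y_{1, i_1}] \times [-R,R]^{d-1}$ of $\nu$-mass $M_{i_1}/M$, the renormalized measure $\nu_{i_1} := (M/M_{i_1})\, \nu \llcorner S_{i_1}$ is a probability measure whose pushforward onto the last $d - 1$ coordinates is still compactly supported and absolutely continuous (the projected density being $L^1$ by Tonelli). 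Applying the inductive hypothesis with parameters $(d - 1, \widetilde n, M_{i_1})$ to this projected measure yields a tiling of the slab into $M_{i_1}$ cells, each of $\nu$-mass $(M_{i_1}/M) \cdot (1/M_{i_1}) = 1/M$ and with all further slice counts in $\{\widetilde n, \widetilde n + 1\}$. Concatenation over $i_1$ produces a tiling meeting every requirement of Definition \ref{def:good-tiling}.

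The main obstacle I anticipate is purely combinatorial, namely rigorously verifying the covering claim used to pick $n_{1,\emptyset}$ and the $M_{i_1}$: one must check that the two intervals corresponding to $n_{1,\emptyset} = \widetilde n$ and $n_{1,\emptyset} = \widetilde n + 1$ jointly exhaust $[\widetilde n^d, (\widetilde n + 1)^d]$, handling the boundary cases $M = \widetilde n^d$ and $M = (\widetilde n + 1)^d$ carefully, and then exhibiting an explicit integer decomposition of $M$ within $[\widetilde n^{d-1}, (\widetilde n + 1)^{d-1}]$. Once this arithmetic step is secured, the geometric realization via iterated conditional quantiles is routine, relying only on the continuity of the relevant marginal CDFs guaranteed by the assumption $\mu \in \mathcal{P}_c(\mathbb{R}^d) \cap L^1(\mathbb{R}^d)$.
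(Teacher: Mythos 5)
Your proposal is correct, but it takes a genuinely different route from the paper. The paper's proof gives a direct, non-recursive decomposition: it writes $N = \widetilde{n}^{d-m}(\widetilde{n}+1)^m + l$ with unique $m \in \{0,\ldots,d-1\}$ and $l \in \{0,\ldots,\widetilde{n}^{d-1-m}(\widetilde{n}+1)^m - 1\}$, then assigns the slice counts uniformly as $\widetilde{n}+1$ in the first $m$ dimensions, $\widetilde{n}$ in dimensions $m+1,\ldots,d-1$, and in the last dimension lets exactly $l$ of the multi-indices use $\widetilde{n}+1$; a single counting identity then shows the product of counts sums to $N$. You instead strengthen the statement (decoupling $\widetilde{n}$ from $M$ and allowing any $M \in [\widetilde{n}^d, (\widetilde{n}+1)^d]$) and proceed by induction on $d$, reducing the existence question to a covering lemma for the union of the two integer intervals $[n\,\widetilde{n}^{d-1}, n(\widetilde{n}+1)^{d-1}]$, $n\in\{\widetilde{n},\widetilde{n}+1\}$. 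Your approach is arguably the more systematic one --- the strengthened hypothesis makes the inductive structure transparent and the covering lemma isolates the sole arithmetic obstruction --- while the paper's is shorter because it exhibits the entire slice-count assignment in closed form without recursion. One minor slip: in the base case $d=1$ you wrote $\widetilde{n}=M$, but under your hypothesis $M$ may equal either $\widetilde{n}$ or $\widetilde{n}+1$; the quantile construction handles both, so the claim stands. Both proofs rely on the same final geometric step of iterated conditional quantiles, which is unproblematic given $\mu \in \mathcal{P}_c(\mathbb{R}^d)\cap L^1(\mathbb{R}^d)$.
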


\begin{proof}
  Analogously to Example \ref{exp:constr-2d}, let \( \widetilde{n}:= \lfloor N^{1/d} \rfloor \) and set
  \begin{equation*}
    N = \widetilde{n}^{d-m}\left( \widetilde{n} +1 \right)^m + l,
  \end{equation*}
  with unique \( m \in \left\{ 0, \ldots, d-1 \right\} \) and \( l \in \left\{ 0,\ldots,\widetilde{n}^{d-1-m} \left( \widetilde{n} + 1 \right)^m - 1 \right\} \). Then, we get the desired ranges by
  \begin{alignat}{2}
    n_{k,i_1,\ldots,i_{k-1}} &{}:= \widetilde{n} + 1, &&\text{for } k \in \left\{ 1,\ldots,m \right\} \text{ and all relevant indices};\\ 
    n_{k,i_1,\ldots,i_{k-1}}, &{}:= \widetilde{n}, &&\text{for } k \in \left\{ m+1,\ldots,d-1 \right\} \text{ and all relevant indices};\\
    n_{d,i_1,\ldots,i_{d-1}} &{}\in \left\{ \widetilde{n}, \widetilde{n}+1 \right\}, \quad&&\text{such that exactly \( l \) multi-indices are } \widetilde{n} + 1.
  \end{alignat}
  The weights can then be selected such that we get equal mass after multiplying them, and the tiling is found by iteratively using a quantile construction similar to \eqref{eq:161} in Example \ref{exp:constr-2d}.
\end{proof}

\begin{lemma}
  [Consistency of the approximation]
  \label{lem:13}
  For \(\mu\in \mathcal{P}_c(\mathbb{R}^d) \cap BV(\mathbb{R}^d)\), let \( (T_i)_{i \in I} \) be a tiling as in Definition \ref{def:good-tiling}, and \( x_i \in T_i \) for all \( i \in I \) an arbitrary point in each tile. Then, \(\mu_N = \frac{1}{N} \sum_{i=1}^{N} \delta_{x_i}\) converges narrowly to \(\mu\) for \(N \rightarrow\infty\).
  Furthermore, if
  \begin{equation}
    \label{eq:172}
    h = h(N)\rightarrow 0 \quad \text{and} \quad h^{2d}N \rightarrow \infty \quad \text{for } N\rightarrow \infty,
  \end{equation}
  then \(Q_{h(N)}[\mu_N] \rightarrow\mu\) strictly in \(BV(\mathbb{R}^d)\) (as defined in \cite[Definition 3.14]{00-Amb-Fusco-Pallara-BV}).
\end{lemma}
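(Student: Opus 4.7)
The plan is to combine the equidistribution property $\mu(T_i)=1/N$ of the tiling with a dimension-counting estimate on the side-lengths, and then deploy classical $W^{1,1}$- and $BV$-translation estimates applied to $K_{h}$ and $\nabla K_{h}$.

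First, for the narrow convergence of $\mu_N$ to $\mu$, I would test against a bounded Lipschitz function $g$ with Lipschitz constant $L$. Since $\mu(T_i) = \mu_N(T_i) = 1/N$, the equidistribution gives
\begin{equation*}
  \left| \int g \diff \mu_N - \int g \diff \mu \right| \leq \sum_{i \in I} \int_{T_i} |g(x_i) - g(y)| \diff \mu(y) \leq \frac{L}{N} \sum_{i \in I} \diam(T_i).
\end{equation*}
Bounding $\diam(T_i) \leq \sum_{k=1}^d \ell_k(T_i)$, where $\ell_k(T_i)$ is the side-length in direction $k$, and noting that for each fixed prefix $(i_1,\ldots,i_{k-1})$ the numbers $\ell_k(T_i)$ telescope to $2R_\mu$ when summed over $i_k$, I obtain $\sum_{i \in I} \ell_k(T_i) \leq 2R_\mu(\widetilde n +1)^{d-1}$, and therefore $\frac{1}{N}\sum_i \diam(T_i) = O(N^{-1/d}) \to 0$.

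For the strict $BV$ convergence, I would decompose
\begin{equation*}
  Q_h[\mu_N] - \mu = K_h \ast (\mu_N - \mu) + (K_h \ast \mu - \mu),
\end{equation*}
and analogously $\nabla Q_h[\mu_N] = (\nabla K_h) \ast \mu_N$, using the identity $(\nabla K_h) \ast \mu = K_h \ast D\mu$ available because $\mu \in BV$. The mollification summands provide both the $L^1$ convergence $K_h \ast \mu \to \mu$ and the convergence $\|\nabla K_h \ast \mu\|_{L^1} \to |D\mu|(\mathbb{R}^d)$ by the classical strict convergence of mollifications in $BV$. The quantization summands are controlled by the translation estimate $\|f(\cdot + a) - f\|_{L^1} \leq |a|\,|Df|(\mathbb{R}^d)$ applied to $f = K_h$ and $f = \nabla K_h$, combined with the tile estimate above. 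Using the rescaling identities $|DK_h|(\mathbb{R}^d) = h^{-1}|DK|(\mathbb{R}^d)$ and $|D\nabla K_h|(\mathbb{R}^d) = h^{-2}|D\nabla K|(\mathbb{R}^d)$, I arrive at
\begin{align*}
  \|K_h \ast (\mu_N - \mu)\|_{L^1} &\leq h^{-1} \|\nabla K\|_{L^1} \cdot \frac{1}{N}\sum_i \diam(T_i) = O(h^{-1} N^{-1/d}), \\
  \|\nabla K_h \ast (\mu_N - \mu)\|_{L^1} &\leq h^{-2} |D\nabla K|(\mathbb{R}^d) \cdot \frac{1}{N}\sum_i \diam(T_i) = O(h^{-2} N^{-1/d}).
\end{align*}

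The main obstacle is the gradient-level quantization error: it demands exactly the hypothesis $h^{2d}N \to \infty$, which is equivalent to $h^{-2}N^{-1/d} \to 0$ and is the reason the extra smoothness requirement $\nabla K \in BV(\mathbb{R}^d,\mathbb{R}^d)$ is imposed in Definition \ref{def:dens-est}; without it, the second-order translation estimate would fail and the scaling would be unattainable. The first-order error $O(h^{-1}N^{-1/d})$ is then automatically negligible under the same hypothesis. Assembling both ingredients via the triangle inequality yields $\|Q_h[\mu_N] - \mu\|_{L^1} \to 0$ and $\|\nabla Q_h[\mu_N]\|_{L^1} \to |D\mu|(\mathbb{R}^d)$, i.e.\ strict convergence of $Q_{h(N)}[\mu_N]$ to $\mu$ in $BV(\mathbb{R}^d)$.
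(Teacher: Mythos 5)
Your proof is correct and follows essentially the same route as the paper's: the same tile-counting estimate giving $\frac{1}{N}\sum_i \diam(T_i) = O(N^{-1/d})$, the same decomposition into quantization error $K_h*(\mu_N-\mu)$ (resp.\ $\nabla K_h*(\mu_N-\mu)$) plus mollification error, and the same scaling $h^{-1}N^{-1/d}$ resp.\ $h^{-2}N^{-1/d}$ matched against the hypothesis $h^{2d}N\to\infty$. The only stylistic difference is that you invoke the BV translation inequality $\|f(\cdot+a)-f\|_{L^1}\le|a|\,|Df|(\mathbb{R}^d)$ directly on $K_h$ and $\nabla K_h$, whereas the paper establishes the same bound via the fundamental theorem of calculus for $C^1$/$C^2$ approximants of $K$ and then passes to the limit by strict $BV$ approximation; both are valid and give identical rates.
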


\begin{proof}
  Suppose again that
  \begin{equation*}
    \operatorname{supp} \mu \subseteq [-R_{\mu},R_{\mu}]^d.
  \end{equation*}
  
  \emph{Ad narrow convergence:}
  By \cite[Theorem 3.9.1]{Dur10}, it is sufficient to test convergence for bounded, Lipschitz-continuous functions. So let \(\varphi \in C_b(\mathbb{R}^d) \) be a Lipschitz function with Lipschitz constant \(L\). Then,
  \begin{align*}
    \leadeq{\left| \int_{\mathbb{R}^d} \varphi(x) \diff \mu_N (x) - \int_{\mathbb{R}^d} \varphi(x) \diff \mu(x) \right|}\\
    = {} & \left| \frac{1}{N} \sum_{i=1}^{N} \varphi(x_i) - \int_{\mathbb{R}^d} \varphi(x) \diff \mu(x) \right|\\
    \leq {} & \sum_{i \in I} \int_{T_i} \left| \varphi(x) - \varphi(x_i) \right| \diff \mu(x)\\
    \leq {} & L \sum_{i \in I} \int_{T_i} \left| x - x_i \right| \diff \mu(x).
  \end{align*}
  Denote by
  \begin{equation*}
    \widehat{\pi}_k (i_1,\ldots,i_d) := (i_1,\ldots,i_{k-1},i_{k+1},i_d)
  \end{equation*}
  the projection onto all coordinates except the \( k \)th one. Now, we exploit the uniformity of the tiling in all dimensions, \eqref{eq:155}: By using the triangle inequality and grouping the summands,
  \begin{align}
    \leadeq{\sum_{i \in I} \int_{T_i} \left| x - x_i \right| \diff \mu(x)} \\
    \leq {} & \sum_{i \in I} \sum_{k=1}^{d} \int_{T_i} \left| x^k - x_i^k \right| \diff \mu(x)\label{eq:176}\\
    = {} & \sum_{k=1}^{d} \sum_{i \in \widehat{\pi}_k(I)} \sum_{j=1}^{n_{k,i_1,\ldots,i_{k-1}}} \int_{T_i} \left| x^k - x_{i_1,\ldots,i_{k-1},j,i_k,\ldots,i_{d-1}}^k \right| \diff \mu(x) \nonumber \\
    \leq {} & \sum_{k=1}^{d} \sum_{i \in \widehat{\pi}_k(I)} \underbrace{\sum_{j=1}^{n_{k,i_1,\ldots,i_{k-1}}} \left( y_{k,i_1,\ldots,i_{k-1},(j-1)}-y_{k,i_1,\ldots,i_{k-1},j} \right)}_{=2R_\mu} \underbrace{\int_{T_i} \diff \mu(x)}_{=1/N} \nonumber \\
    \leq {} & 2R_{\mu}\,d \frac{\left(\widetilde{n}+1\right)^{d-1}}{N} \leq 2R_{\mu}\,d \frac{\left(\widetilde{n}+1\right)^{d-1}}{\widetilde{n}^{d}} \leq \frac{C}{\widetilde{n}} \rightarrow 0 \quad \text{for } N\rightarrow\infty.\label{eq:376}
  \end{align}
  
  \emph{Ad \(L^1\)-convergence:} As \( K \in W^{1,1}(\mathbb{R}^d) \subseteq BV(\mathbb{R}^d)\), we can approximate it by \( C^1 \) functions which converge \( BV \)-strictly, so let us additionally assume \( K \in C^1 \) for now. Then,
  \begin{align}
    \label{eq:177}
    \leadeq{\int_{\mathbb{R}^d} \left| K_h \ast \mu_N(x) - \mu(x) \right| \diff x}\\
    \leq {} & \int_{\mathbb{R}^d} \left| K_h \ast \mu_N(x) - K_h \ast \mu (x) \right| \diff x + \int_{\mathbb{R}^d} \left| K_h \ast \mu(x) - \mu(x) \right| \diff x.
  \end{align}
  By \(h \rightarrow 0\), the second term goes to \( 0 \) (see \cite[Chapter 5.2, Theorem 2]{92-Evans-fine-properties}), so it is sufficient to consider
  \begin{align}
    \leadeqnum{\int_{\mathbb{R}^d} \left| K_h \ast \mu_N(x) - K_h \ast \mu(x)  \right| \diff x}\label{eq:178}\\
    \leq {} & \sum_{i \in I} \int_{T_i} \int_{\mathbb{R}^d} \left| K_h(x - x_i) - K_h(x-y) \right| \diff x \diff \mu(y) \nonumber \\
    = {} & \sum_{i \in I} \int_{T_i} \int_{\mathbb{R}^d} \left| \int_0^1 \nabla K_h(x - y + t (y - x_i)) \cdot (y - x_i) \diff t \right| \diff x \diff \mu(y) \nonumber \\
    \leq {} & \sum_{i \in I} \int_{T_i} \int_0^1 \left| y - x_i \right| \int_{\mathbb{R}^d} \left| \nabla K_h(x - y + t (y - x_i)) \right| \diff x \diff t \diff \mu(y) \nonumber \\
    = {} & \frac{1}{h} \left\| \nabla K \right\|_{L^1} \sum_{i \in I} \int_{T_i}  \left| y - x_i \right| \diff \mu(y).\label{eq:179}
  \end{align}
  Since the left-hand side \eqref{eq:178} and the right-hand side \eqref{eq:179} of the above estimate are continuous with respect to strict BV convergence (by Fubini-Tonelli and convergence of the total variation, respectively), this estimate extends to a general \( K \in BV(\mathbb{R}^d) \) and
  \begin{equation*}
    \frac{1}{h} \sum_{i \in I} \int_{T_i}  \left| y - x_i \right| \diff \mu(y) \leq \frac{C}{\widetilde{n}h} \rightarrow 0, \quad \text{for } N \rightarrow \infty,
  \end{equation*}
  %
  by the calculation in \eqref{eq:176} and condition \eqref{eq:172}.

  \emph{Ad convergence of the total variation:} Similarly to the estimate in \eqref{eq:177}, by \( h \rightarrow 0 \) it is sufficient to consider the \(L^1\) distance between \( \nabla K_h \ast \mu_N \) and \( \nabla K_h \ast \mu \) if we approximate a general \( K \) by a \( K \in C^{2}(\mathbb{R}^d) \). By a calculation similar to \eqref{eq:178} -- \eqref{eq:179} as well as \eqref{eq:376} and using \( \nabla K_h(x) = h^{-d-1} \nabla K(x/h) \), we get
  \begin{align*}
    \leadeq{\int_{\mathbb{R}^d} \left| \nabla K_h \ast \mu_N(x) - \nabla K_h \ast \mu(x)  \right| \diff x}\\
    \leq {} & C \frac{1}{h} \sum_{i \in I} \int_{T_i} \int_{\mathbb{R}^d} \left| \nabla K_h(x-x_i) - \nabla K_h(x-y) \right| \diff x \diff \mu(y)\\
    \leq {} & C \left\| D^2 K \right\|_{L^1} \frac{1}{\widetilde{n}h^2} \rightarrow 0 \quad \text{for } N\rightarrow \infty,
  \end{align*}
  by the condition \eqref{eq:172} we imposed on \( h \).
\end{proof}

Since we associate to each \( \mu_N \in \mathcal{P}_N \) an \( L^1 \)-density \( Q_{h(N)}[\mu_N] \) and want to analyze both the behavior of \( \mathcal{E}[\mu_N] \) and \( \left| DQ_{h(N)}[\mu_N] \right|(\mathbb{R}^d) \), we need to incorporate the two different topologies involved, namely the narrow convergence of \( \mu \) and \( L^1 \)-convergence of \( Q_{h(N)}[\mu] \), into the concept of \( \Gamma \)-convergence. This can be done by using a slight generalization introduced in \cite{94-Anzellotti-GeneralizedGamma}, named \( \Gamma(q,\tau^-) \)-convergence.

\begin{definition}[\( \Gamma(q,\tau^-) \)-convergence]
  \label{def:-gammaq-tau}
  \cite[Definition 2.1]{94-Anzellotti-GeneralizedGamma}
  For \( N \in \mathbb{N} \), let \( X_N \) be a set and \( F_N \colon X_N \to \mathbb{R} \) a function. Furthermore, let \( Y \) be a topological space with topology \( \tau \) and \( q = ( q_N )_{N \in \mathbb{N}} \) a sequence of embedding maps \( q_N \colon X_N \to Y \). Then, \( F_N \) is said to \( \Gamma(q,\tau^-) \)-converge to a function \( F \colon Y \to \overline{\mathbb{R}} \) at \( y \in Y \), if
  \begin{enumerate}
  \item \emph{\( \liminf \)-condition}: For each sequence \( x_N \in X_N \) such that \( q_N(x_N) \xrightarrow{\tau} y \),
    \begin{equation*}
      F(y) \leq \liminf_{N \to \infty} F_N(x_N).
    \end{equation*}
  \item \emph{\( \limsup \)-condition}: There is a sequence \( x_N \in X_N \) such that \( q_N(x_N) \xrightarrow{\tau} y \) and
    \begin{equation*}
      F(y) \geq \limsup_{N \to \infty} F_N(x_N).
    \end{equation*}
  \end{enumerate}

  Furthermore, we say that the \( F_N \) \( \Gamma(q,\tau^-) \)-converge on a set \( \mathcal{D} \subseteq Y \) if the above is true for all \( y \in \mathcal{D} \) and we call the sequence \( F_N \) equi-coercive, if for every \( c \in \mathbb{R} \), there is a compact set \( K \subseteq Y \) such that \( q_N\left( \left\{ x : F_N(x) \leq c \right\} \right) \subseteq K \).
\end{definition}

\begin{remark}
  The main consequence of \( \Gamma \)-convergence, which is of interest to us, is the convergence of minimizers. This remains true also for \( \Gamma(q,\tau^-) \)-convergence, see \cite[Proposition 2.4]{94-Anzellotti-GeneralizedGamma}.
\end{remark}

Here, we are going to consider
\begin{equation}
  \label{eq:379}
  Y :=  \mathcal{P}(\mathbb{R}^d) \times BV(\mathbb{R}^d)
\end{equation}
with the corresponding product topology of narrow convergence and $BV$ weak-$*$-convergence (actually $L^1$-convergence suffices),
\begin{equation*}
  X_N := \mathcal{P}^N(\mathbb{R}^d), \quad q_N(\mu) := (\mu,Q_{h(N)}[\mu]).
\end{equation*}
and consider the limit \( \widehat{\mathcal{E}}^\lambda \) to be defined on the diagonal
\begin{equation*}
  \mathcal{D} := \left\{ (\mu,\mu) : \mu \in \mathcal{P}(\mathbb{R}^d) \cap BV(\mathbb{R}^d) \right\}.
\end{equation*}

Since we will be extracting convergent subsequences of pairs \( (\mu_N,Q_{h(N)}[\mu]) \) in order to obtain existence of minimizers, we need the following lemma to ensure that the limit is in the diagonal set \( \mathcal{D} \).

\begin{lemma}[Consistency of the embedding \( Q_{h(N)} \)]
  \label{lem:29}
  If \( (\mu_N)_N \) is a sequence such that \( \mu_N \in \mathcal{P}^N(\mathbb{R}^d) \), \( \mu_N \to \mu \in \mathcal{P}(\mathbb{R}^d) \) narrowly and \( Q_{h(N)}[\mu_N] \to \widetilde{\mu} \in BV(\mathbb{R}^d) \) in \( L^1(\mathbb{R}^d) \) as well, as \( h(N) \to 0 \), then \( \mu = \widetilde{\mu} \).
\end{lemma}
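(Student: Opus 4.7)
The plan is to show independently that $Q_{h(N)}[\mu_N]$ also converges narrowly to $\mu$; combined with the hypothesis $Q_{h(N)}[\mu_N]\to\widetilde{\mu}$ in $L^1$, the uniqueness of the narrow (equivalently, distributional) limit will then force $\mu=\widetilde{\mu}$. Since $K\geq 0$ with $\int_{\mathbb{R}^d} K\diff x = 1$ and $\mu_N\in\mathcal{P}(\mathbb{R}^d)$, each $Q_{h(N)}[\mu_N]=K_{h(N)}\ast\mu_N$ is a probability density, and its $L^1$-limit $\widetilde{\mu}$ is therefore a non-negative function integrating to one; the $L^1$-convergence of the densities then entails the narrow convergence of the corresponding probability measures towards $\widetilde{\mu}$.

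The core step is a Fubini identity: for $\varphi\in C_c^\infty(\mathbb{R}^d)$, setting $\check{K}_h(x):=K_h(-x)$,
\begin{equation*}
  \int_{\mathbb{R}^d}\varphi(x)\,Q_{h(N)}[\mu_N](x)\diff x = \int_{\mathbb{R}^d}(\varphi\ast \check{K}_{h(N)})(y)\diff \mu_N(y).
\end{equation*}
Because $\varphi$ is uniformly continuous and $(K_{h(N)})_N$ is a non-negative $L^1$-approximate identity (thanks to $h(N)\to 0$), one has $\varphi\ast \check{K}_{h(N)}\to \varphi$ uniformly on $\mathbb{R}^d$, hence
\begin{equation*}
  \left|\int_{\mathbb{R}^d}\varphi\,Q_{h(N)}[\mu_N]\diff x - \int_{\mathbb{R}^d}\varphi\diff \mu_N\right| \leq \|\varphi\ast\check{K}_{h(N)}-\varphi\|_\infty\to 0.
\end{equation*}
Together with the narrow convergence $\mu_N\to\mu$, this yields $\int \varphi\,Q_{h(N)}[\mu_N]\diff x \to \int\varphi\diff\mu$.

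On the other hand, $L^1$-convergence $Q_{h(N)}[\mu_N]\to\widetilde{\mu}$ directly provides $\int \varphi\,Q_{h(N)}[\mu_N]\diff x \to \int \varphi\,\widetilde{\mu}\diff x$ for the same bounded test function $\varphi$. Comparing the two limits gives $\int\varphi\diff\mu=\int\varphi\diff\widetilde{\mu}$ for every $\varphi\in C_c^\infty(\mathbb{R}^d)$, which is enough to conclude $\mu=\widetilde{\mu}$ as Radon measures on $\mathbb{R}^d$. The only non-routine ingredient is the uniform convergence $\varphi\ast\check{K}_{h(N)}\to\varphi$, which is a classical property of non-negative $L^1$-approximate identities acting on uniformly continuous functions, obtained by splitting the convolution integral over $\{|z|\leq\delta\}$ and its complement and using that $\int_{|z|\geq\delta/h(N)}K(z)\diff z\to 0$ as $N\to\infty$.
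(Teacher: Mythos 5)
Your proposal is correct and follows essentially the same route as the paper: both use the Fubini identity $\int \varphi\, Q_{h(N)}[\mu_N]\,\mathrm{d}x = \int (\varphi\ast\check K_{h(N)})\,\mathrm{d}\mu_N$, show that this tends to $\int\varphi\,\mathrm{d}\mu$, and compare with the $L^1$-limit to conclude. The only difference is cosmetic: the paper tests against bounded Lipschitz functions and obtains the explicit bound $Lh\|K\|_{L^1}$ via the substitution $y\mapsto h(N)y$, whereas you test against $C_c^\infty$ and invoke the general approximate-identity fact that $\varphi\ast\check K_{h(N)}\to\varphi$ uniformly for uniformly continuous bounded $\varphi$; both yield the same conclusion with comparable effort.
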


\begin{proof}
  To show \( \mu = \widetilde{\mu} \), by the metrizability of \( \mathcal{P} \) it suffices to show that \( Q_{h(N)}[\mu_N] \to \mu \) narrowly. For this, as in the proof of Lemma \ref{lem:13}, we can restrict ourselves to test convergence of the integral against bounded and Lipschitz-continuous functions. Hence, let \( \varphi \in C_b(\mathbb{R}^d) \cap \mathrm{Lip}(\mathbb{R}^d) \) with Lipschitz constant \( L \). Then,
  \begin{align*}
    \leadeq{\left| \int_{\mathbb{R}^d} \varphi(x) Q_{h(N)}[\mu_N](x) \diff x - \int_{\mathbb{R}^d} \varphi(x) \diff \mu(x) \right|} \\
    \leq {} & \left| \int_{\mathbb{R}^d} \varphi(x) K_{h(N)} \ast \mu_N(x) \diff x - \int_{\mathbb{R}^d} \varphi(x) \diff \mu_N(x) \right|\\
    {} & + \left| \int_{\mathbb{R}^d} \varphi(x) \diff \mu_N(x) - \int_{\mathbb{R}^d} \varphi(x) \diff \mu(x) \right|,
  \end{align*}
  where the second term goes to zero by \( \mu_N \to \mu \) narrowly. For the first term, by Fubini we get that
  \begin{equation*}
    \int_{\mathbb{R}^d} \varphi(x) K_{h(N)} \ast \mu_N(x) \diff x = \int_{\mathbb{R}^d} (\varphi \ast K_{h(N)}(-\cdot)) (x) \diff \mu_N(x)
  \end{equation*}
  and therefore
  \begin{align*}
    \leadeq{\left| \int_{\mathbb{R}^d} \varphi(x) K_{h(N)} \ast \mu_N(x) \diff x - \int_{\mathbb{R}^d} \varphi(x) \mu_N(x) \diff x \right|} \\
    = {} & \left| \int_{\mathbb{R}^d} \int_{\mathbb{R}^d} \left( \varphi(x + y) - \varphi(x) \right) K_{h(N)}(y) \diff y \diff \mu_N(x) \right| \\
    = {} & \left| \int_{\mathbb{R}^d} \int_{\mathbb{R}^d} \left( \varphi(x + h(N)y) - \varphi(x) \right) K(y) \diff y \diff \mu_N(x) \right| \\
    \leq {} & Lh \left\| K \right\|_{L^1} \mu_N(\mathbb{R}^d) \to 0, \quad N \to 0
  \end{align*}
  by \( h(N) \to 0 \), proving \( Q_{h(N)}[\mu_N] \to \mu \) and hence the claim.
\end{proof}

\begin{theorem}
  [Consistency of the kernel estimate]
  \label{thm:con-kern-est}
  Under the assumption \eqref{eq:172} on \( h(N) \), the functionals \((\widehat{\mathcal{E}}_{N}^{\lambda})_{N \in \mathbb{N}}\) are equi-coercive and
  \begin{equation*}
    \widehat{\mathcal{E}}_{N}^{\lambda} \xrightarrow{\Gamma(q,\tau^-)} \widehat{\mathcal{E}}^{\lambda} \quad \text{for } N \rightarrow \infty,
  \end{equation*}
  with respect to the topology of \( Y \) defined above, \ie, weak convergence of \(\mu_{N}\) together with \(L^{1}\)-convergence of \(Q_{h(N)}[\mu_N]\). In particular, every sequence of minimizers of \( \widehat{\mathcal{E}}^{\lambda}_{N} \)  admits a subsequence converging to a minimizer of \(\widehat{\mathcal{E}}^{\lambda}\).
\end{theorem}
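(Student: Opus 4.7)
Given \(c \in \mathbb{R}\) and \(\mu_{N} \in \mathcal{P}^{N}(\mathbb{R}^{d})\) with \(\widehat{\mathcal{E}}_{N}^{\lambda}[\mu_{N}] \leq c\), the bound splits into \(\widehat{\mathcal{E}}[\mu_{N}] \leq c\), which places \((\mu_{N})_{N}\) in a narrowly relatively compact set by Proposition \ref{prp:compctness-sublvls}, and \(|DQ_{h(N)}[\mu_{N}]|(\mathbb{R}^{d}) \leq c/\lambda\), which together with the trivial bound \(\|Q_{h(N)}[\mu_{N}]\|_{L^{1}} = 1\) yields weak-\(*\) \(BV\) compactness, and in particular \(L^{1}_{\mathrm{loc}}\) relative compactness. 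I would then upgrade this to full \(L^{1}\) compactness by deducing the tightness of \((Q_{h(N)}[\mu_{N}])_{N}\) from the tightness of \((\mu_{N})_{N}\) through the convolution identity \(Q_{h(N)}[\mu_{N}] = K_{h(N)} \ast \mu_{N}\) together with \(h(N) \to 0\) and \(K \in L^{1}\). Lemma \ref{lem:29} then guarantees that the limits of the two components of \(q_{N}(\mu_{N})\) coincide, placing the limit point in the diagonal set \(\mathcal{D}\). The \(\liminf\)-condition is immediate from adding the narrow lower semi-continuity of \(\widehat{\mathcal{E}}\) in Proposition \ref{prp:compctness-sublvls} and the \(L^{1}\)-lower semi-continuity of \(|D \cdot|(\mathbb{R}^{d})\).

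\textbf{Recovery sequence.} This is the heart of the argument. Assume \(\widehat{\mathcal{E}}^{\lambda}[\mu] < \infty\), so that \(\mu \in \mathcal{P}(\mathbb{R}^{d}) \cap BV(\mathbb{R}^{d})\) with \(\widehat{\mathcal{E}}[\mu] < \infty\). The plan is a two-level diagonal construction. First I would approximate \(\mu\) by a sequence \(\mu^{k} \in \mathcal{P}_{c}(\mathbb{R}^{d}) \cap BV(\mathbb{R}^{d})\) of compactly supported \(BV\) probability measures such that simultaneously \(\widehat{\mathcal{E}}[\mu^{k}] \to \widehat{\mathcal{E}}[\mu]\) and \(|D\mu^{k}|(\mathbb{R}^{d}) \to |D\mu|(\mathbb{R}^{d})\). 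This can be achieved by composing Gaussian mollification, which provides \(BV\)-strict convergence by standard approximation theory and \(\widehat{\mathcal{E}}\)-convergence by Lemma \ref{lem:11}, with a smooth truncation by cut-offs \(\chi_{R}\) satisfying \(|\nabla \chi_{R}| \leq C/R\) plus renormalization, the Leibniz contribution \(\mu \nabla \chi_{R}\) being controlled in \(L^{1}\) by \(C/R \to 0\). Second, for each compactly supported \(\mu^{k}\), Definition \ref{def:good-tiling}, Proposition \ref{prp:constr-gen-dim} and Lemma \ref{lem:13} provide point approximations \(\mu_{N}^{k} \in \mathcal{P}^{N}(\mathbb{R}^{d})\) such that \(\mu_{N}^{k} \to \mu^{k}\) narrowly and \(Q_{h(N)}[\mu_{N}^{k}] \to \mu^{k}\) strictly in \(BV\). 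Since all \(\mu_{N}^{k}\) and \(\mu^{k}\) are supported in a common compact cube, Lemma \ref{lem:26} upgrades narrow to \(W_{2}\)-convergence, and Lemma \ref{lem:5} with Corollary \ref{cor:four-repr-widet} then yields \(\widehat{\mathcal{E}}[\mu_{N}^{k}] \to \widehat{\mathcal{E}}[\mu^{k}]\). A diagonal choice \(N = N(k) \to \infty\) fast enough then produces the required recovery sequence in the topology of \(Y\).

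\textbf{Convergence of minimizers.} Existence of minimizers for each \(\widehat{\mathcal{E}}_{N}^{\lambda}\) follows by the direct method on the narrowly closed set \(\mathcal{P}^{N}(\mathbb{R}^{d})\) (Lemma \ref{lem:9}), invoking Fatou's lemma for the lower semi-continuity of \(\widehat{\mathcal{E}}\) and the lower semi-continuity of the discrete total variation together with the equi-coercivity established above. The convergence of any sequence of minimizers to a minimizer of \(\widehat{\mathcal{E}}^{\lambda}\) is then the standard consequence of \(\Gamma(q,\tau^{-})\)-convergence of equi-coercive functionals, see \cite[Proposition 2.4]{94-Anzellotti-GeneralizedGamma}. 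The principal obstacle is the first stage of the recovery sequence: since \(\mu\) need not lie in \(\mathcal{P}_{2}(\mathbb{R}^{d})\), one cannot directly appeal to \(W_{2}\)-continuity of \(\widehat{\mathcal{E}}\) when passing to the truncation limit, so the mollification and cut-off steps must be carefully tuned to ensure compatibility of the total variation convergence with the convergence of the Fourier-type functional \(\widehat{\mathcal{E}}\).
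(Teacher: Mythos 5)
Your equi-coercivity, $\liminf$-condition, and convergence-of-minimizers arguments are sound and closely match the paper's structure: compactness from Proposition \ref{prp:compctness-sublvls} plus $BV$ bounds, tightness of the kernel-estimated densities transferred from the particle measures, closedness of the diagonal via Lemma~\ref{lem:29}, and standard $\Gamma$-convergence machinery at the end. Your second stage of the recovery sequence (tiling, Lemma~\ref{lem:13}, upgrade to $W_2$ on a common compact set, continuity via Lemma~\ref{lem:5} and Corollary~\ref{cor:four-repr-widet}, then diagonalize) is also exactly right.

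The gap is in the first stage, and you yourself flag it in the last sentence: for a general $\mu \in \mathcal{P}(\mathbb{R}^d)\cap BV(\mathbb{R}^d)$ that need not lie in $\mathcal{P}_2(\mathbb{R}^d)$, ``Gaussian mollification plus truncation plus renormalization'' does \emph{not} by itself preserve convergence of $\widehat{\mathcal{E}}$. Mollification $\mu \ast \eta_\varepsilon$ has the same tail behavior as $\mu$ (it does not gain integrability of $|x|^2$), so the subsequent cut-off $\chi_R$ cannot be controlled through $W_2$-continuity; and there is no monotonicity of the quadratic Fourier-type functional that would let you pass the truncation limit otherwise. The paper resolves this with an additional step you omit: the \emph{dampening} of Proposition~\ref{prp:rec-seq}, $\mu_n := \widehat{\eta}_{n^{-1}}\cdot\mu + (1 - (\widehat{\eta}_{n^{-1}}\cdot\mu)(\mathbb{R}^d))\delta_0$, i.e.\ multiplication by a Gaussian in the spatial variable (hence convolution by a Gaussian on the Fourier side), which forces all moments to be finite while the Fourier-side argument in Proposition~\ref{prp:rec-seq} guarantees $\widehat{\mathcal{E}}[\mu_n]\to\widehat{\mathcal{E}}[\mu]$. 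Only \emph{after} this dampening do the mollification, cut-off, and mass-absorption steps become controllable through $W_2$-continuity and direct $BV$-estimates, which is exactly the cascade $\mu_n \rightsquigarrow \mu_n'\rightsquigarrow\mu_n''\rightsquigarrow\mu_n'''$ carried out in the paper. Without Proposition~\ref{prp:rec-seq} your first stage does not close, so the $\limsup$-condition remains unproven for measures outside $\mathcal{P}_2(\mathbb{R}^d)$.
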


\begin{proof}
  \emph{Ad \( \liminf \)-condition:} This follows from the lower semi-continuity of \( \widehat{\mathcal{E}} \) and \( \mu \mapsto \left| D\mu \right|(\mathbb{R}^d) \) \wrt narrow convergence and \(L^1\)-convergence, respectively.

  \emph{Ad \( \limsup \)-condition:}  We use a diagonal argument to find the recovery sequence. An arbitrary \(\mu \in BV(\mathbb{R}^d) \cap \mathcal{P}(\mathbb{R}^d)\) can by Proposition \ref{prp:rec-seq} be approximated by probability measures \( \mu_n \) with existing second moment such that \( \widehat{\mathcal{E}}[\mu_n] \rightarrow \widehat{\mathcal{E}}[\mu] \), namely
  \begin{equation*}
    \mu_n = \widehat{\eta}_{n^{-1}} \cdot \mu + \left( 1 - \widehat{\eta}_{n^{-1}} \cdot \mu(\mathbb{R}^d) \right)\delta_0.
  \end{equation*}
  By Lemma \ref{lem:11}, we can also smooth the approximating measures by convolution with a Gaussian \( \eta_{\varepsilon(n)} \) to get a narrowly convergent sequence \( \mu_n' \to \mu \),
    \begin{equation*}
      \mu_n' = \eta_{\varepsilon(n)} \ast \mu_n = \eta_{\varepsilon(n)} \ast (\widehat{\eta}_{n^{-1}}\cdot \mu) + \left(1 - (\widehat{\eta}_{n^{-1}}\cdot \mu)(\mathbb{R}^{d})\right)\eta_{\varepsilon(n)},
    \end{equation*}
    while still keeping the continuity in \( \widehat{\mathcal{E}} \). Since \( \left(1 - (\widehat{\eta}_{n^{-1}}\cdot \mu)(\mathbb{R}^{d})\right) \rightarrow 0 \), we can replace its factor \( \eta_{\varepsilon(n)} \) by \( \eta_1 \) to get
    \begin{equation*}
      \mu_n'' = \eta_{\varepsilon(n)} \ast (\widehat{\eta}_{n^{-1}}\cdot \mu) + \left(1 - (\widehat{\eta}_{n^{-1}}\cdot \mu)(\mathbb{R}^{d})\right)\eta_{1},
    \end{equation*}
    and still have convergence and continuity in \( \widehat{\mathcal{E}} \). These \( \mu_n'' \) can then be (strictly) cut-off by a smooth cut-off function \( \chi_M \) such that
    \begin{alignat*}{2}
      \chi_M(x) & = 1 &&\text{ for } \left| x \right| \leq M, \\
      \chi_M(x) & \in [0,1] &&\text{ for } M < \left| x \right| < M+1, \\
      \chi_M(x) &= 0 &&\text{ for } \left| x \right| \geq M+1.
    \end{alignat*}
    Superfluous mass can then be absorbed in a normalized version of \( \chi_1 \), summarized yielding
    \begin{equation*}
      \mu_n''' = \chi_{M(n)} \cdot \mu_n'' + (1-\chi_{M(n)} \cdot \mu_n'')(\mathbb{R}^d) \frac{\chi_1}{\left\| \chi_1 \right\|_1},
    \end{equation*}
    which for fixed \( n \) and \( M(n) \to \infty  \) is convergent in the \( 2 \)-Wasserstein topology, hence we can keep the continuity in \( \widehat{\mathcal{E}} \) by choosing \( M(n) \) large enough.

    Moreover, the sequence \( \mu_n''' \) is also strictly convergent in \( BV \): for the \( L^1 \)-convergence, we apply the dominated convergence theorem for \( M(n) \to \infty \) when considering \( \mu_n''' \), and the approximation property of the Gaussian mollification of \( L^1 \)-functions for \( \mu_n'' \). Similarly, for the convergence of the total variation, consider
    \begin{align}
      \leadeq{\left| \left| D \mu_n''' \right|(\mathbb{R}^d) - \left| D \mu \right|(\mathbb{R}^d) \right|}\\
      \leq {} & \left| \int_{\mathbb{R}^d} \chi_{M(n)}(x) \left| D \mu_n''(x) \right| \diff x - \int_{\mathbb{R}^d} \left| D \mu_n''(x) \right| \diff x \right| \label{eq:391} \\
      {} & + \int_{\mathbb{R}^d} \left| \nabla \chi_{M(n)}(x) \right| \mu_n''(x) \diff x \label{eq:392} \\
      {} & + \left| \left| D \mu_n''(x) \right| - \left| D \mu \right|(\mathbb{R}^d) \right| \label{eq:383} \\
      {} & + (1-\chi_{M(n)} \cdot \mu_n'')(\mathbb{R}^d) \frac{\left\| \nabla \chi_1 \right\|_1}{\left\| \chi_1 \right\|_1}, \label{eq:388}
    \end{align}
    where the terms \eqref{eq:391}, \eqref{eq:392} and \eqref{eq:388} tend to \( 0 \) for \( M(n) \) large enough by Dominated Convergence. For the remaining term \eqref{eq:383}, we have
    \begin{align}
      \leadeq{\left| \left| D \mu_n'' \right| - \left| D \mu \right|(\mathbb{R}^d) \right|}\\
      \leq {} & \left| \left| \eta_{\varepsilon(n)} \ast D (\widehat{\eta}_n \cdot \mu) \right|(\mathbb{R}^d) - \left| D (\widehat{\eta}_n \cdot \mu) \right|(\mathbb{R}^d) \right| \label{eq:393} \\
      {} & + \int_{\mathbb{R}^d} \left| \nabla \widehat{\eta}_n(x) \right| \diff \mu(x) \label{eq:394} \\
      {} & + \int_{\mathbb{R}^d} (1 - \widehat{\eta}_n(x)) \diff \left| D \mu \right|(x) \label{eq:395} \\
      {} & + \left(1 - (\widehat{\eta}_{n^{-1}}\cdot \mu)(\mathbb{R}^{d})\right)\left| D\eta_{1} \right|(\mathbb{R}^d). \label{eq:396}
    \end{align}
    Here, all terms vanish as well: \eqref{eq:393} for \( \varepsilon(n) \) large enough by the approximation property of the Gaussian mollification for \( BV \)-functions and \eqref{eq:394}, \eqref{eq:395} and \eqref{eq:396} by the dominated convergence theorem for \( n \to \infty \). Finally, Lemma \ref{lem:13} applied to the \( \mu_n''' \) yields the desired sequence of point approximations.

  \emph{Ad equi-coercivity and existence of minimizers:} Equi-coercivity and compactness strong enough to ensure the existence of minimizers follow from the coercivity and compactness of level sets of \( \widehat{\mathcal{E}} \) and by \( \| Q_{h(N)}(\mu_N) \|_{L^1} = 1 \) together with compactness arguments in \( BV \), similar to Proposition \ref{prp:consist-cont-bv}. Since Lemma \ref{lem:29} ensures that the limit is in \( \mathcal{D} \), standard \( \Gamma \)-convergence arguments then yield the convergence of minimizers.
\end{proof}

\subsubsection{Discretization by point-differences}
\label{sec:discrete-version-tv}

In one dimension, the geometry is sufficiently simple to avoid the use of kernel density estimators to allow us to explicitly see the intuitive effect the total variation regularization has on point masses (similar to the depiction in Figure \ref{fig:discrete-tv} in the previous section). In particular, formula \eqref{eq:192} below shows that the total variation acts as an additional attractive-repulsive force which tends to promote equi-spacing between the points masses.

In the following, let \(d = 1\) and \(\lambda > 0\) fixed.

Let \( N\in\mathbb{N} \), \( N \geq 2 \) and \(\mu_{N} \in \mathcal{P}^{N}(\mathbb{R})\) with
\begin{equation*}
  \mu_{N} = \frac{1}{N} \sum_{i=1}^{N}\delta_{x_{i}} \quad \text{for some } x_{i} \in \mathbb{R}.
\end{equation*}
Using the ordering on \(\mathbb{R}\), we can assume the \( (x_i)_i \) to be ordered, which allows us to associate to \(\mu_{N}\) a unique vector
\begin{equation*}
  x := x(\mu_{N}) := (x_{1},\dotsc,x_{N}), \quad x_{1} \leq \dotsc \leq x_{N}.
\end{equation*}
If \(x_{i} \neq x_{j}\) for all \(i \neq j \in \{1,\dotsc,N\}\), we can further
define an \(L^{1}\)-function which is piecewise-constant by
\begin{equation*}
  \widetilde{Q}_{N}[\mu_N] := \frac{1}{N}\sum_{i = 2}^{N} \frac{1}{x_{i} - x_{i-1}}1_{[x_{i-1},x_{i}]}
\end{equation*}
and compute explicitly the total variation of its derivative to be
\begin{align}
  \label{eq:192}
  \leadeq{\left| D\widetilde{Q}_{N}[\mu_N] \right|(\mathbb{R})} \\
  = {} & \frac{1}{N}\left[\sum_{i = 2}^{N-1}
    \left| \frac{1}{x_{i+1}-x_{i}} - \frac{1}{x_{i}-x_{i-1}} \right| +
    \frac{1}{x_{2}-x_{1}} + \frac{1}{x_{N}-x_{N-1}}\right],
\end{align}
if no two points are equal, and \(\infty\) otherwise. This leads us to the following definition of the regularized functional using piecewise constant functions:
\begin{align*}
  \mathcal{P}^{N}_{\times}(\mathbb{R}) := {} &\left\{\mu \in \mathcal{P}^{N}(\mathbb{R}) : \mu = \frac{1}{N}
    \sum_{i=1}^{N}\delta_{x_{i}} \text{ with } x_{i} \neq x_{j} \text{ for } i \neq j
  \right\},\\
  \widehat{\mathcal{E}}^{\lambda}_{N,\mathrm{pwc}}[\mu] := {} &
  \begin{cases}
    \widehat{\mathcal{E}}[\mu] + \lambda \left| D\widetilde{Q}_{N}[\mu] \right|(\mathbb{R}), &\mu\in\mathcal{P}_{\times}^{N}(\mathbb{R});\\
    \infty,&\mu \in \mathcal{P}^N(\mathbb{R}) \setminus \mathcal{P}^N_\times(\mathbb{R}).
  \end{cases}
\end{align*}

\begin{remark}
  The functions \(\widetilde{Q}_{N}[\mu_N]\) as defined above are not probability densities, but instead have mass \((N-1)/N\).
\end{remark}

We shall again prove \( \Gamma(q,\tau^-) \)-convergence as in Section \ref{sec:discr-kern-estim}, this time with the embeddings \( q_N \) given by \( \widetilde{Q}_{N} \). The following lemma yields the necessary recovery sequence.

\begin{lemma}
  \label{lem:15}
  If \(\mu \in \mathcal{P}_c(\mathbb{R}) \cap C_{c}^{\infty}(\mathbb{R}) \) is the density of a compactly supported probability measure, then there is a sequence \(\mu_{N} \in \mathcal{P}^{N}(\mathbb{R})\), \(N \in \mathbb{N}_{\geq 2}\) such that
  \begin{equation*}
    \mu_{N} \rightarrow  \mu \quad \text{narrowly for } N\rightarrow \infty
  \end{equation*}
  and
  \begin{equation*}
    \widetilde{Q}_{N}[\mu_N]\rightarrow \mu \quad \text{in } L^{1}(\mathbb{R}), \quad \left| D\widetilde{Q}_{N}[\mu_N] \right|(\mathbb{R}) \rightarrow  \int_{\mathbb{R}} \left| \mu'(x) \right|
    \dd x \quad \text{for } N\rightarrow \infty.
  \end{equation*}
\end{lemma}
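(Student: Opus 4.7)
The plan is to take the points $x_i$ as evenly spaced quantiles of $\mu$. Assuming $\operatorname{supp}\mu \subseteq [a,b]$, let $F(t):=\int_a^t \mu(s)\,ds$ and define $x_1<\dots<x_N$ by $F(x_i) = (i-1/2)/N$, or equivalently
\[
\int_{x_{i-1}}^{x_i}\mu(s)\,ds = \frac{1}{N},\quad i=2,\dots,N,\qquad \int_{-\infty}^{x_1}\mu\,ds = \int_{x_N}^{+\infty}\mu\,ds = \frac{1}{2N}.
\]
Since $\mu$ is continuous, we may perturb slightly if necessary so that the $x_i$ are distinct (hence $\mu_N\in \mathcal{P}^N_\times(\mathbb{R})$). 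With $h_i := x_{i+1}-x_i$, this construction has the crucial property that the constant value $v_i := 1/(Nh_i)$ of $\widetilde{Q}_N[\mu_N]$ on $[x_i,x_{i+1}]$ equals the average of $\mu$ there, so by the mean value theorem for integrals $v_i = \mu(\xi_i)$ for some $\xi_i\in[x_i,x_{i+1}]$. Moreover, whenever $\mu$ is positive on the interior of its support, $F^{-1}\colon (0,1)\to (a,b)$ is continuous, which yields $\max_i h_i \to 0$.

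\textbf{Narrow convergence of $\mu_N$.} For any $\varphi\in C_b(\mathbb{R})$, $\int\varphi\,d\mu_N = \frac{1}{N}\sum_i \varphi(x_i)$ is a Riemann-sum approximation of $\int \varphi\,d\mu$ adapted to the partition, converging by uniform continuity of $\varphi$ on compact sets and $\max h_i \to 0$.

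\textbf{$L^1$ convergence of $\widetilde{Q}_N[\mu_N]$.} On each interval $[x_i,x_{i+1}]$, $|\widetilde{Q}_N[\mu_N]-\mu| = |\bar\mu_i - \mu|\leq \operatorname{osc}_{[x_i,x_{i+1}]}\mu$, so
\[
\int_{x_1}^{x_N}|\widetilde{Q}_N[\mu_N]-\mu|\,dx \leq (b-a)\max_i \operatorname{osc}_{[x_i,x_{i+1}]}\mu \to 0
\]
by uniform continuity of $\mu$ and $\max h_i\to 0$. Outside $[x_1,x_N]$, $\widetilde{Q}_N[\mu_N]\equiv 0$ while the remaining mass of $\mu$ is at most $1/N$, also vanishing in the limit.

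\textbf{Convergence of the total variation.} From \eqref{eq:192},
\[
\left|D\widetilde{Q}_N[\mu_N]\right|(\mathbb{R}) = v_1 + v_{N-1} + \sum_{i=2}^{N-1}|v_i-v_{i-1}| = \mu(\xi_1) + \mu(\xi_{N-1}) + \sum_{i=2}^{N-1}|\mu(\xi_i)-\mu(\xi_{i-1})|.
\]
The interior sum is exactly the total variation of the $C^1$ function $\mu$ evaluated on the partition $\{\xi_i\}$ of an interval contained in $[a,b]$; since this partition has mesh size $\leq 2\max_i h_i\to 0$, a standard argument (mean value theorem and dominated/monotone convergence applied to $|\mu'|$ on the partition) gives convergence to $\int_\mathbb{R} |\mu'(x)|\,dx$. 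The boundary terms $\mu(\xi_1)$ and $\mu(\xi_{N-1})$ vanish in the limit because $\xi_1\to a$ and $\xi_{N-1}\to b$, and $\mu(a)=\mu(b)=0$ as $\mu\in C_c^\infty(\mathbb{R})$.

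\textbf{Main obstacle.} The only nontrivial point is the convergence of the discrete variation $\sum|\mu(\xi_i)-\mu(\xi_{i-1})|$ to $\int|\mu'|$; this is a classical fact for $C^1$ functions but must be invoked carefully, and it also crucially uses that the full mesh $\max h_i\to 0$, which in turn relies on the positivity (or at least non-degeneracy) of $\mu$ on the interior of its support. If $\mu$ has interior zeros, a small perturbation combined with a density argument recovers the result.
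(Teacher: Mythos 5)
Your construction and the mean-value-theorem interpretation $v_i = \mu(\xi_i)$ coincide with the paper's; the difference lies in the convergence arguments, and there is a genuine gap there.

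Both your $L^1$ argument (bounding $|\widetilde{Q}_N[\mu_N]-\mu|$ by the oscillation of $\mu$ on each quantile cell) and your total-variation argument (treating $\sum|\mu(\xi_i)-\mu(\xi_{i-1})|$ as a Riemann-type sum for $\int|\mu'|$) rest on $\max_i h_i\to 0$. As you correctly observe, this requires $\mu$ to be strictly positive on the interior of the convex hull of its support. But the lemma is stated for an arbitrary $\mu\in\mathcal{P}_c(\mathbb{R})\cap C^\infty_c(\mathbb{R})$, which may vanish on a subinterval $[c,d]$ strictly inside $[-R_\mu,R_\mu]$ (and indeed may even have disconnected support). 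In that case the quantile cell containing $[c,d]$ has length $\geq d-c$ for every $N$, so $\max_i h_i$ does not tend to $0$, and the Riemann-sum arguments break down. You flag this at the end and propose ``a small perturbation combined with a density argument,'' but that argument is not carried out, and it is not automatic: one needs a diagonal extraction showing that the discrete TV of the approximants to the perturbed $\mu_\varepsilon$ converges, uniformly enough in $\varepsilon$, to $|D\mu|(\mathbb{R})$ as both indices are sent to their limits.

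The paper sidesteps this obstacle entirely. For the $L^1$ convergence, it argues pointwise almost-everywhere convergence and invokes dominated convergence, treating separately the points where $\mu>0$ (where the surrounding cell does shrink) and the points outside $\operatorname{supp}\mu$ (where $\widetilde{Q}_N[\mu_N]\to 0$ because the enclosing cell stays long); the mesh is never required to vanish uniformly. For the total variation, the paper proves only the upper bound $\limsup_N |D\widetilde{Q}_N[\mu_N]|(\mathbb{R}) \leq V(\mu)=|D\mu|(\mathbb{R})$, which holds for \emph{any} partition (not just one with vanishing mesh) since $\sum_i|\mu(t_{i+1})-\mu(t_i)|$ is by definition dominated by the pointwise variation; the matching lower bound then follows from the lower semi-continuity of the total variation with respect to $L^1$-convergence. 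This is both simpler and more robust than trying to establish direct convergence of the discrete variation. You should replace the Riemann-sum steps by these two devices (pointwise DCT and the one-sided bound plus lower semi-continuity) to cover the full generality of the statement.
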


\begin{proof}
  \emph{1. Definition and narrow convergence:} Let \( \supp \mu \subseteq [-R_\mu, R_\mu] \) and define the vector \(x^{N} \in \mathbb{R}^{N}\) as an \(N\)th quantile of \( \mu \), i.e.,
  \begin{equation*}
    \int_{x^{N}_{i-1}}^{x^{N}_{i}} \mu(x) \dd x = \frac{1}{N} \quad \text{with
    } x^{N}_{i-1} < x^{N}_{i} \text{ for all } i = 1,\dotsc,N-1,
  \end{equation*}
  where we set \( x^{N}_{0} = -R_\mu \) and \( x^{N}_{N} = R_\mu \). Narrow convergence of the corresponding measure then follows by the same arguments used in the proof of Lemma \ref{lem:15}.
  
  \emph{2. \(L^1\)-convergence:} We want to use the dominated convergence theorem: Let \(x \in \mathbb{R}\) with \(\mu(x) > 0\). Then, by the continuity of \( \mu \), there are \(x_{i-1}^{N}(x)\), \( x_{i}^{N}(x) \) such that \( x \in [x_{i-1}^N(x), x_{i}^N(x)] \) and
  \begin{align}
    \mu(x) - \widetilde{Q}_{N}[\mu_N](x) = {} & \mu(x) - \frac{1}{N(x^{N}_{i}(x)-x^{N}_{i-1}(x))} \nonumber \\
    = {} & \mu(x) - \frac{1}{x^{N}_{i}(x)-x^{N}_{i-1}(x)}
    \int_{x^{N}_{i-1}(x)}^{x^{N}_{i}(x)}\mu(y) \dd y. \label{eq:390}
  \end{align}
  Again by \(\mu(x) > 0\) and the continuity of \(\mu\),
  \begin{equation*}
    x^{N}_{i}(x) - x^{N}_{i-1}(x) \rightarrow  0 \quad \text{for } N\rightarrow \infty,
  \end{equation*}
  and therefore
  \begin{equation*}
    \widetilde{Q}_{N}[\mu_N](x) \to \mu(x) \quad \text{for all } x \text{ such that } \mu(x) > 0.
  \end{equation*}
  On the other hand, if we consider an \( x \in [-R_\mu,R_\mu] \) such that \( x \notin \supp \mu \), say \( x \in [a,b] \), where the interval $[a,b]$ is such that \( \mu(\xi) = 0 \) for all \( \xi \in [a,b] \), and again denote by \( x^N_{i-1}(x), x^N_i(x) \) the two quantiles for which \( x \in [x^N_{i-1}(x),x^N_i(x)] \), then \( x_i^N(x) - x_{i-1}^N(x) \) stays bounded from below because \( x_{i-1}^N(x) \leq a \) and \( x_i^N(x) \geq b \), together with \( N \to \infty \) implying that for such an \( x \),
  \begin{equation*}
    \widetilde{Q}_{N}[\mu_N](x) = \frac{1}{N(x^N_i - x^N_{i-1})} \leq \frac{1}{N(b-a)} \to 0. 
  \end{equation*}
  Taking into account that $\mu$ can vanish on \(\supp \mu \) only on a subset of measure $0$, we thus have
  \begin{equation*}
    \widetilde{Q}_{N}[\mu_N](x) \to \mu(x) \quad \text{for almost every } x \in \mathbb{R}.
  \end{equation*}
  Furthermore, by \eqref{eq:390} and the choice of the \( (x^N_i)_i \), we can estimate the difference by 
  \begin{equation*}
    \left| \mu(x) - \widetilde{Q}_{N}[\mu_N](x) \right| \leq 2 \left\| \mu \right\|_\infty \cdot 1_{[-R_\mu,R_\mu]}(x),
  \end{equation*}
  yielding an integrable dominating function for \( \left| \mu(x) - \widetilde{Q}_{N}[\mu_N](x) \right| \) and therefore justifying the \( L^1 \)-convergence
  \begin{equation*}
    \int_{\mathbb{R}}\left| \mu(x) - \widetilde{Q}_{N}[\mu_N](x) \right| \dd x \to 0, \quad N \to \infty.
  \end{equation*}
  
  \emph{3. Strict BV-convergence:} For strict convergence of \(\widetilde{Q}_{N}[\mu_N]\) to \(\mu\), we additionally have to check that \(\limsup_{N\rightarrow \infty} \left| D\widetilde{Q}_{N}[\mu_N] \right|(\mathbb{R}) \leq \left| D\mu \right|(\mathbb{R}) \). To this end, consider
  \begin{align*}
    \leadeq{\left| D\widetilde{Q}_{N}[\mu_N] \right|(\mathbb{R})}\\
    = {} &\sum_{i = 2}^{N-1} \left| \frac{1}{N}\frac{1}{x^{N}_{i+1}-x^{N}_{i}} - \frac{1}{N}\frac{1}{x^{N}_{i}-x^{N}_{i-1}} \right| + \frac{1}{N(x^{N}_{2}-x^{N}_{1})} + \frac{1}{N(x^{N}_{N}-x^{N}_{N-1})}\\
    = {} & \sum_{i = 2}^{N-1} \left| \frac{1}{x^{N}_{i+1}-x^{N}_{i}}\int_{x^{N}_{i}}^{x^{N}_{i+1}}\mu(x) \dd x - \frac{1}{x^{N}_{i}-x^{N}_{i-1}}\int_{x^{N}_{i-1}}^{x^{N}_{i}}\mu(x) \dd x \right|\\
    &+ \frac{1}{x^{N}_{2}-x^{N}_{1}}\int_{x^{N}_{1}}^{x^{N}_{2}}\mu(x) \dd x + \frac{1}{x^{N}_{N}-x^{N}_{N-1}}\int_{x^{N}_{N-1}}^{x^{N}_{N}}\mu(x) \dd x\\
    = {} &\sum_{i = 1}^{N}\left| \mu(t_{i+1})-\mu(t_{i}) \right|
  \end{align*}
  for \(t_{i} \in [x^{N}_{i},x^{N}_{i-1}]\), \(i = 2,\dotsc,N\) chosen by the mean value theorem (for integration) and \(t_{1},t_{N+1}\) denoting \( -R_\mu \) and \( R_\mu \), respectively. Hence,
  \begin{equation*}
    \left| D\widetilde{Q}_{N}[\mu_{N}] \right|(\mathbb{R}) \leq \sup \left\{\sum_{i=1}^{n - 1}\left| \mu(t_{i+1})-\mu(t_{i}) \right| : n \geq 2,\, t_{1} < \dotsb < t_{n}\right\} = V(\mu),
  \end{equation*}
  the pointwise variation of \(\mu\), and the claim now follows from \(V(\mu) = \left| D\mu \right|(\mathbb{R})\) by \cite[Theorem 3.28]{00-Amb-Fusco-Pallara-BV}, because by the smoothness of \( \mu \), it is a good representative of its equivalence class in \( BV(\mathbb{R}) \), \ie, one for which the pointwise variation coincides with the measure theoretical one. \qedhere
\end{proof}

As in the previous section, we have to verify that a limit point of a sequence \( (\mu_N, \widetilde{Q}_N[\mu_N]) \) is in the diagonal \( \mathcal{D} \).

\begin{lemma}[Consistency of the embedding \( \widetilde{Q}_N \)]
  \label{lem:30}
  Let \( (\mu_N)_N \) be a sequence where \( \mu_N \in \mathcal{P}^N(\mathbb{R}) \), \( \mu_N \to \mu \) narrowly and \( \widetilde{Q}_N[\mu_N] \to \widetilde{\mu} \) in \( L^1(\mathbb{R}) \). Then \( \mu = \widetilde{\mu} \).
\end{lemma}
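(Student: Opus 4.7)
The plan is to follow the same strategy as Lemma~\ref{lem:29}: show that when tested against a rich enough class of functions, $\widetilde{Q}_N[\mu_N]$ and $\mu_N$ produce the same limit, forcing $\widetilde{\mu} = \mu$. Since two finite Radon measures on $\mathbb{R}$ that agree on every $\varphi \in C_c(\mathbb{R}) \cap \mathrm{Lip}(\mathbb{R})$ are equal, it suffices to prove $\int \varphi \, d\mu = \int \varphi \, \widetilde{\mu} \, dx$ for every such $\varphi$. Fix one with Lipschitz constant $L$ and $\supp \varphi \subseteq [-M,M]$. By the narrow convergence of $\mu_N$ we have $\int \varphi \, d\mu_N \to \int \varphi \, d\mu$, and by the $L^1$-convergence of $\widetilde{Q}_N[\mu_N]$ we have $\int \varphi \, \widetilde{Q}_N[\mu_N] \, dx \to \int \varphi \, \widetilde{\mu} \, dx$. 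Everything therefore reduces to showing that
\begin{equation*}
  A_N := \int_{\mathbb{R}} \varphi \, d\mu_N - \int_{\mathbb{R}} \varphi \, \widetilde{Q}_N[\mu_N] \, dx = \frac{1}{N}\varphi(x_1) + \frac{1}{N}\sum_{i=2}^{N} \left( \varphi(x_i) - \frac{1}{x_i - x_{i-1}} \int_{x_{i-1}}^{x_i} \varphi(y) \, dy \right)
\end{equation*}
tends to zero, where $x_1 < x_2 < \cdots < x_N$ are the ordered atoms of $\mu_N$ (the $1/N$ discrepancy comes from the fact that $\widetilde{Q}_N[\mu_N]$ has mass $(N-1)/N$).

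To bound $A_N$, the plan is to split the indices of the sum according to the position of the interval $I_i := [x_{i-1}, x_i]$ relative to $[-M, M]$. First, whenever $I_i \cap [-M, M] = \emptyset$ the summand vanishes, since both $\varphi(x_i) = 0$ and $\varphi|_{I_i} \equiv 0$. Second, for indices with $I_i \subseteq [-M, M]$, the Lipschitz bound yields
\begin{equation*}
  \left| \varphi(x_i) - \frac{1}{x_i - x_{i-1}} \int_{x_{i-1}}^{x_i} \varphi(y) \, dy \right| \leq \frac{L(x_i - x_{i-1})}{2};
\end{equation*}
since such indices form a consecutive block, their lengths $x_i - x_{i-1}$ telescope to a total at most $2M$, so the overall contribution is at most $LM/N$. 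Third, the only remaining indices are those for which $I_i$ straddles $\{-M\}$, straddles $\{M\}$, or strictly contains $[-M, M]$; there are at most three such indices, each contributing at most $2\|\varphi\|_\infty / N$ in absolute value. Together with the boundary term $\varphi(x_1)/N$, this gives $|A_N| = O(1/N) \to 0$ and hence $\mu = \widetilde{\mu}$.

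The main obstacle is that narrow convergence of $\mu_N$ does not provide any control on the spread $x_N - x_1$ of its atoms --- a single outlier carrying mass $1/N$ is perfectly compatible with tightness --- so a naive Lipschitz bound summed over all $N-1$ intervals cannot succeed. The crucial point is therefore to exploit the compact support of $\varphi$ to cut the sum down to intervals of total length uniformly bounded in $N$, isolating finitely many "boundary" intervals whose contribution is automatically $O(1/N)$. Once this localisation is achieved, the remaining estimates are elementary as outlined above.
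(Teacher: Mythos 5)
Your proof is correct, but it takes a genuinely different route from the paper's. The paper works with cumulative distribution functions: it observes that $\widetilde{Q}_N[\mu_N]$ is obtained by replacing the piecewise-constant CDF $F_N$ of $\mu_N$ by its piecewise-affine interpolant $\widetilde{F}_N$, and that since every jump of $F_N$ has size exactly $1/N$, one has $|\widetilde{F}_N - F_N| \leq 1/N$ uniformly; combining this with pointwise convergence of $F_N \to F$ at continuity points of $F$ (a consequence of narrow convergence) yields $\widetilde{F}_N \to F$ pointwise and hence $\widetilde{\mu} = \mu$. Your argument instead tests against compactly supported Lipschitz functions, computes the discrepancy $A_N$ explicitly as a sum over the gaps between consecutive atoms, and localizes to $[-M,M]$ to get the $O(1/N)$ bound via a telescoping Lipschitz estimate plus a finite number of boundary terms. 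Both proofs exploit the one-dimensional ordering, but yours has the advantage of mirroring the structure of the proof of Lemma~\ref{lem:29}, making the two consistency arguments parallel; the paper's CDF observation is somewhat slicker, obtaining the uniform $1/N$ bound in one line from the structure of $\widetilde{Q}_N$ as a linear interpolation of $F_N$. Your localization to $[-M,M]$ is carried out carefully and correctly (the indices with $I_i \subseteq [-M,M]$ indeed form a consecutive block because the atoms are ordered, and the remaining nonvanishing terms are finitely many), so the argument is complete.
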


\begin{proof}
  Denote the cumulative distribution functions \( \widetilde{F}_N \), \( F_N \), and \( F \) of \( \widetilde{Q}_N[\mu_N] \), \( \mu_N \), and \( \mu \), respectively. We can deduce \( \mu = \widetilde{\mu} \) if \( \widetilde{F}_N(x) \to F(x) \) for every \( x \in \mathbb{R} \)  (even if the measures \( \widetilde{Q}_N[\mu_N] \) have only mass \( (N-1)/N \), this is enough to show that the limit measures have to coincide, for example by rescaling the measures to have mass \( 1 \)). Note that the construction of \( \widetilde{Q}_N[\mu_N] \) precisely consists of replacing the piecewise constant functions \( F_N \) by piecewise affine functions interpolating between the points \( (x^N_i)_i \). Now, taking into account that the jump size \( F_N(x^N_i)-F_N(x^N_{i-1}) \) is always \( 1/N \) we see that
  \begin{align*}
    | \widetilde{F}_N(x) - F(x)  | \leq {} & | \widetilde{F}_N(x) - F_N(x) | + \left| F_N(x) - F(x) \right|\\
    \leq {} & \frac{1}{N} + \left| F_N(x) - F(x) \right|\to 0, \quad N \to 0,
  \end{align*}
  which is the claimed convergence.
\end{proof}

\begin{theorem}
  [Consistency of \( \widehat{\mathcal{E}}^{\lambda}_{N,\mathrm{pwc}} \)]
  \label{thm:cons-discrete-tv}
Assume $d=1$. Then for $N \to \infty$ we have \(\widehat{\mathcal{E}}^{\lambda}_{N,\mathrm{pwc}} \xrightarrow{\Gamma(q,\tau^-)} \widehat{\mathcal{E}}^{\lambda}\) with respect to the topology of \( Y \) in \eqref{eq:379}, \ie, the topology induced by the narrow convergence together with the \(L^{1}\)-convergence of the associated densities, and the sequence \((\widehat{\mathcal{E}}^{\lambda}_{N,\mathrm{pwc}})_{N}\) is equi-coercive. In particular, every sequence of minimizers of \( \widehat{\mathcal{E}}^{\lambda}_{N,\mathrm{pwc}} \) admits a subsequence converging to a minimizer of \(\widehat{\mathcal{E}}^{\lambda}\).
\end{theorem}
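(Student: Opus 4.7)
The plan is to establish $\Gamma(q,\tau^-)$-convergence in the sense of Definition~\ref{def:-gammaq-tau} on the space $Y=\mathcal P(\mathbb R)\times BV(\mathbb R)$ of \eqref{eq:379}, with embedding $q_N(\mu):=(\mu,\widetilde{Q}_N[\mu])$, and then deduce existence and convergence of minimizers, arguing in close analogy with the proof of Theorem~\ref{thm:con-kern-est}. For the \emph{liminf inequality}, given $\mu_N\in\mathcal P^N(\mathbb R)$ with $\mu_N\to\mu$ narrowly and $\widetilde{Q}_N[\mu_N]\to\widetilde{\mu}$ in $L^1(\mathbb R)$, Lemma~\ref{lem:30} forces $\widetilde{\mu}=\mu$; the narrow lower semi-continuity of $\widehat{\mathcal E}$ (Proposition~\ref{prp:compctness-sublvls}) together with the $L^1$-lower semi-continuity of the total variation \cite[Chapter 5.2, Theorem 1]{92-Evans-fine-properties} then yields
$$\widehat{\mathcal E}^\lambda[\mu]\le \liminf_{N\to\infty}\widehat{\mathcal E}^\lambda_{N,\mathrm{pwc}}[\mu_N].$$

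For the \emph{limsup inequality}, I would first reduce an arbitrary $\mu\in\mathcal P(\mathbb R)\cap BV(\mathbb R)$ to a smooth, compactly supported probability density. Namely, one constructs $(\mu_k)_k\subset C_c^\infty(\mathbb R)\cap \mathcal P(\mathbb R)$ with $\mu_k\to\mu$ narrowly, $\mu_k\to\mu$ strictly in $BV(\mathbb R)$, and $\widehat{\mathcal E}[\mu_k]\to\widehat{\mathcal E}[\mu]$, using the same layered scheme (Proposition~\ref{prp:rec-seq} dampening, Gaussian mollification via Lemma~\ref{lem:11}, smooth compact cut-off, and mass renormalization) already developed in the limsup step of Theorem~\ref{thm:con-kern-est}, which was verified there to preserve both $\widehat{\mathcal E}$-continuity and BV-strict convergence. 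For each such $\mu_k$, Lemma~\ref{lem:15} provides an $N$-quantile point approximation $\mu_{k,N}\in\mathcal P^N_\times(\mathbb R)$ with $\mu_{k,N}\to\mu_k$ narrowly and $\widetilde{Q}_N[\mu_{k,N}]\to\mu_k$ strictly in $BV(\mathbb R)$. Since $\mu_{k,N}$ is supported inside the compact $\supp \mu_k$, narrow convergence upgrades to $2$-Wasserstein convergence, along which $\widehat{\mathcal E}$ is continuous by Lemma~\ref{lem:5} and Corollary~\ref{cor:four-repr-widet}. A standard diagonal extraction $N\mapsto k(N)$ then produces the recovery sequence.

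\emph{Equi-coercivity, existence and convergence of minimizers} follow as in Theorem~\ref{thm:con-kern-est}. The inequality $\widehat{\mathcal E}^\lambda_{N,\mathrm{pwc}}\ge \widehat{\mathcal E}$ gives narrow relative compactness in the $\mu$-component via Proposition~\ref{prp:compctness-sublvls}; in the density component, bounds on $|D\widetilde{Q}_N[\mu_N]|(\mathbb R)$ together with the fixed mass $(N-1)/N$ give $L^1_{\mathrm{loc}}$ compactness, upgraded to $L^1(\mathbb R)$ compactness by the tightness coming from the narrow control, exactly as in the proof of Proposition~\ref{prp:consist-cont-bv}. Lower semi-continuity of $\widehat{\mathcal E}^\lambda_{N,\mathrm{pwc}}$ on $\mathcal P^N(\mathbb R)$ follows from the narrow lower semi-continuity of $\widehat{\mathcal E}$ combined with the fact that the explicit formula \eqref{eq:192} blows up when two points coalesce, and from the narrow closedness of $\mathcal P^N(\mathbb R)$ (Lemma~\ref{lem:9}); the direct method then yields a minimizer for each $\widehat{\mathcal E}^\lambda_{N,\mathrm{pwc}}$. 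Finally, Lemma~\ref{lem:30} forces any subsequential limit of such minimizers to lie on the diagonal $\mathcal D$, so \cite[Proposition~2.4]{94-Anzellotti-GeneralizedGamma} delivers convergence to a minimizer of $\widehat{\mathcal E}^\lambda$.

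The \emph{main obstacle} is the limsup step: one needs the piecewise-constant densities $\widetilde{Q}_N[\mu_N]$ of the recovery sequence to converge \emph{strictly} in $BV(\mathbb R)$, not merely in $L^1$ or weak-$*$, since only strict convergence transmits the total variation term to the limit. Lemma~\ref{lem:15} supplies this when the target is smooth and compactly supported, so the delicate point lies in arranging the preliminary smoothing so that the approximants $\mu_k$ themselves satisfy $|D\mu_k|(\mathbb R)\to|D\mu|(\mathbb R)$ jointly with $\widehat{\mathcal E}[\mu_k]\to\widehat{\mathcal E}[\mu]$ and $\mu_k\to\mu$ narrowly; this dictates the specific layered scheme inherited from the proof of Theorem~\ref{thm:con-kern-est}.
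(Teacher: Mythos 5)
Your proposal is correct and follows essentially the same route as the paper: liminf by lower semi-continuity of the two summands in their respective topologies, limsup by re-using the layered Gaussian dampening/mollification/cut-off scheme from the proof of Theorem~\ref{thm:con-kern-est} with Lemma~\ref{lem:13} replaced by Lemma~\ref{lem:15}, and equi-coercivity, existence, and convergence of minimizers via the same compactness arguments plus Lemma~\ref{lem:30}. The only material difference is that you spell out a few steps the paper leaves implicit (e.g., the blow-up of \eqref{eq:192} under coalescence, which gives lower semi-continuity of $\widehat{\mathcal{E}}^\lambda_{N,\mathrm{pwc}}$ on $\mathcal{P}^N(\mathbb{R})$), but the underlying argument is the same.
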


\begin{proof}
  \emph{1. \( \liminf \)-condition:} Let \(\mu_{N} \in \mathcal{P}_{N}(\mathbb{R})\) and
  \(\mu \in BV(\mathbb{R}) \cap \mathcal{P}(\mathbb{R}) \) with \(\mu_{N}\rightarrow \mu\) narrowly and \(\widetilde{Q}_{N}[\mu_N]\rightarrow \mu\) in \(L^{1}\).
  Then,
  \begin{equation*}
    \liminf_{N\rightarrow \infty} \widehat{\mathcal{E}}^{\lambda}_{N,\mathrm{pwc}}[\mu_{N}] = \liminf_{N\rightarrow \infty} \left[\widehat{\mathcal{E}}[\mu_{N}] + \left| D\widetilde{Q}_{N}[\mu_N] \right|(\mathbb{R}) \right] \geq \widehat{\mathcal{E}}[\mu] + \left| D\mu \right|(\mathbb{R})
  \end{equation*}
  by the lower semi-continuity of the summands with respect to the involved topologies.
  
  \emph{2. \( \limsup \)-condition:} We use the same diagonal argument used in the proof of Theorem \ref{thm:con-kern-est}, replacing the final application of Lemma \ref{lem:13} there by Lemma \ref{lem:15}, which serves the same purpose, but uses the point differences instead of the kernel estimators.
  
  \emph{3. Equi-coercivity and existence of minimizers:} The coercivity follows analogously to the proof of Theorem \ref{thm:con-kern-est}, which also justifies the existence of minimizers for each \( N \). The convergence of minimizers to an element of \( \mathcal{D} \) then follows by standard arguments together with Lemma \ref{lem:30}.
\end{proof}

\begin{remark}
  \label{rem:non-diagonal-topology}
  In both cases, instead of working with two different topologies, we could also consider
  \begin{equation*}
    \widehat{\mathcal{E}}^\lambda_{N,\mathrm{alt}} := \widehat{\mathcal{E}}[Q[\mu]] + \lambda \left| DQ[\mu] \right| (\mathbb{R}^d),
  \end{equation*}
  for a given embedding \( Q \), which in the case of point differences would have to be re-scaled to keep mass \( 1 \). Then, we would obtain the same results by identical arguments, but without the need to worry about narrow convergence separately, since it is implied by the \( L^1 \)-convergence of \( Q[\mu_N] \).
\end{remark}

\section{Numerical experiments}
\label{sec:numer-exper}

\begin{figure}[t]
  \centering
  \subfigure[\( \omega_1 \)  \label{fig:data1}
]{\includegraphics[width=6cm]{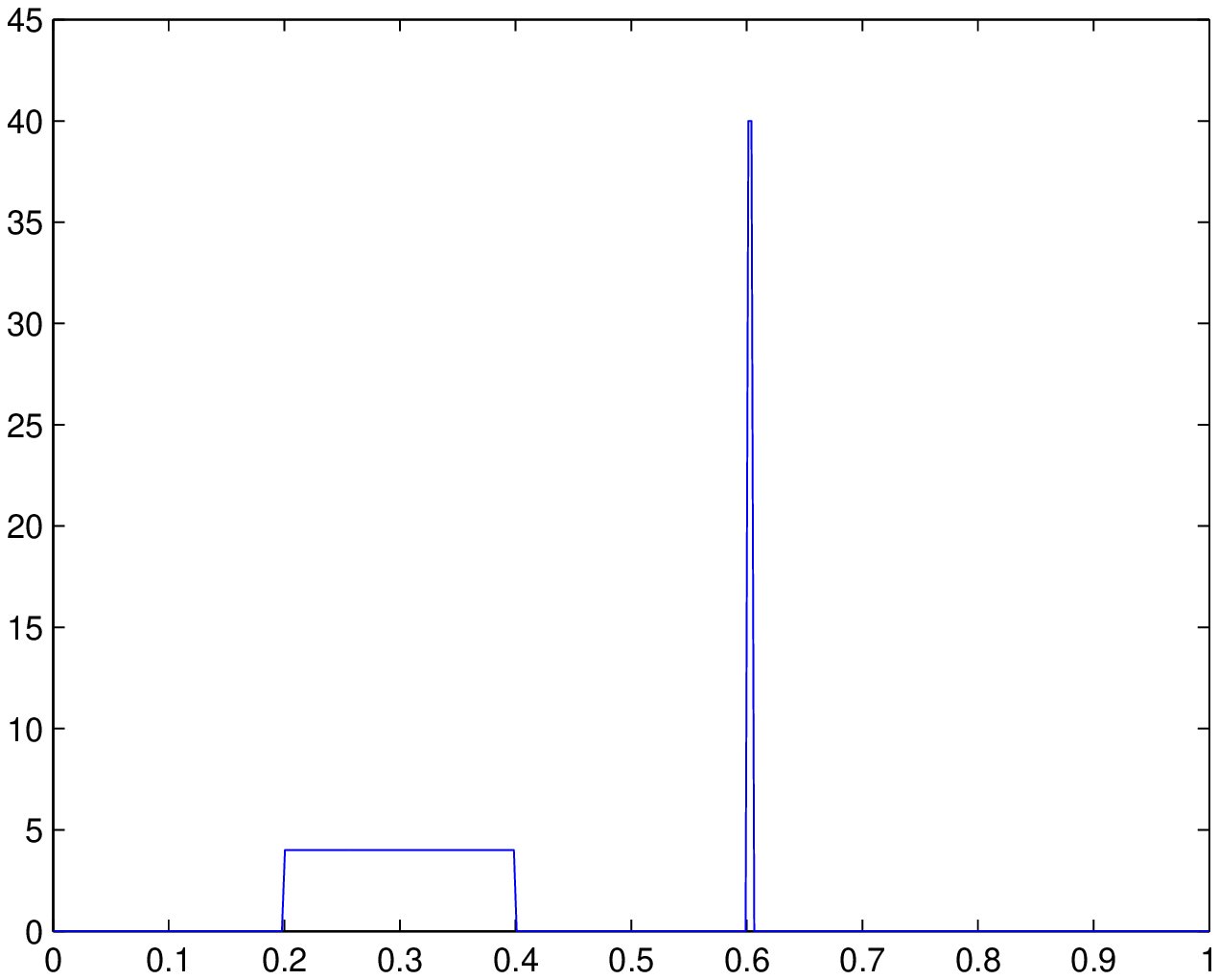}}
  \hspace{0.5cm}
  \subfigure[\( \omega_2 \)  \label{fig:data2} ]{\includegraphics[width=6cm]{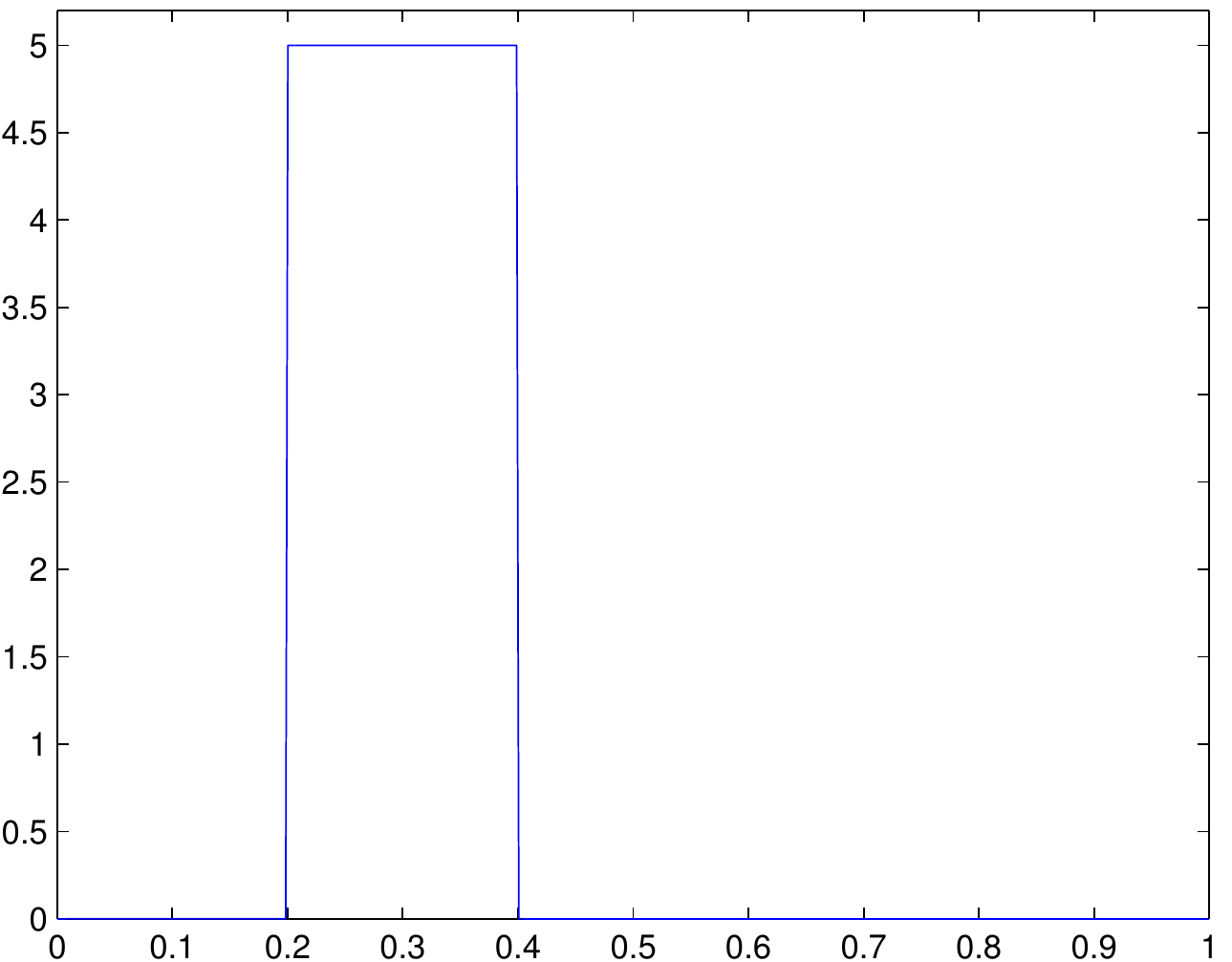}}
  \caption{The data \( \omega_1 \) and \( \omega_2 \)}
\end{figure} 

In this section, we shall show a few results of the numerical computation of minimizers to \( \widehat{\mathcal{E}}^\lambda \) and \( \widehat{\mathcal{E}}^\lambda_N \) in one dimension in order to numerically demonstrate the \( \Gamma \)-convergence result in Theorem \ref{thm:con-kern-est}.

\subsection{Grid approximation}
\label{sec:continuous-case}

\begin{figure}[t]
  \centering
  \subfigure[\( u \approx \mu \in L^1, \, \lambda = 10^{-4} \)]{\includegraphics[width=6cm]{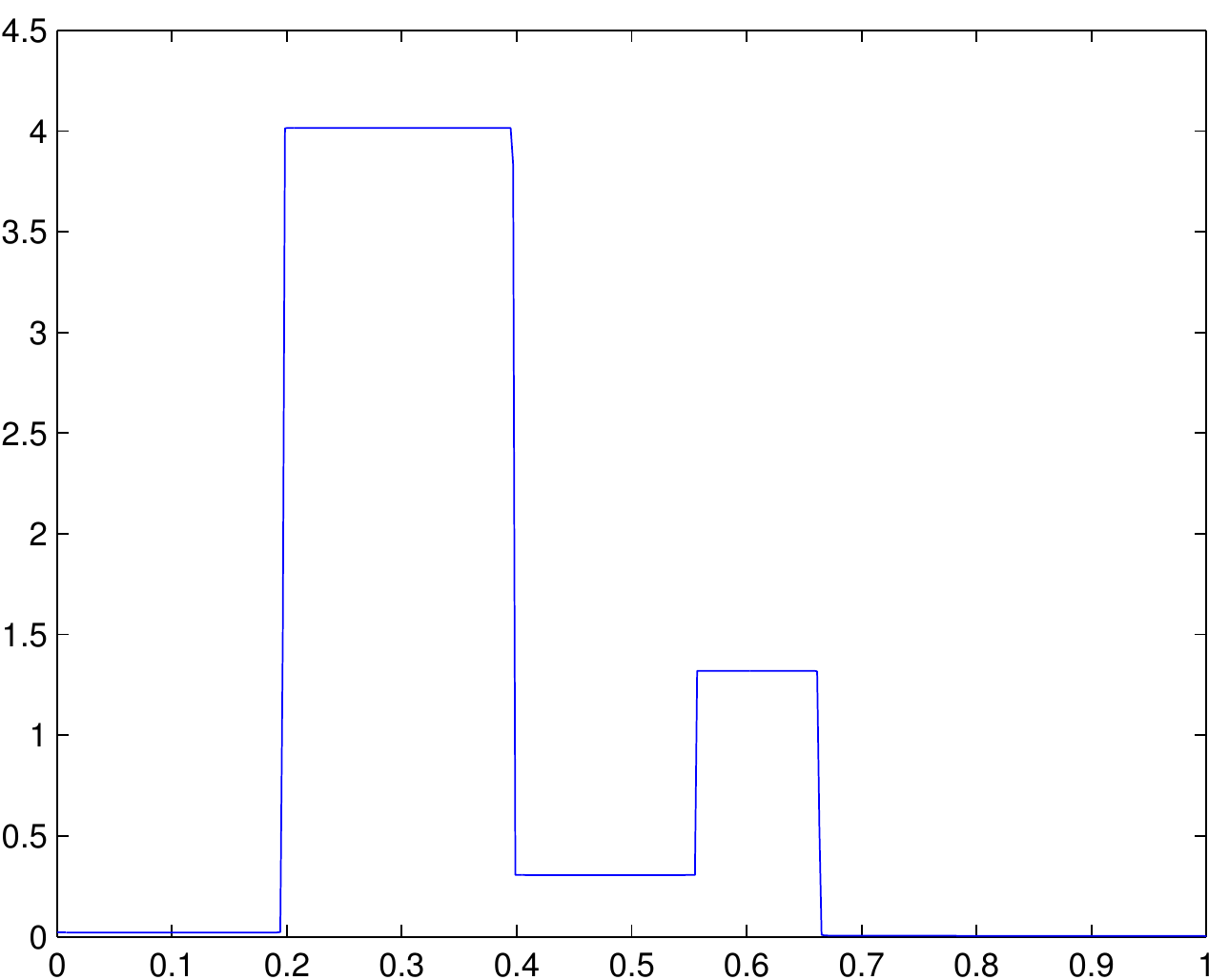}}
  \hspace{0.5cm}
  \subfigure[Particles supporting \( \mu_N \), \( \lambda = 10^{-4} \)]{\includegraphics[width=6cm]{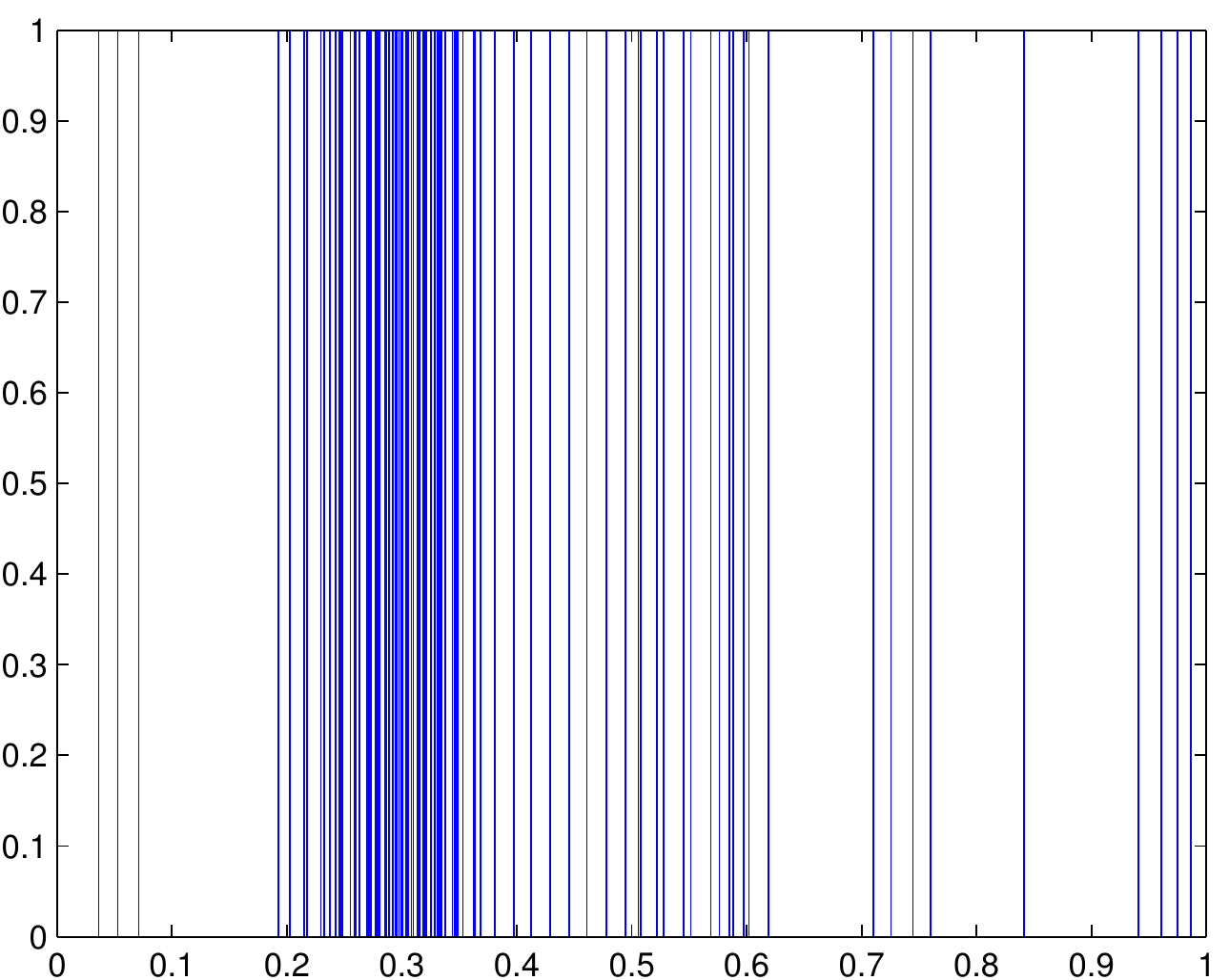}}

  \subfigure[\( u \approx \mu \in L^1, \, \lambda = 10^{-6} \)]{\includegraphics[width=6cm]{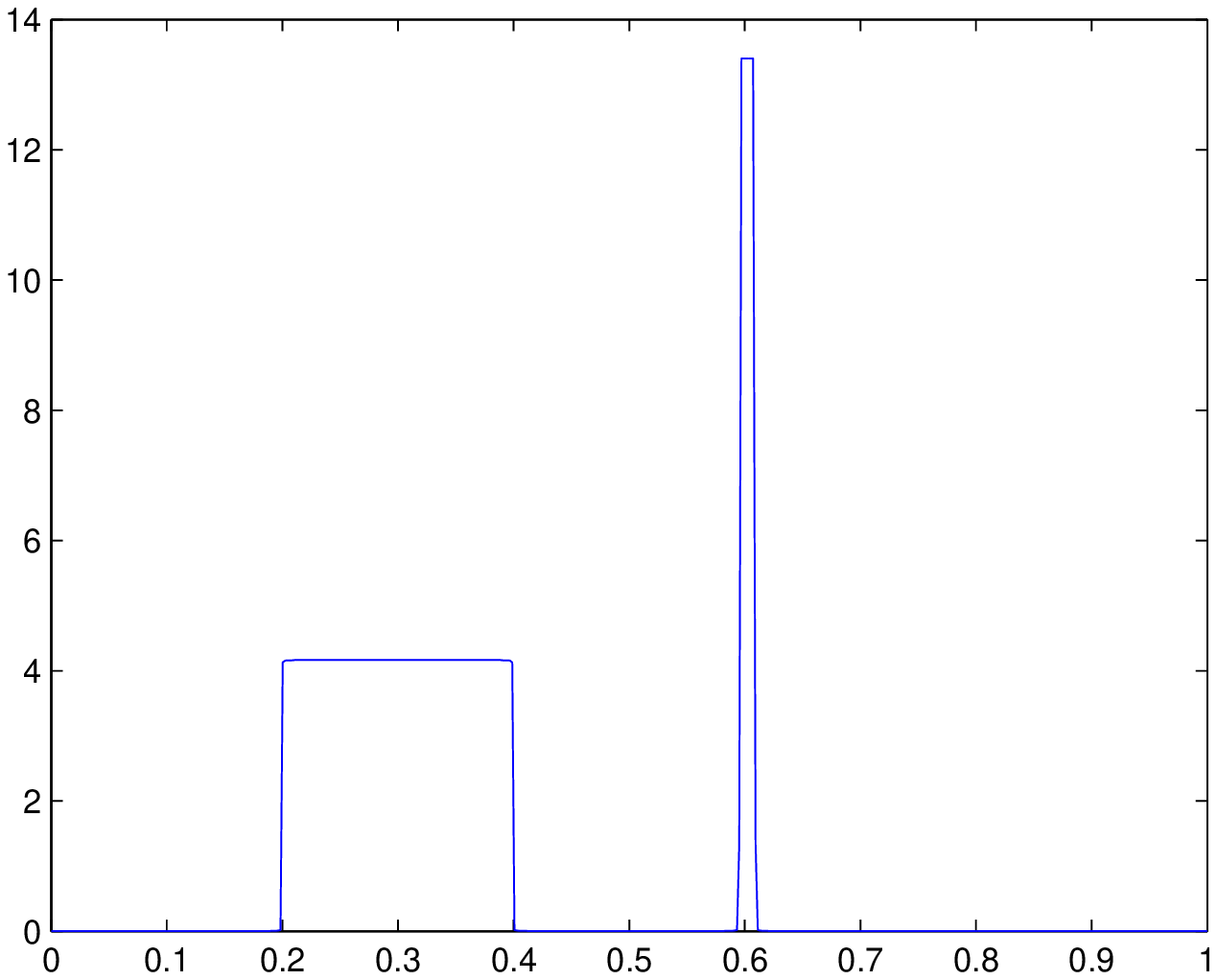}}
  \hspace{0.5cm}
  \subfigure[Particles supporting \( \mu_N \), \( \lambda = 10^{-6} \)]{\includegraphics[width=6cm]{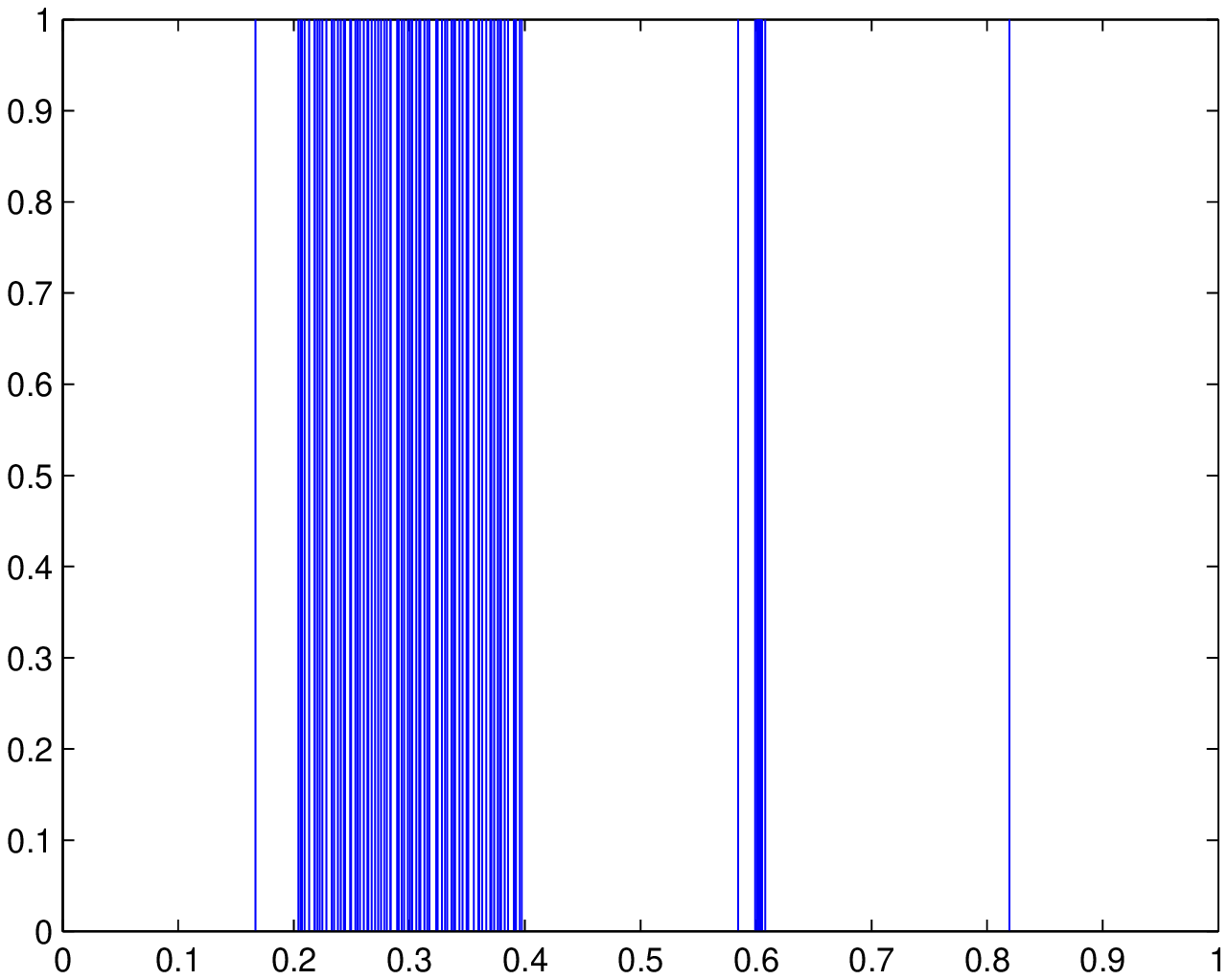}}
  \caption{Minimizers \( u \) of \eqref{eq:323} and minimizers \( \mu_N \) of \( \widehat{\mathcal{E}}^\lambda_N \) for \( \omega_1 \) as in Figure \ref{fig:data1} and parameters \( q = 1.0, \, N = 100 \).}
  \label{fig:spikes}
\end{figure}

By Theorem \ref{thm:con-kern-est}, we know that \( \widehat{\mathcal{E}}^\lambda_N \xrightarrow{\Gamma} \widehat{\mathcal{E}}^\lambda \), telling us that the particle minimizers of \( \widehat{\mathcal{E}}^\lambda \) will be close to a minimizer of the functional \( \widehat{\mathcal{E}}^\lambda \), which will be a \( BV \) function. Therefore, we would like to compare the particle minimizers to minimizers which were computed by using a more classical approximation method which in contrast maintains the underlying \( BV \) structure. One such approach is to approximate a function in \( BV \) by interpolation by piecewise constant functions on an equispaced discretization of the interval \( \Omega = [0, 1] \). Denoting the restriction of \( \widehat{\mathcal{E}}^\lambda \) to the space of these functions on a grid with \( N \) points by \( \widehat{\mathcal{E}}^\lambda_{N,\mathrm{grid}} \), it can be seen that we have \( \widehat{\mathcal{E}}^\lambda_{N,\mathrm{grid}} \xrightarrow{\Gamma} \widehat{\mathcal{E}}^\lambda \), hence it makes sense to compare minimizers of \( \widehat{\mathcal{E}}^\lambda_{N,\mathrm{grid}} \) and \( \widehat{\mathcal{E}}^\lambda_N \) for large \( N \).

If we denote by \( u \in \mathbb{R}^N \) the approximation to \( \mu \) and by \( w \in \mathbb{R}^N \) the one to \( \omega \), then the problem to minimize \( \widehat{\mathcal{E}}^\lambda_{N,\mathrm{grid}} \) takes the form
\begin{equation}
  \label{eq:323}
  \begin{aligned}
    &\text{minimize }& &(u-w)^T A_{q,\Omega} (u-w) + \lambda \sum_{i = 1}^{N-1} \left| u_{i + 1}-u_{i} \right| \\
    &\text{subject to } & &u \geq 0, \quad \sum_{i = 1}^{m} u_i = N,
  \end{aligned}
\end{equation}
where \( A_{q,\Omega} \) is the corresponding discretization matrix of the quadratic integral functional \( \widehat{\mathcal{E}} \), which is positive definite on the set \( \left\{ v : \sum v = 0 \right\} \) by the theory of Appendix \ref{cha:cond-posit-semi}. Solving the last condition \( \sum_{i=1}^{N} u_i = N \) for one coordinate of \( u \), we get a reduced matrix \( \widetilde{A}_{q, \Omega} \) which is positive definite. Together with the convex approximation term to the total variation, problem \eqref{eq:323} is a convex optimization problem which can be easily solved, e.g., by the \textsc{cvx} package \cite{cvx}, \cite{08-Grant-Boyd-Convex-Programs}.

As model cases to study the influence of the total variation, the following data were considered (see Figure \ref{fig:data1} and Figure \ref{fig:data2} for their visual representation)
\begin{enumerate}
\item \( \omega_1 = 4 \cdot 1_{[0.2,0.4]} + 40 \cdot 1_{[0.6,0.605] } \), the effect of the regularization being that the second bump gets smaller and more spread out with increasing parameter \( \lambda \), see Figure \ref{fig:spikes};
\item \( \tilde \omega_2=  \frac{1}{1+\|\eta|_{>0}\|_1} \left (  \omega_2 + \eta|_{>0} \right ) \), where \( \eta \) is Gaussian noise affecting the reference measure \( \omega_2 = 5 \cdot 1_{[0.2,0.4]} \), where we cut off the negative part and re-normalized the datum to get a probability measure. The effect of the regularization here is a filtering of the noise, see Figure \ref{fig:noise}.
\end{enumerate} 

\subsection{Particle approximation}
\label{sec:part-appr-1}

\begin{figure}[t]
  \centering
  \subfigure[\(  u \approx \mu \in L^1, \, \lambda = 0 \)]{\includegraphics[width=6cm]{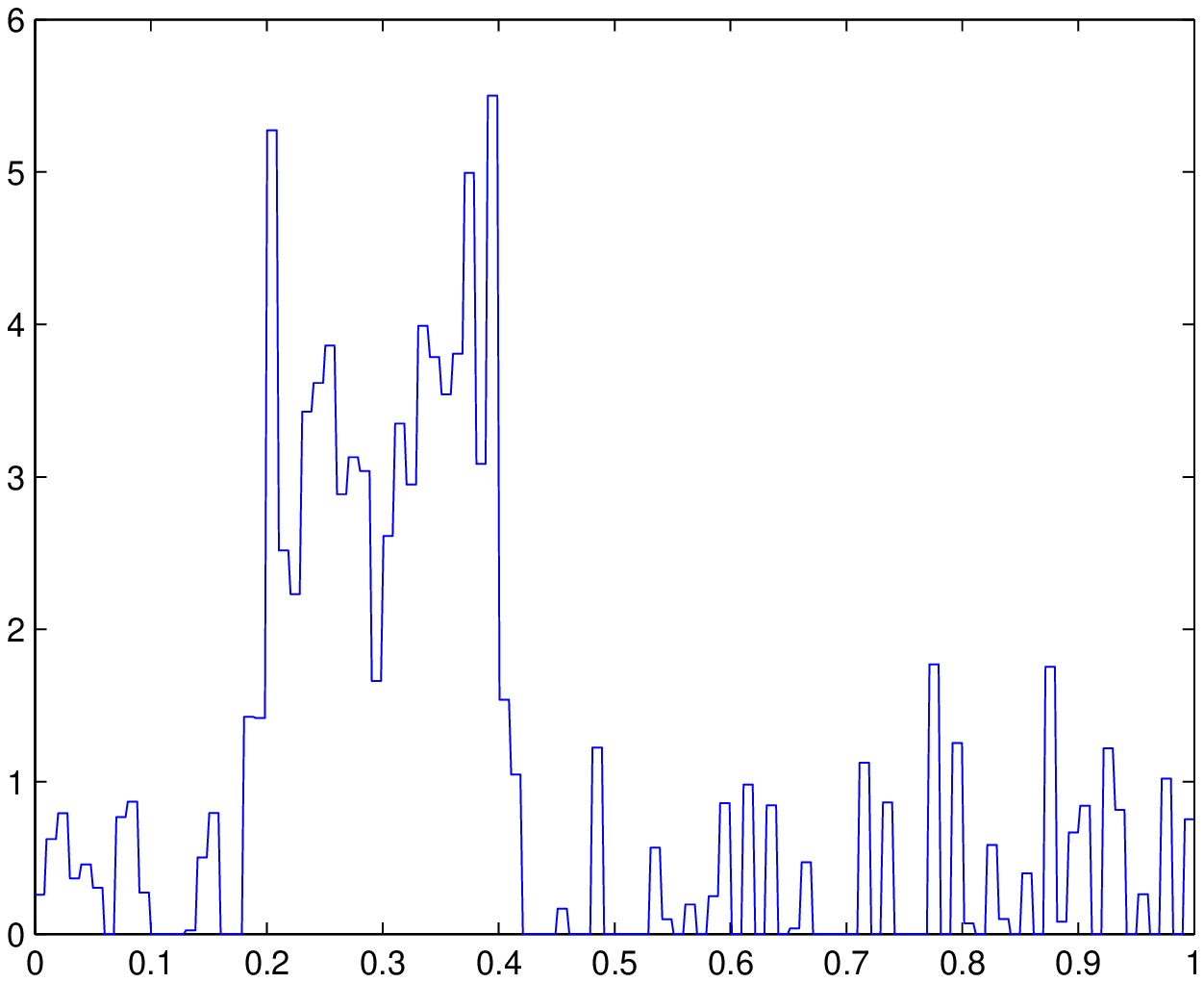}}
  \hspace{0.5cm}
  \subfigure[ Particles supporting \( \mu_N \), \, \(\lambda = 0 \)]{\includegraphics[width=6cm]{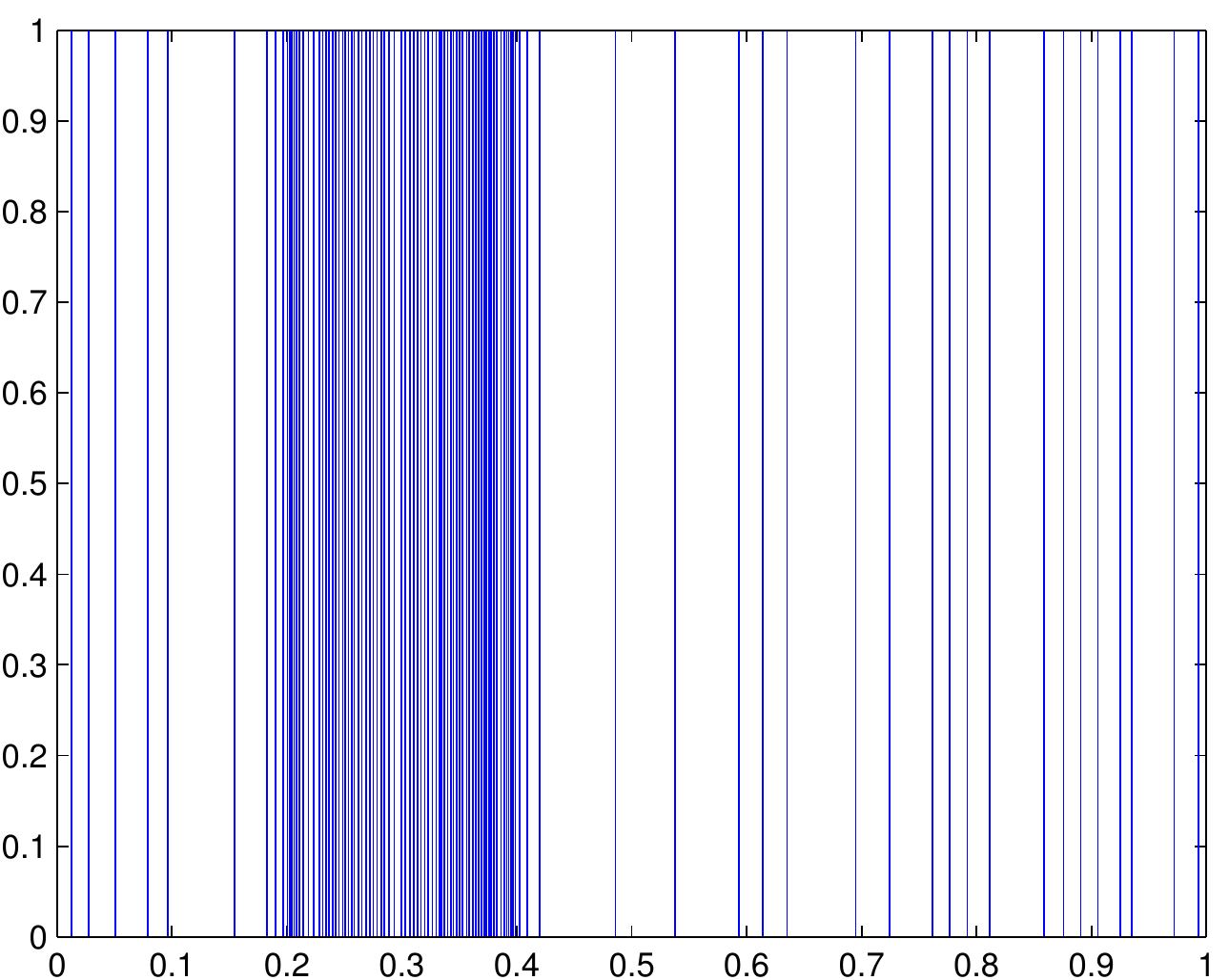}}

  \subfigure[\(  u \approx \mu \in L^1, \, \lambda = 10^{-5} \)]{\includegraphics[width=6cm]{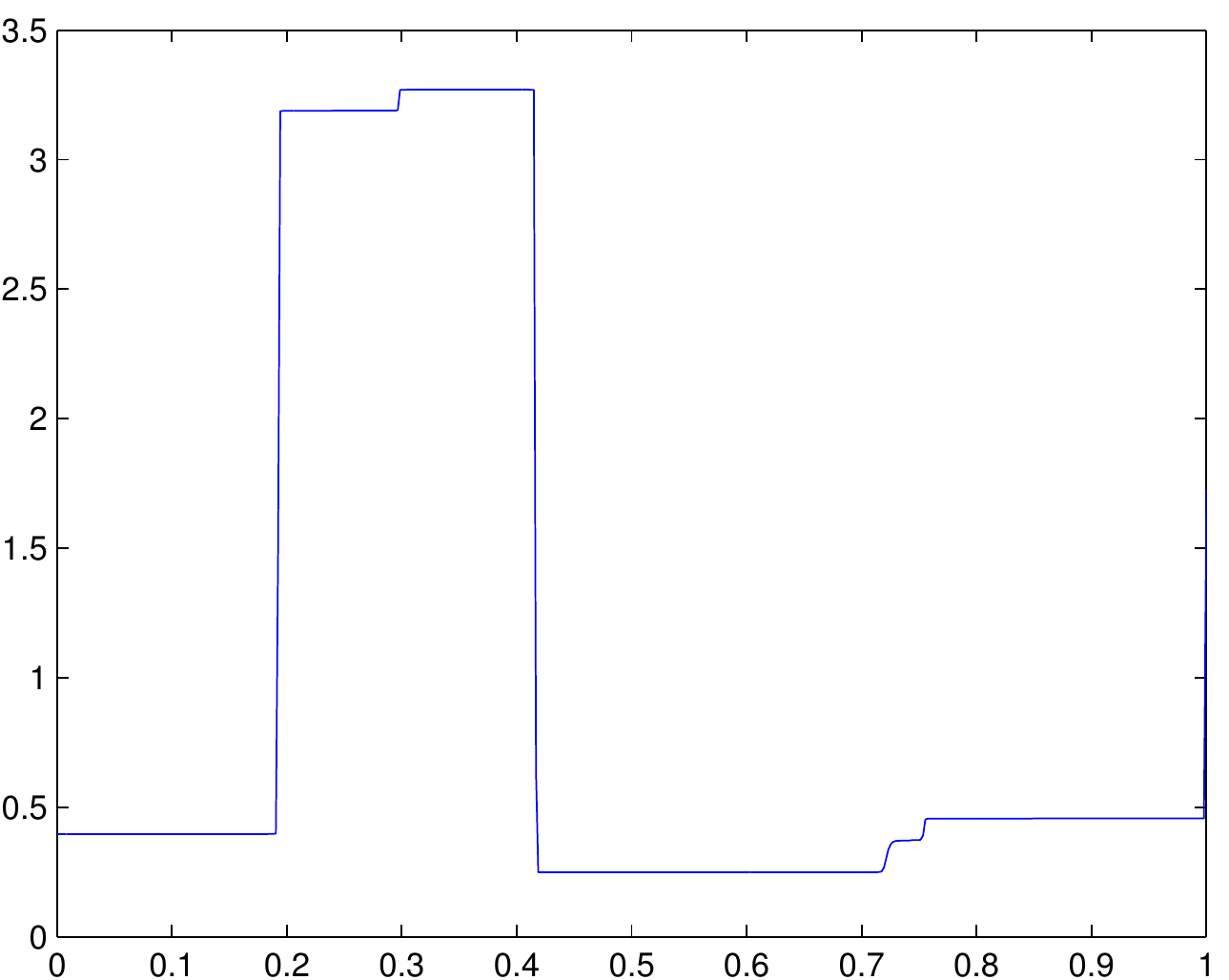}}
  \hspace{0.5cm}
  \subfigure[Particles supporting \( \mu_N \), \,\( \lambda = 10^{-5} \)]{\includegraphics[width=6cm]{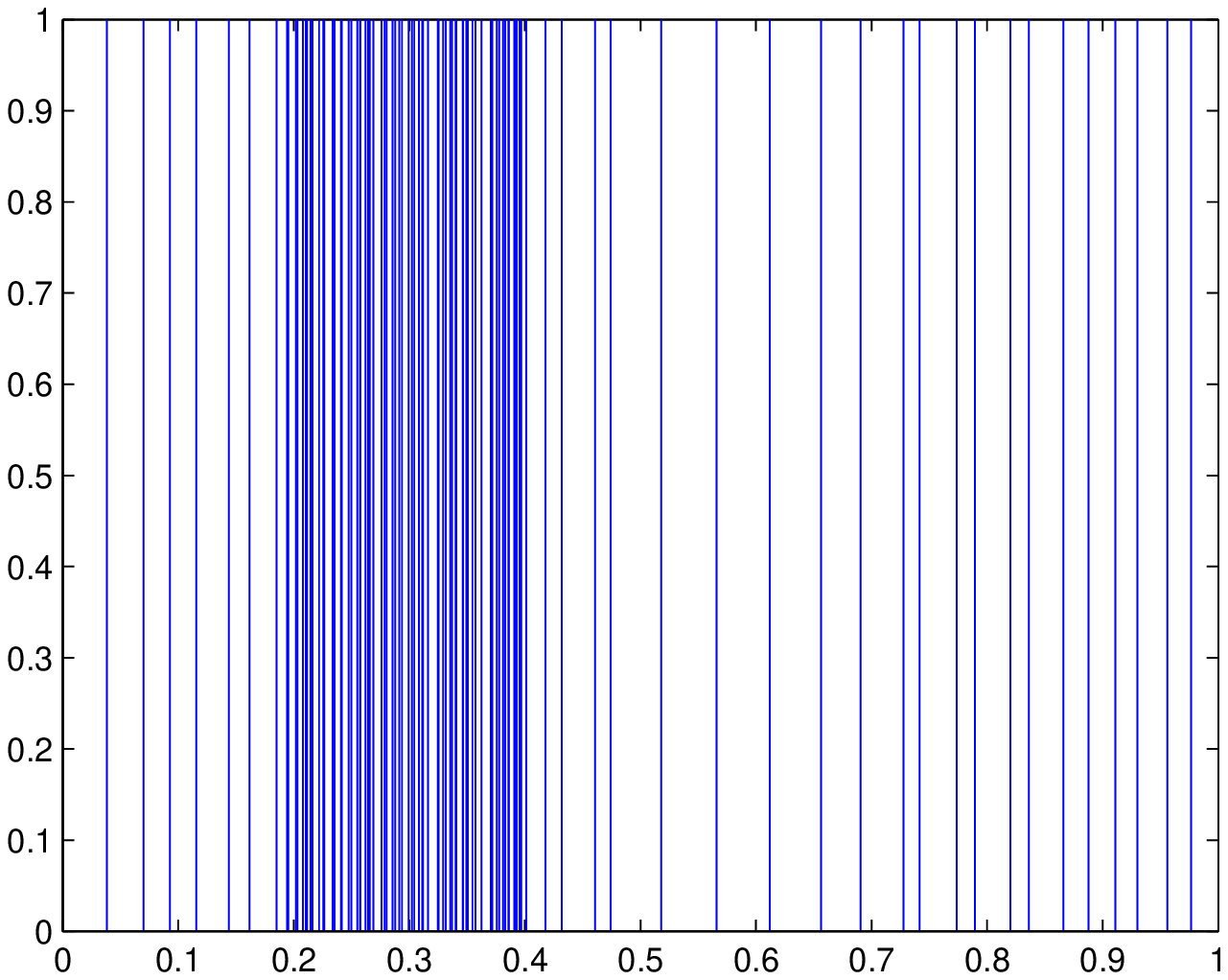}}
  \caption{Minimizers \( u \) of \eqref{eq:323} and minimizers \( \mu_N \) of \( \widehat{\mathcal{E}}^\lambda_N \) for a projection of \( \omega_2 + \eta \) as in Figure \ref{fig:data2} and parameters \( q = 1.5, \, N = 100 \).}
  \label{fig:noise}
\end{figure}

The solutions in the particle case were computed by the \textsc{matlab} optimization toolbox, in particular the Quasi-Newton method available via the \textsc{fminunc} command. The corresponding function evaluations were computed directly in the case of the repulsion functional and by a trapezoidal rule in the case of the attraction term. For the kernel estimator, we used the one sketched in Figure \ref{fig:discrete-tv},
\begin{equation*}
  K(x) = \left( 1 - \left| x \right| \right) \cdot 1_{[-1,1]}(x), \quad x \in \mathbb{R}.
\end{equation*}

\subsection{Results}
\label{sec:num-results}

As for the \( L^1 \) case, we see that the total variation regularization works well as a regularizer and allows us to recover the original profile from a datum disturbed by noise.

When it comes to the particle case, we numerically confirm the theoretical results of convergence for \( N \rightarrow \infty \) of Section \ref{sec:discrete-version-tv-1}, since the minimizers of the particle system behave roughly like the quantizers of the problem in \( L^1 \). 

\section{Conclusion}
\label{sec:conclusion}

Apart from the relatively simple results on existence for asymmetric exponents \( q_a \neq q_r \) in Section \ref{sec:prel-obs}, the Fourier representation of Section \ref{sec:prop-funct-mathbbrd}, building upon the theory of conditionally positive semi-definite functions reported in Appendix \ref{cha:cond-posit-semi}, proved essential to establish the well-posedness of the problem for equal exponents $1 \leq q_a=q_r< 2$, in terms of the lower semi-continuous envelope of the energy $\mathcal E$. This allowed us to use classical tools of the calculus of variations, in particular the machinery of \( \Gamma \)-convergence, to prove statements concerning the consistency of the particle approximation, Theorem \ref{thm:cons-part-appr}, and the moment bound, Theorem \ref{thm:moment-bound-symmetric}, which would be otherwise not at all obvious when just considering the original spatial definition of \( \mathcal{E} \).
Moreover, it enabled us to easily analyze the regularized version of the functional in Section \ref{sec:tv-reg}, which on the particle level allowed us to present a novel interpretation of the total variation as a nonlinear attractive-repulsive potential, translating the regularizing effect of the total variation in the continuous case into an energy which promotes a configuration of the particles which is as homogeneous as possible.

\subsection*{Acknowledgement}
Massimo Fornasier~is supported by the ERC-Starting Grant for the project ``High-Dimensional Sparse Optimal Control''. Jan-Christian H\"utter acknowledges the partial financial support of the START-Project ``Sparse Approximation and Optimization in High-Dimensions''  during the early preparation of this work.
\appendix
\section{Conditionally positive definite functions}
\label{cha:cond-posit-semi}

In order to compute the Fourier representation of the energy functional \( \mathcal{E} \) in Section \ref{sec:four-repr-gener}, we used the notion of \emph{generalized Fourier transforms} and \emph{conditionally positive definite functions} from \cite{Wend05}, which we shall briefly recall here for the sake of completeness. In fact, the main result reported below, Theorem \ref{thm:cond-ft-power} is shown in a slightly modified form with respect to \cite[Theorem 8.16]{Wend05} in order to allow us also for
the proof of the moment bound in Section \ref{sec:moment-bound-symm}. The representation formula \eqref{eq:39} is a consequence of Theorem \ref{thm:repr-thm-cond-semi} below, which serves as a characterization in the theory of conditionally positive definite functions.

\begin{definition}
  \label{def:cond-definit}
  \cite[Definition 8.1]{Wend05}
  Let \( \mathbb{P}_{k}(\mathbb{R}^d) \) denote the set of polynomial functions on \( \mathbb{R}^d \) of degree less or equal than \( k \). We call a continuous function \( \Phi \colon \mathbb{R}^d \rightarrow \mathbb{C} \) \emph{conditionally positive semi-definite of order} \( m \) if for all \( N\in\mathbb{N} \), pairwise distinct points \( x_1,\ldots,x_N \in \mathbb{R}^d \), and \( \alpha \in \mathbb{C}^N \) with
  \begin{equation}
    \label{eq:281}
    \sum_{j=1}^{N} \alpha_j p(x_j) = 0, \quad \text{for all } p \in \mathbb{P}_{m - 1}(\mathbb{R}^d),
  \end{equation}
  the quadratic form given by \( \Phi \) is non-negative, i.e.,
  \begin{equation*}
    \sum_{j,k=1}^{N} \alpha_j \overline{\alpha_k} \Phi(x_j - x_k) \geq 0.
  \end{equation*}
  Moreover, we call \( \Phi \) \emph{conditionally positive definite of order} \( m \) if the above inequality is strict for \( \alpha \neq 0 \).
\end{definition}

\subsection{Generalized Fourier transform}
\label{sec:gener-four-transf}

When working with distributional Fourier transforms, which can serve to characterize the conditionally positive definite functions defined above, it can be opportune to further reduce the standard Schwartz space \( \mathcal{S} \) to functions which in addition to the polynomial decay for large arguments also exhibit a certain decay for small ones. In this way, one can elegantly neglect singularities in the Fourier transform at $0$, which could otherwise arise.

\begin{definition}
  [Restricted Schwartz class \( \mathcal{S}_m \)]
  \label{def:restr-schwartz}
  \cite[Definition 8.8]{Wend05}
  Let \( \mathcal{S} \) be the Schwartz space of functions in \( C^\infty(\mathbb{R}^d) \) which for \( \left| x \right| \rightarrow \infty \) decay faster than any fixed polynomial. Then, for \( m \in \mathbb{N} \), we denote by \( \mathcal{S}_m \) the set of those functions in \( \mathcal{S} \) which additionally fulfill
  \begin{equation}
    \label{eq:283}
    \gamma(\xi) = O(\left| \xi \right|^m) \quad \text{for } \xi \rightarrow 0.
  \end{equation}

  Furthermore, we shall call an (otherwise arbitrary) function \( \Phi \colon \mathbb{R}^d \rightarrow \mathbb{C} \) \emph{slowly increasing} if there is an \( m \in \mathbb{N} \) such that
  \begin{equation*}
    \Phi(x) = O \left( \left| x \right|^m \right) \quad \text{for } \left| x \right| \rightarrow \infty.
  \end{equation*}
\end{definition}

\begin{definition}
  [Generalized Fourier transform]
  \label{def:gen-fourier-transform}
  \cite[Definition 8.9]{Wend05}
  For \( \Phi \colon \mathbb{R}^d \rightarrow \mathbb{C} \) continuous and slowly increasing, we call a measurable function \(\widehat{\Phi} \in L_{\mathrm{loc}}^2(\mathbb{R}^d \setminus \left\{ 0 \right\})\) the \emph{generalized Fourier transform} of \( \Phi \) if there exists an integer \( m \in \mathbb{N}_0 \) such that
  \begin{equation}
    \label{eq:285}
    \int_{\mathbb{R}^d} \Phi(x)\widehat{\gamma}(x) \diff x = \int_{\mathbb{R}^d} \widehat{\Phi}(\xi) \gamma(\xi) \diff \xi \quad \text{for all } \gamma \in \mathcal{S}_{2m}.
  \end{equation}
  Then, we call \( m \) the \emph{order} of \( \widehat{\Phi} \).
\end{definition}

Note that the order here is defined in terms of \( 2m \) instead of \( m \).

The consequence of this definition is that we can ignore additive polynomial terms in \( \Phi \) which would result in Dirac distributions in the Fourier transform.

\begin{proposition}
  \label{prp:poly-vanish}
  \cite[Proposition 8.10]{Wend05} If \( \Phi \in \mathbb{P}_{m-1}(\mathbb{R}^d) \), then \( \Phi \) has the generalized Fourier transform \( 0 \) of order \( m/2 \). Conversely, if \( \Phi \) is a continuous function which has generalized Fourier transform \( 0 \) of order \( m/2 \), then \( \Phi \in \mathbb{P}_{m-1} \left( \mathbb{R}^d \right) \).
\end{proposition}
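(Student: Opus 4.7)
The plan is to identify $\Phi$ with a tempered distribution and reduce the claim to the classical structure theorem for distributions supported at the origin. The starting observation, via a Taylor expansion at the origin, is that $\gamma \in \mathcal{S}_m$ if and only if $\partial^\alpha \gamma(0) = 0$ for every multi-index $|\alpha| \leq m-1$; this equivalence is what makes the order condition in Definition~\ref{def:gen-fourier-transform} bite exactly at polynomial degree $m-1$.

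For the forward direction, given $\Phi(x) = \sum_{|\alpha| \leq m-1} c_\alpha x^\alpha$, I would use the Fourier duality
\[
\int_{\mathbb{R}^d} x^\alpha \widehat{\gamma}(x) \diff x = C_\alpha (\partial^\alpha \gamma)(0), \quad \gamma \in \mathcal{S}(\mathbb{R}^d),
\]
with explicit constants $C_\alpha$ involving powers of $\i$ and $2\pi$. Summing, any $\gamma \in \mathcal{S}_m$ yields $\int_{\mathbb{R}^d} \Phi(x) \widehat{\gamma}(x) \diff x = 0$, which is exactly \eqref{eq:285} with $\widehat{\Phi} \equiv 0$ and order $m/2$.

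The converse is the less trivial direction. Since $\Phi$ is continuous and slowly increasing, it defines a tempered distribution $T_\Phi$ whose distributional Fourier transform $\widehat{T_\Phi}$ satisfies $\langle \widehat{T_\Phi}, \gamma \rangle = 0$ for every $\gamma \in \mathcal{S}_m$. Any $\gamma \in C_c^\infty(\mathbb{R}^d \setminus \{0\})$ trivially lies in $\mathcal{S}_m$, so $\widehat{T_\Phi}$ vanishes on $\mathbb{R}^d \setminus \{0\}$; by the classical structure theorem for distributions supported at a single point, $\widehat{T_\Phi} = \sum_{|\alpha| \leq N} c_\alpha \partial^\alpha \delta_0$ for some finite $N$.

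The hard part, which I would treat carefully, is pinning down the sharp order bound $|\alpha| \leq m-1$. For each multi-index $\beta$ with $|\beta| \geq m$ I would test against $\gamma_\beta(\xi) := \xi^\beta \phi(\xi)$, where $\phi \in C_c^\infty(\mathbb{R}^d)$ equals $1$ near the origin. Since $|\beta| \geq m$ one has $\gamma_\beta \in \mathcal{S}_m$, and a direct Leibniz computation reduces $\partial^\alpha \gamma_\beta(0)$ to $\partial^\alpha(\xi^\beta)|_{\xi = 0} = \beta!\, \delta_{\alpha \beta}$ (smaller-order derivatives of $\xi^\beta$ still carry a factor $\xi^{\beta - \alpha}$ that vanishes at the origin, while larger orders are zero automatically). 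Hence $\langle \widehat{T_\Phi}, \gamma_\beta \rangle = (-1)^{|\beta|} \beta!\, c_\beta = 0$, forcing $c_\beta = 0$. Therefore $\widehat{T_\Phi}$ is a combination of $\partial^\alpha \delta_0$ with $|\alpha| \leq m-1$, whose inverse Fourier transform is a polynomial of degree at most $m-1$. The equality of tempered distributions, combined with the continuity of $\Phi$, upgrades to a pointwise identity, yielding $\Phi \in \mathbb{P}_{m-1}(\mathbb{R}^d)$.
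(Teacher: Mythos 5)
Your proposal is correct, and the forward direction is the same as the paper's (vanishing of low-order derivatives of $\gamma \in \mathcal{S}_m$ at the origin, combined with the duality between polynomial multiplication and differentiation of Fourier transforms). For the converse you take a genuinely different route: you first localize $\operatorname{supp}\widehat{T_\Phi}\subseteq\{0\}$ by testing against $C_c^\infty(\mathbb{R}^d\setminus\{0\})\subseteq\mathcal{S}_m$, invoke the structure theorem for distributions supported at a point to write $\widehat{T_\Phi}=\sum_{|\alpha|\leq N}c_\alpha\partial^\alpha\delta_0$ for an a priori unknown finite $N$, and then kill the high-order coefficients by testing against $\xi^\beta\phi(\xi)\in\mathcal{S}_m$ for $|\beta|\geq m$. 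The paper instead avoids the structure theorem entirely by a single projection trick: for an arbitrary $g\in\mathcal{S}$ it subtracts the cut-off Taylor polynomial $\sum_{|\beta|<m}\frac{D^\beta g(0)}{\beta!}x^\beta\chi(x)$ to produce a $\gamma\in\mathcal{S}_m$, so that $\langle\widehat{T_\Phi},g\rangle$ is seen to depend linearly on $\{D^\beta g(0):|\beta|<m\}$ alone, identifying $\widehat{T_\Phi}$ directly as a combination of $\partial^\beta\delta_0$ with $|\beta|<m$ without any intermediate step. Your argument is somewhat longer but leans on a very standard black box; the paper's is shorter and self-contained, delivering the sharp order bound in one stroke. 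Both are valid, and both correctly convert the distributional equality into a pointwise one using the continuity of $\Phi$.
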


\begin{proof}[Sketch of proof]
  The first claim follows from the fact that multiplication by polynomials corresponds to computing derivatives of the Fourier transform: by condition \eqref{eq:283}, all derivatives of order less than \( m \) of a test function \( \gamma \in \mathcal{S}_m \) have to vanish.

  The second claim follows from considering the pairing \( \int_{\mathbb{R}^d} \Phi(x) \widehat{g}(x) \diff x \) for a general \( g \in \mathcal{S} \) and projecting it into \( \mathcal{S}_m \) by setting
  \begin{equation*}
    \gamma(x) := g(x) - \sum_{\left| \beta \right| < m} \frac{D^{\beta}g(0)}{\beta!}x^\beta \chi(x), \quad x \in \mathbb{R}^d,
  \end{equation*}
  where  \( \chi \in C_0^\infty(\mathbb{R}^d) \) is \( 1 \) close to \( 0 \).
\end{proof}

\subsection{Representation formula for conditionally positive definite functions}
\label{sec:repr-form-cond}

Before proceeding to prove Theorem \ref{thm:repr-thm-cond-semi}, we need two Lemmata. The first one is the key to applying the generalized Fourier transform in our case, namely that functions fulfilling the decay condition \eqref{eq:283} can be constructed as Fourier transforms of point measures satisfying condition \eqref{eq:281}. The second one recalls some basic facts about the Fourier transform of the Gaussian, serving to pull the exponential functions in Lemma \ref{lem:19} into \( \mathcal{S}_m \).

\begin{lemma}
  \cite[Lemma 8.11]{Wend05}
  \label{lem:19}
  Given pairwise distinct points \( x_1,\ldots,x_N \in \mathbb{R}^d \) and \( \alpha \in \mathbb{C}^N \setminus \left\{ 0 \right\} \) such that
  \begin{equation}
    \label{eq:287}
    \sum_{j=1}^{N} \alpha_j p(x_j) = 0, \quad \text{for all } p \in \mathbb{P}_{m-1}(\mathbb{R}^d),
  \end{equation}
  then
  \begin{equation*}
    \sum_{j=1}^{N} \alpha_j \e^{\i x_j \cdot \xi} = O \left( \left| \xi \right|^m \right) \quad \text{for } \left| \xi \right| \rightarrow 0.
  \end{equation*}
\end{lemma}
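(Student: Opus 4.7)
The plan is to expand the exponential functions as Taylor polynomials about $\xi = 0$ and exploit the vanishing moment condition \eqref{eq:287} to kill the low-order terms, leaving only a remainder of the desired order.

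Concretely, for each $j$, I would write
\begin{equation*}
  e^{\i x_j \cdot \xi} = \sum_{k=0}^{m-1} \frac{(\i x_j \cdot \xi)^k}{k!} + R_j(\xi),
\end{equation*}
where the integral form of the remainder gives $|R_j(\xi)| \leq C_j |\xi|^m$ with a constant $C_j$ depending on $|x_j|$ (for instance $C_j = |x_j|^m / m!$). Since the $x_j$ are finitely many, I can take a uniform constant $C = \max_j C_j$. Substituting this into $\sum_j \alpha_j e^{\i x_j \cdot \xi}$ and exchanging the summation order produces
\begin{equation*}
  \sum_{j=1}^{N} \alpha_j e^{\i x_j \cdot \xi} = \sum_{k=0}^{m-1} \frac{\i^k}{k!} \sum_{j=1}^{N} \alpha_j (x_j \cdot \xi)^k + \sum_{j=1}^{N} \alpha_j R_j(\xi).
\end{equation*}

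The crucial step is then to recognize that for each fixed $\xi$ and each $k \leq m-1$, the map $y \mapsto (y \cdot \xi)^k$ is a polynomial on $\mathbb{R}^d$ of total degree $k \leq m-1$ (it is a linear combination of monomials $y^\beta$ with $|\beta| = k$, with coefficients depending on $\xi$). Hence by hypothesis \eqref{eq:287}, $\sum_{j=1}^{N} \alpha_j (x_j \cdot \xi)^k = 0$ for every such $k$, so every polynomial term in the Taylor expansion disappears. What remains is the remainder, which satisfies
\begin{equation*}
  \left| \sum_{j=1}^{N} \alpha_j R_j(\xi) \right| \leq |\xi|^m \sum_{j=1}^{N} |\alpha_j| C_j = O(|\xi|^m) \quad \text{for } |\xi| \rightarrow 0,
\end{equation*}
which is exactly the claim. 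I do not anticipate a serious obstacle; the only point to be careful about is verifying that $(y \cdot \xi)^k$ genuinely lies in $\mathbb{P}_{m-1}(\mathbb{R}^d)$ as a function of $y$ for each $k \leq m-1$, which follows by expanding with the multinomial theorem, and ensuring the remainder estimate is uniform in $j$, which is immediate since $N$ is finite.
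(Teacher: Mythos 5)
Your proof is correct and follows essentially the same approach as the paper: expand the exponential about $\xi = 0$, observe that each term $\sum_j \alpha_j (x_j \cdot \xi)^k$ with $k \leq m-1$ vanishes by the moment condition since $y \mapsto (y\cdot\xi)^k$ is a polynomial of degree $k$, and bound what remains. The paper phrases the expansion as the full power series and simply notes the first $m$ terms drop out; your use of a finite Taylor polynomial with an explicit integral-form remainder is a slightly more careful packaging of the same idea and makes the $O(|\xi|^m)$ bound immediate.
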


\begin{proof}
  Expanding the exponential function into its power series yields
  \begin{equation*}
    \sum_{j=1}^{N} \alpha_j \e^{\i x_j \cdot \xi} = \sum_{k = 0}^{\infty} \frac{\i^k}{k!} \sum_{j=1}^{N} \alpha_j \left( x_j \cdot \xi \right)^k,
  \end{equation*}
  and by condition \eqref{eq:287} its first \( m \) terms vanish, giving us the desired behavior.
\end{proof}

\begin{lemma}
  \label{lem:20}
  \cite[Theorem 5.20]{Wend05}
  Let \( l > 0 \) and \( g_l (x) := (l/\pi)^{d/2} \e^{-l \left| x \right|^2} \). Then,
  \begin{enumerate}
  \item \label{lem:21} \( \widehat{g}_l(\xi) = \e^{-\left| \xi \right|^2/(4l)} \);
  \item \label{lem:22} for \( \Phi \colon \mathbb{R}^d \rightarrow \mathbb{C}\) continuous and slowly increasing, we have
    \begin{equation*}
      \Phi(x) = \lim_{l \rightarrow \infty} (\Phi \ast g_l)(x).
    \end{equation*}
  \end{enumerate}
\end{lemma}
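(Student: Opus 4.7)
\textbf{Proof plan for Lemma \ref{lem:20}.}

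For part (i), I would exploit the tensor product structure $g_l(x) = \prod_{j=1}^d (l/\pi)^{1/2} \e^{-l x_j^2}$ to reduce to the one-dimensional case. In one dimension, setting $\widehat{h}_l(\xi) = (l/\pi)^{1/2}\int_{\mathbb{R}} \e^{-lt^2} \e^{-\i t\xi}\, \dd t$, differentiation under the integral sign followed by an integration by parts gives the ODE $\widehat{h}_l'(\xi) = -\xi/(2l)\, \widehat{h}_l(\xi)$ with initial value $\widehat{h}_l(0) = 1$ (the latter by the normalization of $h_l$). This first-order linear ODE has the unique solution $\widehat{h}_l(\xi) = \e^{-\xi^2/(4l)}$, and multiplying the $d$ one-dimensional transforms yields the claim in $\mathbb{R}^d$.

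For part (ii), the key observation is that $(g_l)_{l > 0}$ is a non-negative approximate identity as $l \to \infty$: $\int_{\mathbb{R}^d} g_l\, \dd y = 1$ for all $l > 0$, and a change of variables $y = z/\sqrt{l}$ shows that $\int_{|y| \geq \delta} g_l(y)\, \dd y = \pi^{-d/2}\int_{|z| \geq \sqrt{l}\delta} \e^{-|z|^2}\, \dd z \to 0$ as $l \to \infty$ for each fixed $\delta > 0$. Writing
\begin{equation*}
(\Phi \ast g_l)(x) - \Phi(x) = \int_{\mathbb{R}^d} \bigl(\Phi(x-y) - \Phi(x)\bigr) g_l(y)\, \dd y,
\end{equation*}
I would split the domain at $|y| = \delta$. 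On $|y| \leq \delta$, continuity of $\Phi$ at $x$ makes the integrand arbitrarily small for $\delta$ small enough, and since $g_l$ has mass at most $1$ there, the near-field contribution is controlled by an arbitrary $\varepsilon > 0$.

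The technically delicate step is the far-field integral on $|y| > \delta$. Using the slow-increase assumption $|\Phi(y)| \leq C(1+|y|^m)$, the integrand is bounded by $C(1 + |x|^m + |y|^m)$, so after the rescaling $y = z/\sqrt{l}$ the tail becomes
\begin{equation*}
\pi^{-d/2}\int_{|z| > \sqrt{l}\,\delta}\bigl(1 + |x|^m + l^{-m/2}|z|^m\bigr) \e^{-|z|^2}\, \dd z,
\end{equation*}
which vanishes as $l \to \infty$ by the dominated convergence theorem, since Gaussian decay beats any polynomial factor. Combining the two bounds, first choosing $\delta$ small so that the near-field is at most $\varepsilon$ and then letting $l \to \infty$ to kill the far-field, gives $\limsup_{l\to\infty}|(\Phi\ast g_l)(x) - \Phi(x)| \leq \varepsilon$; since $\varepsilon$ was arbitrary, this proves the pointwise convergence. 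The main obstacle is hence not any deep argument but the careful interplay between the polynomial growth of $\Phi$ and the Gaussian decay; beyond that, both parts are standard.
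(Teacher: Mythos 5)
The paper does not actually prove this lemma — it simply cites Wendland's \cite[Theorem~5.20]{Wend05} — so there is no in-paper argument to compare against. Your proof is correct and self-contained: the tensor-product/ODE derivation in (i) gives exactly the stated transform under the paper's convention \eqref{eq:32}, and in (ii) the near-field/far-field split with the rescaling \( y = z/\sqrt{l} \) correctly handles the competition between the polynomial growth of \( \Phi \) and the Gaussian decay, which is the only point requiring care.
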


\begin{theorem}
  \label{thm:repr-thm-cond-semi}
  \cite[Corollary 8.13]{Wend05}
  Let \( \Phi \colon \mathbb{R}^d \rightarrow \mathbb{C} \) be a continuous and slowly increasing function with a non-negative, non-vanishing generalized Fourier transform \( \widehat{\Phi} \) of order \( m \) that is continuous on \( \mathbb{R}^d \setminus \left\{ 0 \right\} \). Then, we have
  \begin{equation}
    \label{eq:291}
    \sum_{j,k=1}^N \alpha_j \overline{\alpha}_k \Phi \left( x_j-x_k \right) = \int_{\mathbb{R}^d} \left| \sum_{j=1}^{N} \alpha_j \e^{\i x_j \cdot \xi}\right|^2 \widehat{\Phi}(\xi) \diff \xi.
  \end{equation}
\end{theorem}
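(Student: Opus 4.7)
The strategy is to apply the defining duality \eqref{eq:285} of the generalized Fourier transform to a carefully regularized test function in $\mathcal{S}_{2m}$, and then let the regularization scale diverge. The natural candidate $|\sum_j \alpha_j e^{\i x_j \cdot \xi}|^2$, which would produce directly the desired right-hand side, fails to decay at infinity and so lies outside $\mathcal{S}_{2m}$; the remedy I plan to use is to damp it with a Gaussian factor of width $l^{-1/2}$, and then send $l \to \infty$.

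Concretely, fix $g_l(x) := (l/\pi)^{d/2} e^{-l|x|^2}$ as in Lemma \ref{lem:20}, whose Fourier transform is $\widehat{g_l}(\xi) = e^{-|\xi|^2/(4l)}$, and define the test function
$$\gamma_l(\xi) := \Bigl|\sum_{j=1}^{N} \alpha_j e^{\i x_j \cdot \xi}\Bigr|^2 \widehat{g_l}(\xi).$$
To place $\gamma_l$ in $\mathcal{S}_{2m}$, I invoke Lemma \ref{lem:19} on the coefficients $(\alpha_j)$ (exploiting the polynomial-orthogonality condition \eqref{eq:287}, which is implicit in the order-$m$ setting), giving $\sum_j \alpha_j e^{\i x_j \cdot \xi} = O(|\xi|^m)$ at the origin and hence $\gamma_l = O(|\xi|^{2m})$ there; the rapid Gaussian decay of $\widehat{g_l}$ handles infinity. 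Expanding the square as $\sum_{j,k} \alpha_j \overline{\alpha_k} e^{\i(x_j-x_k)\cdot\xi} \widehat{g_l}(\xi)$ and computing the Fourier transform term by term via Fourier inversion and the evenness of $g_l$, one obtains (up to a normalization constant depending on the Fourier transform convention)
$$\widehat{\gamma_l}(x) = \sum_{j,k=1}^{N} \alpha_j \overline{\alpha_k}\, g_l\bigl(x - (x_j - x_k)\bigr).$$

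Substituting into \eqref{eq:285} and recognizing the convolution yields the pre-limit identity
$$\sum_{j,k=1}^{N} \alpha_j \overline{\alpha_k} (\Phi \ast g_l)(x_j - x_k) = \int_{\mathbb{R}^d} \widehat{\Phi}(\xi) \Bigl|\sum_{j=1}^{N} \alpha_j e^{\i x_j \cdot \xi}\Bigr|^2 \widehat{g_l}(\xi) \, \diff\xi.$$
I then let $l \to \infty$. On the left, the mollification statement in Lemma \ref{lem:20} gives pointwise convergence $(\Phi \ast g_l)(x_j - x_k) \to \Phi(x_j - x_k)$, and the finite double sum converges term by term. On the right, $\widehat{g_l}(\xi) = e^{-|\xi|^2/(4l)}$ is monotonically increasing in $l$ with limit $1$, while the remaining two factors are pointwise non-negative by the hypothesis $\widehat{\Phi} \geq 0$, so the monotone convergence theorem delivers the convergence to $\int \widehat{\Phi}(\xi) |\sum_j \alpha_j e^{\i x_j \cdot \xi}|^2 \, \diff\xi$, which is the desired identity.

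The principal technical point is confirming $\gamma_l \in \mathcal{S}_{2m}$: this step depends entirely on the order-$m$ vanishing at the origin supplied by Lemma \ref{lem:19}, and hence on the polynomial-orthogonality of the coefficients $(\alpha_j)$ that is intrinsic to the order-$m$ framework. Once this is in hand, the right-hand-side limit is a routine application of monotone convergence made possible by the sign assumption on $\widehat{\Phi}$, the left-hand-side limit is a standard property of Gaussian mollification of continuous slowly-increasing functions, and the remainder of the argument amounts to bookkeeping of Fourier-inversion constants.
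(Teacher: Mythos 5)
Your proposal is correct and follows essentially the same route as the paper: Gaussian damping of the exponential sum via $\widehat{g}_l$, membership in $\mathcal{S}_{2m}$ via Lemma \ref{lem:19}, evaluation of the duality \eqref{eq:285}, and then $l\to\infty$ using monotone convergence on the Fourier side and Gaussian mollification (Lemma \ref{lem:20}) on the spatial side. You also correctly flag the implicit polynomial-orthogonality hypothesis on the $\alpha_j$, which the theorem statement leaves tacit but the paper's proof likewise relies on through Lemma \ref{lem:19}.
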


\begin{proof}
  Let us start with the right-hand side of the claimed identity \eqref{eq:291}: By Lemma \ref{lem:19}, the function
  \begin{equation*}
    f(\xi) := \left| \sum_{j=1}^{N} \alpha_j \e^{\i x_j \cdot \xi} \right|^2 \widehat{g}_l(\xi)
  \end{equation*}
  is in \( \mathcal{S}_{2m} \) for all \( l > 0 \). Moreover, by the monotone convergence theorem,
  \begin{align*}
    \int_{\mathbb{R}^d} \left| \sum_{j=1}^{N} \alpha_j \e^{\i x_j \cdot \xi}\right|^2 \widehat{\Phi}(\xi) \diff \xi = {} &\lim_{l \rightarrow \infty} \int_{\mathbb{R}^d} \left| \sum_{j=1}^{N} \alpha_j \e^{\i x_j \cdot \xi}\right| \widehat{g}_l(\xi)\, \widehat{\Phi}(\xi) \diff \xi\\
    = {} &\lim_{l \rightarrow\infty} \int_{\mathbb{R}^d} \left( \left| \sum_{j=1}^{N} \alpha_j \e^{\i x_j \cdot .}\right|^2 \widehat{g}_l(.) \right)^{\wedge}(x)\, \Phi(x) \diff x.
  \end{align*}
  Now, by Lemma \ref{lem:20}, \ref{lem:21},
  \begin{align*}
    \left( \left| \sum_{j=1}^{N} \alpha_j \e^{\i x_j \cdot .}\right|^2 \widehat{g}_l(.) \right)^{\wedge}(x) = {} &\widehat{\widehat{g\,}}_l \ast \left( \sum_{j=1}^{N} \alpha_j \delta_{x_j} \right) \ast \left( \sum_{j=1}^{N} \overline{\alpha_j} \delta_{-x_j} \right)(x)\\
    = {} & g_l\ast \left( \sum_{j=1}^{N} \alpha_j \delta_{x_j} \right) \ast \left( \sum_{j=1}^{N} \overline{\alpha_j} \delta_{-x_j} \right) (x)
  \end{align*}
  and therefore
  \begin{align*}
    \leadeq{\lim_{l \rightarrow\infty} \int_{\mathbb{R}^d} \left( \left| \sum_{j=1}^{N} \alpha_j \e^{\i x_j \cdot .}\right|^2 \widehat{g}_l(.) \right)^{\wedge}(x)\, \Phi(x) \diff x}\\
    = {} & \lim_{l \rightarrow\infty} \int_{\mathbb{R}^d}  \Phi(x) \, g_l \ast \left( \sum_{j=1}^{N} \alpha_j \delta_{x_j} \right) \ast \left( \sum_{j=1}^{N} \overline{\alpha_j} \delta_{-x_j} \right)(x) \diff x\\
    = {} & \lim_{l \rightarrow\infty} \sum_{i,j=1}^{N} \int_{\mathbb{R}^d} \alpha_i \overline{\alpha_j}\,  \Phi(x) \, g_l(x-(x_i-x_j)) \diff x\\
    = {} & \lim_{l \rightarrow\infty} \sum_{i,j=1}^{N} \int_{\mathbb{R}^d} \alpha_i \overline{\alpha_j}\,  \Phi(x-(x_i-x_j)) \, g_l(x) \diff x\\
    = {} & \sum_{i,j=1}^{N} \alpha_i \overline{\alpha_j} \, \Phi(x_i-x_j)
  \end{align*}
   by Lemma \ref{lem:20}, \ref{lem:22}.
\end{proof}

\subsection{Computation for the power function}
\label{sec:comp-power-funct}

Given Theorem \ref{thm:repr-thm-cond-semi}, we are naturally interested in the explicit formula of the generalized Fourier transform for the power function \( x \mapsto \left| x \right|^q \) for \( q \in [1,2) \). It is a nice example of how to pass from an ordinary Fourier transform to the generalized Fourier transform by extending the formula by means of complex analysis. Our starting point will be the multiquadric \( x \mapsto \left( c^2 + \left| x \right|^2 \right)^{\beta} \) for \( \beta < -d/2 \), whose Fourier transform involves the modified Bessel function of the third kind:

\begin{definition}
  [Modified Bessel function]
  \cite[Definition 5.10]{Wend05}
  For \( \nu \in \mathbb{C} \), \( z \in \mathbb{C} \) with \( \left| \operatorname{arg} z \right| < \pi/2 \), define
  \begin{equation*}
    K_\nu(z) := \int_{0}^\infty \exp(-z \cosh(t)) \cosh(\nu t)  \diff t,
  \end{equation*}
  the \emph{modified Bessel function of the third kind of order} \( \nu \in \mathbb{C} \).
\end{definition}

\begin{theorem}
  \label{poly:cond-ft-multquad1}
  \cite[Theorem 6.13]{Wend05}
    For \( c > 0 \) and \( \beta < -d/2 \),
    \begin{equation*}
      \Phi(x) = (c^2+\left| x \right|^2)^{\beta}, \quad x \in \mathbb{R}^d,
    \end{equation*}
    has Fourier transform given by    \begin{equation}
      \label{eq:300}
      \widehat{\Phi}(\xi) =(2\pi)^{d/2}  \frac{2^{1+\beta}}{\Gamma(-\beta)} \left( \frac{\left| \xi \right|}{c} \right)^{-\beta-d/2}K_{d/2+\beta}(c \left| \xi \right|).
    \end{equation}
\end{theorem}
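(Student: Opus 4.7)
The strategy is to rewrite the multiquadric kernel as a superposition of Gaussians (each of which has an explicitly computable Fourier transform) and then recognize the resulting integral in $t$ as a standard integral representation of $K_\nu$. Everything reduces to careful bookkeeping of constants.

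Step 1: Gaussian superposition. Since $\beta<0$, the Gamma function identity
\[
A^{\beta} \;=\; \frac{1}{\Gamma(-\beta)}\int_{0}^{\infty} t^{-\beta-1}\, e^{-tA}\,\diff t,\qquad A>0,
\]
applied with $A=c^{2}+|x|^{2}$ yields
\[
\Phi(x) \;=\; \frac{1}{\Gamma(-\beta)}\int_{0}^{\infty} t^{-\beta-1}\, e^{-tc^{2}}\, e^{-t|x|^{2}}\,\diff t.
\]
The hypothesis $\beta<-d/2$ ensures $\Phi\in L^{1}(\mathbb{R}^{d})$, so $\widehat{\Phi}$ is a bounded continuous function and the classical Fourier transform applies directly, without recourse to the generalized transform.

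Step 2: Fubini and the Gaussian Fourier transform. Using the joint integrability in $(x,t)$ (again from $\beta<-d/2$, which makes the double integral absolutely convergent), I exchange the order of integration and apply the standard formula
\[
\int_{\mathbb{R}^{d}} e^{-t|x|^{2}} e^{-\i x\cdot\xi}\,\diff x \;=\; \Bigl(\frac{\pi}{t}\Bigr)^{d/2} e^{-|\xi|^{2}/(4t)},
\]
(this is Lemma \ref{lem:20}.\ref{lem:21} after rescaling). This gives
\[
\widehat{\Phi}(\xi) \;=\; \frac{\pi^{d/2}}{\Gamma(-\beta)} \int_{0}^{\infty} t^{-\beta-1-d/2}\, e^{-tc^{2}-|\xi|^{2}/(4t)}\,\diff t.
\]

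Step 3: Reduction to the Bessel integral. The substitution $u=tc^{2}$ turns the integral into
\[
c^{2\beta+d}\int_{0}^{\infty} u^{-\beta-1-d/2}\, e^{-u-(c|\xi|)^{2}/(4u)}\,\diff u .
\]
I now invoke the classical integral representation of the modified Bessel function of the third kind,
\[
K_{\nu}(z) \;=\; \tfrac{1}{2}\Bigl(\frac{z}{2}\Bigr)^{\nu}\int_{0}^{\infty} s^{-\nu-1}\, e^{-s-z^{2}/(4s)}\,\diff s,\qquad z>0,
\]
with $\nu=\beta+d/2$ and $z=c|\xi|$, together with the symmetry $K_{\nu}=K_{-\nu}$. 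Solving for the integral and substituting back yields
\[
\widehat{\Phi}(\xi) \;=\; \frac{\pi^{d/2}}{\Gamma(-\beta)}\, c^{2\beta+d}\cdot 2\Bigl(\frac{2}{c|\xi|}\Bigr)^{\beta+d/2} K_{d/2+\beta}(c|\xi|),
\]
which after rearranging $c^{2\beta+d}\,c^{-\beta-d/2} = c^{\beta+d/2}$ and absorbing the factors of $2$ becomes exactly the claimed formula \eqref{eq:300}.

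The proof is essentially a computation with no real obstacle: the only points requiring attention are (i) the appeal to Fubini, which is justified uniformly by $\beta<-d/2$, and (ii) the identification of the correct index of $K_{\nu}$ through the substitution and the $\nu\leftrightarrow-\nu$ symmetry, which together explain the appearance of $d/2+\beta$ (rather than $-d/2-\beta$) in the final formula.
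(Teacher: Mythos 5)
Your proof is correct and is the standard argument (Gaussian superposition via the Euler/Gamma identity, Fubini, and the Schläfli-type integral representation of $K_\nu$). Note that the paper itself does not prove this statement: Theorem \ref{poly:cond-ft-multquad1} is quoted directly from Wendland \cite[Theorem 6.13]{Wend05}, which uses essentially the same Gaussian-superposition computation, so there is no divergence of approach to report. One small remark: the appeal to $K_\nu = K_{-\nu}$ is superfluous in your computation, since the representation $K_\nu(z)=\tfrac{1}{2}(z/2)^\nu\int_0^\infty s^{-\nu-1}e^{-s-z^2/(4s)}\,\dd s$ with $\nu=\beta+d/2$ already produces the index $d/2+\beta$ directly; the symmetry would only be needed if you used the equivalent form with $s^{\nu-1}$ in the integrand. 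Also worth flagging explicitly: the $(2\pi)^{d/2}$ prefactor in \eqref{eq:300} reflects that the paper uses the Fourier convention $\widehat{\Phi}(\xi)=\int\Phi(x)e^{-\i x\cdot\xi}\,\dd x$ rather than Wendland's symmetric normalization $(2\pi)^{-d/2}\int\Phi(x)e^{-\i x\cdot\xi}\,\dd x$; your computation produces exactly the constant $\pi^{d/2}\,2^{1+\beta+d/2}=(2\pi)^{d/2}\,2^{1+\beta}$, so it is consistent with the paper's convention.
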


The next lemma provides the asymptotic behavior of the involved Bessel function for large and small values, which we need for the following proof.

\begin{lemma}
  [Estimates for \( K_\nu \)]
  \label{lem:23}
  1. \cite[Lemma 5.14]{Wend05}
  For \( \nu \in \mathbb{C}, r > 0 \),
  \begin{equation}
    \label{eq:301}
    \left| K_\nu(r) \right| \leq
    \begin{cases}
      2^{\left| \Re (\nu) \right| - 1}\Gamma \left( \left| \Re(\nu) \right| \right) r^{-\left| \Re(\nu) \right|}, &\Re(\nu) \neq 0,\\
      \frac{1}{\e}-\log \frac{r}{2},&r < 2, \Re(\nu) = 0.
    \end{cases}
  \end{equation}

  2. For large \( r \), \( K_\nu \) has the asymptotic behavior
  \begin{equation}
    \label{eq:302}
    \left| K_\nu(r) \right| \leq \sqrt{\frac{2\pi}{r}} \e^{-r} \e^{\left| \Re(\mu) \right|^2/(2r)}, \quad r > 0.
  \end{equation}
\end{lemma}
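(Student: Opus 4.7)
The starting point for both estimates is the symmetrized integral representation obtained from the evenness of $\cosh$ and the identity $\cosh(\nu t)=\tfrac12(e^{\nu t}+e^{-\nu t})$, namely
$$
K_\nu(r) \;=\; \tfrac12 \int_{-\infty}^{\infty} e^{-r\cosh(t)+\nu t}\,dt,
$$
which immediately gives $|K_\nu(r)| \le \tfrac12\int_{-\infty}^\infty e^{-r\cosh(t)+\Re(\nu)t}\,dt$. For part 1 with $\Re(\nu)\ne 0$, I would exploit the symmetry $K_\nu=K_{-\nu}$ (read off from the above display by replacing $t$ with $-t$) to reduce to the case $\alpha:=\Re(\nu)>0$. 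Substituting $s=e^{t}$ in the representation and then $u=rs/2$ gives, in a few lines,
$$
K_\nu(r) \;=\; 2^{\nu-1} r^{-\nu} \int_{0}^{\infty} u^{\nu-1}\,e^{-u-r^{2}/(4u)}\,du.
$$
Estimating $|e^{-r^{2}/(4u)}|\le 1$ and recognizing the resulting Gamma integral produces exactly the bound $2^{\alpha-1}\Gamma(\alpha)\,r^{-\alpha}$.

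For the logarithmic case $\Re(\nu)=0$, $r<2$, I would note that $|\cosh(\nu t)|=|\cos(\Im(\nu)t)|\le 1$, reducing the task to bounding $\int_0^\infty e^{-r\cosh(t)}\,dt$. Passing to $v=e^{t}$ turns this into $\int_1^\infty e^{-r(v+1/v)/2}\,dv/v$, and the natural splitting is at $v=2/r$: on $[1,2/r]$ the exponent is $\ge -1$ on the slow-growth side and the integrand is bounded by $1/v$, giving $\log(2/r)=-\log(r/2)$; on $[2/r,\infty)$ one uses $1/v\le r/2$ and $e^{-rv/2}\le e^{-1}$ together with $\int_{2/r}^\infty \tfrac{r}{2} e^{-rv/2}\,dv = e^{-1}$. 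Adding the two pieces yields $\tfrac1e-\log(r/2)$.

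Part 2 is the simplest conceptually and rests on the inequality $\cosh(t)\ge 1+t^2/2$ valid for all real $t$ (immediate from the Taylor series, every term of which is nonnegative). Plugging this into the symmetrized bound gives
$$
|K_\nu(r)| \;\le\; \tfrac12\,e^{-r}\int_{-\infty}^{\infty} e^{-rt^{2}/2+\Re(\nu)t}\,dt,
$$
and completing the square $-rt^{2}/2+\Re(\nu)t=-\tfrac{r}{2}(t-\Re(\nu)/r)^2+\Re(\nu)^{2}/(2r)$ reduces the integral to a translated Gaussian of value $\sqrt{2\pi/r}$. The resulting bound $\tfrac12\sqrt{2\pi/r}\,e^{-r}\,e^{\Re(\nu)^{2}/(2r)}$ is even sharper than what is claimed.

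The main obstacle is the logarithmic case: the other two estimates follow mechanically from Gamma-function manipulations or a Gaussian comparison, but the $\Re(\nu)=0$, $r<2$ bound requires choosing the splitting point $v=2/r$ (equivalently $\cosh(t)\sim 1/r$), which is where the two regimes of the integrand $e^{-rv/2}\cdot e^{-r/(2v)}/v$ cross over, and any other choice would produce either a spurious blow-up like $r^{-1/2}$ or fail to produce the $\log$ behavior.
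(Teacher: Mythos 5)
The paper does not prove Lemma~\ref{lem:23}: part~1 is cited to \cite[Lemma 5.14]{Wend05} and part~2 is stated without citation as a standard estimate, so there is no in-paper proof to compare against. Your blind reconstruction is correct and self-contained. The substitutions $s=e^t$, $u=rs/2$ produce the identity $K_\nu(r)=2^{\nu-1}r^{-\nu}\int_0^\infty u^{\nu-1}e^{-u-r^2/(4u)}\,du$, which is exactly the representation the authors invoke (without rederiving it) at the end of the proof of Theorem~\ref{thm:cond-ft-power}.2 to pass $c\to 0$; bounding $e^{-r^2/(4u)}\le 1$ and recognizing $\Gamma(\alpha)$ then gives part~1 for $\Re(\nu)\neq 0$, with the $K_\nu=K_{-\nu}$ symmetry handling the sign. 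One small imprecision in the logarithmic case: you say ``the exponent is $\ge -1$ on the slow-growth side,'' but on $[1,2/r]$ the exponent $-r(v+1/v)/2$ ranges between $-r$ and $-(1+r^2/4)$ and can dip below $-1$; what you actually use, and what suffices, is just $e^{-r(v+1/v)/2}\le 1$ there, combined with $\int_1^{2/r}dv/v=-\log(r/2)$ and the tail computation $\int_{2/r}^\infty \tfrac{r}{2}e^{-rv/2}\,dv=e^{-1}$. Part~2 via $\cosh t\ge 1+t^2/2$ and completing the square is clean, gives the sharper constant $\tfrac12\sqrt{2\pi/r}$, and correctly treats the $\mu$ in \eqref{eq:302} as a typo for $\nu$.
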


\begin{theorem}
  \label{thm:cond-ft-power}
  1. \cite[Theorem 8.15]{Wend05}
  \( \Phi(x) = (c^2 + \left| x \right|^2)^\beta \), \( x \in \mathbb{R}^d \) for \( c > 0 \) and \( \beta \in \mathbb{R} \setminus \frac{1}{2} \mathbb{N}_0 \) has the generalized Fourier transform
  \begin{equation}
    \label{eq:303}
    \widehat{\Phi}(\xi) = (2\pi)^{d/2} \frac{2^{1+\beta}}{\Gamma(-\beta)} \left( \frac{\left| \xi \right|}{c} \right)^{-\beta-d/2} K_{d/2+\beta}(c \left| \xi \right|), \quad \xi \neq 0
  \end{equation}
  of order \( m = \max(0, \lceil 2\beta \rceil/2) \).
  
  2. \cite[Theorem 8.16]{Wend05}
  \( \Phi(x) = \left| x \right|^\beta \), \( x \in \mathbb{R}^d \) with \( \beta \in \mathbb{R}_+ \setminus \mathbb{N} \) has the generalized Fourier transform
  \begin{equation*}
    \widehat{\Phi}(\xi) = (2\pi)^{d/2}\frac{2^{\beta+d/2}\Gamma((d+\beta)/2)}{\Gamma(-\pi/2)} \left| \xi \right|^{-\beta-d}, \quad \xi \neq 0.
  \end{equation*}
  of order \( m = \lceil \beta \rceil/2 \).
\end{theorem}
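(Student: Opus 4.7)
The strategy for both parts is analytic continuation in a complex parameter, seeded by the classical Fourier transform of Theorem \ref{poly:cond-ft-multquad1}, which is valid only in the regime $\beta < -d/2$ where $(c^2 + |\cdot|^2)^\beta \in L^1(\mathbb{R}^d)$.

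For Part 1, I fix a test function $\gamma \in \mathcal{S}_{2m}$ with $m = \max(0,\lceil 2\beta \rceil/2)$ and introduce the two analytic functions
\begin{align*}
  F(z) &:= \int_{\mathbb{R}^d} (c^2 + |x|^2)^z\, \widehat{\gamma}(x) \diff x, \\
  G(z) &:= (2\pi)^{d/2} \frac{2^{1+z}}{\Gamma(-z)} c^{z+d/2} \int_{\mathbb{R}^d} |\xi|^{-z-d/2} K_{d/2+z}(c|\xi|)\, \gamma(\xi) \diff \xi,
\end{align*}
of the complex parameter $z$. Since $\widehat{\gamma} \in \mathcal{S}$ decays faster than any polynomial, $F$ is entire on $\mathbb{C}$. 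For $G$ I would split the integral at $|\xi| = 1$: on $\{|\xi| > 1\}$ the exponential decay in \eqref{eq:302} together with the Schwartz decay of $\gamma$ gives a contribution entire in $z$; on $\{|\xi| \leq 1\}$ the vanishing $\gamma(\xi) = O(|\xi|^{2m})$ combined with the near-origin bound \eqref{eq:301} compensates the singularity of the integrand, yielding holomorphy on a half-plane that covers the desired range. Outside the poles of $1/\Gamma(-z)$ — located at $z \in \mathbb{N}_0 \subseteq \tfrac{1}{2}\mathbb{N}_0$ — $G$ is therefore holomorphic. The identity $F(z) = G(z)$ holds for $\Re z < -d/2$ by Theorem \ref{poly:cond-ft-multquad1} and the $L^1$-integrability of both $\Phi_z$ and $\widehat{\Phi}_z$ in that regime, and by the identity theorem it extends to all admissible $\beta$, proving \eqref{eq:303}.

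For Part 2, I would obtain the power-function formula by letting $c \to 0^+$ in Part 1 applied with exponent $\beta/2$, so that $(c^2 + |x|^2)^{\beta/2} \to |x|^\beta$ pointwise. On the spatial side, the bound $(c^2 + |x|^2)^{\beta/2} \leq (1 + |x|^2)^{\beta/2}$ for $c \leq 1$ is integrable against $\widehat{\gamma} \in \mathcal{S}$, enabling dominated convergence. On the frequency side, the asymptotics $K_\nu(r) \sim 2^{\nu-1}\Gamma(\nu)\, r^{-\nu}$ as $r \to 0^+$ for $\nu > 0$ gives the pointwise limit
\[
  c^{\beta/2 + d/2} K_{d/2 + \beta/2}(c|\xi|) \xrightarrow{c \to 0^+} 2^{d/2 + \beta/2 - 1}\, \Gamma((d+\beta)/2)\, |\xi|^{-d/2 - \beta/2},
\]
and a uniform-in-$c$ majorant is obtained by patching \eqref{eq:301} (near the origin) with \eqref{eq:302} (away from it); the assumption $\gamma \in \mathcal{S}_{2m}$ with $m = \lceil \beta \rceil/2$ is exactly what is needed to render this majorant integrable against the limiting singularity $|\xi|^{-d-\beta}$. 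Collecting all prefactors then produces the announced formula $(2\pi)^{d/2}\, 2^{\beta+d/2}\, \Gamma((d+\beta)/2)/\Gamma(-\beta/2)\cdot |\xi|^{-\beta-d}$.

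The main technical obstacle lies in the precise matching between the prescribed order of vanishing $2m$ of the test functions $\gamma$ at the origin and the strength $|\xi|^{-|d/2+\Re z|}$ of the singularity in $K_{d/2+z}(c|\xi|)$; this matching simultaneously governs the admissible range of $\Re z$ in Part 1 and the existence of a $c$-uniform dominating function in the limit of Part 2, and determines the minimal order $m$ for which the formulas hold. A secondary but essential concern is the bookkeeping of the powers of $2$, $2\pi$, $c$ and the Gamma factors, which must combine cleanly — and only do so modulo the poles of $1/\Gamma(-z)$, accounting for the exclusion of $\tfrac{1}{2}\mathbb{N}_0$ in Part 1 and of $\mathbb{N}$ in Part 2.
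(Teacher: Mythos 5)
Your proposal follows essentially the same route as the paper: seed with the classical $L^1$ formula of Theorem \ref{poly:cond-ft-multquad1}, analytically continue in the exponent $z$ by dominating the pairing integrals uniformly on compacts using the near-origin estimate \eqref{eq:301} and the large-argument decay \eqref{eq:302} of $K_\nu$, then obtain Part 2 by sending $c \to 0^+$ in Part 1 with $\beta/2$ via dominated convergence and the limit $r^\nu K_\nu(r) \to 2^{\nu-1}\Gamma(\nu)$. One inaccuracy worth correcting: $1/\Gamma(-z)$ is entire, not meromorphic --- $\Gamma(-z)$ has poles at $z\in\mathbb{N}_0$, so $1/\Gamma(-z)$ has zeros there, not poles, and in any case $\mathbb{N}_0$ does not account for the full set $\tfrac12\mathbb{N}_0$. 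The genuine reason for excluding $\beta \in \tfrac12\mathbb{N}_0$ is different: the domination bound \eqref{eq:307} near the origin yields $|\xi|^{2m - 2\max(b,-d/2) - d}$, which is locally integrable only for $\Re z < m$; since $m = \lceil 2\beta\rceil/2$, the target $\beta$ lies \emph{strictly} inside the half-plane $\{\Re z < m\}$ precisely when $2\beta \notin \mathbb{N}_0$, i.e.\ when $m > \beta$. When $\beta$ is a nonnegative half-integer, $m = \beta$ sits on the boundary, where the continuation argument does not apply. Your sentence about the ``matching'' between the vanishing order $2m$ and the singularity strength actually contains the right intuition --- the exclusion should be traced to that requirement, not to the Gamma factor.
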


\begin{proof}
  1. We can pass from formula \eqref{eq:300} to \eqref{eq:303} by analytic continuation, where the exponent \( m \) serves to give us the needed integrable dominating function, see formula \eqref{eq:307} below.

  Let \( G = \left\{ \lambda \in \mathbb{C} : \Re(\lambda) < m \right\} \) and
  \begin{align*}
    \varphi_\lambda(\xi) := {} &(2\pi)^{d/2} \frac{2^{1+\lambda}}{\Gamma(-\lambda)} \left( \frac{\left| \xi \right|}{c} \right)^{-\lambda-d/2} K_{d/2+\lambda}(c \left| \xi \right|)\\
    \Phi_\lambda(\xi) := {} & \left( c^2 + \left| \xi \right|^2 \right)^\lambda.
  \end{align*}
  We want to show
  \begin{equation*}
    \int_{\mathbb{R}^d} \Phi_\lambda(\xi)\widehat{\gamma}(\xi) \diff \xi = \int_{\mathbb{R}^d} \varphi_\lambda(\xi) \gamma(\xi) \diff \xi, \quad \text{for all } \gamma \in \mathcal{S}_{2m},
  \end{equation*}
  which is so far true for $\lambda$ real and \( \lambda < d/2 \) by \eqref{eq:300}. As the integrands \( \Phi_\lambda \widehat{\gamma} \) and \( \varphi_\lambda \gamma  \) are analytic, the integral functions are also analytic by Cauchy’s integral formula and Fubini's theorem if we can find a uniform dominating function for each of them on an arbitrary compact set \( \mathcal{C} \subseteq G \). As this is clear for \( \Phi_\lambda \) by the decay of \( \gamma \in \mathcal{S} \), it remains to consider \( \varphi_\lambda \).

  Setting \( b := \Re(\lambda) \), for \( \xi \) close to \( 0 \) we get by estimate \eqref{eq:301} of Lemma \ref{lem:23} that
  \begin{equation}
    \label{eq:307}
    \left| \varphi_\lambda(\xi) \gamma(\xi) \right| \leq C_\gamma \frac{2^{b+\left| b + d/2 \right|}\Gamma(\left| b + d/2 \right|)}{\left| \Gamma(-\lambda) \right|}c^{b+d/2-\left| b+d/2 \right|}\left| \xi \right|^{-b-d/2-\left| b+d/2 \right|+2m}
  \end{equation}
  for \( b \neq -d/2 \) and
  \begin{equation*}
    \left| \varphi_\lambda(\xi)\gamma(\xi) \right|\leq C_\lambda \frac{2^{1-d/2}}{\left| \Gamma(-\lambda) \right|}\left( \frac{1}{\e} - \log \frac{c \left| \xi \right|}{2} \right).
  \end{equation*}
  for \( b = -d/2 \). Taking into account that \( \mathcal{C} \) is compact and \( 1/\Gamma \) is an entire function, this yields
  \begin{equation*}
    \left| \varphi_\lambda(\xi)\gamma(\xi) \right| \leq C_{\lambda,m,c,\mathcal{C}} \left( 1 + \left| \xi \right|^{-d+2\varepsilon}-\log \frac{c \left| \xi \right|}{2} \right),
  \end{equation*}
  with \( \left| \xi \right| < \min \left\{ 1/c,1 \right\} \) and \( \varepsilon := m-b \), which is locally integrable.

  For \( \xi \) large, we similarly use estimate \eqref{eq:302} of Lemma \ref{lem:23} to obtain
  \begin{equation*}
    \left| \varphi_\lambda(\xi)\gamma(\xi) \right| \leq C_\lambda \frac{2^{1+b}\sqrt{2\pi}}{\left| \Gamma(-\lambda) \right|}c^{b+(d-1)/2} \left| \xi \right|^{-b-(d+1)/2} \e^{-c \left| \xi \right|} \e^{\left| b+d/2 \right|^2/(2c \left| \xi \right|)}
  \end{equation*}
  and consequently
  \begin{equation*}
    \left| \varphi_\lambda(\xi)\gamma(\xi) \right| \leq C_{\gamma,m,\mathcal{C},c} \e^{-c \left| \xi \right|},
  \end{equation*}
  which certainly is integrable.
  
  2. We want to pass to \( c \rightarrow 0 \) in formula \eqref{eq:303}. This can be done by applying the dominated convergence theorem in the definition of the generalized Fourier transform \eqref{eq:285}. Writing \( \Phi_c(x) := \left( c^2+ \left| x \right|^2 \right)^{\beta/2} \) for \( c>0 \), we know that
  \begin{equation*}
    \widehat{\Phi}_c(\xi) = \varphi_c(\xi) := (2\pi)^{d/2} \frac{2^{1+\beta/2}}{\left| \Gamma(-\beta/2) \right|} \left| \xi \right|^{-\beta-d}(c \left| \xi \right|)^{(\beta+d)/2}K_{(\beta+d)/2}(c \left| \xi \right|).
  \end{equation*}
  By using the decay properties of a \( \gamma \in \mathcal{S}_{2m} \) in the estimate \eqref{eq:307}, we get
  \begin{equation}
    \label{eq:1}
    \left| \varphi_c(\xi)\gamma(\xi) \right| \leq C_\gamma \frac{2^{\beta+d/2}\Gamma((\beta+d)/2}{\left| \Gamma(-\beta/2) \right|} \left| \xi \right|^{2m-\beta-d} \quad \text{for } \left| \xi \right| \rightarrow 0
  \end{equation}
  and
  \begin{equation*}
    \left| \varphi_c(\xi)\gamma(\xi) \right| \leq C_\gamma \frac{2^{\beta+d/2}\Gamma((\beta+d)/2)}{\left| \Gamma(-\beta/2) \right|} \left| \xi \right|^{-\beta-d},
  \end{equation*}
  yielding the desired uniform dominating function. The claim now follows by also taking into account that
  \begin{equation*}
    \lim_{r\rightarrow 0} r^\nu K_\nu(r) = \lim_{r\rightarrow 0} 2^{\nu-1} \int_{0}^{\infty} \e^{-t} \e^{-r^2/(4t)} t^{\nu-1} \diff t = 2^{\nu-1} \Gamma(\nu).
  \end{equation*}
\end{proof}

\begin{remark}[Fractional orders]
  \label{rem:frac-ord}
  In Theorem \ref{thm:cond-ft-power}, we have slightly changed the statement compared to the reference \cite[Theorem 8.16]{Wend05} in order to allow orders which are a multiple of \( 1/2 \) instead of just integers. This made sense in \cite{Wend05} because the definition of the order involves the space \( \mathcal{S}_{2m} \) due to its purpose in the representation formula of Theorem \ref{thm:repr-thm-cond-semi}, involving a quadratic functional. However, in Section \ref{sec:moment-bound-symm} we needed the generalized Fourier transform in a linear context. Fortunately, one can easily generalize the proof in \cite{Wend05} to this fractional case, as all integrability arguments remain true when permitting multiples of \( 1/2 \), in particular the estimates in \eqref{eq:307} and \eqref{eq:1}.
\end{remark}

\bibliographystyle{abbrv}
\bibliography{BibKineticDitheringVar}

\begin{thebibliography}{10}

\bibitem{00-Amb-Fusco-Pallara-BV}
L.~Ambrosio, N.~Fusco, and D.~Pallara.
\newblock {\em Functions of Bounded Variation and Free Discontinuity Problems}.
\newblock Oxford Mathematical Monographs. The Clarendon Press Oxford University
  Press, New York, 2000.

\bibitem{AGS08}
L.~Ambrosio, N.~Gigli, and G.~Savar{\'e}.
\newblock {\em Gradient Flows in Metric Spaces and in the Space of Probability
  Measures}.
\newblock Lectures in Mathematics ETH Z\"urich. Birkh\"auser Verlag, Basel,
  second edition, 2008.

\bibitem{94-Anzellotti-GeneralizedGamma}
G.~Anzellotti, S.~Baldo, and D.~Percivale.
\newblock Dimension reduction in variational problems, asymptotic development
  in {$\Gamma$}-convergence and thin structures in elasticity.
\newblock {\em Asymptotic Anal.}, 9(1):61--100, 1994.

\bibitem{12-Bartels-TotalVariation}
S.~Bartels.
\newblock Total variation minimization with finite elements: convergence and
  iterative solution.
\newblock {\em SIAM J. Numer. Anal.}, 50(3):1162--1180, 2012.

\bibitem{68-Billingsley-Conv-Proba}
P.~Billingsley.
\newblock {\em Convergence of Probability Measures}.
\newblock John Wiley \& Sons Inc., New York, 1968.

\bibitem{95-Billingsley-Proba_and_Measuer}
P.~Billingsley.
\newblock {\em Probability and Measure}.
\newblock Wiley Series in Probability and Mathematical Statistics. John Wiley
  \& Sons Inc., New York, third edition, 1995.
\newblock A Wiley-Interscience Publication.

\bibitem{13-Carrillo-Choi-Hauray-MFL}
J.~A. Carrillo, Y.-P. Choi, and M.~Hauray.
\newblock The derivation of swarming models: mean-field limit and wasserstein
  distances.
\newblock {\em arXiv/1304.5776}, 2013.

\bibitem{07_Carillo_Toscani_prob-metrics}
J.~A. Carrillo and G.~Toscani.
\newblock Contractive probability metrics and asymptotic behavior of
  dissipative kinetic equations.
\newblock {\em Riv. Mat. Univ. Parma (7)}, 6:75--198, 2007.

\bibitem{10_Chambolle_ea_tv_intro}
A.~Chambolle, V.~Caselles, D.~Cremers, M.~Novaga, and T.~Pock.
\newblock An introduction to total variation for image analysis.
\newblock In {\em Theoretical foundations and numerical methods for sparse
  recovery}, volume~9 of {\em Radon Ser. Comput. Appl. Math.}, pages 263--340.
  Walter de Gruyter, Berlin, 2010.

\bibitem{13-Cicalese-droplet-minimizers}
M.~Cicalese and E.~Spadaro.
\newblock Droplet minimizers of an isoperimetric problem with long-range
  interactions.
\newblock {\em Comm. Pure Appl. Math.}, to appear.

\bibitem{cvx}
{\relax CVX Research, Inc.}
\newblock {CVX}: Matlab software for disciplined convex programming, version
  2.0 beta.
\newblock \url{http://cvxr.com/cvx}, Apr. 2013.

\bibitem{93-Dal_Maso-intro-g-conv}
G.~Dal~Maso.
\newblock {\em An Introduction to {$\Gamma$}-Convergence}.
\newblock Progress in Nonlinear Differential Equations and their Applications,
  8. Birkh\"auser Boston Inc., Boston, MA, 1993.

\bibitem{descsc13}
S.~Dereich, M.~Scheutzow, and R.~Schottstedt.
\newblock Constructive quantization: approximation by empirical measures.
\newblock {\em Ann. Inst. Henri Poincar\'e (B)}.
\newblock to appear.

\bibitem{85-Devroye-Gyoerfi-non-param-den-est}
L.~Devroye and L.~Gy{\"o}rfi.
\newblock {\em Nonparametric Density Estimation}.
\newblock Wiley Series in Probability and Mathematical Statistics: Tracts on
  Probability and Statistics. John Wiley \& Sons Inc., New York, 1985.

\bibitem{dffohuma13}
M.~Di~Francesco, M.~Fornasier, J.-C. H{\"u}tter, and D.~Matthes.
\newblock Asymptotic behavior of gradient flows driven by power repulsion and
  attraction potentials in one dimension.
\newblock {\em preprint}, 2013.

\bibitem{Dur10}
R.~Durrett.
\newblock {\em Probability: Theory and Wxamples}.
\newblock Cambridge Series in Statistical and Probabilistic Mathematics.
  Cambridge University Press, Cambridge, fourth edition, 2010.

\bibitem{92-Evans-fine-properties}
L.~C. Evans and R.~F. Gariepy.
\newblock {\em Measure Theory and Fine Properties of Functions}.
\newblock Studies in Advanced Mathematics. CRC Press, Boca Raton, FL, 1992.

\bibitem{fest04}
M.~Fenn and G.~Steidl.
\newblock {Fast NFFT based summation of radial functions.}
\newblock {\em Sampl. Theory Signal Image Process.}, 3(1):1--28, 2004.

\bibitem{FHS12}
M.~Fornasier, J.~Ha\v{s}kovec, and G.~Steidl.
\newblock {Consistency of variational continuous-domain quantization via
  kinetic theory.}
\newblock {\em Appl. Anal.}, 92(6):1283--1298, 2013.

\bibitem{12-Serfaty-DropletI}
D.~Goldman, C.~B. Muratov, and S.~Serfaty.
\newblock {The Gamma-limit of the two-dimensional Ohta-Kawasaki energy. I.
  Droplet density}.
\newblock {\em arXiv/1201.0222}, 2012.

\bibitem{12-Serfaty-DropletII}
D.~Goldman, C.~B. Muratov, and S.~Serfaty.
\newblock {The Gamma-limit of the two-dimensional Ohta-Kawasaki energy. II.
  Droplet arrangement at the sharp interface level via the renormalized
  energy}.
\newblock {\em arXiv/1210.5098}, 2012.

\bibitem{grpost11}
M.~Gr\"af, D.~Potts, and G.~Steidl.
\newblock {Quadrature errors, discrepancies, and their relations to halftoning
  on the torus and the sphere.}
\newblock {\em SIAM J. Sci. Comput.}, 34(5):A2760--A2791, 2012.

\bibitem{00_Graf_Luschgy_quantization}
S.~Graf and H.~Luschgy.
\newblock {\em Foundations of Quantization for Probability Distributions},
  volume 1730 of {\em Lecture Notes in Mathematics}.
\newblock Springer-Verlag, Berlin, 2000.

\bibitem{08-Grant-Boyd-Convex-Programs}
M.~Grant and S.~Boyd.
\newblock Graph implementations for nonsmooth convex programs.
\newblock In V.~Blondel, S.~Boyd, and H.~Kimura, editors, {\em Recent Advances
  in Learning and Control}, Lecture Notes in Control and Information Sciences,
  pages 95--110. Springer-Verlag Limited, 2008.
\newblock \url{http://stanford.edu/~boyd/graph_dcp.html}.

\bibitem{04-Gruber-Quantization}
P.~M. Gruber.
\newblock Optimum quantization and its applications.
\newblock {\em Adv. Math.}, 186(2):456--497, 2004.

\bibitem{2013-Muratov}
H.~Kn\"upfer and C.~B. Muratov.
\newblock {On an isoperimetric problem with a competing nonlocal term. I: The
  planar case.}
\newblock {\em Commun. Pure Appl. Math.}, 66(7):1129--1162, 2013.

\bibitem{2012-Muratov}
H.~Kn\"upfer and C.~B. Muratov.
\newblock On an isoperimetric problem with a competing nonlocal term ii: the
  general case.
\newblock {\em Commun. Pure Appl. Math.}, to appear.

\bibitem{pa98}
G.~Pag\`es.
\newblock {A space quantization method for numerical integration.}
\newblock {\em J. Comput. Appl. Math.}, 89(1):1--38, 1998.

\bibitem{post03}
D.~Potts and G.~Steidl.
\newblock {Fast summation at nonequispaced knots by NFFT.}
\newblock {\em SIAM J. Sci. Comput.}, 24(6):2013--2037, 2003.

\bibitem{rof92}
L.~I. Rudin, S.~Osher, and E.~Fatemi.
\newblock {Nonlinear total variation based noise removal algorithms.}
\newblock {\em {Physica D}}, 60(1-4):259--268, 1992.

\bibitem{scgwbrwe11}
C.~Schmaltz, G., P.~Gwosdek, A.~Bruhn, and J.~Weickert.
\newblock {Electrostatic halftoning.}
\newblock {\em Computer Graphics}, 29:2313--2327, 2010.

\bibitem{testgwscwe11}
T.~Teuber, G.~Steidl, P.~Gwosdek, C.~Schmaltz, and J.~Weickert.
\newblock {Dithering by differences of convex functions}.
\newblock {\em SIAM J. Imaging Sci.}, 4(1):79--108, 2011.

\bibitem{Wend05}
H.~Wendland.
\newblock {\em Scattered Data Approximation}, volume~17 of {\em Cambridge
  Monographs on Applied and Computational Mathematics}.
\newblock Cambridge University Press, Cambridge, 2005.

\bibitem{12-Wied-Weissbach-Kernel-Estimators}
D.~Wied and R.~Wei{\ss}bach.
\newblock Consistency of the kernel density estimator: a survey.
\newblock {\em Statist. Papers}, 53(1):1--21, 2012.

\end{thebibliography}

\end{document}